\numberwithin{equation}{section}
\definecolor{Vino}{rgb}{0.356,0,0}
\definecolor{Ruta}{rgb}{0.309, 0.459, 0.208}
\let\cal\mathcal
\def\Ascr{{\cal A}}
\def\Cscr{{\cal C}}
\def\Dscr{{\cal D}}
\def\Fscr{{\cal F}}
\def\Gscr{{\cal G}}
\def\Iscr{{\cal I}}
\def\Lscr{{\cal L}}
\def\Mscr{{\cal M}}
\def\Oscr{{\cal O}}
\def\Pscr{{\cal P}}
\def\Rscr{{\cal R}}
\def\Sscr{{\cal S}}
\def\Tscr{{\cal T}}
\def\Uscr{{\cal U}}
\def\Vscr{{\cal V}}
\def\Xscr{{\cal X}}
\def\Zscr{{\cal Z}}
\let\blb\mathbb
\def \PP{{\blb P}}
\def \ZZ{{\blb Z}}
\def \NN{{\blb N}}
\def \RR{{\blb R}}
\def\id{\text{id}}
\def\Res{\operatorname{Res}}
\def\Lotimes{\overset{L}{\otimes}}
\def\quot{/\!\!/}
\def\Mod{\operatorname{Mod}}
\def\mod{\operatorname{mod}}
\def\Gr{\operatorname{Gr}}
\def\Supp{\mathop{\text{\upshape Supp}}}
\def\Qch{\operatorname{Qch}}
\def\coh{\mathop{\text{\upshape{coh}}}}
\def\Spec{\operatorname {Spec}}
\def\Hom{\operatorname {Hom}}
\def\uHom{\operatorname {\mathcal{H}\mathit{om}}}
\def\uEnd{\operatorname {\mathcal{E}\mathit{nd}}}
\def\End{\operatorname {End}}
\def\RHom{\operatorname {RHom}}
\def\uRHom{\operatorname {R\mathcal{H}\mathit{om}}}
\def\ker{\operatorname {ker}}
\def\End{\operatorname {End}}
\def\id{{\operatorname {id}}}
\def\rk{\operatorname {rk}}
\def\gldim{\operatorname {gl\,dim}}
\def\G{\mathop{\underline{\underline{{\Gamma}}}}\nolimits}
\def\r{\rightarrow}
\def\u{\uparrow}
\DeclareMathOperator{\Proj}{Proj}
\DeclareMathOperator{\Ind}{Ind}
\DeclareMathOperator{\Aut}{Aut}
\def\Bl{\rm{Bl}}
\def\codim{\operatorname{codim}}
\def\uRHom{\operatorname {R\mathcal{H}\mathit{om}}}
\def\uREnd{\operatorname {R\mathcal{E}\mathit{nd}}}
\def\lll{{\operatorname{loc}}}
\def\G{G}
\newtheorem{lemma}{Lemma}[section]
\newtheorem{proposition}[lemma]{Proposition}
\newtheorem{theorem}[lemma]{Theorem}
\newtheorem{corollary}[lemma]{Corollary}
\theoremstyle{definition}
\newtheorem{example}[lemma]{Example}
\newtheorem{definition}[lemma]{Definition}
\newtheorem{step}{Step}
\theoremstyle{remark}
\newtheorem{remark}[lemma]{Remark}
\newdimen\uboxsep \uboxsep=1ex
\def\uboxn#1{\vtop to 0pt{\hrule height 0pt depth 0pt\vskip\uboxsep
\hbox to 0pt{\hss #1\hss}\vss}}
\def\uboxs#1{\vbox to 0pt{\vss\hbox to 0pt{\hss #1\hss}
\vskip\uboxsep\hrule height 0pt depth 0pt}}
\def\Sym{\operatorname{Sym}}
\let\oldmarginpar\marginpar
\def\marginpar#1{\oldmarginpar{\raggedright \tiny \baselineskip 0pt \lineskip 0pt #1}}
\def\Bl{\operatorname{Bl}}
\def\Sym{\operatorname{Sym}}
\def\rep{\operatorname{rep}}
\def\fks{{\bf X}}
\def\K{R}
\def\Sym{\operatorname{Sym}}
\def\Ob{\operatorname{Ob}}
\def\ns{{ns}}
\def\XZ{Z}
\def\XscrZ{\Zscr}
\def\XZH{Z^{\langle H\rangle}}
\def\GpH{H}
\def\ns{ns}
\def\u{u}
\title[]{Comparing the Kirwan and noncommutative resolutions of quotient varieties}
\author[\v{S}pela \v{S}penko and Michel Van den Bergh]{\v{S}pela \v{S}penko and Michel
  Van den Bergh} 
\thanks{While working on this project the first author was a FWO $[$PEGASUS$]^2$
  Marie Sk\l odowska-Curie fellow at the Free University of Brussels
  (funded by the European Union Horizon 2020 research and innovation
  program under the Marie Sk\l odowska-Curie grant agreement No
  665501 with the Research Foundation Flanders (FWO)).}  
\address[\v{S}pela \v{S}penko]{D\'epartement de Math\'ematique, Universit\'e Libre de Bruxelles, Campus de la Plaine CP 213, Bld du Triomphe, B-1050 Bruxelles}
\email{spela.spenko@ulb.be}
\address[Michel Van den Bergh]{Vakgroep Wiskunde, Universiteit Hasselt, Universitaire Campus \\
  B-3590 Diepenbeek} 
\email{michel.vandenbergh@uhasselt.be}
\address[Michel Van den Bergh]{VUB Main Campus Etterbeek,
Pleinlaan 2,
B-1050 Elsene} 
\email{michel.van.den.bergh@vub.be}
\thanks{The second author is a senior researcher at the Research
  Foundation Flanders (FWO).  This project has received funding from the European Research Council (ERC) under the European Union's Horizon 2020 research and innovation programme (grant agreement No 885203). This project was also supported the FWO grant G0D8616N: ``Hochschild cohomology and
  deformation theory of triangulated categories''.}
\keywords{Noncommutative resolutions, derived categories, geometric invariant theory}
\subjclass[2010]{13A50, 32S45, 16S38, 18E30, 14F05}
\begin{document}

\begin{abstract}
  Let a reductive group $G$ act on a smooth variety $X$ such that a good quotient $X\quot G$ exists. We show that
  the derived category of a noncommutative crepant resolution (NCCR) of $X\quot G$, obtained from a $G$-equivariant vector bundle on $X$, 
can be embedded in the 
  derived category of the (canonical, stacky) Kirwan resolution of $X\quot G$. 
 In fact 
the embedding 
  can be completed to a semi-orthogonal decomposition in which the other parts
are all derived categories of Azumaya algebras over smooth Deligne-Mumford stacks.
\end{abstract}

\maketitle
\tableofcontents

\section{Introduction}
\subsection{Preliminaries}
We fix an algebraically closed ground field $k$ of characteristic~$0$. Everything is linear over $k$.
Here and below, if $\Xscr$ is an
Artin stack and $\Lambda$ is a quasi-coherent sheaf of rings on~$\Xscr$ then 
$D(\Lambda)$ is the unbounded derived categories of right $\Lambda$-modules with quasi-coherent cohomology. We also put $D(\Xscr):=D(\Oscr_\Xscr)$.

\medskip

We recall the following definition.
\begin{definition}\cite{VdB04}
Let $R$ be a normal Gorenstein 
 domain. A \emph{noncommutative crepant resolution} (NCCR) of $R$
is an $R$-algebra of finite global dimension 
   of the form $\Lambda=\End_R(M)$  which in addition is Cohen-Macaulay as $R$-module and where $M$ is a non-zero finitely generated 
reflexive
$R$-module.
\end{definition}
In this paper we will say
that a sheaf of $k$-algebras $\Lambda$ on a scheme $X$ is a NCCR of $X$ if $\Lambda(U)$ is a NCCR of $\Gamma(U)$ 
for every connected affine open $U\subset X$.

\medskip

The derived categories of NCCRs are particular instances of  ``categorical
strongly crepant resolutions'' and the latter are conjectured to be minimal among all ``categorical
resolutions'' \cite{Kuz}. 
In the current paper we provide new evidence for this conjecture. Namely
 we will show that the NCCRs of quotient
singularities for reductive groups, of the type constructed in \cite{SVdB},
embed
in a particular canonical (stacky)  resolution of singularities, constructed by Kirwan in \cite{Kirwan1}.

\begin{remark}
\label{rem:care}
  The correct interpretation of the conjecture requires some care since for
  example if $X$ is a noetherian scheme and $\pi:Y\r X$ is a
  commutative resolution of singularities (where $Y$ can be a smooth Deligne-Mumford stack) 
  then $D(Y)$ is only a categorical resolution of $D(X)$ if $X$ has
  rational singularities \cite[Example 5.1]{Lunts}.\footnote{On the other hand
  in~\cite{KuznetsovLunts} it is shown that an arbitrary commutative
  resolution can always be suitably modified to yield a
  categorical resolution.}
\end{remark}

\medskip

To be able to state our main results we introduce some more
notation. Let $G$ be a reductive group and let $X$ be a smooth
$G$-variety such that a good quotient\footnote{A  good quotient of $X$ is a map $X\r Y$ which is locally (on $Y$) of the form $\Spec R\r \Spec R^G$. If $Y$ exists
then it is unique and we write $X\quot G\coloneqq Y$.}  $\pi:X\to X\quot G$ exists.
In \cite{Kirwan1}, Kirwan constructed (for projective $X$) a partial resolution of $X\quot G$
by an inductive procedure involving GIT quotients 
of  repeated $G$-equivariant blowups of $X$ 
(see \S\ref{sec:embedding}).  
The final quotient variety $\fks\quot G$ is then a partial resolution of singularities of $X\quot G$ (finite quotient singularities may remain).
We may also view the end result as a smooth Deligne Mumford stack $\fks/G$ and therefore we say that $\fks/G$ is the {\em{Kirwan (stacky) resolution}} of $X\quot G$.
In \cite{EdidinRydh},  Edidin and Rydh generalised the Kirwan (and also Reichstein \cite{Reichstein}) procedure to irreducible Artin stacks with stable good moduli spaces. We will heavily use their technical results.

\subsection{Assumptions}

Let $X^{\ns}\subset X$ be the locus of points whose stabilizer is not finite or whose orbit is not closed (see \S\ref{sec:general}). Throughout the introduction (and in various parts of the paper) we assume 
\begin{equation}\tag{H2}\label{H2}
\codim(X^{\ns},X)\geq 2.
\end{equation}
Occasionally we will impose the slightly stronger condition that $X$ is \hyperlink{gnc}{\em{generic}} in the sense of \cite{SVdB}; i.e.\ that $G$ acts in addition freely on an open subset of $X-X^{\ns}$ whose
complement has codimension $\ge 2$
(see \S\ref{sec:hhgencodim1}). 
\subsection{The embedding of a noncommutative resolution in $D(\fks /G)$}
\label{sec:embedding1}

In this paper we consider noncommutative resolutions of $X\quot G$ of the form
\begin{equation}
\label{eq:resU}
\Lambda=\uEnd_X(\Uscr)^G
\end{equation}
where $\Uscr$ is a $G$-equivariant vector bundle on $X$.  This is a minor generalization
with respect to \cite{SVdB} where we exclusively considered the case
  $\Uscr=U\otimes \Oscr_X$ where~$U$ is a finite dimensional
  representation of $G$.\footnote{In the context of the current paper the 
  restriction $\Uscr=U\otimes \Oscr_X$ is unnatural as we will, in any case, forcibly encounter
  more general equivariant vector bundles when iterating Reichstein transforms.}

\medskip

A feature of a resolution like \eqref{eq:resU} is that there is an embedding
\[
- \Lotimes_{\Lambda}\Uscr: D(\Lambda)\hookrightarrow D(X/G).
\]
Now let $\fks/G$ be the Kirwan resolution of $X\quot G$ which we factor as
\[
\fks/G\xrightarrow{\Xi} X/G\r X\quot G.
\]
As $X\quot G$ has rational singularities \cite{Boutot}, $D(\fks/G)$ is a categorical resolution of $D(X\quot G)$ by Remark \ref{rem:care},
which implies in particular that pullback provides an embedding
\[
D(X\quot G)\hookrightarrow D(\fks/G).
\]
It is important to observe however that \emph{we do not have an embedding of $D(X/G)$ in $D(\fks/G)$}; an indication for this is given in \S\ref{sec:conifold}
where we provide an example with $\rk K_0(X/G)=\infty$ but $\rk K_0(\fks/G)=8$.

\medskip

 The following is our first main result.
\begin{proposition}[Proposition \ref{prop:embedding}]\label{prop1}
  Let $G$ be a reductive group acting on a smooth variety $X$ such
  that a good quotient $\pi:X\to X\quot G$ exists. Assume \eqref{H2}. 
Let $\fks/G$ be the
  Kirwan resolution of $X\quot G$ discussed above.

\medskip

 Let $\Uscr$ be a $G$-equivariant
  vector bundle on $X$ and assume that
  $\Lambda=\uEnd_X(\Uscr)^G$ is (locally) Cohen-Macaulay as a sheaf
  of algebras on $X\quot G$ (e.g. if $\Lambda$ is an NCCR). Then the composition
\[
D(\Lambda)\hookrightarrow D(X/G)\xrightarrow{L\Xi^\ast} D(\fks/G)
\]
is fully faithful.
\end{proposition}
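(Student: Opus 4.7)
The plan is to reduce fully faithfulness to a tilting identity for $L\Xi^\ast\Uscr\in D(\fks/G)$ and then verify that identity inductively along the Kirwan/Edidin--Rydh construction.

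I would write $\pi\colon X/G\to X\quot G$ and $q\colon\fks/G\to\fks\quot G$ for the good moduli space maps, and $\xi\colon\fks\quot G\to X\quot G$ for the induced projective birational morphism. Since $\pi$ is cohomologically affine, $\RHom_{X/G}(\Uscr,\Uscr)=\pi_\ast\uEnd(\Uscr)=\Lambda$ concentrated in degree zero; together with compactness of $\Uscr$ on $X/G$ this already gives fully faithfulness of the first factor $-\Lotimes_\Lambda\Uscr\colon D(\Lambda)\hookrightarrow D(X/G)$ by the standard tilting argument. The same argument reduces fully faithfulness of the composition with $L\Xi^\ast$ to the sheaf-theoretic identity
\[
R(\xi\circ q)_\ast\, L\Xi^\ast\uEnd_{X/G}(\Uscr)\;\cong\;\Lambda
\]
in $D(X\quot G)$, using that $L\Xi^\ast\Uscr$ is still a vector bundle on the smooth Deligne--Mumford stack $\fks/G$ and hence compact in $D(\fks/G)$.

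To prove the identity I would factor $\Xi$ into the elementary Edidin--Rydh steps: each consists of a $G$-equivariant blowup $b\colon\tilde X\to X_i$ along the smooth stratum of maximal-dimensional stabilizer, followed by a Reichstein transform $j\colon U\hookrightarrow\tilde X$ excising the saturated exceptional divisor. For the blowup step, $Rb_\ast\Oscr_{\tilde X}=\Oscr_{X_i}$ together with the projection formula yields $Rb_\ast Lb^\ast\uEnd(\Uscr_i)=\uEnd(\Uscr_i)$, so the identity is preserved. For the Reichstein step, the closed complement of $j$ descends to a closed subvariety $Z\subset\tilde X\quot G$ of codimension $\ge 2$ (by (H2), whose propagation along the procedure is one of the technical outputs of \cite{EdidinRydh}), and local Cohen--Macaulayness of the intermediate algebra $\Lambda_i=\uEnd(\Uscr_i)^G$ forces $\depth_Z\Lambda_i\ge 2$, so that the unit $\pi_{i,\ast}\uEnd(\Uscr_i)\to\pi_{i,\ast}Rj_\ast j^\ast\uEnd(\Uscr_i)$ is an isomorphism via the usual depth/local-cohomology computation. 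Composing the two kinds of steps by induction delivers the required global identity.

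The main obstacle is the inheritance of the Cohen--Macaulay hypothesis across successive Kirwan steps: Cohen--Macaulayness of $\Lambda$ on $X\quot G$ does not propagate formally to the intermediate $\Lambda_i$ on $X_i\quot G$, and this is precisely what makes the depth-$\ge 2$ vanishing uniformly applicable. I would expect the verification to proceed by a local slice-theoretic reduction near each Reichstein centre to an explicit linear model of the type considered in \cite{SVdB}, where the $G$-invariants of an endomorphism bundle on a representation can be analysed directly. Once Cohen--Macaulayness is secured at every stage, the inductive step goes through and the proposition follows.
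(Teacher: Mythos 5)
Your overall strategy — reduce full faithfulness to the unit isomorphism
\[
\Lambda \;\xrightarrow{\sim}\; R\bar{\Xi}_\ast\bigl(\text{endomorphism algebra of } L\Xi^\ast\Uscr\bigr)
\]
and prove it one Reichstein transform at a time, using \'etale-local reduction to a linear slice and propagation of Cohen--Macaulayness along the steps — is the same skeleton the paper uses (Lemma \ref{lem:pushfwd}, Corollary \ref{cor:ff}, Lemma \ref{lem:cm}, and Proposition \ref{prop:embedding}). However, the mechanism you propose for the crucial vanishing in the Reichstein step is wrong, and this is precisely where the real content of the paper's argument lies.

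You factor one step as a blowup $b$ followed by the open immersion $j$ excising the strict transform $\tilde Z$, handle $b$ by the projection formula, and then claim that for $j$ the relevant unit map is an isomorphism ``via the usual depth/local-cohomology computation'' because $\depth_Z\Lambda_i\geq 2$. There are two problems. First, $\tilde X$ does not in general admit a good quotient (the Reichstein transform exists precisely to fix this), so ``$\tilde X\quot G$'' and a depth bound there are not well-defined; the paper compares $X\quot G$ and $X^R\quot G$ directly and only passes through $\tilde X/G$ at the level of quotient stacks, not GIT quotients. Second, and more seriously, $\depth_Z\geq 2$ is much too weak: it only kills $H^0_Z$ and $H^1_Z$, hence gives $\Lambda_i\xrightarrow{\sim} j_\ast j^\ast\Lambda_i$, but it does \emph{not} kill $R^ij_\ast j^\ast$ for $i\geq 1$. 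For a Cohen--Macaulay $\Lambda$, $H^c_Z\Lambda$ at $c=\codim Z$ is generically nonzero, and $R\xi_\ast$ of the exceptional local cohomology can contribute in exactly those cohomological degrees. In the conifold example in \S\ref{sec:conifold} the cone is supported at a point in a threefold and has potential cohomology in degree $3$, so no bound ``$\depth\geq 2$'' (or even $\geq\codim$) suffices to force vanishing.

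What actually makes Lemma \ref{lem:pushfwd} work is a \emph{graded} vanishing, not a cohomological-degree vanishing. After the \'etale-local reduction to the linear model $W=W_0\oplus W_1$, the cone of $\Lambda\to R\overline{\xi^R_\ast}\Lambda'$ is identified with the internal degree $\geq 0$ part
\[
\bigl(\End(U)\otimes R\Gamma_{W_0\times W_1^{\mathrm{null}}}(W,\Oscr_W)\bigr)^G_{\geq 0}
\;=\;\bigl(R\Gamma_{W_0\quot G}\Lambda\bigr)_{\geq 0},
\]
and by graded local duality (Appendix A, Corollary \ref{cor:dualizing}) plus Cohen--Macaulayness this reduces to showing that $H=\Hom_{W\quot G}(\Lambda,\omega_{W\quot G})$ vanishes in internal degrees $\leq 0$. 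The latter is computed via Grothendieck duality for the finite morphism $W^s/G\to W^s\quot G$ (here (H2) is used to pass to $W^s$, and the computation of $\omega_{W^s/G}$ via Lemma \ref{lem:omega}) to be $(\End(U)\otimes\wedge^{\dim G}\mathfrak g\otimes\wedge^d W\otimes S)^G$ shifted by $-d_1$, which lives in degrees $\geq d_1>0$. So it is the \emph{internal} grading shift coming from $\Oscr(1)$ on the blowup, together with $d_1=\dim W_1>0$, that forces the cone to vanish; a pure codimension argument does not see this. You should replace your ``depth $\geq 2$'' step by this graded local duality computation (or an equivalent one) before the inductive argument closes up.
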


\subsection{The Reichstein transform and a naive embedding}
One establishes Proposition \ref{prop1} inductively, following the 
Edidin and Rydh procedure discussed above. Let us describe the procedure more precisely. 

For a point $x\in X$ let $G_x$ be its stabilizer 
and set $\mu(X)=\max_{x\in X}\dim G_x$.
Put
\begin{align*}
Z&=\{x\in X\mid \dim G_x=\mu(X)\},\\
\bar{Z}&=\{x\in X\mid \overline{Gx}\cap Z\neq 0\}.
\end{align*}
Both $Z$ and $\bar{Z}$ are closed in $X$ (see \S\ref{subsec:Kstep}). Denote by $(-)'$ the strict transform of a closed subset under a blowup. Put 
\[
X^\K=\Bl_Z X-\bar{Z}'.
\]
The resulting map $\xi^R:X^\K/G\to X/G$ is called the {\emph{Reichstein transform}} of $X/G$. 
One has $\mu(X^\K)<\mu(X)$ and hence by performing a sequence of such transforms the maximal stabilizer dimension becomes zero, yielding $\fks$ and  the Kirwan resolution $\fks /G$. 

Let $\Uscr'=\xi^{R\ast} \Uscr$ be a vector bundle on $X^R/G$ and let 
$\Lambda'=\uEnd(\Uscr')^G$, viewed as a sheaf of algebras on $X^R\quot G$. 

We first obtain a naive embedding. 

\begin{proposition}[Corollary 5.3, Lemma 5.4]\label{prop:naive}
Assume \eqref{H2}. If $\Lambda$ is Cohen-Macaulay, then so is $\Lambda'$. Moreover,
 pullback for the morphism of ringed spaces $(X^R\quot G,\Lambda')\r  (X\quot G,\Lambda)$ induces an
 embedding of derived categories $D(\Lambda) \hookrightarrow D(\Lambda')$.
\end{proposition}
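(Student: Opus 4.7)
The plan is to treat the two statements separately; both ultimately reduce to understanding the derived behavior of the Reichstein morphism $\xi^R\colon X^R/G\to X/G$ and its descent $\bar f\colon X^R\quot G\to X\quot G$. Throughout, write $p\colon X/G\to X\quot G$ and $p^R\colon X^R/G\to X^R\quot G$ for the good-moduli-space morphisms.

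For the embedding, by the projection formula for the morphism of ringed spaces $f$, the functor $Lf^\ast$ is fully faithful if and only if the natural ring map $\Lambda\to R\bar f_\ast\Lambda'$ is an isomorphism in $D(X\quot G)$. Using $\Lambda=p_\ast\uEnd(\Uscr)$, $\Lambda'=p^R_\ast\xi^{R\ast}\uEnd(\Uscr)$, the commutativity $\bar f\circ p^R=p\circ\xi^R$, and the projection formula for $\xi^R$ (which applies since $\uEnd(\Uscr)$ is locally free), this further reduces to
\[
p_\ast\bigl(\uEnd(\Uscr)\Lotimes_{\Oscr_{X/G}} R\xi^R_\ast\Oscr_{X^R/G}\bigr)\xrightarrow{\sim}p_\ast\uEnd(\Uscr).
\]
Equivalently, the cone of the unit $\Oscr_{X/G}\to R\xi^R_\ast\Oscr_{X^R/G}$, which is supported on the substack $\bar Z/G$, must be annihilated by the functor $p_\ast\bigl(\uEnd(\Uscr)\Lotimes -\bigr)$. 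The conceptual reason this should hold is that every closed $G$-orbit of $\bar Z$ in fact lies in $Z$: points of $\bar Z\setminus Z$ have strictly smaller stabilizer than the maximum and hence non-closed orbits. Since $p_\ast$ is governed by closed orbits, the higher $R^i\xi^R_\ast\Oscr_{X^R/G}$, which are effectively concentrated on the non-closed-orbit locus $\bar Z\setminus Z$, should vanish after good-quotienting.

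For the Cohen-Macaulayness of $\Lambda'$, my approach is to work locally on $X^R\quot G$. Over $\bar f^{-1}(X\quot G\setminus\pi(Z))$ the map $\bar f$ is an isomorphism and $\Lambda'$ identifies with the pullback of $\Lambda$, so Cohen-Macaulayness is inherited there. Over a neighborhood of a point of $\pi(Z)$ I would invoke the Luna slice theorem to reduce to a linear $G_x$-representation in which $Z$ is cut out by a subrepresentation; the Reichstein transform then acquires an explicit weighted-blowup description and the depth of $\Lambda'$ can be verified by direct calculation. Assumption \eqref{H2} is indispensable: it forces $\codim_X Z\geq 2$ and $\codim_X\bar Z\geq 2$, providing the depth margin required for the Cohen-Macaulay condition to propagate from $\Lambda$ to $\Lambda'$.

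The principal obstacle lies in the annihilation step of the embedding argument: turning the geometric interplay between the blowup center $Z$ and the removed non-closed-orbit locus $\bar Z'$ into a rigorous vanishing statement for the higher $R^i\xi^R_\ast\Oscr_{X^R/G}$ after $p_\ast$. This invariant-theoretic analysis is presumably where most of the technical content of Corollary~\ref{prop:naive} resides.
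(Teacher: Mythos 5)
The reduction you identify for the embedding part---that $L\hat\xi^\ast$ is fully faithful once the natural map $\Lambda\to R\overline{\xi^R}_\ast\Lambda'$ is an isomorphism---does match the backbone of the paper's argument (Lemma 5.1 plus Corollary 5.3; note the paper finishes the ``iso on $\Lambda$ implies iso on all $\Fscr$'' step via $K$-projective resolutions and finite homological dimension of $\overline{\xi^R}_\ast$ rather than a bare appeal to the projection formula). Your plan for Cohen--Macaulayness (\'etale-local reduction to a $G_x$-representation, explicit weighted-blowup description $\Lambda'\cong(\Lambda_f)_{\ge 0}$) is also the paper's route; the actual argument there is a purely algebraic chain through the Laurent subring $A'=A_0[f,f^{-1}]$, noetherianity, and the nonzerodivisor $f-1$, and---contrary to what you claim---it does \emph{not} use \eqref{H2}. \eqref{H2} enters only in the vanishing lemma for the embedding.

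The genuine gap is precisely where you flag it, and the heuristic you offer for bridging it does not survive scrutiny. The cone of $\Oscr_{X/G}\to R\xi^R_\ast\Oscr_{X^R/G}$ is not just a spread-out sheaf on $\bar Z/G$ that ``dies after taking invariants because $\bar Z\setminus Z$ has non-closed orbits'': since $\xi^R$ is the restriction of a proper blowup to a non-saturated open subset, removing the strict transform $\tilde Z$ of $\bar Z$ produces cohomology with support $\Rscr\Gamma_{\tilde W-W^R}$ that contributes in arbitrarily high cohomological degree, and these contributions do not vanish merely because they sit over non-closed orbits. The mechanism the paper actually uses is a degree count, not an orbit-type argument: one reduces (after the distinguished triangle for cohomology with support and a $\Gamma_\ast$/$R\omega$ manipulation) to the graded vanishing $\bigl(\End(U)\otimes R\Gamma_{W_0\times W_1^{\text{null}}}(W,\Oscr_W)\bigr)^G_{\ge 0}=0$, which by graded local duality and the Cohen--Macaulayness of $\Lambda$ becomes $\Hom_{W\quot G}(\Lambda,\omega_{W\quot G})_{\le 0}=0$; this last is established via reflexivity in codimension $1$ (this is where \eqref{H2} is used), Grothendieck duality for the finite map $W^s/G\to W^s\quot G$, and the fact that the resulting module $(\End(U)\otimes\wedge^{\dim G}\mathfrak g\otimes\wedge^d W\otimes S)^G(-d_1)$ lives in degrees $\ge d_1>0$. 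Your proposal never touches the Cohen--Macaulay hypothesis, local duality, or the grading, which are indispensable: without them the relevant negative-degree pieces of $R\Gamma$ need not vanish and the putative embedding fails.
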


 We obtain Proposition \ref{prop1} by successive application of Proposition \ref{prop:naive}.

\subsection{Semi-orthogonal decomposition of $D(\fks/G)$}
\label{sec:sodintro}
In the case that $\Lambda$ in the statement of Proposition \ref{prop1}
is actually a noncommutative crepant resolution of $X$ (and $X$ is generic), another main result of this paper is that the embedding
$D(\Lambda)\hookrightarrow D(\fks/G)$ can be completed to a semi-orthogonal decomposition.  

\medskip

Here we first encounter an impediment, wishing to   proceed in the inductive way using the ``naive'' embedding given by Proposition \ref{prop:naive}. 
The hindrance is that the NCCR property is not preserved when passing from $\Lambda$ to $\Lambda'$ (see Example \ref{sec:example}). 
Therefore we would not be able to proceed inductively, even if we could enhance the embedding in Proposition \ref{prop:naive} to a semi-orthogonal decomposition. 

\medskip

The most serious issue is that finite global dimension is not preserved. 
This obstacle we overcome  by slightly tweaking $\Uscr'$, and hence $\Lambda'$. 
Let $\Oscr_{\Bl_Z X}(1)$ be the tautological relatively ample line bundle on $\Bl_ZX$ and
let $\Oscr_{X^{\K}}(1)$ be its restriction to $X^{\K}$. For some $N>0$, $\Oscr_{X^R}(N)$ is the pullback of a line bundle $(\pi^R_*\Oscr_{X^R}(N))^G$ on the quotient $\pi^R:X^R\to X^R\quot G$ (see Proposition \ref{prop:Reichsteinproperties}\eqref{5}).  
From a vector bundle $\Uscr$ on $X/G$ we produce the vector bundle $\Uscr^R$ on $X^R/G$ as
\[
{\Uscr^R}=
\bigoplus_{i=0}^{N-1} (\xi^{R\ast} \Uscr)(i).
\]

We obtain an {\em Orlov's type (blow-up) semi-orthogonal decomposition} for $\Lambda^R=\uEnd_{X^R}(\Uscr^R)$ with one component corresponding to $\Lambda$ 
and the other components corresponding to representatives $Z_i$ for the orbits of the $G$-action on the connected components of the center $Z$ of the blow-up.
Let $G_i\subset G$ be the stabilizer of $Z_i$, as a connected component.

\begin{proposition}[Corollary \ref{cor:sod}]\label{prop:introsod}
Assume \eqref{H2} and that $\Lambda$ is Cohen-Macaulay. Let $\Uscr_{Z_i}$ be the restriction of $\Uscr$ to $Z_i$ and let $\Lambda_{Z_i}=\uEnd_{Z_i}(\Uscr_{Z_i})^{G_i}$. 
There is a semi-orthogonal decomposition 
\[
D(\Lambda^{\K})\cong \langle D(\Lambda),\underbrace{D(\Lambda_{Z_1}),\ldots,  D(\Lambda_{Z_1})}_{c_1-1},\ldots,
\underbrace{ D(\Lambda_{Z_t}),\ldots,  D(\Lambda_{Z_t})}_{c_t-1}\rangle
\]
where   $c_i=\codim(Z_i,X)$. 
Moreover, the components corresponding to different $Z_i$ are orthogonal.
\end{proposition}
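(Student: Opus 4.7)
The plan is to view Proposition \ref{prop:introsod} as an equivariant, $\Uscr$-twisted variant of Orlov's blow-up semi-orthogonal decomposition, applied along the Reichstein transform $\xi^R \colon X^R/G \to X/G$.

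First, I would work on the full blow-up $\tilde Y = \Bl_Z X$, with exceptional divisor $j \colon E \hookrightarrow \tilde Y$ and projective bundle projection $p \colon E \to Z$. Decomposing $Z = \bigsqcup_i G\cdot Z_i$ into $G$-orbits of connected components, each $E_i = p^{-1}(Z_i) = \PP(N_{Z_i/X})$ inherits a $G_i$-action, and the different $E_i$ lie in disjoint $G$-invariant pieces of $E$. The Orlov blow-up formula for smooth centers in smooth stacks then yields
\begin{equation*}
D(\tilde Y / G) = \Bigl\langle L\pi^\ast D(X/G),\; \bigl\{ Rj_{i,\ast}(Lp_i^\ast D(Z_i/G_i) \otimes \Oscr_{E_i}(-k))\bigr\}_{i,\,1\le k\le c_i -1}\Bigr\rangle,
\end{equation*}
with the pieces for different $Z_i$ mutually orthogonal because they are supported on disjoint exceptional components.

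Next, I would restrict this SOD to the open substack $X^R/G = (\tilde Y - \bar Z')/G$. The key geometric fact to verify is that each $E_i$ lies entirely in $\tilde Y - \bar Z'$: a point of $\bar Z' \cap E$ would have to be the limit in $\tilde Y$ of an orbit in $\bar Z - Z$ approaching $Z$, and by the construction of the Reichstein transform precisely such limits are collapsed to $Z$ rather than tracked in the blown-up direction. Hence each Orlov-type piece restricts to an embedding in $D(X^R/G)$ with the same orthogonality relations, and together they still generate, yielding an analogous SOD
\begin{equation*}
D(X^R / G) = \Bigl\langle L\xi^{R\ast} D(X/G),\; \bigl\{ Rj_{i,\ast}(Lp_i^\ast D(Z_i/G_i) \otimes \Oscr_{E_i}(-k))\bigr\}_{i,\,1\le k\le c_i-1}\Bigr\rangle.
\end{equation*}

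Finally, I would transfer this SOD through the fully faithful embedding $-\Lotimes_{\Lambda^R}\Uscr^R \colon D(\Lambda^R) \hookrightarrow D(X^R/G)$. The vector bundle $\Uscr^R = \bigoplus_{l=0}^{N-1}(\xi^{R\ast}\Uscr)(l)$ is engineered precisely so that its $N$ twists pair with the corresponding categories in the SOD above (with $N$ chosen so that $\Oscr_{X^R}(N)$ descends to the quotient and large enough to accommodate the codimensions $c_i$). The $l=0$ summand $\xi^{R\ast}\Uscr$ pairs with $L\xi^{R\ast} D(X/G)$ to recover the naive embedding $D(\Lambda)\hookrightarrow D(\Lambda^R)$ of Proposition \ref{prop:naive}; and the twist $(\xi^{R\ast}\Uscr)(k)$ pairs with the Orlov piece indexed by $k$ over $Z_i$, where via the standard projection formula on the projective bundle $p_i$ the Koszul/Euler computation $Rp_{i,\ast}\Oscr_{E_i}(k-k') \neq 0$ only for $k=k'$ (in the relevant range) shows that this piece reduces to $D(\Lambda_{Z_i})$ with $\Lambda_{Z_i} = \uEnd_{Z_i}(\Uscr|_{Z_i})^{G_i}$.

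The main obstacle, I expect, is the restriction step: carefully justifying that removing $\bar Z'$ from $\tilde Y$ preserves the Orlov SOD without merging or destroying its pieces. Establishing the relevant disjointness $\bar Z' \cap E_i = \emptyset$ is the essential geometric input, and this is precisely where the detailed analysis of the Reichstein transform from Edidin-Rydh, as invoked throughout the paper, must be brought to bear. A secondary but non-trivial bookkeeping issue is matching the numerology of $N$ with the multiplicities $c_i - 1$, so that the $\Uscr^R$-blocks line up with the Orlov pieces exactly; this is likely why the generalisation from $\Uscr = U\otimes \Oscr_X$ to arbitrary equivariant $\Uscr$ (as flagged in the introduction) is built into the setup from the start.
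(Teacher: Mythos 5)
Your proposal has a genuine gap at the restriction step, and it is not the small bookkeeping issue you anticipated. The disjointness $\bar Z'\cap E_i=\emptyset$ is \emph{false} in general. Passing to the linear case $W=W_0\oplus W_1$ as in Lemma \ref{lem:local}: the exceptional divisor of the full blow-up $\tilde W$ is $W_0\times\PP(W_1)$, while $\bar Z=W_0\times W_1^{\text{null}}$ has strict transform $\tilde Z$ whose intersection with the exceptional divisor is $W_0\times\PP(W_1)^{\ns}$. This is nonempty whenever $\bar Z\supsetneq Z$. Your heuristic has the geometry backwards: the strict transform of $\bar Z$ does \emph{not} collapse the orbit limits to the center, it tracks them into the exceptional divisor, which is precisely why they must be deleted to form $X^R$. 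Consequently $E^R$ is a strictly smaller open piece of the full exceptional divisor, and the Orlov pieces do not survive restriction unchanged.

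This points to a deeper obstruction: even granting an Orlov-type SOD of $D(\tilde Y/G)$, restriction to the open substack $X^R/G$ does not preserve semi-orthogonal decompositions. In particular your plan would require $L\xi^{R\ast}\colon D(X/G)\to D(X^R/G)$ to be fully faithful, but the paper explicitly records (\S\ref{sec:embedding1}, and concretely in \S\ref{sec:conifold}, where a single Reichstein transform sends $\rk K_0$ from $\infty$ to $8$) that this fails. The entire point of the paper's machinery of locally generated subcategories (\S 7.1) is to sidestep this: Proposition \ref{prop:sodC} produces an SOD only of the subcategory $\Cscr_{X^R}=\langle\Uscr^R\rangle^\lll_{X^R\quot G}$, not of $D(X^R/G)$, and Theorem \ref{thm:sodC}(1) asserts full faithfulness of $L\xi^{R\ast}$ only restricted to $\Cscr_X$. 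The paper proves the SOD directly on $X^R$ via the exact sequence \eqref{eq:eks}, the vanishing of Lemma \ref{rem:afterlempushfwd}, and Proposition \ref{th:recognition}, never passing through the full blow-up $\tilde Y$ at all; generation is then obtained from the relatively ample bundle $\Oscr_{X^R}(1)$ and the properness of $X^R\quot G\to X\quot G$. Your final step (transferring to $D(\Lambda^R)$ and matching twists against the Orlov pieces) is in the right spirit, but it relies on an SOD of $D(X^R/G)$ that does not exist.
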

Unfortunately it turns out that 
the NCCR
property is \emph{still} not preserved by the passage $\Lambda\mapsto \Lambda^R$; the culprit being that the Reichstein transform may produce non-trivial
stabilizers in codimension one.\footnote{The appearance of non-trivial stabilisers in codimension one  is already visible for an action of $G_m$ with only odd weights. Then the exceptional divisor is stabilised by $\ZZ_2$. In that case, however, the induction step is necessary. The same phenomenon can occur for a higher dimensional example, e.g. $G_m^2$ with one of the components of $G_m$ acting with odd weights.} We solve this by introducing the
following two technical conditions.
\begin{itemize}
\item[($\alpha$)] $\Lambda$ is \emph{homologically homogeneous}  (see Definition \ref{def:hh}).
\item[($\beta$)] $\Uscr$ is a \emph{generator in codimension one} (see Definition \ref{def:genincodim1}).
\end{itemize}
Both of these conditions are satisfied if $X$ is generic and $\Lambda$ is a NCCR (see Proposition \ref{prop:NCCRimpliesU1}). Moreover we prove that both properties, along with the \eqref{H2} property,
 are preserved
under the passage $X\mapsto X^R$ (see Propositions \ref{prop:Reichsteinproperties},\ref{prop:want}).

\medskip

The successive applications of  semi-orthogonal decompositions as in Proposition \ref{prop:introsod} following successive Reichstein transforms  yield a semi-orthogonal decomposition of 
$D(\bold{\Lambda})$ where $\bold{\Lambda}$ is obtained on the final step.
We will show that if $(\alpha,\beta)$ hold for $\bold{\Lambda}$ then in fact
 $D(\bold{\Lambda})\cong D(\fks/ G)$. 
So by the above discussion we conclude that it is enough to 
assume $(\alpha,\beta)$ hold for the initial $\Lambda$ to obtain a semi-orthogonal decomposition of $D(\fks/G)$. This semi-orthogonal decomposition is stated in Theorem \ref{thm:sod}. We will not restate it here
as we prefer to give a more geometric version in the next section.

\subsection{Geometric description of the semi-orthogonal decomposition}
We further proceed to give a geometric description of the $D(\Lambda_{Z_i})$ appearing in Proposition~\ref{prop:introsod}. 
For simplicity we here state our final result only  in the abelian case. For the general case see Theorem  \ref{thm:sod}, Corollary \ref{cor:external}.

Let us assume  the Kirwan resolution is obtained by performing~$n$ successive Reichstein transforms with $Z_j$ being blown up at the $j$-th step. Let $Z_{ji}$, $1\leq i\leq t_j$, be representatives for the orbits of the $G$-action on the connected components of $Z_j$. Let $H_{ji}$ be the  stabilizer of $Z_{ji}$.
\begin{theorem}[Theorem \ref{thm:sod}, Remark \ref{rem:spoiler}, Corollary \ref{cor:external}]\label{thm:introGsod}
Assume \eqref{H2}. Assume that $\Lambda$ is homologically homogeneous and that $\Uscr$ is a generator in codimension $1$. Let $G$ be abelian (for general $G$ see Theorem  \ref{thm:sod}, Corollary \ref{cor:external}). There is a semi-orthogonal decomposition 
\[
D(\fks/G)\cong \langle D(\Lambda), D(Z_{ji}/(G/H_{ji}))^{\oplus N_{ji}}_{1\leq j\leq n, 1\leq i\leq t_j,0\leq k\leq c_{ji}-2}\rangle
\]
for some $N_{ji}\in \NN_{>0}$, 
where $c_{ji}:=\codim(Z_{ji},X_j)$, and the terms appear in lexicographic order (according to the label $(j,i,k)$).
\end{theorem}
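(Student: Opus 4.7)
The plan is to argue by induction on the number $n$ of Reichstein transforms needed to reach the Kirwan resolution, peeling off one Orlov-type semi-orthogonal decomposition at each step via Proposition \ref{prop:introsod}, and then matching the final ``base'' piece with $D(\fks/G)$ and each ``blow-up'' piece with a direct sum of copies of $D(Z_{ji}/(G/H_{ji}))$.

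\textbf{Step 1 (propagating the hypotheses).} Before I can iterate Proposition \ref{prop:introsod} I must know that at every intermediate stage the pair $(X_j, \Uscr_j)$ still satisfies (H2), that $\Lambda_j$ is still homologically homogeneous, and that $\Uscr_j$ is still a generator in codimension~$1$. This is exactly the content of Propositions \ref{prop:Reichsteinproperties} and \ref{prop:want}: both ($\alpha$) and ($\beta$) survive the twist $\Uscr \mapsto \bigoplus_{i=0}^{N-1}(\xi^{R\ast}\Uscr)(i)$ under a single Reichstein transform, as does (H2). I would begin by invoking these preservation results to carry the hypotheses to the final stage~$n$, where $\mu(X_n)=0$ and the stack $\fks/G$ is smooth Deligne--Mumford.

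\textbf{Step 2 (iterated SOD).} Applying Proposition \ref{prop:introsod} at level $j$ yields
\[
D(\Lambda_j^R) \cong \langle D(\Lambda_j), D(\Lambda_{Z_{j1}})^{c_{j1}-1}, \ldots, D(\Lambda_{Z_{jt_j}})^{c_{jt_j}-1}\rangle,
\]
with different $D(\Lambda_{Z_{ji}})$-blocks mutually orthogonal. Substituting the decomposition of $D(\Lambda_j^R)$ into the decomposition obtained at the previous level and iterating from $j=1$ up to $j=n$, I obtain a semi-orthogonal decomposition of $D(\bold{\Lambda})$ whose leftmost piece is $D(\Lambda)$ and whose remaining pieces are the $D(\Lambda_{Z_{ji}})^{c_{ji}-1}$ in the lexicographic order on $(j,i)$. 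The only care required is that the successive embeddings produced by Proposition \ref{prop:naive} interact cleanly with the Orlov-type SODs, but this is formal once mutual orthogonality at each stage is in place.

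\textbf{Step 3 (identifying $D(\bold{\Lambda})$ with $D(\fks/G)$, and the centers).} Since $\fks/G$ is a smooth DM stack and, by Step~1, $\bold{\Lambda}$ is homologically homogeneous while $\bold{\Uscr}$ generates in codimension~$1$ on $\fks/G$, the algebra $\bold{\Lambda}$ is Azumaya; the codimension-$1$ generation of $\bold{\Uscr}$ kills the corresponding Brauer class, yielding a Morita equivalence $\bold{\Lambda} \sim \Oscr_{\fks/G}$ and hence $D(\bold{\Lambda}) \cong D(\fks/G)$. For the blow-up pieces, abelianness of $G$ forces the component stabilizer $H_{ji}$ to act trivially on $Z_{ji}$, so $\Uscr_{Z_{ji}} = \bigoplus_\chi \Uscr_{Z_{ji}}^\chi$ splits into $H_{ji}$-isotypic summands. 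Because $\Lambda_{Z_{ji}} = \uEnd_{Z_{ji}}(\Uscr_{Z_{ji}})^{G_{ji}}$ respects this weight decomposition, it splits into blocks indexed by the occurring characters $\chi$, each Morita equivalent to $\Oscr_{Z_{ji}/(G/H_{ji})}$ by the same Brauer-triviality argument applied on $Z_{ji}$. Letting $N_{ji}$ be the number of such characters, this gives $D(\Lambda_{Z_{ji}}) \cong D(Z_{ji}/(G/H_{ji}))^{\oplus N_{ji}}$ and, combined with Step~2, the asserted semi-orthogonal decomposition of $D(\fks/G)$.

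\textbf{Main obstacle.} The hardest step will be Step~3: making rigorous the passage from ``$\bold{\Lambda}$ is homologically homogeneous and $\bold{\Uscr}$ generates in codimension~$1$'' to ``$\bold{\Lambda}$ is Morita equivalent to $\Oscr_{\fks/G}$'' on a smooth DM stack, and then reusing the same mechanism on each $Z_{ji}$ to obtain the block splitting in the abelian case. Hypothesis ($\beta$) is precisely what trivializes the Brauer obstruction, but spelling this out stackily — and checking that, in the abelian case, the multiplicity $N_{ji}$ really is determined by the $H_{ji}$-character support of $\Uscr_{Z_{ji}}$ — is where genuine care is needed. Step~1 is technical but is entirely handled by the cited preservation propositions.
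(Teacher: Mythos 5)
Your Steps 1 and 2 correctly reproduce the paper's architecture: propagate the hypotheses via Propositions \ref{prop:Reichsteinproperties} and \ref{prop:want}, then iterate the Orlov-type decomposition of Corollary \ref{cor:sod}. But Step 3 has a genuine gap. You assert that the codimension-$1$ generation of $\bold{\Uscr}$ ``kills the corresponding Brauer class'' and therefore yields $\bold{\Lambda}\sim\Oscr_{\fks/G}$. This is not the right mechanism and, as stated, does not suffice. On the smooth DM stack $\fks/G$ the sheaf $\uEnd_{\fks}(\bold{\Uscr})$ is tautologically a trivial Azumaya algebra for \emph{any} nonzero vector bundle $\bold{\Uscr}$; the nontrivial assertion is that the pushforward $\bold{\Lambda}=\pi_{s\ast}\uEnd_\fks(\bold{\Uscr})$ on the \emph{coarse space} $\fks\quot G$ satisfies $D(\bold{\Lambda})\cong D(\fks/G)$. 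This requires $\bold{\Uscr}$ to be a local projective generator of $\Qch(\fks/G)$ over $\fks\quot G$, i.e.\ that at every point $x$ the fiber $\bold{\Uscr}_x$ contains every irreducible $G_x$-representation (``full'' in Definition \S\ref{sec:conststab}). Generation in codimension~$1$ alone gives nothing at the deeper strata: take $\fks=\Spec k$, $G=\ZZ/2$, $\bold{\Uscr}=\Oscr$ — trivially ``generator in codim $1$'' on the empty codim-$\ge 1$ locus, yet $D(\bold{\Lambda})=D(k)\not\cong D(B\ZZ/2)$. The paper closes this gap through Theorem \ref{thm:saturated}: homological homogeneity of $\bold{\Lambda}$ together with codim-$1$ generation implies that $\bold{\Uscr}$ is \emph{saturated}, which for finite stabilizers means \emph{full}; Lemma \ref{lem:full} then delivers the projective-generator statement and hence $\Qch(\fks/G)\cong\Qch(\bold{\Lambda})$. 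This is the step you are implicitly relying on but never invoking.

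The same omission recurs for the blow-up pieces. You split $\Uscr_{Z_{ji}}$ into $H_{ji}$-weight blocks (correct, since for abelian $G$ one has $Z_{ji}^{\langle H_{ji}\rangle}=Z_{ji}$) and claim each block is Morita equivalent to $\Oscr_{Z_{ji}/(G/H_{ji})}$ ``by the same Brauer-triviality argument.'' But the content of Proposition \ref{prop:mainpropZ} is a Morita descent along $Z_{ji}/(G/H_{ji})\to Z_{ji}\quot G_{ji}$ (Lemma \ref{lem:morita}), and its proof explicitly requires $\Uscr_{Z_{ji}}$ to be \emph{saturated}; this saturatedness is again produced by Theorem \ref{thm:saturated} applied to $(X_j,\Uscr_j)$, using the hypotheses propagated in Step 1. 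Your proof as written never names Theorem \ref{thm:saturated} (or the saturated/full conditions), and without it the Morita equivalences in Step 3 are unjustified.
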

As we have already mentioned in \S\ref{sec:sodintro},  by Proposition \ref{prop:NCCRimpliesU1} the conditions for this theorem are satisfied if $X$ is generic  and $\Lambda$ is a NCCR of $X\quot G$.

\medskip

For general $G$, $Z_{ji}$ will not have a global stabilizer group, however the generic stabilizer is conjugate to a fixed group $H_{ji}$. Thus, instead of  $Z_{ji}/(G/H_{ji})$ we should take $Z_{ji}^{\langle H_{ji}\rangle}/(N_V(H_{ji})/H_{ji})$ where $Z_{ji}^{\langle H_{ji}\rangle}$ is a suitable (smooth) subscheme of $Z_{ji}^{H_{ji}}$ and $N_V(H_{ji})$ is a subgroup of the normalizer group $N(H_{ji})$, and adorn it with a sheaf of (equivariant) Azumaya algebras (see Corollary \ref{cor:external}). 
\section{Acknowledgement}
A significant part of this work was done during a research in pairs program at the ``Centro Internazionale per la Ricerca Matematica'' in Trento. 
Further work was carried out at the ``Max-Planck-Institut f\"ur Mathematik'' in Bonn.
The authors thank both institutes for the excellent working conditions and invigorating atmosphere. 
They are grateful to Michel Brion for discussions around Proposition~\ref{prop:structure}.
They also thank Geoffrey Janssens for helpful and constructive discussions. 
Moreover, they thank Amnon Yekutieli for several useful comments, in particular those related to the appendix. Furthermore, they thank the referee for very helpful suggestions.

\section{Notation and conventions}\label{sec:notation}
We fix  an algebraically closed field $k$ of characteristic $0$.  Everything is linear over $k$. 
In particular $\Spec k$ is the base scheme and unadorned tensor products are over $k$. 

All schemes are separated. The stacks we will use are global quotients
stacks $X/G$ for which $X$ is at least separated.  We will
silently identify $G$-equivariant sheaves on $X$ and sheaves on
$X/G$. If a good quotient $\pi:X\to X\quot G$ exists we write
$\pi_s:X/G \to X\quot G$ for the corresponding stack morphism. On some occasions
we sloppily write $(-)^G$ for $\pi_{s\ast}$. We sometimes silently globalize results for $X$, $X/G$, $X\quot G$, \dots which are available in the literature for $X$ affine
and which are seen to be trivially local over $X\quot G$.

All modules are right modules. If $\Lambda$ is ring then $D(\Lambda)$ is the unbounded derived category of $\Lambda$. 
If $\Xscr$ is an
Artin stack and $\Lambda$ is a quasi-coherent sheaf of rings on $\Xscr$ then 
$D(\Lambda)$ is the unbounded derived categories of $\Lambda$-modules with quasi-coherent cohomology. We also put $D(\Xscr):=D(\Oscr_\Xscr)$.

For an affine  algebraic group $H$ we denote by $H_e$ the identity component of $H$. 
and we let $\rep(H)$ be the set of isomorphism classes of irreducible $H$-representations.

For $U,U'\in \Ob(\mathfrak{a})$ where $\mathfrak{a}$ is a Karoubian category
we write $U:=:U'$ to indicate $U\in \Ob(\operatorname{add}(U'))$ and $U'\in \Ob(\operatorname{add}(U))$.

Unless otherwise specified, ``graded'' means $\ZZ$-graded and elements of a graded ring are automatically
assumed to be homogeneous.
\section{Generalities}\label{sec:general}

Unless otherwise specified $X$ is a 
smooth  variety 
 and  $G$ is a reductive group acting on $X$ such
that a good quotient $\pi:X\r X\quot G$ exists (see e.g. \cite[Definition 3.3.1]{SVdB3} for the definition of good quotient).

\subsection{(Semi-)stability}

A point in $X$ is \emph{stable} if it has closed orbit and finite stabilizer. 
We write $X^s$ for the stable locus of $X$ and $X^{\ns}$ for its complement. 
If $\Lscr$ is a line bundle on $X$ which linearises the $G$-action then by \cite[\S4]{Mumford} $x\in X$ is ($\Lscr$-)semi-stable if there is  $f\in H^0(X,\Lscr^{\otimes n})^G$ for $n>0$ such that $f(x)\neq 0$ and $X_f$ is affine. We denote the set of $\Lscr$-semi-stable points by $X^{ss,\Lscr}$.

\begin{remark}\label{lem:torsion}
By \cite[Theorem 1.10]{Mumford}, a good quotient $\pi:X^{ss,\Lscr}\to X^{ss,\Lscr}\quot G$ exists. Moreover,  there is an $N>0$ such that the restriction of $\Lscr^{\otimes N}$ to $X^{ss,\Lscr}$ is the pullback of a line bundle on $X^{ss,\Lscr}\quot G$.  
 It follows in particular that any $\Lscr$-semi-stable point 
$x$ has  a $G$-equivariant saturated\footnote{A $G$-invariant open subset in $X$ is saturated if it is a pullback of an open subset in $X\quot G$.}  neighbourhood on which $\Lscr$ is torsion. 
\end{remark}

A particular example of a linearisation is
given by a line bundle of the form $\Lscr=\chi\otimes \Oscr_X$ for
$\chi\in X(G)$. We can sometimes reduce to this case by Lemma \ref{rem:vecbundle} below.

\subsection{(Semi-)stability and \'etale maps}\label{subsec:semisetale}

\begin{lemma}\label{lem:etalescss}
Assume that $\phi:Y\to X$ is a $G$-equivariant \'etale map. 
Let $x\in X$ and let $y\in Y$ be a preimage of $x$. Then the following holds true: 
\begin{enumerate}
\item 
$G_y\subset G_x$ and $\dim G_y=\dim G_x$.
\item 
If $Gx$ is closed then so is $Gy$.
\item
If $x$ is stable then so is $y$.
\end{enumerate}
In addition, if $\phi$ is strongly \'etale\footnote{A $G$-equivariant map $\phi:Y\to X$ is strongly \'etale if it is induced by pullback from an \'etale map $V\to X\quot G$. In particular the inclusion of a saturated open subset is strongly \'etale.} then $G_x=G_y$ and the converse of  (2) and (3) holds. 
\end{lemma}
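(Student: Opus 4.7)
The plan splits along the two halves of the statement.

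\emph{General étale case (parts (1)--(3)).} For (1), the inclusion $G_y\subseteq G_x$ will be immediate from $G$-equivariance: $gy=y$ forces $gx = g\phi(y) = \phi(gy) = x$. For the equality of dimensions I plan to use that the differential $d\phi_y\colon T_yY\to T_xX$ is an isomorphism (since $\phi$ is étale) and, again by equivariance, carries $\mathfrak{g}\cdot y = T_y(Gy)$ onto $\mathfrak{g}\cdot x = T_x(Gx)$; this forces $\dim Gy = \dim Gx$, and the orbit--stabilizer relation $\dim Gy = \dim G - \dim G_y$ then yields $\dim G_y = \dim G_x$.

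For (2) I would argue by contradiction: assuming $Gx$ is closed but there exists $y'\in \overline{Gy}\setminus Gy$, the orbit $Gy$ is open in its equidimensional closure (each irreducible component of $\overline{Gy}$ being a $G_e$-orbit closure of dimension $\dim Gy$), whence $\dim Gy' < \dim Gy$. On the other hand, continuity of $\phi$ gives $\phi(y')\in\overline{\phi(Gy)}\subseteq\overline{Gx}=Gx$, so (1) applied at $y'$ yields $\dim Gy' = \dim G\phi(y') = \dim Gx = \dim Gy$, a contradiction. Part (3) is then the conjunction of (1), giving finiteness of $G_y$, and (2), giving closedness of $Gy$.

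\emph{Strongly étale case.} Here $\phi$ is the pullback of an étale $\phi_0\colon V\to X\quot G$, so $Y = V\times_{X\quot G} X$ with $G$ acting trivially on the first factor. Writing $y=(v,x)$ under this identification, the equality $G_y = G_x$ is immediate. Moreover $Y\quot G = V$, and by flat base change the fiber $\pi_Y^{-1}(v)$ will be $G$-equivariantly identified with $\pi^{-1}(\phi_0(v))$ via the projection $Y\to X$. Since every good-quotient fiber contains a unique closed orbit, this bijection matches the closed orbit through $y$ with the closed orbit through $x$, yielding the converses of (2) and (3).

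The argument is essentially routine differential-geometric and quotient-theoretic bookkeeping. The only point I expect to call for genuine care is the dimension equality in (1), which I would secure via the étale isomorphism $d\phi_y$ without appealing to deeper structure such as Luna's slice theorem; everything else should fall out as a formal consequence.
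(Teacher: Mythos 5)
Your proof is correct, and it follows essentially the same route as the paper's: dimension counting for (1)–(2), with (3) obtained by combining them, and identification of good-quotient fibers in the strongly étale case. The only cosmetic differences are that the paper disposes of (1) by citing quasi-finiteness rather than the tangent-space isomorphism $d\phi_y$, and for (2) it computes $\dim(\overline{Gy}\setminus Gy)$ globally via surjectivity onto $Gx$ rather than at a single boundary point $y'$; both variants carry identical content.
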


\begin{proof}
(1) is clear since $\phi$ is quasi-finite. For (2) 
assume that $Gx$ is closed and  $Gy$ is not closed. Since the action of $G$ on $Gx$ is transitive and $\overline{Gy}\subset \phi^{-1}(Gx)$, we have $\phi(\overline{Gy}\setminus Gy)=Gx$. Hence $\dim(\overline{Gy}\setminus Gy)=\dim Gx=\dim Gy$ (as $\phi$ is quasi-finite), which is a contradiction. (3) follows by combining (1) and (2). 

Now assume $\phi$ is strongly \'etale. By definition, $Y=V\times_{X\quot G} X$ with $V\to X\quot G$ \'etale. Let $\bar{y}$, $\bar{x}$ be the images of $y$, $x$ in $V$, $X\quot G$, respectively, and let $Y_{\bar y}$, $X_{\bar x}$ be the corresponding fibers. Then $\phi$ induces an isomorphism $Y_{\bar y}\cong X_{\bar x}$ and hence $G_x=G_y$. If $Gy$ is closed then it is closed in $Y_{\bar y}$, and hence $Gx$ is closed in $X_{\bar x}$, and therefore closed in $X$. This proves the converse of (2). The converse of (3) is again a combination of (1) and the converse of (2). 
\end{proof}

\begin{lemma}
Assume that $\phi:Y\to X$ is a $G$-equivariant \'etale map which is moreover affine. 
Let $x\in X$ and let $y\in Y$ be a preimage of $x$. Assume $\Lscr$ is a linearisation of the $G$-action on $X$ and let $\Mscr=\phi^*\Lscr$. 
If $x$ is $\Lscr$-semi-stable then $y$ is $\Mscr$-semi-stable. If $\phi$ is strongly \'etale and $Y$ and $X$ are affine then the converse also holds.
\end{lemma}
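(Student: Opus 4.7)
The plan is to treat the two directions separately by producing witnessing semi-invariant sections explicitly.

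For the forward direction, I would take a $G$-invariant section $f\in H^0(X,\Lscr^{\otimes n})^G$ with $f(x)\neq 0$ and $X_f$ affine, and set $g:=\phi^{\ast}f\in H^0(Y,\Mscr^{\otimes n})^G$. Since $\Mscr=\phi^{\ast}\Lscr$, the fiber $\Mscr^{\otimes n}|_y$ is canonically identified with $\Lscr^{\otimes n}|_x$, so $g(y)=f(x)\neq 0$. Moreover $Y_g=\phi^{-1}(X_f)$, which is affine because $\phi$ is assumed affine and $X_f$ is affine. This is the definition of $\Mscr$-semi-stability of $y$.

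For the converse, assume $\phi$ is strongly \'etale and both $X$ and $Y$ are affine. Then $X\quot G$ and $V:=Y\quot G$ are affine as well, and by the strongly \'etale hypothesis $Y=V\times_{X\quot G}X$ with $\bar{\psi}:V\to X\quot G$ \'etale. The key step is the flat base change identification
\[
H^0(Y,\phi^{\ast}\Fscr)\cong H^0(X,\Fscr)\otimes_{H^0(X\quot G)}H^0(V)
\]
valid for every quasi-coherent $\Oscr_X$-module $\Fscr$, obtained from the flatness of $\bar{\psi}$ and the affineness of $\pi:X\to X\quot G$. Taking $G$-invariants and using exactness of $(-)^G$ for reductive $G$ in characteristic zero (together with the trivial $G$-action on $H^0(V)$), one obtains
\[
H^0(Y,\Mscr^{\otimes n})^G\cong H^0(X,\Lscr^{\otimes n})^G\otimes_{H^0(X\quot G)}H^0(V).
\]
Given a witness $g\in H^0(Y,\Mscr^{\otimes n})^G$ with $g(y)\neq 0$, I would write $g=\sum_i h_i\otimes f_i$ with $f_i\in H^0(X,\Lscr^{\otimes n})^G$ and $h_i\in H^0(V)$. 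Under the identification $\Mscr^{\otimes n}|_y\cong \Lscr^{\otimes n}|_x$ the evaluation reads $g(y)=\sum_i h_i(\bar{y})f_i(x)$, where $\bar{y}$ denotes the image of $y$ in $V$. Since the sum is nonzero in a one-dimensional vector space, at least one $f_i$ satisfies $f_i(x)\neq 0$; because $X$ is affine, $X_{f_i}$ is automatically affine, and $x$ is $\Lscr$-semi-stable.

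The main obstacle I expect is setting up the flat base change identification cleanly and tracking that pointwise evaluation at $y$ translates correctly to pointwise evaluation at $x$ through the tensor product decomposition; this is essentially bookkeeping but requires some care because $\Lscr^{\otimes n}$ is not globally trivialised. Everything else (the affineness of $Y_g$ and $X_{f_i}$, the fiber identification, the computation of $g(y)$) is routine.
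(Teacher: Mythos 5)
Your proof is correct. The forward direction is the same as the paper's: pull back the witnessing invariant section and observe that the non-vanishing locus is the preimage of an affine under an affine morphism. For the converse, however, you take a genuinely different route. The paper's proof restricts the total spaces $\underline{\Spec}\Sym\Lscr$ and $\underline{\Spec}\Sym\Mscr$ to the fibers $X_{\bar x}$ and $Y_{\bar y}$ of the quotient maps (which are identified by strong \'etaleness, as shown in the preceding lemma) and then relies on the fact that semi-stability of $x$ is detected by the $G$-orbit structure inside the restricted line bundle over that fiber. Your argument instead works at the level of global sections: flat base change along the cartesian square $Y=V\times_{X\quot G}X$, plus exactness of $(-)^G$ and triviality of the $G$-action on $H^0(V)$, gives $H^0(Y,\Mscr^{\otimes n})^G\cong H^0(X,\Lscr^{\otimes n})^G\otimes_{H^0(X\quot G)}H^0(V)$, after which a witness for $y$ decomposes as a finite sum and pointwise evaluation at $y$ singles out a witness $f_i$ with $f_i(x)\neq 0$. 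Both approaches use the affineness hypotheses in the same places (affineness of $V$ for the pushforward, affineness of $X$ to ensure $X_{f_i}$ is affine). Your route has the advantage of being entirely algebraic and self-contained, avoiding the geometric characterization of semi-stability via orbits in the total space of the line bundle; the paper's is shorter because it leans on that characterization and the already-established fiber isomorphism.
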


\begin{proof}
The first part follows by pulling back the section nonvanishing on $x$ to $Y$. 
The converse follows by considering the restriction of $L$ and $M=\underline{\Spec}\Sym\Mscr$ to $X_{\bar{x}}$ and $Y_{\bar{y}}$, respectively (with notation as in the proof of Lemma \ref{lem:etalescss}). 
\end{proof}

\subsection{Genericity conditions}
Let $i\in \NN$. Below we write $(Hi)$ for the condition $\codim(X^{\ns},X)\ge i$. 
Note that $(H1)$ is equivalent to $X^s\neq \emptyset$.
Furthermore $(Hi)\Rightarrow (Hj)$ for $j\le i$.

\subsection{Reduction to the linear case}\label{subsec:prelimlin}
We will often reduce to the linear case using the Luna slice theorem \cite{Luna}. 
Assume that $x$ is a point in $X$ with closed orbit. There there is a
smooth affine slice $S$ at $x$ such that there is a strongly \'etale
map $\phi:G\times^{G_x}S\to X$. Furthermore we may assume that there is a
strongly \'etale map $\gamma:S\to T_xS$, sending $x$ to $0$. We
will usually abuse terminology by simply calling $S$  a slice as $x$ and by calling $(G_x,T_xS)$ the \emph{linearised slice} at
$x$.

\begin{lemma}\label{lem:H12slices}
The hypothesis $(Hi)$ holds for $(G,X)$ if and only it holds for $(G_x,T_xS)$ for all points $x\in X$ with  closed orbit. 
\end{lemma}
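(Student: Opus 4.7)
The plan is to pass through the Luna bundle $Y:=G\times^{G_x}S$ as an intermediary between $X$ and the linearised slice $T_xS$, combining Lemma~\ref{lem:etalescss} with the fact that the unstable locus of a linear representation is conical.

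The first step is to check that $Y^u = G\times^{G_x}S^u$, where $S^u$ denotes the $G_x$-unstable locus of $S$. This follows because the $G$-stabilizer of $[g,s]\in Y$ is conjugate to the $G_x$-stabilizer of $s$, and the $G$-orbit closure of $[g,s]$ equals $G\times^{G_x}\overline{G_x\cdot s}$. In particular $\codim(Y^u,Y)=\codim(S^u,S)$.

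For the forward direction, the strongly étale map $\phi:Y\to X$ has open image and satisfies $\phi^{-1}(X^u)=Y^u$ by Lemma~\ref{lem:etalescss}, so $\codim(Y^u,Y)\ge \codim(X^u,X)\ge i$, yielding $(Hi)$ for $(G_x,S)$. The transfer from $S$ to $T_xS$ via the strongly étale $\gamma:S\to T_xS$ is the delicate step, because $\gamma$ need not be surjective. However, $(T_xS)^u$ is invariant under the scalar $\GG_m$-action on $T_xS$ (which commutes with $G_x$), and since every $\GG_m$-orbit has $0$ in its closure, every irreducible component of $(T_xS)^u$ contains $0=\gamma(x)\in\gamma(S)$. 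Hence the codimension of $(T_xS)^u$ in $T_xS$ is already realised on the open set $\gamma(S)$, and combined with $\gamma^{-1}((T_xS)^u)=S^u$ (again Lemma~\ref{lem:etalescss}) this gives $\codim((T_xS)^u,T_xS)=\codim(S^u,S)\ge i$, proving $(Hi)$ for $(G_x,T_xS)$.

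For the converse, assume $(Hi)$ holds at every linearised slice. For any $y\in X$ the closure $\overline{Gy}$ meets a unique closed orbit $Gx$, so $\pi(y)=\pi(x)$ lies in $\pi(U)$ where $U:=\phi(Y)$ is a $\pi$-saturated open neighbourhood of $Gx$; saturation then forces $y\in U$. Thus the open sets $U$ at representatives of closed orbits cover $X$, and on each of them the chain of identifications above yields $\codim(X^u\cap U,U)=\codim((T_xS)^u,T_xS)\ge i$, whence $\codim(X^u,X)\ge i$. The main obstacle is the non-surjectivity of $\gamma$ in the forward direction; the conical structure of $(T_xS)^u$ supplies the crucial additional input.
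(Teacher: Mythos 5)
Your proof is correct and follows essentially the same strategy as the paper's: reduce to the linear slice via the strongly \'etale maps $\phi:G\times^{G_x}S\to X$ and $\gamma:S\to T_xS$, and use the conical ($\GG_m$-stable, hence homogeneous) structure of $(T_xS)^u$ to handle the non-surjectivity of $\gamma$. The paper phrases the conicity step via ``local codimension at the origin'' being the global codimension for a cone, while you phrase it as ``every component contains $0$''; these are the same observation. The only substantive organizational difference is in the converse: the paper fixes one well-chosen point $x$ with closed orbit lying on a maximal-dimensional component of $X^u$, so that the inequalities become equalities, whereas you cover $X$ by the $\pi$-saturated opens $\phi(Y)$ (using that every $G$-orbit closure meets a closed orbit) and take the minimum of codimensions over the cover. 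Both work; your version spells out more explicitly why a suitable $x$ exists for every component of $X^u$, at the small cost of arguing that the cover picks up each component.
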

\begin{proof}
  Let $x$ be a point in $X$ with closed orbit. We first show that $\codim (X^{\ns},X)\le \codim
((T_xS)^{\ns},T_xS)$, so that if $(Hi)$ holds for $(G,X)$ it holds for $(G_x,T_xS)$.

If $x\not\in X^{\ns}$ then $G_x$ is finite
and hence $(T_xS)^{\ns}$ is empty so that there is nothing to prove.

Now assume $x\in X^{\ns}$  and let
$\phi:G\times^{G_x}S\to X$, $\gamma:S\r T_x S$ be as above. By Lemma \ref{lem:etalescss}, 
$\phi^{-1}(X^{\ns})=(G\times^{G_x}S)^{\ns}$.
In addition one can verify that $(G\times^{G_x} S)^{\ns}=G\times^{G_x} S^{\ns}$. We deduce
$\codim (X^{\ns},X)\le \codim(G\times^{G_x} S,(G\times^{G_x} S)^{\ns}))= \codim(S^{\ns},S)$ (as $\phi$ is \'etale). Let $\codim_x(S^{\ns},S):=\dim S-\dim\Spec \Oscr_{S^{\ns},x}$ be
the local codimension at $x$. Then we compute
\begin{align*}
\codim(S^{\ns},S)&\le \codim_x(S^{\ns},S)\\
&=\codim_0((T_x S)^{\ns},T_x S)\\
&=\codim((T_x S)^{\ns},T_x S).
\end{align*}
For the first equality we use that $\gamma$ is strongly \'etale and hence a local homeomorphism, and moreover $S^{\ns}=\gamma^{-1}((T_x S)^{\ns})$ by Lemma \ref{lem:etalescss}. 
For the second equality we use that $T_x S$ is a $G_x$-representation and that $(T_x S)^{\ns}$ is defined by a homogeneous ideal.

\medskip

To prove the converse we have to show that  $\codim (X^{\ns},X)=\codim
((T_xS)^{\ns},T_xS)$ for at least one $x$. By reversing the above arguments it follows that we may take $x$ to be a point with closed orbit in an irreducible component of 
$X^{\ns}$ of maximal dimension (guaranteeing $\codim(S^{\ns},S)=\codim_x(S^{\ns},S)$).
\end{proof}

\subsection{Equivariant vector bundles}
\label{sec:equivariant}

\begin{lemma}\label{lem:bundle}
  If $\Vscr$ is a $G$-equivariant vector bundle on $X$ and $x\in X$ is
  $G$-invariant point 
   then $x$ has a saturated affine $G$-invariant 
  neighborhood on which $\Vscr$ is of the form $V\otimes \Oscr_X$ for
  the $G$-representation $V$ which is the fiber of $\Vscr$ in $x$, i.e.\
$V=\Vscr\otimes_X k(x)$.
\end{lemma}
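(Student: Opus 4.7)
The plan is to trivialize $\Vscr$ equivariantly by lifting a $G$-splitting of the evaluation map at $x$ and spreading it out to a saturated neighborhood.

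First I would reduce to the affine case. Since $x$ is $G$-fixed, the orbit $Gx=\{x\}$ is closed, so $\pi(x)$ is a closed point in $X\quot G$; choosing an affine open $W\subset X\quot G$ containing $\pi(x)$ and replacing $X$ by the saturated affine open $\pi^{-1}(W)$, I may assume $X=\Spec R$ is affine. Let $M=\Gamma(X,\Vscr)$, a finitely generated $G$-equivariant projective $R$-module with fiber $M\otimes_R k(x)=V$ (with its $G$-action, since $x$ is fixed). Since $\charact k=0$ and $G$ is reductive, the category of rational $G$-representations is semisimple, so the equivariant surjection $M\twoheadrightarrow V$ splits: I pick a $G$-equivariant section $\sigma\colon V\hookrightarrow M$. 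This yields the $R$-linear $G$-equivariant morphism
\[
\alpha\colon V\otimes_k R \longrightarrow M,\qquad v\otimes r\mapsto r\cdot\sigma(v),
\]
whose reduction mod $\mathfrak{m}_x$ is the identity $V\to V$.

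Second I would show $\alpha$ is an isomorphism in an open $G$-invariant neighborhood of $x$. Both source and target are locally free of rank $\dim_k V$ near $x$ (the rank of $\Vscr$ is locally constant), so Nakayama applied at the stalk of $x$ gives that $\alpha_x$ is surjective, hence an isomorphism between free modules of equal rank. Therefore $\alpha$ is an isomorphism in some open $U\ni x$, and $U$ is automatically $G$-stable as $\alpha$ is equivariant.

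Third, the key remaining point is saturating $U$. The complement $X\setminus U$ is $G$-stable and closed, so its image $\pi(X\setminus U)\subset X\quot G$ is closed. I claim $\pi(x)\notin \pi(X\setminus U)$: if some $y\in X\setminus U$ satisfied $\pi(y)=\pi(x)$, then since $\{x\}$ is the (unique) closed orbit in $\pi^{-1}(\pi(x))$, we would have $x\in \overline{Gy}\subset X\setminus U$, contradicting $x\in U$. Hence I can pick an affine open $V\subset (X\quot G)\setminus\pi(X\setminus U)$ with $\pi(x)\in V$; then $\pi^{-1}(V)$ is the desired saturated affine $G$-invariant neighborhood of $x$ on which $\alpha$ trivializes $\Vscr$ as $V\otimes\Oscr_X$.

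I expect the only subtle step to be the saturation argument in the third paragraph, since a priori the open locus where an equivariant map between vector bundles is an isomorphism need not be $\pi$-saturated; it is the hypothesis that $x$ is a global fixed point (so $Gx$ is closed and is the only closed orbit in its fiber) that makes the argument go through.
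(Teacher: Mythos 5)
Your proof is correct and follows essentially the same route as the paper's: reduce to affine $X$ by pulling back an affine open from $X\quot G$, use reductivity to equivariantly split the evaluation $\Gamma(X,\Vscr)\twoheadrightarrow V$, and then saturate the open isomorphism locus by removing $\pi^{-1}(\pi(X\setminus U))$ (the paper delegates the disjointness of $\pi(X\setminus U)$ and $\{\pi(x)\}$ to a cited theorem of Brion where you argue it directly via the unique closed orbit in the fiber). One cosmetic remark: you reuse the letter $V$ both for the fiber representation and for the affine open in $X\quot G$ at the end, which should be renamed.
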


\begin{proof}
  By taking the pullback of an affine neighborhood of the image of $x$
  in $X\quot G$ we may reduce to the case that $X$ is affine.  Choose
  a $G$-invariant splitting $\Gamma(X,\Vscr)\to V$
  (since $X$ is affine $V=\Gamma(X,\Vscr) \otimes k(x)$). 
This gives us an $G$-equivariant map
  $V \otimes \Oscr_X\to \Vscr$
 which is an isomorphism in a
  neighborhood of $x$. The maximal neighborhood $U$ on which  this
  is the case must be $G$-equivariant and open. Then $\pi(X\setminus U)=(X\setminus U)\quot G$ and
$\{\pi(x)\}=\{x\}\quot G$ are disjoint closed subsets of $X\quot G$ (see \cite[Theorem
  1.24(iv)]{Brion}). Finally the saturated neigbourhood of $x$ we
  want is $X\setminus \pi^{-1}(\pi(X\setminus U))=
  \pi^{-1}(X\quot G\setminus \pi(X\setminus U))$; i.e. the maximal saturated
subset of $U$.
\end{proof} 

\begin{lemma}\label{rem:vecbundle}
  Let $\Vscr$ be a $G$-equivariant vector
  bundle on $X$. We may choose the slice $S$ as in \S\ref{subsec:prelimlin} in such a way that the pullback
  of $\Vscr$ to $G\times^{G_x}S$ coincides with
  $G\times^{G_x}(V\otimes \Oscr_S)$ for $V=\Vscr\otimes k(x)$. 
\end{lemma}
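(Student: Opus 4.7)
The plan is to reduce this to the $G_x$-fixed point triviality statement of Lemma~\ref{lem:bundle} via the standard equivalence between $G$-equivariant sheaves on an induced space and $H$-equivariant sheaves on a slice. Concretely, I would start with any slice $S$ at $x$ as guaranteed by the Luna slice theorem (with its accompanying strongly \'etale maps $\phi:G\times^{G_x}S\to X$ and $\gamma:S\to T_xS$), and then argue that $S$ can be shrunk around $x$ to achieve the desired $G$-equivariant triviality of the pullback of $\Vscr$.

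First, I would pull back $\Vscr$ along $\phi$ to obtain a $G$-equivariant vector bundle on $G\times^{G_x}S$. Under the standard equivalence of categories between $G$-equivariant quasi-coherent sheaves on $G\times^{G_x}S$ and $G_x$-equivariant quasi-coherent sheaves on $S$ (given by restriction along $s\mapsto[e,s]$), this bundle corresponds to a $G_x$-equivariant vector bundle $\Wscr$ on $S$ whose fiber at $x$ is exactly $V=\Vscr\otimes k(x)$, since $\phi$ sends $[e,x]$ to $x$. Note that $x$ is a $G_x$-fixed point of $S$ because $S$ is a $G_x$-stable slice through $x$.

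Next, I would apply Lemma~\ref{lem:bundle} to the triple $(G_x,S,\Wscr)$ at the $G_x$-fixed point $x$. This supplies a $G_x$-saturated affine $G_x$-invariant neighborhood $S'\subset S$ of $x$ on which $\Wscr|_{S'}\cong V\otimes\Oscr_{S'}$ as $G_x$-equivariant sheaves. Transporting back across the equivalence above, the pullback of $\Vscr$ to $G\times^{G_x}S'$ becomes $G\times^{G_x}(V\otimes\Oscr_{S'})$, which is the desired conclusion.

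The only remaining point, and the one I would expect to be the trickiest to phrase cleanly, is that $S'$ is still a legitimate slice. Since $S'\subset S$ is $G_x$-saturated, the image $G\times^{G_x}S'\subset G\times^{G_x}S$ is $G$-saturated and, being saturated inside something strongly \'etale over $X$, the composition $G\times^{G_x}S'\to X$ remains strongly \'etale. Similarly, $T_x S'=T_x S$ as $S'$ is open in $S$, and restricting $\gamma$ to the $G_x$-saturated open $S'$ keeps strong \'etaleness (saturated opens correspond to pullbacks from open subsets of the GIT quotient), so $\gamma|_{S'}:S'\to T_xS$ is a strongly \'etale map sending $x$ to $0$. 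Thus $S'$ meets all the requirements from \S\ref{subsec:prelimlin}, finishing the argument.
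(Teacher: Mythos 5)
Your proof is correct and follows essentially the same route as the paper: pass to the $G_x$-equivariant bundle on $S$ via the $G\times^{G_x}(-)$ equivalence, apply Lemma~\ref{lem:bundle} at the $G_x$-fixed point $x$, and then observe that the resulting saturated open $S'$ is still a slice because a saturated open immersion is strongly \'etale. The extra paragraph you give checking that $\gamma|_{S'}:S'\to T_xS$ also stays strongly \'etale is a detail the paper leaves implicit, but it is the same argument, not a different approach.
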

\begin{proof} 
First take an arbitrary slice $S$ at $x$. We pull back $\Vscr$ to $G\times^{G_x} S$ and replace
$X$ by $G\times^{G_x} S$.
 Now the $G$-equivariant vector bundle $\Vscr$ on $G\times^{G_x}S$ restricts to a
  $G_x$-equivariant vector bundle $\Vscr_S$ on $S$, such that $\Vscr=G\times^{G_x} \Vscr_S$.
We then apply Lemma
  \ref{lem:bundle} to obtain a saturated affine
  $G_x$-invariant open subset $x\in S'\subset S$ such that $\Vscr_S\mid S'\cong V\otimes \Oscr_{S'}$. Since a saturated open
  immersion is a special case of a strongly \'etale map,
  $G\times^{G_x} S' \hookrightarrow G\times^{G_x} S\to X$ is strongly
  \'etale and so we may replace $S$ by $S'$.
\end{proof}

\subsection{The canonical sheaf on $X^{s}/G$}\label{subsec:canonicalsheaf}
The following lemma gives the precise relation between the canonical sheaf of the stack $X^{s}/G$ and $\omega_{X^s}$ considered as a $G$-equivariant sheaf.  

\begin{lemma}\label{lem:omega}
There is an isomorphism $\omega_{X^{{s}}/G}\cong \wedge^{\dim G}\mathfrak{g}\otimes \omega_{X^{s}}$.
\end{lemma}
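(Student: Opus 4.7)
The plan is to deduce the formula from the standard short exact sequence of the tangent complex of a quotient stack. Since $X^s$ consists of points with finite stabilizer, the Lie algebra of every stabilizer is trivial, and therefore the infinitesimal action map
\[
a : \mathfrak{g}\otimes \Oscr_{X^s} \longrightarrow T_{X^s}
\]
is injective (here $\mathfrak{g}\otimes\Oscr_{X^s}$ is equipped with the $G$-equivariant structure coming from the adjoint representation on $\mathfrak{g}$; this is precisely the equivariant structure that makes $a$ a $G$-equivariant map, as can be checked on fibres).

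The defining property of the tangent complex of a quotient stack then identifies $\coker(a)$ with $\pi_s^\ast T_{X^s/G}$, yielding a short exact sequence of $G$-equivariant vector bundles on $X^s$:
\[
0 \longrightarrow \mathfrak{g}\otimes \Oscr_{X^s} \longrightarrow T_{X^s} \longrightarrow \pi_s^\ast T_{X^s/G} \longrightarrow 0.
\]
Taking top exterior powers (which is exact on short exact sequences of vector bundles) and using $\dim T_{X^s/G} = \dim X - \dim G$, one obtains a $G$-equivariant isomorphism of line bundles
\[
\det T_{X^s} \;\cong\; \bigl(\wedge^{\dim G}\mathfrak{g}\bigr)\otimes \pi_s^\ast \det T_{X^s/G}.
\]
Passing to duals and unwinding the identification between $G$-equivariant sheaves on $X^s$ and sheaves on $X^s/G$ (under which $\pi_s^\ast$ is the forgetful functor) gives the claimed isomorphism $\omega_{X^s/G}\cong \wedge^{\dim G}\mathfrak{g}\otimes \omega_{X^s}$.

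The only step that requires any vigilance is the tracking of the $G$-equivariant structure: the adjoint action of $G$ on $\mathfrak{g}$ enters precisely because the infinitesimal action map $\mathfrak{g}\otimes\Oscr_X \to T_X$ is $G$-equivariant only when $\mathfrak{g}$ is endowed with the adjoint structure, and this is the structure that survives to the determinant factor $\wedge^{\dim G}\mathfrak{g}$. Everything else is a routine determinant computation.
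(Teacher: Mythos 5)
Your proof is correct and is, modulo dualization, the same argument as the paper's: the paper works with the cotangent complex $\Omega_{X}\to\mathfrak g^\ast\otimes\Oscr_X$ of $X/G$, observes that on $X^s$ the differential is surjective so that $\Omega_{X^s/G}$ is the kernel, and takes determinants; you instead work with the dual short exact sequence $0\to\mathfrak g\otimes\Oscr_{X^s}\to T_{X^s}\to T_{X^s/G}\to 0$ coming from the infinitesimal action map and take top exterior powers before dualizing. (One small notational caution: in the paper $\pi_s$ denotes $X/G\to X\quot G$, whereas you use $\pi_s^\ast$ for pullback along the atlas $X^s\to X^s/G$, i.e.\ the functor the paper treats as the silent identification of $G$-equivariant sheaves on $X^s$ with sheaves on $X^s/G$.)
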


\begin{proof}
The lemma follows from the fact that the cotangent complex on $X/G$ is given by the complex $\Omega_{X}\to \mathfrak{g}^*\otimes \Oscr_{X}$ with $\Omega_X$ in degree zero. 
Since the map is surjective on $X^{ s}$ by the definition of $X^{ s}$, we get the exact sequence $0\to \Omega_{X^{ s}/G}\to \Omega_{X^{s}}\to \mathfrak{g}^*\otimes \Oscr_{X^{{s}}}\to 0$. Taking determinants we get the desired equality. 
\end{proof}

\begin{remark}
Note that $\alpha=\wedge^{\dim G}\mathfrak{g}$ can only be nontrivial  in the case when $G$ is disconnected. Furthermore $\alpha^2$ is always trivial (see e.g. \cite[p.41]{Knop2}).
\end{remark}
\section{The Kirwan resolution}\label{sec:setting}
Let $X$ be as in \S\ref{sec:general}. We assume in addition that $X$ satisfies (H1), i.e.\ $X^s\neq\emptyset$.
\subsection{The Reichstein transform}\label{subsec:Kstep}
The steps in the partial resolution of $X\quot G$ described in \cite{Kirwan1} were
reinterpreted by Reichstein \cite{Reichstein}, and generalized by Edidin and More \cite{EdidinMore} and Edidin and
Rydh \cite{EdidinRydh}. They are now known as ``Reichstein transforms''
\cite{EdidinMore}. 
We will use $(-)^{\K}$ for notations related to the Reichstein transform.

\medskip

Let $\mu$ be the maximal dimension of the stabilizers of the $G$-action on $X$ and 
for simplicity we put 
$
Z=X_\mu:=\{x\in X\mid \dim G_x=\mu\},
$ 
which is closed and smooth (see e.g. \cite[Proposition B.2]{EdidinRydh}). 
Assume $\mu>0$. 
Put
\[
\bar{Z}=\{x\in X\mid \overline{Gx}\cap Z\neq 0\}.
\]
Then $\bar{Z}=\pi^{-1}(Z\quot G)$, so it is closed as well.
Let $\xi:\tilde{X}\to X$ be the blowup of $X$ in $Z$ and let $\tilde{Z}\subset \tilde{X}$ be the strict transform
of $\bar{Z}$. Let $\xi^{\K}$ be the restriction of $\xi$ to ${X}^{\K}:=\tilde{X}-\tilde{Z}$. The resulting map $\xi^\K_s:X^\K/G\to X/G$ is called the {\emph{Reichstein transform}}
of $X/G$.

\begin{remark}
In \cite{EdidinRydh}, $X^{\K}/G$ is denoted by $R_G(X,Z)$. 
\end{remark}

Let $\Oscr_{\tilde{X}}(1)$ be the tautological relatively ample line bundle on $\tilde{X}$ and
let $\Oscr_{X^{\K}}(1)$ be its restriction to $X^{\K}$. 
Let $E^R$ denote the exceptional divisor in $X^R$. 
 The following can be extracted from \cite{EdidinRydh}. 
 
\begin{proposition} \label{prop:Reichsteinproperties}
The following properties hold for $X^{\K}$:
\begin{enumerate}
\item\label{1}
$X^{\K}$ has a good quotient $\pi^{\K}:X^{\K}\r X^{\K}\quot G$.
\item\label{2}
The induced map $X^{\K}\quot G\r X\quot G$ is proper. 
\item \label{2.5} $X^{\K}$ satisfies (H1).
\item\label{3} If $X$ satisfies \eqref{H2} then $X^{\K}$ also satisfies \eqref{H2}.
\item\label{4} We have $\mu(X^{\K})<\mu(X)$. 
\item\label{5} For some $N>0$, $\Oscr_{X^{\K}}(N)$ is the pullback of the  line bundle $(\pi^{\K}_\ast \Oscr_{X^{\K}}(N))^G$ on $X^{\K}\quot G$.
\end{enumerate}
\end{proposition}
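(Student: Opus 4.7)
The plan is to extract each item essentially from \cite{EdidinRydh}, verifying that our good-quotient setup matches the stable-good-moduli framework used there. Most stabilizer-theoretic and codimension statements reduce, via the Luna slice theorem (\S\ref{subsec:prelimlin}) and Lemma~\ref{lem:H12slices}, to the linearized case where $X$ is a $G_x$-representation and $Z$ is the fixed locus of the identity component of a maximal stabilizer. I will then verify each item in turn.

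For \eqref{1}, \eqref{2} and \eqref{5} the strategy is to identify $X^{\K}$ with the $G$-semistable locus in $\tilde X$ for a linearization of the form $\Oscr_{\tilde X}(-E)\otimes \xi^\ast \Lscr_0^{\otimes n}$ for $n\gg 0$, where $\Lscr_0$ is, locally on $X\quot G$, a $G$-linearized line bundle pulled back from $X\quot G$; the freedom to work with torsion linearizations on saturated opens afforded by Remark~\ref{lem:torsion} justifies this locally. Mumford's theorem then supplies the good quotient in \eqref{1}, the descent of a suitable power of $\Oscr_{X^{\K}}(1)$ in \eqref{5} (after absorbing the pullback contribution from $\Lscr_0$, which is torsion on the saturated open in question), and properness of $X^{\K}\quot G\to X\quot G$ in \eqref{2} follows from properness of $\tilde X\to X$ via the valuative criterion.

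For \eqref{4} I would work on a linearized slice $(G_x, T_xS)$ at a closed-orbit point $x\in Z$. The exceptional divisor of $\widetilde{T_xS}$ is the projectivization of the normal bundle $N_{Z\cap T_xS/T_xS}$, and a point $[\ell]$ on it has stabilizer equal to the $G_x$-stabilizer of the line $\ell$. The strict transform of $\bar Z$ consists exactly of those lines whose stabilizer still has dimension $\mu$, so removing it forces the stabilizer dimension to drop strictly below $\mu$ on the exceptional part of $X^{\K}$; off the exceptional divisor, stabilizers agree with those on $X\setminus\bar Z$. For \eqref{2.5}, observe that $X^s\cap\bar Z=\emptyset$ (stable points have finite stabilizers, incompatible with an orbit closure meeting $Z$ when $\mu>0$), so that $X^s$ embeds into $X^{\K}$ via the isomorphism $\xi^{\K}\colon X^{\K}\setminus E^{\K}\to X\setminus\bar Z$ and its image remains stable by Lemma~\ref{lem:etalescss}.

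The main obstacle is \eqref{3}. Decompose $X^{\K,u}=(X^{\K,u}\setminus E^{\K})\sqcup(X^{\K,u}\cap E^{\K})$ with $E^{\K}$ the exceptional divisor. The first piece has codimension $\geq 2$ in $X^{\K}$ via the isomorphism above combined with (H2) for $X$, since $X^u\cap(X\setminus\bar Z)\subset X^u$. The second piece is delicate: because $E^{\K}$ has codimension $1$ in $X^{\K}$, it suffices to show $X^{\K,u}\cap E^{\K}$ has positive codimension in $E^{\K}$. By Lemma~\ref{lem:H12slices} this again reduces to the linearized slice, where one analyzes the $G_x$-action on the projectivized normal bundle and exploits the drop in maximal stabilizer dimension from \eqref{4} to produce stable points on a dense open of the exceptional divisor. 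This codimension estimate is the technical heart of the proposition and I would follow the analogous argument of \cite{EdidinRydh}.
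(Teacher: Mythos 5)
Items \eqref{1}, \eqref{2}, \eqref{4} and \eqref{5} are handled in a way compatible with the paper (the paper simply cites \cite[Theorem 2.11 (2a),(2c),(3)]{EdidinRydh} and Remark~\ref{lem:torsion}; your reconstruction via Mumford's GIT and relative ampleness is a reasonable unwinding of those), and your argument for \eqref{2.5} is essentially the paper's: $X^s\cap\bar Z=\emptyset$ and $\xi^R$ is an isomorphism off the exceptional divisor, so the stable locus of $X$ lifts.

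The genuine gap is \eqref{3}. You correctly reduce to showing that a generic point of $E^R$ is stable, and correctly use Lemma~\ref{lem:H12slices} to pass to the linearized slice $(G_x, W=W_0\oplus W_1)$ with $W_0=W^{G_{x,e}}$, where $E^R$ sits over $W_0\times\PP(W_1)^{ss}$. But at that point you write that one should ``exploit the drop in maximal stabilizer dimension from \eqref{4}'' and ``follow the analogous argument of \cite{EdidinRydh}.'' This does not close the argument: a drop in stabilizer dimension gives finite stabilizers at most, not closed orbits, so \eqref{4} alone cannot produce stable points; and the paper deliberately does \emph{not} cite \cite{EdidinRydh} for this item, precisely because \eqref{3} is specific to the (H2) setup and needs a separate argument. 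The paper's actual step is: a generic $(w_0,w_1)\in W_0\oplus W_1$ is $G_e$-stable by (H1); since $G_e$ acts trivially on $W_0$, stability of $(w_0,w_1)$ forces $w_1$ to be $G_e$-stable in $W_1$; then one invokes the identity $\PP(W_1)^s=\PP(W_1^s)$ (\cite[Proposition 1.31]{Brion}) to conclude $[w_1]\in\PP(W_1)^s$, so $(w_0,[w_1])$ is a stable point of $E^R$. Without the passage through $\PP(W_1)^s=\PP(W_1^s)$ (or an equivalent fact relating affine stability in $W_1$ to stability in the projectivization), your sketch stops short of a proof.

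A lesser imprecision: your description of the discarded locus on the exceptional divisor as ``exactly those lines whose stabilizer still has dimension $\mu$'' is not how the Reichstein transform is defined. What is removed is the strict transform of $\bar Z$, which in the linear model is the nullcone $W_0\times\PP(W_1)^{\ns}$; that this removal forces the stabilizer dimension to drop is a nontrivial content of \cite[Theorem 2.11(3)]{EdidinRydh}, not a tautology about lines with large stabilizer.
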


\begin{proof}
  \eqref{1}, \eqref{2}, \eqref{4} follow from \cite[Theorem 2.11
  (2a),(2c),(3)]{EdidinRydh}.  \eqref{5} follows by Remark
  \ref{lem:torsion}.  The fact that (H1) is true (asserted in \eqref{2.5}) follows from the assumption that
$X$ satisfies (H1) and the fact that $\xi^R:X^R-E^R=(\xi^R)^{-1}(X-Z)\r X-Z$ is an isomorphism.
  For \eqref{3} we observe that $X$ and $X^R$ differ in
  codimension $1$ by the exceptional divisor $E^R$. We have to prove that 
  a generic point of $E^R$ is stable. To this end we use 
  reduction to the linear case made possible by Lemma
  \ref{lem:H12slices} and Lemma \ref{lem:etale} below, see Lemma \ref{lem:local}.  
As we will now switch to the notations introduced in those lemmas, the reader is advised to consult \S\ref{subsec:linear} first.

Note that $G$-stability and $G_e$-stability are equivalent. The exceptional divisor $E^R$ is
  given by $W_0\times\PP(W_1)^{ss}$ (see Lemma \ref{lem:local}).
Let $(w_0,w_1)\in W_0\times W_1$ be a generic point. It is $G_e$-stable by (H1). Since $G_e$ acts trivially on $W_0$ this implies 
in particular that $w_1$ is $G_e$-stable.
 By \cite[Proposition 1.31]{Brion}, 
   $\PP(W_1)^s=\PP(W_1^s)$ 
and hence $[w_1]$ is $G_e$-stable. Thus, $(w_0,[w_1])$ is $G_e$-stable as well.
\end{proof}

In the following commutative diagram we summarize the notations that have been introduced up to now and we also introduce some additional ones which should be self explanatory.
\begin{equation}
\label{eq:kirwan}
\xymatrix{
X^{\K}\ar@/^2em/[rr]^{\pi^R}\ar[d]_{\xi^R}\ar[r]&X^{\K}/G\ar[r]^{\pi^{\K}_s}\ar[d]_{\xi^{\K}_s} &X^{\K}\quot G\ar[d]^{\overline{\xi^{\K}}}\\
X\ar@/_2em/[rr]_{\pi}\ar[r]&X/G\ar[r]_{\pi_s} &X\quot G
}
\end{equation}

\subsection{The Kirwan resolution}\label{subsec:Kirwan}
By repeatedly applying the Reichstein transform, the  maximal stabilizer dimension ultimately becomes $0$ by Proposition \ref{prop:Reichsteinproperties}\eqref{4}. Hence we arrive at a commutative 
 diagram
\[
\xymatrix{
\fks/G\ar[r]^{\pi'_s}\ar[d]_{\Xi} &\fks\quot G\ar[d]^{\bar{\Xi}}\\
X/G\ar[r]_{\pi_s} &X\quot G
}
\]
where $\fks/ G$ is a DM stack and hence $\fks\quot G$ has finite quotient singularities. We call $\fks\quot G$ (or perhaps $\fks/G$) the Kirwan (partial) resolution of $X\quot G$. 

\subsection{Reduction of the Reichstein transform to the linear case}\label{subsec:linear} 
Let $H$ be a reductive group.
For a $H$-representation $W$ we choose a decomposition $W=W_0\oplus W_1$ of $H$-representations, where $W_0=W^{H_e}$.  
\begin{lemma}\label{lem:etale}
With notation as in \S\ref{subsec:prelimlin}, \ref{subsec:Kstep}, 
let $x$ be a point with maximal stabilizer dimension. 
Let $S$ be a smooth affine slice at $x$. 
We have $(G\times^{G_x} S)^R=G\times^{G_x} S^{R,G_x}$, where $S^{R,G_x}$ is the Reichstein transform of $S$ with respect to $G_x$, and there is a $G$-equivariant cartesian diagram 
\begin{equation*}
\xymatrix{
G\times^{G_x}S^{R,G_x}\ar[r]\ar[d] &X^R\ar[d]\\
G\times^{G_x}S\ar[r] &X
}
\end{equation*}
in which the horizontal arrows are strongly \'etale.

We have $X_\mu\cap S=S^{G_{x,e}}$. 
Denote $W=T_{x}S$. Then $W_\mu=W_0$ and  
 there is a $G_x$-equivariant cartesian diagram  
\begin{equation*}
\xymatrix{
S^{R,G_x}\ar[r]\ar[d] &W^{R,{G_x}}\ar[d]\\
S\ar[r] &W
}
\end{equation*}
in which the horizontal maps are strongly \'etale. 
\end{lemma}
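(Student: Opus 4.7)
The plan is to reduce everything to the observation that forming the Reichstein transform is compatible with $G$-equivariant strongly étale maps. Luna's slice theorem, as recalled in \S\ref{subsec:prelimlin}, supplies the two strongly étale maps we need: the $G$-equivariant $\phi : G \times^{G_x} S \to X$ and the $G_x$-equivariant $\gamma : S \to W = T_x S$. Once this compatibility is in hand, the two cartesian squares (with strongly étale horizontals), together with the identifications $(G\times^{G_x} S)^R = G\times^{G_x} S^{R,G_x}$ and its slice-level counterpart $S^{R,G_x}=W^{R,G_x}\times_W S$, all follow by direct base change.

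First I would pin down the maximal stabilizer locus. By Lemma~\ref{lem:etalescss}, the stabilizer of $[g,s] \in G \times^{G_x} S$ is conjugate to $(G_x)_s$; its dimension equals $\mu = \dim G_x$ exactly when $G_{x,e} = (G_x)_e \subset (G_x)_s$, i.e.\ when $s \in S^{G_{x,e}}$. Since $\phi$ preserves stabilizers, this yields simultaneously
\[
X_\mu \cap S = S^{G_{x,e}}, \qquad \phi^{-1}(X_\mu) = G \times^{G_x} S^{G_{x,e}}.
\]
The same dimension argument applied to $W$, where $0\in W$ has stabilizer $G_x$ of dimension $\mu$, gives $W_\mu = W^{(G_x)_e} = W_0$.

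Next I would show that both forming the blowup and removing the strict transform of $\bar Z$ commute with pullback along $\phi$. Blowups are stable under flat base change, and Lemma~\ref{lem:etalescss} combined with the identification $(G\times^{G_x} S)\quot G = S\quot G_x$ coming from strong étaleness shows that $\phi^{-1}(\bar Z_X)$ equals the corresponding $\bar Z$ for $G\times^{G_x} S$. Strict transforms are likewise stable under flat base change, so $(G\times^{G_x} S)^R = X^R \times_X (G\times^{G_x} S)$; this simultaneously produces the equality $(G\times^{G_x} S)^R = G\times^{G_x} S^{R, G_x}$ and makes the first square cartesian, with the top horizontal arrow strongly étale as the base change of the strongly étale $\phi$ (which descends to an étale map on good quotients). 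Applying the same argument to $\gamma : S \to W$ delivers the second cartesian square.

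The main obstacle I anticipate is bookkeeping around the strict-transform step: one must check that the $G$-saturation of $X_\mu$ pulls back cleanly (which is where \emph{strong}, rather than merely ordinary, étaleness of $\phi$ is used, via the identification of good quotients) and that the subsequent strict transform genuinely commutes with the flat pullback at the scheme-theoretic level. Beyond this verification, the remaining assertions are formal consequences of Lemma~\ref{lem:etalescss} and the Luna slice theorem.
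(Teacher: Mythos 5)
The paper's proof is almost entirely a matter of citation: it invokes \cite[Proposition 6.6, Diagram (6.6.1)]{EdidinRydh}, which packages the compatibility of the Reichstein transform with strong \'etale base change, applies it to the two strongly \'etale maps $\phi\colon G\times^{G_x}S\to X$ and $\gamma\colon S\to T_xS$ furnished by Luna's slice theorem, and handles the identification $(G\times^{G_x}S)^R=G\times^{G_x}S^{R,G_x}$ separately via the equivalence between $G$-equivariant schemes over $G/G_x$ and $G_x$-schemes. You instead set out to reprove this compatibility from scratch: blowups and strict transforms commute with flat (hence \'etale) base change, the center $Z=X_\mu$ pulls back correctly by Lemma~\ref{lem:etalescss}, and the saturation $\bar Z=\pi^{-1}(Z\quot G)$ pulls back correctly precisely because the map is \emph{strongly} (not merely) \'etale, so that quotients are preserved. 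That plan is sound and your computations of $X_\mu\cap S=S^{G_{x,e}}$ and $W_\mu=W_0$ match the lemma. What each approach buys: the paper is short and outsources the technical verification, while your argument is more self-contained but you inherit the full bookkeeping burden that \cite{EdidinRydh} carries out.

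One spot deserves a sharper treatment. You write that the equality $(G\times^{G_x}S)^R=X^R\times_X(G\times^{G_x}S)$ ``simultaneously produces'' $(G\times^{G_x}S)^R=G\times^{G_x}S^{R,G_x}$. These are not the same statement, and the second does not follow by base change along $\phi$: the inclusion $S\hookrightarrow G\times^{G_x}S$ is neither flat nor $G$-equivariant, so the flat-base-change machinery you set up doesn't directly compare the $G$-Reichstein transform of $G\times^{G_x}S$ with the $G_x$-Reichstein transform of $S$. You need the equivalence of categories between $G$-equivariant schemes over $G/G_x$ and $G_x$-schemes (exactly what the paper invokes), or equivalently a descent argument along the $G_x$-torsor $G\times S\to G\times^{G_x}S$, checking that the center, the saturation, the blowup and the strict transform are all carried through the equivalence. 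This is a genuine, if readily fillable, gap in the proposal as written.
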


\begin{proof}
The equality $(G\times^{G_x} S)^R=G\times^{G_x} S^{R,G_x}$ is an easy verification using the equivalence between the categories of $G$-equivariant schemes over $G/G_x$ and $G_x$-equivariant schemes. 

Both diagrams follow from \cite[Proposition 6.6, Diagram (6.6.1)]{EdidinRydh}  and the Luna slice theorem \S\ref{subsec:prelimlin} (as  strongly \'etale morphism is strong  \cite[Definition 6.4]{EdidinRydh}).
The observation that both strong morphisms 
and \'etale morphisms are preserved under pullback yields that the upper arrows in the diagrams are strongly \'etale.
\end{proof}

In the case of a representation the {Reichstein transform} has a more concrete description recorded in the following lemma. 

\medskip

Let  $\PP(W_1)^{ss}$, $\PP(W_1)^{\u}$ be respectively the semi-stable part of 
$\PP(W_1)$ and its complement, corresponding to the $G$-linearisation $\Oscr(1)$. 
As $\PP(W_1)=(W_1-\{0\})/G_m$ we 
alternatively have  $\PP(W_1)^{ss}=W_1^{ss,1}/G_m$, where $1\in X(G_m)=\ZZ$ is the identity character. If $W^{\text{null}}_1$ is the $G$-nullcone in $W_1$ then
$W_1^{ss,1}=W_1\setminus W_1^{\text{null}}$.

\begin{lemma}\label{lem:local}
If $X=W$ is a representation,   
 $X^{\K}=\underline{\operatorname{Spec}}(\Sym_{W_0\times \PP(W_1)^{ss}}(\Oscr(1)))$.
\end{lemma}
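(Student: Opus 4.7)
The plan is to match both sides of the desired identity by unpacking the Reichstein construction in the purely linear setting $X=W=W_0\oplus W_1$.

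First, I will identify the locus $Z=W_\mu$. Since $G_e$ acts trivially on $W_0$ and, by construction of $W_1$, has no nonzero fixed vectors in $W_1$, any point $(w_0,w_1)$ with $w_1\neq 0$ has $\dim G_{(w_0,w_1)}<\dim G$, whereas every point of $W_0\times\{0\}$ is fixed by $G_e$. Thus $\mu=\dim G$ and $Z=W_0$ (sitting inside $W$ as $W_0\times\{0\}$), consistent with the formula $W_\mu=W_0$ from Lemma~\ref{lem:etale}.

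Second, I will identify $\bar Z$ with $W_0\times W_1^{\mathrm{null}}$. The inclusion $\bar Z\subseteq W_0\times W_1^{\mathrm{null}}$ is obtained by projecting the condition $\overline{G(w_0,w_1)}\cap Z\neq\emptyset$ to $W_1$. For the reverse, if $w_1\in W_1^{\mathrm{null}}$, Hilbert--Mumford provides a one-parameter subgroup $\lambda:\GG_m\to G$ with $\lim_{t\to 0}\lambda(t)w_1=0$. Since $\lambda(\GG_m)$ is connected it is contained in $G_e$, hence fixes $w_0$, and so $\lim_{t\to 0}\lambda(t)(w_0,w_1)=(w_0,0)\in Z$.

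Third, I will carry out the blowup and excision. Because $Z=W_0\times\{0\}$ is the zero section of the trivial vector bundle $W_0\times W_1\to W_0$, one has $\tilde X=W_0\times\Bl_0 W_1$, where $\Bl_0 W_1$ is the total space of $\Oscr_{\PP(W_1)}(-1)$ with exceptional divisor $\PP(W_1)$. Since $W_1^{\mathrm{null}}$ is a $\GG_m$-stable cone through the origin, the strict transform of $W_0\times W_1^{\mathrm{null}}$ inside $\tilde X$ is $W_0$ times the total space of $\Oscr_{\PP(W_1^{\mathrm{null}})}(-1)$. Using the identification $\PP(W_1^{\mathrm{null}})=\PP(W_1)^{\ns}$ recorded just above the lemma, excising this strict transform from $\tilde X$ leaves the restriction of $\Oscr_{\PP(W_1)}(-1)$ to $W_0\times\PP(W_1)^{ss}$, which is precisely $\underline{\Spec}_{W_0\times\PP(W_1)^{ss}}(\Sym\Oscr(1))$.

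The only step that takes any care is the second one, where the Hilbert--Mumford destabilizing one-parameter subgroup must be verified to land in $G_e$ so as not to disturb the $W_0$ coordinate; after that, everything reduces to the standard description of the blowup of a cone at its vertex.
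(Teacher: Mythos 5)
Your proof is correct and follows the same route as the paper's: the paper's (very terse) argument simply states $\tilde{X}=\underline{\Spec}(\Sym_{W_0\times \PP(W_1)}(\Oscr(1)))$, $\bar{Z}=W_0\times W_1^{\text{null}}$, $\tilde{Z}=\underline{\Spec}(\Sym_{W_0\times \PP(W_1)^\ns}(\Oscr(1)))$ and concludes, while you supply the justifications (in particular the Hilbert--Mumford argument that a destabilizing one-parameter subgroup lies in $G_e$, which is the one genuinely non-trivial point) that the paper leaves implicit.
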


\begin{proof}
We have $\tilde{X}=\underline{\Spec}(\Sym_{W_0\times \PP(W_1)}(\Oscr(1)))$,  $\bar{Z}=W_0\times W_1^{\text{null}}$ and $\tilde{Z}=\underline{\Spec}(\Sym_{W_0\times \PP(W_1)^\ns}(\Oscr(1)))$, which implies the 
claim. 
\end{proof}

We obtain the following diagram
\begin{equation}\label{eq:diagramE}
\xymatrix{E^R/G \ar@/_0.5em/[rr]_{s}\ar[d]^{\pi^R_{E,s}} & &X^R/G\ar[r]^{{\xi^R}}\ar@{.>}@/_0.5em/[ll]_{{\theta}}\ar[d]^{\pi^R_s}&X/ G\ar[d]^{\pi_s}\\
E^R\quot G\ar@/_0.5em/[rr]_{\bar s} & &X^R\quot G\ar[r]^{\overline{\xi^R}}\ar@{.>}@/_0.5em/[ll]_{\bar{\theta}}&X\quot G
}
\end{equation}
where $s$, $\bar{s}$ are obtained from  the inclusion of $E^R$ in $X^R$ and 
where $\theta$ and $\bar\theta$ only exist in the linear case and are obtained from the projection of the line bundle $W^{\K}=\underline{\operatorname{Spec}}(\Sym_{W_0\times \PP(W_1)^{ss}}(\Oscr(1)))$ to the base $W_0\times \PP(W_1)^{ss}$. 

\medskip

From Lemma \ref{lem:local} we obtain a very concrete description of the Reichstein transform in the linear case. 
\begin{lemma}\label{rem:reichsteincovering}
Put $S=\Sym W^\vee$ considered as a graded ring by giving $W_i^\vee$ degree $i$, $i\in \{0,1\}$. 
Then  $W^{\K}\quot G$ is covered by affine charts of the form $\Spec((S^G_f)_{\geq 0})$ for homogeneous $f\in S^G_{>0}$. 
\end{lemma}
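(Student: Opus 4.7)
The plan is to start from the description $W^\K = \underline{\Spec}_{W_0\times \PP(W_1)^{ss}}(\Sym\Oscr(1))$ provided by Lemma \ref{lem:local} and recognise it as the relative Proj construction associated with the grading on $S=\Sym W^\vee$. With $W_0^\vee$ in degree $0$ and $W_1^\vee$ in degree $1$, one has $S_0=\Sym W_0^\vee$, the relative Proj $\underline{\Proj}(S)$ over $\Spec S_0=W_0$ equals $W_0\times\PP(W_1)$, and $\Oscr_{\Proj S}(1)$ is the pullback of the tautological line bundle on the $\PP(W_1)$-factor. On each standard affine chart $D_+(f)=\Spec((S_f)_0)$ attached to a homogeneous $f\in S_{>0}$, the graded sheaf $\bigoplus_{n\ge 0}\Oscr(n)$ corresponds to $(S_f)_{\ge 0}$ as a graded $(S_f)_0$-algebra, so that
\[
\underline{\Spec}_{D_+(f)}\Bigl(\bigoplus_{n\ge 0}\Oscr(n)\Bigr)=\Spec((S_f)_{\ge 0}).
\]

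Next I would verify that, as $f$ varies over homogeneous elements of $S^G_{>0}$, the charts $D_+(f)$ provide a $G$-equivariant covering of $W_0\times\PP(W_1)^{ss}$. By definition of semistability on $W_0\times\PP(W_1)$ with respect to the linearisation pulled back from $\Oscr(1)$ on $\PP(W_1)$, the $G$-invariant global sections of $\Oscr(n)$ are exactly $S^G_n$, hence the corresponding $D_+(f)$ cover $(W_0\times\PP(W_1))^{ss,G}$. It then remains to identify this locus with $W_0\times\PP(W_1)^{ss,G}$. Using $W_0=W^{G_e}$, for any fixed $w_0\in W_0$ the restriction $f(w_0,\cdot)$ of a $G$-invariant $f\in S_n$ automatically lies in $(\Sym^n W_1^\vee)^{G_e}$; a standard symmetric-function/averaging argument over the finite component group $G/G_e$ then upgrades $G_e$- to $G$-semistability on the $\PP(W_1)$-factor, while the reverse inclusion is trivial by pullback.

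Combining these ingredients, the preimages in $W^\K$ of the $D_+(f)$, with $f\in S^G_{>0}$ homogeneous, form a $G$-equivariant affine open cover of $W^\K$ by pieces $\Spec((S_f)_{\ge 0})$. Each such open is saturated with respect to the good quotient $\pi^\K$ because $f$ is $G$-invariant, so passing to $G$-invariants produces an affine cover of $W^\K\quot G$ by $\Spec\bigl(((S_f)_{\ge 0})^G\bigr)$. Since localisation at an invariant element commutes both with taking $G$-invariants (by reductivity) and with the $\ZZ$-grading, $((S_f)_{\ge 0})^G = (S^G_f)_{\ge 0}$, finishing the proof.

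The main delicate point I anticipate is the identification $(W_0\times\PP(W_1))^{ss,G}=W_0\times\PP(W_1)^{ss,G}$, particularly when $G$ is disconnected, where the averaging over $G/G_e$ has to be combined with the $G_e$-invariance coming from $W_0=W^{G_e}$; once this is settled, everything else is a routine translation between relative Proj and relative Spec and the standard compatibility of GIT quotients with localisation at invariants.
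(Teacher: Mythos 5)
Your proof follows the same route as the paper's (Lemma \ref{lem:local} $\Rightarrow$ charts $\Spec((S_f)_{\ge 0})$ cover $W^R$ $\Rightarrow$ take invariants), and it is essentially correct. Two small comments are worth making. First, for the covering one only needs the \emph{trivial} inclusion $W_0\times\PP(W_1)^{ss}\subseteq (W_0\times\PP(W_1))^{ss}$: any point $(w_0,[w_1])$ with $[w_1]$ semistable already admits $g\in(\Sym^n W_1^\vee)^G\subset S^G_n$ with $g(w_1)\neq 0$, and that is all the paper uses. Your proof instead establishes the full equality of the two semistable loci, which is more than required. Second, the ``standard symmetric-function/averaging argument'' that you invoke to pass from $G_e$-semistability to $G$-semistability of $[w_1]$ does not work as stated: if $h\in(\Sym^n W_1^\vee)^{G_e}$ satisfies $h(w_1)\neq 0$, the norm $\prod_{\bar g\in G/G_e} g\cdot h$ is $G$-invariant but its individual factors $h(g^{-1}w_1)$ may vanish, so the product can be zero at $w_1$. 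The correct way to upgrade is via orbit closures (or Hilbert--Mumford): $0\notin\overline{G_e w_1}$ implies $0\notin\overline{G w_1}$ since $G_e\trianglelefteq G$ has finite index, and this is equivalent to $G$-semistability. Since this direction is not needed for the statement, the gap is harmless, but the phrasing should be fixed if you keep it.
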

\begin{proof}
It follows from Lemma \ref{lem:local} that $W^{\K}$ is covered by affine charts $\Spec((S_f)_{\geq 0})$ for homogeneous $f\in S^G_{>0}$ (by the definition of semi-stable points), and hence $W^{\K}\quot G$ is covered by affine charts as stated.
\end{proof}
\begin{remark}\label{rem:weightedblowup}
Elaborating on Lemma \ref{rem:reichsteincovering} we obtain yet another concrete description of the Reichstein transform in the linear case as a weighted blowup. Let $R$ be a $\ZZ$-graded ring and put $R^\dagger=\oplus_{n\geq 0}R_{\geq n}$, where the right-hand side is $\NN$-graded by the index $n$. 
Then the \emph{weighted blowup} of $\Spec R$ is defined as $\Proj R^\dagger$. 
One easily checks that $W^R\quot G$ is given by the weighted blowup of $\Spec S^G=W\quot G$. (Note that this is not the same as taking the usual blow-up of $W\quot G$ even if $W_0=0$, as $k[W_1]^G$ may not be generated in one degree.)
\end{remark}

\section{The embedding of an NCCR in the Kirwan resolution}\label{sec:embedding}
Let $X$ be as in \S\ref{sec:general}  and assume that $X$ moreover satisfies \eqref{H2}.  Assume we are in the setting of \S\ref{subsec:Kstep}. Let $\Uscr$ be a $G$-equivariant vector bundle on $X$ and define
\[
\Lambda:=\pi_{s\ast} \uEnd_X(\Uscr),\quad
\Lambda':=\pi^{\K}_{s\ast} \uEnd_{X^R}(\xi^{R*}\Uscr).
\] 

\begin{lemma}\label{lem:pushfwd}
If $\Lambda$ is Cohen-Macaulay  (as a sheaf of $\Oscr_{X\quot G}$-algebras) then the canonical morphism 
\begin{equation*}
\label{eq:push}
\Lambda\r R\overline{\xi^R_\ast}\Lambda'
\end{equation*}
is an isomorphism.
\end{lemma}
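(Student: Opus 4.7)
The plan is to reduce to a local, linear computation where the Cohen--Macaulay hypothesis can be exploited directly. Since the assertion is local on $X\quot G$, I would assume $X\quot G$ is affine, so that $\pi$ and $\pi^R$ are affine; because $G$ is reductive in characteristic zero, $\pi_{s\ast}$ and $\pi^R_{s\ast}$ are exact. Using the commutative square \eqref{eq:kirwan} together with the fact that $\xi^{R\ast}$ commutes with $\uEnd$ for vector bundles, one rewrites
\[
R\overline{\xi^R_\ast}\Lambda' \;\cong\; \pi_{s\ast}R\xi^R_{s\ast}\xi^{R\ast}\uEnd_X(\Uscr),
\]
so that it suffices to prove the adjunction unit $\uEnd_X(\Uscr)\to R\xi^R_{s\ast}\xi^{R\ast}\uEnd_X(\Uscr)$ becomes an isomorphism after applying $\pi_{s\ast}$. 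The cone of this unit is cohomologically supported on $\bar Z$, which has codimension at least $2$ in $X$ by (H2).

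Next I would reduce to the linear case. Lemma~\ref{lem:etale} shows that the formation of $X^R$ is compatible with the Luna slice construction, and Lemma~\ref{rem:vecbundle} allows one to arrange the pullback of $\Uscr$ to the slice $S$ to be of the form $U\otimes\Oscr_S$ for a $G_x$-representation $U$. Applying the further slice map $\gamma:S\to T_xS$ reduces us to the linear situation $X=W$ a $G$-representation with $\Uscr=U\otimes\Oscr_W$. In that case Lemma~\ref{lem:local} identifies $X^R$ with the total space of $\Oscr(1)$ over $W_0\times\PP(W_1)^{ss}$, and Remark~\ref{rem:weightedblowup} presents $X^R\quot G$ as the weighted blowup $\Proj(S^G)^{\dagger}$ of $\Spec S^G$, where $S=\Sym W^\vee$. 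Both sides of the desired quasi-isomorphism can then be computed as $G$-invariants of graded $S$-modules, using the \v{C}ech cover from Lemma~\ref{rem:reichsteincovering}.

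The main obstacle is the final step: one must convert the Cohen--Macaulay assumption on the $S^G$-module $\Lambda=(\End_k(U)\otimes S)^G$ into precise vanishing of higher cohomology on the weighted $\Proj$. This is a weighted-graded analogue of Serre vanishing; one needs the depth of $\Lambda$ at the irrelevant ideal $S^G_{>0}$ to be large enough that $R^i\overline{\xi^R_\ast}\Lambda'=0$ for $i>0$, while the degree-zero piece recovers $\Lambda$. Carrying this out cleanly requires careful bookkeeping of the two-step grading (with $W_0$ in degree $0$ and $W_1$ in degree $1$) and verifying that the formation of $G$-invariants commutes with the relevant derived functors on the \v{C}ech complex, so that the CM depth condition produces both the vanishing in positive degrees and the isomorphism in degree zero.
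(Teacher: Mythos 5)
Your proposal correctly identifies the overall strategy of the paper's proof: localize over $X\quot G$, reduce to the linear case $X=W$, $\Uscr=U\otimes\Oscr_W$ via Luna slices (Lemmas~\ref{lem:etale}, \ref{rem:vecbundle}), and compute using the weighted-$\Proj$ description of $W^R\quot G$. Up to that point you are following the paper.

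However, the crucial final step is left as a gap, and the gap is real. You describe it as ``a weighted-graded analogue of Serre vanishing'' driven by the depth of $\Lambda$ at $S^G_{>0}$, but Cohen--Macaulayness alone does not produce the vanishing one needs, and the actual argument has more structure than a depth estimate. In the paper, after the \v{C}ech/local-cohomology manipulations (which, incidentally, go through a local-cohomology triangle on $\tilde W$ and the $\omega$-functor on $\Gr(S)$ rather than through the affine charts of Lemma~\ref{rem:reichsteincovering}), one is reduced to showing $(\End(U)\otimes R\Gamma_{W_0\times W_1^{\text{null}}}(W,\Oscr_W))^G_{\geq 0}=0$, i.e.\ $(R\Gamma_{W_0\quot G}\Lambda)_{\geq 0}=0$. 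This is where CM enters: via local duality (Corollary~\ref{cor:dualizing}) this becomes the statement $H_{\leq 0}=0$ for $H=\Hom_{W\quot G}(\Lambda,\omega_{W\quot G})$. To compute $H$, the paper uses reflexivity of $H$ together with (H2) to pass to the stable locus $W^s$, and then Grothendieck duality for DM stacks to obtain $H=(\End(U)\otimes\wedge^{\dim G}\mathfrak g\otimes\wedge^d W\otimes S)^G(-d_1)$ with $d_1=\dim W_1$. The vanishing then follows from the explicit degree shift by $-d_1$ and the fact that the inner expression is concentrated in nonnegative degrees. None of this is captured by ``the depth is large enough.''

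Two smaller misattributions: you invoke (H2) to say the cone is supported in codimension $\geq 2$ in $X$, but that observation plays no role; in the paper (H2) is used at a completely different point, namely to justify replacing $W$ by $W^s$ so that $\pi_s$ becomes finite and Grothendieck duality for DM stacks applies. Also, the reduction to $\Proj(S^G)^\dagger$ via Remark~\ref{rem:weightedblowup} is not what the proof actually uses; instead the computation lives on $\tilde E=W_0\times\PP(W_1)$ via the functor $\Gamma_*$ and the distinguished triangle $R\Gamma_{S_{>0}}(M)\to M\to R\omega(M)\to$. These are not fatal deviations, but the core duality argument is missing, and without it the CM hypothesis cannot be converted into the required vanishing.
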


\begin{proof}
This statement is local for the \'etale topology and hence Lemma \ref{lem:etale} allows us to reduce to the case that $X=W$ is a representation  of $G$. Moreover we may assume by Lemma \ref{rem:vecbundle} that  $\Uscr=U\otimes \Oscr_W$. By Lemma \ref{lem:local} we then have $W^{\K}=\underline{\operatorname{Spec}}(\Sym_{W_0\times \PP(W_1)^{ss}}(\Oscr(1)))$. 
Using the diagram \eqref{eq:kirwan}  we see that 
\[
R\overline{\xi^R_\ast}\Lambda'=\pi_{s\ast} R\xi^R_{s*}(\End(U)\otimes \Oscr_{W^{\K}})\,.
\]
Write $\xi_s:\tilde{W}/G\r W/G$ for the map induced from $\xi:\tilde{W}\r W$ and similarly $j_s$ for the inclusion $W^R/G\r \tilde{W}/G$.
Then we have
\begin{align*}
\pi_{s\ast} R\xi^R_{s*}(\End(U)\otimes \Oscr_{W^{\K}})&=\pi_{s\ast} R(\xi_s j_s)_*(\End(U)\otimes \Oscr_{W^{\K}})\\
&=\pi_{s\ast} (\End(U)\otimes R\xi_{s\ast}Rj_{s\ast} \Oscr_{W^R}).
\end{align*}
We will use the standard distinguished triangle for cohomology with support
\[
\Rscr \Gamma_{\tilde{W}-W^R}(\tilde{W},\Oscr_{\tilde{W}})\r \Oscr_{\tilde{W}} \r Rj_{s\ast} \Oscr_{W^R}\r
\] 
which after applying $\pi_{s\ast} (\End(U)\otimes R\xi_{s\ast}(-))$ yields a distinguished triangle
\[
\pi_{s\ast} (\End(U)\otimes R\xi_{s\ast}
\Rscr \Gamma_{\tilde{W}-W^R}(\tilde{W},\Oscr_{\tilde{W}}))
\r \Lambda
\r R\overline{\xi^R_\ast}\Lambda'\r.
\]
It follows that we need to show
\begin{equation}
\label{eq:affinestatement}
\pi_{s\ast} (\End(U)\otimes R\xi_{s\ast}
\Rscr \Gamma_{\tilde{W}-W^R}(\tilde{W},\Oscr_{\tilde{W}}))
=0.
\end{equation}
We may as well 
compute 
$\Gamma(W\quot G,\pi_{s\ast} (\End(U)\otimes R\xi_{s\ast}
\Rscr \Gamma_{\tilde{W}-W^R}(\tilde{W},\Oscr_{\tilde{W}})))$ since $W\quot G$ is affine. 
 We  have
\begin{multline}
\label{eq:rsgamma}
\Gamma(W\quot G,\pi_{s\ast} (\End(U)\otimes R\xi_{s\ast}
\Rscr \Gamma_{\tilde{W}-W^R}(\tilde{W},\Oscr_{\tilde{W}})))=\\
(\End(U)\otimes 
R\Gamma(\tilde{W},\Rscr\Gamma_{\tilde{W}-W^R}(\tilde{W},\Oscr_{\tilde{W}})))^G.
\end{multline}

Let $E^\ns=W_0\times \PP(W_1)^\ns$, $\tilde{E}=W_0\times\PP(W_1)$.
Since $\tilde{W}-W^R=\theta^{-1}(E^\ns)$ we have
\begin{equation}\label{eq:WE}
R\Gamma(\tilde{W},\Rscr\Gamma_{\tilde{W}-W^R}(\tilde{W},\Oscr_{\tilde{W}}))=\bigoplus_{n\geq 0}R\Gamma(\tilde{E},{\mathcal R}{\Gamma}_{E^{\ns}}(\tilde{E},\Oscr_{\tilde{E}}(n))).
\end{equation}

Put $S=\Sym(W^\vee)=\Gamma(W,\Oscr_W)$.
We put a grading on $S$
 by giving $W^\vee_i$ degree $i$ for $i\in\{0,1\}$. 
Let $\omega$ denote the composition 
\[
\Gr(S)\xrightarrow{\tilde{?}} \Qch(\tilde{E})\xrightarrow{\Gamma_\ast}\Gr(S)
\]
where $\Gamma_\ast(\tilde{E},\Mscr)=\bigoplus_{n\in \ZZ} \Gamma(\tilde{E},\Mscr(n))$ and the first (exact) functor is the usual correspondence
between graded $S$-modules and quasi-coherent sheaves on $\tilde{E}$. It is easy to see that $\tilde{?}$ preserves injectives and hence $R\omega=R\Gamma_*\circ \tilde{?}$. 
We have
\[
{\mathcal R}{\Gamma}_{E^\ns}(\tilde{E},\Oscr_{\tilde{E}}(n))=R\Gamma_{W_0\times W_1^{\text{null}}}(W,\Oscr_W)(n)\,\tilde{}. 
\]
Hence the part of degree $n$ of the right-hand side of \eqref{eq:WE} equals 
\begin{align}\label{eq:con13}
(R\Gamma_\ast((R\Gamma_{W_0\times W_1^{\text{null}}}(W,\Oscr_W)(n))\,\tilde{}\, ))_0&=(R\omega(R\Gamma_{W_0\times W_1^{\text{null}}}(W,\Oscr_W)(n)))_0\\\nonumber
&=(R\omega R\Gamma_{W_0\times W_1^{\text{null}}}(W,\Oscr_W))_n.
\end{align}
 There is a distinguished triangle in $D(\Gr(S))$ 
\begin{equation*}\label{eq:disttriaGrQch}
R\Gamma_{S_{>0}}(M) \to M\to R\omega(M)\to,
\end{equation*}
for every $M\in D(\Gr(S))$, which applied to $M=R\Gamma_{W_0\times W_1^{\text{null}}}(W,\Oscr_W) $ 
gives  the distinguished triangle
\begin{multline*}
R\Gamma_{W_0} R\Gamma_{W_0\times W_1^{\text{null}}}(W,\Oscr_W) 
\to R\Gamma_{W_0\times W_1^{\text{null}}}(W,\Oscr_W) \\\to R\omega R\Gamma_{W_0\times W_1^{\text{null}}}(W,\Oscr_W)\to.
\end{multline*}
Since $W_0\subset W_0\times W^{\text{null}}_1$  the first term equals 
\begin{equation}
\label{eq:W0bound}
R\Gamma_{W_0} R\Gamma_{W_0\times W_1^{\text{null}}}(W,\Oscr_W) =R\Gamma_{W_0}(W,\Oscr_W) 
\end{equation}
which is $0$ in degrees $\geq 0$. 
Thus, the right-hand side of \eqref{eq:WE} equals (using \eqref{eq:con13})  $(R\Gamma_{W_0\times W_1^{\text{null}}}(W,\Oscr_W))_{\geq 0}$.  
So \eqref{eq:affinestatement} translates (via \eqref{eq:rsgamma}) into
\[
(\End(U)\otimes R\Gamma_{W_0\times W_1^{\text{null}}}(W,\Oscr_W))^G_{\geq 0}=0
\]
which by e.g. \cite[Lemma 4.1]{vdbtrace} is equivalent to
\[
(R\Gamma_{W_0\quot G}\Lambda)_{\geq 0}=0.
\]
By local duality (see Corollary \ref{cor:dualizing}) and Cohen-Macaulayness of $\Lambda$ 
this reduces to showing $H_{\leq 0}=0$ for $H:=\Hom_{W\quot G}(\Lambda,\omega_{W\quot G})$.  
Note that $H$ 
is reflexive and localization commutes with $\Hom$, so we may reduce to codimension $1$ and replace $W$ by $W^s$ due to \eqref{H2}. 
As $W^s/G\to W^s\quot G$ is finite, it is also proper.  
Therefore we can apply Grothendieck duality for DM stacks \cite[Corollary 2.10]{nironi}\footnote{This will also be the subject of \cite{YekDM}.} Setting $d_1=\dim W_1$, $d=\dim W$ we obtain 
\begin{equation}
\label{eq:gduality}
\begin{aligned}
H&=\Hom_{W^s\quot G}(\pi_{s*}(\End(U)\otimes \Oscr_W),\omega_{W^s\quot G})\\&=\Hom_{W^s/G}(\End(U)\otimes \Oscr_{W^s},\pi_s^!\omega_{W^s\quot G})\\
&=\Hom_{W^s/G}(\End(U)\otimes \Oscr_{W^s},\omega_{W^s/G})\\
&=\Hom_{W^s/G}(\End(U)\otimes \Oscr_{W^s},\wedge^{\dim G}\mathfrak{g}\otimes \wedge^{d} W\otimes \Oscr_{W^s}(-d_1))\\
&=(\End(U)\otimes \wedge^{\dim G}\mathfrak{g}\otimes \wedge^d W\otimes S)^G(-d_1)
\end{aligned} 
\end{equation}
where the third equality is \cite[Theorem 2.22]{nironi}\footnote{\cite[Theorem 2.22]{nironi} is strictly speaking for proper DM stacks. However, this assumption can be circumvented by first applying compactification (with smooth DM stacks). See the first paragraph of the proof of \cite[Theorem 1.1]{Bergh}.}, 
 the fourth equality  follows from Lemma \ref{lem:omega}, and the fifth equality from the hypothesis \eqref{H2}. 
Hence 
$H_{\leq 0}=(\End(U)\otimes \wedge^{\dim G}\mathfrak{g}\otimes \wedge^d W\otimes S)^G_{\leq -d_1}$.
Since $(\End(U)\otimes \wedge^{\dim G}\mathfrak{g}\otimes \wedge^d W\otimes S)^G$ lives in nonnegative degrees, the conclusion follows.  
\end{proof}

Below we let $\hat{\xi}$ be the morphism of ringed spaces
\[{}
\hat{\xi}:(X^R\quot G, \Lambda')\to(X\quot G,\Lambda)
\]
obtained from $\overline{\xi^R}$.

\begin{corollary}\label{cor:ff} Assume $\Lambda$ is Cohen-Macaulay. 
Then 
$L\hat{\xi}^\ast:D(\Lambda)\to D(\Lambda')$ is a full faithful embedding. 
\end{corollary}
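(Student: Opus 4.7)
The plan is to show that the unit of adjunction $\eta_N: N \to R\hat\xi_\ast L\hat\xi^\ast N$ is an isomorphism for every $N \in D(\Lambda)$; since $L\hat\xi^\ast$ is left adjoint to $R\hat\xi_\ast$, this is equivalent to $L\hat\xi^\ast$ being fully faithful.

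First I would establish a projection-formula isomorphism
\[
R\hat\xi_\ast L\hat\xi^\ast N \simeq N \Lotimes_\Lambda R\overline{\xi^R_\ast}\Lambda'
\]
natural in $N \in D(\Lambda)$. Since by definition $L\hat\xi^\ast N = \Lambda' \Lotimes_{\overline{\xi^R}^{-1}\Lambda} \overline{\xi^R}^{-1}N$, this reduces to the standard projection formula for the underlying proper morphism of schemes $\overline{\xi^R}: X^R \quot G \to X \quot G$ applied to the quasi-coherent sheaves $\Lambda'$ and $N$, with care taken for the bimodule structures. Then Lemma \ref{lem:pushfwd} identifies $R\overline{\xi^R_\ast}\Lambda' \simeq \Lambda$ as sheaves of $\Lambda$-bimodules on $X\quot G$ (this is where the Cohen--Macaulay hypothesis enters), so substitution yields $R\hat\xi_\ast L\hat\xi^\ast N \simeq N \Lotimes_\Lambda \Lambda \simeq N$, and naturality of the construction ensures that this composite coincides with $\eta_N^{-1}$.

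The main obstacle will be verifying that the projection-formula isomorphism respects the $\Lambda$-bimodule structure, not merely the underlying $\Oscr_{X\quot G}$-module structure. A cleaner alternative, if this bookkeeping is awkward, is to observe that $L\hat\xi^\ast \Lambda = \Lambda'$ (since the derived tensor of $\overline{\xi^R}^{-1}\Lambda$ over itself into $\Lambda'$ is just $\Lambda'$), so that on objects of the form $\Lambda|_U$ with $U \subset X \quot G$ an affine open, the unit reduces after localization directly to the isomorphism of Lemma \ref{lem:pushfwd}. Since $\{\Lambda|_U\}_U$ forms a set of compact generators of $D(\Lambda)$ and both $L\hat\xi^\ast$ and $R\hat\xi_\ast$ preserve small coproducts (the latter because $\overline{\xi^R}$ is proper by Proposition \ref{prop:Reichsteinproperties}\eqref{2} and hence of bounded cohomological dimension on quasi-coherent sheaves), the unit must be an isomorphism on all of $D(\Lambda)$.
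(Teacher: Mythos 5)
Your core strategy matches the paper's: reduce to showing the unit $\Fscr\to R\hat\xi_* L\hat\xi^*\Fscr$ is an isomorphism, then reduce that to Lemma~\ref{lem:pushfwd}. The implementation differs. The paper localizes to affine $X$, replaces $\Fscr$ by a K-projective resolution $\Pscr^\bullet$ with projective terms, uses Lemma~\ref{lem:pushfwd} to see that $\hat\xi^*\Pscr^\bullet$ consists of $\overline{\xi^R_*}$-acyclic objects (projective $\Lambda$-modules pull back to projective $\Lambda'$-modules, summands of $\Lambda'$, and $R\overline{\xi^R_*}\Lambda'\cong\Lambda$), and then applies the lemma once more termwise. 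Your route~(a) via a projection-formula isomorphism $R\hat\xi_* L\hat\xi^* N\simeq N\Lotimes_\Lambda R\overline{\xi^R_*}\Lambda'$ is morally sound but, as you say, requires verifying the bimodule bookkeeping; nothing in the paper sets that up.

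Your route~(b) has a genuine imprecision that would need to be repaired. The objects ``$\Lambda|_U$'' are not compact generators of $D(\Lambda)$ in the straightforward sense: pushing forward along the affine open immersion $i:U\hookrightarrow X\quot G$ gives $i_*(\Lambda|_U)$, which is not compact (since $i_*$ is a right adjoint), while the extension by zero $j_!(\Lambda|_U)$, which \emph{is} compact in the ambient module category, fails to be quasi-coherent and so does not live in $D(\Lambda)$. (The paper itself uses the $j_!$'s in a footnote, but only in the ambient module category to build K-flat resolutions.) The fix is to drop the compact-generation framing altogether and simply observe, as the paper does, that since $\overline{\xi^R}$ is proper, $R\hat\xi_*$ commutes with restriction to affine opens of $X\quot G$, so the unit being an isomorphism is a Zariski-local question; locally one then checks it on a K-projective resolution. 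That is the shorter path to the same endpoint.
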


\begin{proof}
 Note
that on the level of $\Oscr_{X^R}$-modules, $R\hat{\xi}_\ast$ is just $R\overline{\xi^R_\ast}$.
It is sufficient
to prove that for any $\Fscr\in D(\Lambda)$ the canonical morphism $\Fscr\r R\hat{\xi}_* L\hat{\xi}^\ast\Fscr$ is an isomorphism.

This is a local statement and hence we may assume that $X$ is affine. We may then replace $\Fscr$ by a K-projective resolution $\Pscr^\bullet$ with
projective terms. Then $ L\hat{\xi}^\ast\Fscr=\hat{\xi}^\ast \Pscr^\bullet$ and moreover by Lemma \ref{lem:pushfwd} $\hat{\xi}^\ast \Pscr^\bullet$ consists
of objects acyclic for $\overline{\xi^R_*}$. Since $\overline{\xi^R_*}$ has finite homological dimension we obtain
$R\overline{\xi^R_*} L\hat{\xi}^\ast\Fscr=\overline{\xi^R_\ast} \hat{\xi}^\ast \Pscr^\bullet=\Pscr^\bullet$ where for the last equality we use again Lemma \ref{lem:pushfwd}.
\end{proof}

As an immediate corollary of Corollary \ref{cor:ff} we get the following embedding of $D(\Lambda)$ to $D(X^R/G)$.
\begin{corollary}\label{cor:bric1}
Assume that $\Lambda$ is Cohen-Macaulay. There is a commutative diagram of derived categories
\begin{equation}\label{diagrambric2}
\xymatrix{
D(\Lambda)\ar@{^{(}->}[rr]^{-\Lotimes_\Lambda \Uscr}\ar@{^{(}->}[d]_{L\hat{\xi}^\ast} &&D(X/ G)\ar[d]^{{L\xi^{R*}}}\\
D(\Lambda')\ar@{^{(}->}[rr]_{-\Lotimes_{\Lambda'} \xi^{R*}\Uscr}& &D(X^R/G)
}
\end{equation}
\end{corollary}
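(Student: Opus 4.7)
The assertion splits into three ingredients: fully-faithfulness of the left vertical, fully-faithfulness of the two horizontal arrows, and commutativity of the square. The left vertical is handled by Corollary \ref{cor:ff}. The horizontal arrows are covered by the general principle recalled in \S\ref{sec:embedding1}: whenever $\Vscr$ is a $G$-equivariant vector bundle on a smooth $G$-scheme $Y$ with good quotient, and one sets $\Lambda_Y=\pi_{s\ast}\uEnd_Y(\Vscr)$, the functor $-\Lotimes_{\Lambda_Y}\Vscr:D(\Lambda_Y)\to D(Y/G)$ is fully faithful. I would give a one-line justification: its right adjoint is $R\pi_{s\ast} R\uHom_{Y/G}(\Vscr,-)$, and since $\Vscr$ is locally free one has $R\uHom_{Y/G}(\Vscr,\Vscr)=\uEnd_Y(\Vscr)$; moreover $\pi_{s\ast}=(-)^G$ is exact because $G$ is reductive, so the unit evaluated at $\Lambda_Y$ is an isomorphism. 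Applying this to $(X,\Uscr)$ and to $(X^R,\xi^{R\ast}\Uscr)$ takes care of both horizontal arrows.

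The genuine content of the corollary is the commutativity of the square. For this, recall that the base-change morphism associated with the commutative square of ringed spaces obtained from \eqref{eq:kirwan} produces a canonical algebra map
\[
\hat{\xi}^{\ast}\Lambda=\overline{\xi^R}^{\ast}\pi_{s\ast}\uEnd_X(\Uscr)\longrightarrow \pi^R_{s\ast}\xi^{R\ast}_s\uEnd_X(\Uscr)=\pi^R_{s\ast}\uEnd_{X^R}(\xi^{R\ast}\Uscr)=\Lambda',
\]
which by construction is compatible with the identification $\xi^{R\ast}_s\Uscr=\xi^{R\ast}\Uscr$ as a module over the respective sheaves of algebras. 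With this in hand, the standard compatibility between derived pullback and derived tensor product over a sheaf of rings yields, for any $M\in D(\Lambda)$, a canonical morphism
\[
L\hat{\xi}^\ast M \Lotimes_{\Lambda'} \xi^{R\ast}\Uscr\longrightarrow L\xi^{R\ast}\bigl(M\Lotimes_{\Lambda}\Uscr\bigr).
\]
One checks it is an isomorphism first for $M=\Lambda$, where both sides evaluate to $\xi^{R\ast}\Uscr$, and then extends by compatibility with arbitrary coproducts and distinguished triangles, since both functors preserve these and $\Lambda$ generates $D(\Lambda)$.

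The main technical obstacle is purely bookkeeping: making precise the interplay between the two layers $\pi_s$ and $\overline{\xi^R}$ (in particular the action of $\pi_s^{-1}\Lambda$ on $\Uscr$ and how it transforms under $\xi^{R\ast}_s$) so that the canonical morphism above is manifestly well defined. Once the ringed-space data is set up consistently, the remainder is a routine cocontinuity plus triangulated-generators argument, and the fully-faithful properties along the three relevant edges force the fourth to agree with the composition in both directions.
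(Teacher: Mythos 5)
Your proof is correct and matches the paper's (unwritten) approach: the paper presents Corollary \ref{cor:bric1} as an ``immediate corollary'' of Corollary \ref{cor:ff}, leaving the horizontal full faithfulness and the commutativity of the square as routine verifications, and that is exactly what you supply. One small slip: for the horizontal arrows you check the unit of the adjunction only on the object $\Lambda_Y$; you need the same generator-plus-cocontinuity extension you explicitly invoke for the commutativity square (here the functor $R\pi_{s\ast}R\uHom_{Y/G}(\Vscr,-\Lotimes_{\Lambda_Y}\Vscr)$ preserves coproducts since $\Vscr$ is perfect and $\pi_{s\ast}$ is exact and coproduct-preserving), but this is a one-line fix that does not change the approach.
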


\begin{lemma}\label{lem:cm}
If $\Lambda$ is Cohen-Macaulay then $\Lambda'$ is Cohen-Macaulay.
\end{lemma}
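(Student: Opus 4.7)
The Cohen-Macaulay property is local on $X^R\quot G$ and preserved/reflected under faithfully flat (in particular strongly \'etale) base change of the base, so by Lemma~\ref{lem:etale} I can check CM on an \'etale cover coming from linearized slices. Combined with Lemma~\ref{rem:vecbundle}, this reduces us to the \emph{linear case}: $X = W$ is a $G$-representation, $\Uscr = U \otimes \Oscr_W$, and $W^R$ is as in Lemma~\ref{lem:local}. By Lemma~\ref{rem:reichsteincovering}, $W^R\quot G$ is then covered by affine opens $\Spec((S^G_f)_{\geq 0})$ for homogeneous $f \in S^G_{>0}$ (where $S = \Sym W^\vee$ is graded by $\deg W_i^\vee = i$), on which $\Lambda'$ becomes the module $N_f := ((\End(U) \otimes S)^G_f)_{\geq 0}$ over $A_f := (S^G_f)_{\geq 0}$.

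I would verify CM of $N_f$ over $A_f$ prime by prime. If a prime $\mathfrak{q}\subset A_f$ does \emph{not} contain $f$, then because $f$ has positive degree $d>0$ one checks $A_f[f^{-1}] = S^G_f$ and $N_f[f^{-1}] = (\End(U) \otimes S)^G_f = \Lambda_f$, so CM at $\mathfrak{q}$ follows directly from the assumed CM of $\Lambda$ over $S^G$.

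The main obstacle is the case $f \in \mathfrak{q}$, corresponding to points on the exceptional divisor $V(f)$ of the chart (the weighted-blowup divisor in the sense of Remark~\ref{rem:weightedblowup}). Here my plan is to exploit the $\NN$-grading on $A_f$: express the relevant local cohomology of $N_f$ in terms of graded local cohomology of $(\End(U) \otimes S)^G$ in a spirit analogous to \eqref{eq:WE}--\eqref{eq:con13} from the proof of Lemma~\ref{lem:pushfwd}, and then run the same local-duality / Grothendieck-duality argument (invoking Lemma~\ref{lem:omega} for $\omega_{W^s/G}$ and the hypothesis (H2) to discard the $W_0$-supported contribution as in \eqref{eq:W0bound}). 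The CM hypothesis on $\Lambda$ enters through the control it gives, via local duality, on $\Hom_{S^G}(\Lambda, \omega_{W\quot G})$, exactly as in \eqref{eq:gduality}; this forces the required vanishing of low-degree local cohomology of $N_f$ at $\mathfrak{q}$.

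Assembling these two cases gives CM of $\Lambda'$ over each affine chart of $W^R\quot G$ in the linear case, and descending along the strongly \'etale maps of Lemma~\ref{lem:etale} yields CM of $\Lambda'$ in general. I expect the main technical difficulty to be the careful bookkeeping of the interaction between the $\ZZ$-grading on $S$ and the localization at $f$ in the boundary case: the chart $\Spec((S^G_f)_{\geq 0})$ is a ``positive-part'' subring of $S^G_f$, so depth computations at primes containing $f$ mix ordinary and graded local cohomology; once this is set up cleanly the vanishings should be essentially a repackaging of the ones already proved in Lemma~\ref{lem:pushfwd}.
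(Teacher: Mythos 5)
Your reduction to the linear case via Lemma \ref{lem:etale}, and the identification of $\Lambda'$ on each chart of $W^R\quot G$ with $(\Lambda_f)_{\geq 0}$ over $(S^G_f)_{\geq 0}$, both match the paper. Your disposal of primes not containing $f$ by inverting $f$ and recovering $\Lambda_f$ is also correct and clean. But that was always the easy half: it corresponds to the locus where $\xi^R$ is an isomorphism, and CM there is essentially tautological. The substance of the lemma is the exceptional locus $V(f)=V(B_{>0})$, and there your proposal gives a \emph{plan}, not a proof.

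The plan as stated is unlikely to close without a genuinely different idea. Lemma \ref{lem:pushfwd} establishes a single truncated vanishing, $(R\Gamma_{W_0\quot G}\Lambda)_{\geq 0}=0$, via local duality at the irrelevant ideal of $S^G$; this is the statement needed to identify $R\overline{\xi^R_*}\Lambda'$ with $\Lambda$. Cohen-Macaulayness of $(\Lambda_f)_{\geq 0}$ at a point of the exceptional divisor is a full depth equality at a prime of the form $\mathfrak{m}_0 + B_{>0}$ with $\mathfrak{m}_0$ varying over the base $\Spec (S^G_f)_0$, which is a much stronger, non-truncated, pointwise statement. There is no direct passage from the former to the latter, and the ``careful bookkeeping'' you defer is where the actual argument would have to live. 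You would in effect be re-proving the lemma from scratch in a harder framework.

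The paper takes a completely different and more elementary route that bypasses local duality altogether. Writing $A=\Lambda_f$ and $m=\deg f$, it observes that $A$ contains the Laurent polynomial ring $A'=A_0[f,f^{-1}]$ as a direct summand and that $A=\bigoplus_{0\le i<m}A_i[f,f^{-1}]$ is a finite $A'$-module. Since $A$ is CM (as a localization of $\Lambda$), each summand $A_i[f,f^{-1}]$ is a CM $A'$-module; killing the regular element $f-1$ gives that $A_i$ is CM over $A_0$, hence $A_i[f]$ is CM over $A_0[f]$, and therefore $A_{\geq 0}=\bigoplus_{0\le i<m}A_i[f]$ is CM. This handles the exceptional and non-exceptional loci simultaneously, uses only elementary commutative algebra (direct sum decomposition, regular sequences, finite ring extensions), and stays entirely inside the category of graded modules without invoking the dualizing machinery of the Appendix. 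If you want to salvage your approach you would need to set up a genuine graded local duality at the homogeneous maximal ideals of $(S^G_f)_{\geq 0}$, which is considerably more work than the paper's two-paragraph argument; I would instead recommend the direct-summand/quotient-by-$f-1$ trick.
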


\begin{proof}
Since we may check Cohen-Macaulayness \'etale locally, we can reduce by  Lemma \ref{lem:etale} to the linear case. We use the same notations as in the proof 
of Lemma \ref{lem:pushfwd}. Using Lemmas \ref{rem:reichsteincovering},  Lemma \ref{rem:vecbundle}, locally $\Lambda'$ is of the form $(\Lambda_{f})_{\geq 0}$ for  homogeneous $f\in (S^G)_{>0}$. 

We then need to prove that $(\Lambda_{f})_{\geq 0}$ is Cohen-Macaulay. Let $m=\deg f$. 
We put $A=\Lambda_f$. $A$ is Cohen-Macaulay since it is localization of $\Lambda$ which is Cohen-Macaulay. Note that~$A$ contains a Laurent polynomial ring $A'=A_0[f,f^{-1}]$ as a direct summand. 
As an ascending chain of one sided ideals in $A'$ may be extended to an ascending chain of one sided ideals in $A$ we see that $A'$ is noetherian. A similar argument shows that the $A_i[f,f^{-1}]$,
for $0\le i<m$,
are noetherian $A'$-modules and so they are finitely generated. In particular, $A=\oplus_{0\leq i<m} A_i[f,f^{-1}]$ is finitely generated as an $A'$-module.

Since $A$ is Cohen-Macaulay over $A$, it is Cohen-Macaulay over $A'$. As $A=\oplus_{0\leq i<m} A_i[f,f^{-1}]$ the $A'$-summands $A_i[f,f^{-1}]$ of $A$ are also Cohen-Macaulay $A'$-modules. 
Quotienting by the nonzero divisor $f-1$ we see that $A_i$ is a Cohen-Macaulay $A_0$-module for $0\leq i<m$.  
Thus, $A_i[f]$ is a Cohen-Macaulay $A_0[f]$-module  for $0\leq i<m$. 
Therefore $A_{\geq 0}=\oplus_{0\leq i<m}A_i[f]$ is Cohen-Macaulay $A_0[f]$-module and thus $A_{\geq 0}$ is Cohen-Macaulay  (as it is a finitely  generated $A_0[f]$-module). 
\end{proof}

In the following proposition we show that $\Lambda$ as in Lemma
\ref{lem:cm} can be embedded in the smooth Deligne-Mumford stack
obtained by the Kirwan resolution.

\begin{proposition}\label{prop:embedding}
Let $X$ be a smooth $G$-scheme with a good quotient $\pi:X\r X\quot G$ which satisfies in addition \eqref{H2}.\footnote{\eqref{H2} was imposed on in the beginning of \S\ref{sec:embedding} and it was used explicitly in the proof of Lemma \ref{lem:pushfwd} and implicitly  (via Lemma \ref{lem:pushfwd}) in 
Corollaries \ref{cor:ff}, \ref{cor:bric1}.}  Let $\Xi:\fks/G\r X/G$ be the
Kirwan resolution (see \S\ref{subsec:Kirwan}).

Let $\Uscr$ be a $G$-equivariant vector bundle on $X$ and assume that $\Lambda=\pi_{s\ast}\uEnd_{X}(\Uscr)$ is Cohen-Macaulay on $X\quot G$. Put $\Uscr'=\Xi^*\Uscr$, $\Sigma=\pi'_{s\ast}\uEnd_{\fks}(\Uscr')$. There is a commutative diagram of derived categories
\begin{equation}\label{diagram}
\xymatrix{
D(\Lambda)\ar@{^{(}->}[rr]^{-\Lotimes_\Lambda \Uscr}\ar@{^{(}->}[d]_{L\hat{\Xi}^\ast} &&D(X/ G)\ar[d]^{{L\Xi^{*}}}\\
D(\Sigma)\ar@{^{(}->}[rr]_{-\Lotimes_\Sigma \Uscr'}& &D(\fks/G)
}
\end{equation}
where $\hat{\Xi}$ is the induced morphism of ringed spaces $(\fks\quot G,\Sigma)\r (X\quot G,\Lambda)$.
\end{proposition}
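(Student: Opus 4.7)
The plan is to prove the proposition by induction on the number of Reichstein transforms in the Kirwan resolution. By Proposition~\ref{prop:Reichsteinproperties}\eqref{4}, the maximal stabilizer dimension $\mu(X_i)$ strictly decreases under each Reichstein transform and is bounded below by $0$, so the process terminates after finitely many steps, yielding a tower
\[
\fks = X_n \xrightarrow{\xi_{n-1}} X_{n-1} \xrightarrow{\xi_{n-2}} \cdots \xrightarrow{\xi_0} X_0 = X
\]
whose composition is $\Xi$. For each $i$, set $\Uscr_i := (X_i/G \to X/G)^*\Uscr$ and $\Lambda_i := (\pi_i)_{s\ast}\uEnd_{X_i}(\Uscr_i)$; note $\Lambda_0 = \Lambda$ and $\Lambda_n = \Sigma$.

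The key observation is that the hypotheses needed to apply Corollary~\ref{cor:bric1} at step $i$ propagate along the tower. Namely, by Proposition~\ref{prop:Reichsteinproperties}\eqref{3} the condition (H2) passes from $X_i$ to $X_{i+1}$, and by Lemma~\ref{lem:cm} Cohen-Macaulayness of $\Lambda_i$ implies Cohen-Macaulayness of $\Lambda_{i+1}$ (here we use that $\Uscr_{i+1}$ is the pullback of $\Uscr_i$ along the Reichstein transform, as desired by the hypotheses of Lemma~\ref{lem:cm}). Hence at each step we may apply Corollary~\ref{cor:bric1} to obtain a commutative square
\[
\xymatrix{
D(\Lambda_i) \ar@{^{(}->}[rr]^{-\Lotimes_{\Lambda_i} \Uscr_i} \ar@{^{(}->}[d]_{L\hat{\xi}_i^*} && D(X_i/G) \ar[d]^{L\xi_i^*} \\
D(\Lambda_{i+1}) \ar@{^{(}->}[rr]_{-\Lotimes_{\Lambda_{i+1}} \Uscr_{i+1}} && D(X_{i+1}/G)
}
\]
where $\hat{\xi}_i:(X_{i+1}\quot G, \Lambda_{i+1}) \to (X_i\quot G, \Lambda_i)$ is the induced map of ringed spaces with its sheaves of algebras, and the horizontal arrows are fully faithful embeddings.

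Stacking these $n$ squares vertically yields the desired diagram: the composition of the right-hand vertical arrows is $L\Xi^*$ and the composition of the left-hand vertical arrows is $L\hat{\Xi}^*$, while the compositions of fully faithful embeddings on the left remain fully faithful. Commutativity of the outer rectangle follows from functoriality of pullback. The only subtle point is purely bookkeeping — verifying that the pullback of $\uEnd_{X_i}(\Uscr_i)$ coincides with $\uEnd_{X_{i+1}}(\Uscr_{i+1})$ at each step, which is immediate because $\xi_i$ is an open immersion composed with a blowup, so that $\Uscr_{i+1} = \xi_i^*\Uscr_i$ as claimed — there is no genuine obstacle beyond what has already been handled in Lemmas~\ref{lem:pushfwd} and \ref{lem:cm}.
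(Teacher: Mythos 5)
Your proof is correct and follows essentially the same route as the paper's: factor $\Xi$ as a tower of Reichstein transforms, use Proposition~\ref{prop:Reichsteinproperties}\eqref{3} and Lemma~\ref{lem:cm} to propagate the hypotheses (H2) and Cohen-Macaulayness down the tower, apply Corollary~\ref{cor:bric1} (equivalently Corollary~\ref{cor:ff}) at each step, and stack the resulting squares. Your explicit invocation of \eqref{3} for the (H2)-propagation is a useful clarification that the paper relegates to a footnote; the only inaccuracy is cosmetic — the Reichstein transform is a blowup followed by an open immersion, not the other way around — and it does not affect the argument.
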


\begin{proof}
The commutativity of the diagram and  full faithfulness of the horizontal arrows are straightforward. It remains to show full faithfulness of the left most vertical arrow. 
By construction, $L\hat{\Xi}^\ast$ is the composition of $L\hat{\xi}^\ast$'s  which 
correspond to a single {Reichstein transform}. By Lemma \ref{lem:cm} we are reduced to showing  full faithfulness for a single {Reichstein transform}. 
In that case the conclusion follows by Corollary \ref{cor:ff}.
\end{proof}

\begin{remark}
Note that the rightmost vertical map in the diagram \eqref{diagram} is in general not fully faithful.
\end{remark}

\subsection{Embedding of $D(\Lambda_Z)$ in $D(X^R/G)$}
In this subsection for use below (c.f. \S\ref{sec:orlovsod}) we extract some consequences of the above results and their proofs. 

From the proof of Lemma \ref{lem:pushfwd} we can extract the following. 
\begin{lemma}\label{rem:afterlempushfwd}
Assume that $\Lambda$ is Cohen-Macaulay. 
Let $\Uscr_Z$, $\Uscr_{E^R}$ denote the restrictions of $\Uscr$, $\xi^{R*}\Uscr$ to $Z$, $E^R$, respectively. Let $\pi_{Z,s}:Z/G\to Z\quot G$ be the quotient map
and denote the restriction/ corestriction of $\overline{\xi^R}$ to a map $E^R\quot G\r Z\quot G$ by $\overline{\xi^R_E}$ so that we have
a commutative diagram
\[
\xymatrix{
E^R\quot G \ar[d]_{\overline{\xi^R_E}}\ar@{^(->}[r]^{\bar{s}} & X^R\quot G\ar[d]^{\overline{\xi^R}}\\
Z\quot G \ar[r] & X\quot G 
}
\] 
Put 
\[
\Lambda_Z:=\pi_{Z,s,*}\uEnd_{Z}(\Uscr_Z),\quad 
\Lambda_Z':=\pi_{E,s,*}^R\uEnd_{E^R}(\Uscr_{E^R}).
\]
Then
\[
R\overline{\xi^R_{E*}}\Lambda_{Z}'=\Lambda_Z,
\] 
and moreover on every connected component $Z_i$ of $Z$
\begin{equation}\label{eq:van}
R\overline{{\xi}^R_{E*}}\pi_{E,s,*}^R(\uEnd_{E^R}(\Uscr_{E^R})(l))\mid_{ Z_i\quot G_i}=0
\end{equation}
for $-c_i<l<0$ with $c_i=\codim(Z_i,X)$ and  where $G_i$ is the stabilizer of $Z_i$ for the action of $G$ on the connected components of $Z$.
\end{lemma}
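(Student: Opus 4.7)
The plan is to mirror the graded local-cohomology argument at the end of the proof of Lemma~\ref{lem:pushfwd}, while tracking the degree $l$ throughout. Both assertions are local on $X\quot G$ for the \'etale topology and may be checked component by component, so by Lemma~\ref{lem:etale} and Lemma~\ref{rem:vecbundle} I would reduce, working near a point of $Z_i$ with closed orbit, to the linear situation $X=W=W_0\oplus W_1$ with $W_0=W^{G_e}=Z$ and $\Uscr=U\otimes \Oscr_W$. In this reduction $c_i=\codim(Z,X)=\dim W_1=:c$, and by Lemma~\ref{lem:local} $E^R=W_0\times \PP(W_1)^{ss}$ with $\theta:E^R\to Z=W_0$ the first projection. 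Since $Z\quot G$ is affine and $\overline{\xi^R_E}\circ \pi^R_{E,s}=\pi_{Z,s}\circ \theta_s$, the global sections on $Z\quot G$ of the pushforward I want to compute will be
\[
(\End(U)\otimes S_0\otimes R\Gamma(\PP(W_1)^{ss},\Oscr(l)))^G,\quad S_j:=\Sym W_j^\vee.
\]

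Next I would introduce the section functor $\omega_1:\Gr(S_1)\to \Qch(\PP(W_1)^{ss})\to \Gr(S_1)$ of Lemma~\ref{lem:pushfwd}, so that $R\omega_1(S_1)=\bigoplus_l R\Gamma(\PP(W_1)^{ss},\Oscr(l))$. Tensoring the fundamental triangle $R\Gamma_J(S_1)\to S_1\to R\omega_1(S_1)\to$ (with $J\subset S_1$ the ideal of the nullcone $W_1^{\text{null}}$) by $\End(U)\otimes S_0$ and taking $G$-invariants, I obtain the triangle
\[
R\Gamma_{W_0\quot G}(\Lambda)\to \Gamma(W\quot G,\Lambda)\to (\End(U)\otimes S_0\otimes R\omega_1(S_1))^G\to
\]
of $\ZZ$-graded modules, the grading being induced by the $G_m$-action scaling $W_1$. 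The key input will be the vanishing $R\Gamma_{W_0\quot G}(\Lambda)_l=0$ for $l>-c$: this is the Matlis dual of the vanishing of $\Hom_{W\quot G}(\Lambda,\omega_{W\quot G})$ in degrees $<c$ that is computed in equation~\eqref{eq:gduality}, and it follows by local duality (Corollary~\ref{cor:dualizing}) together with the Cohen-Macaulayness of $\Lambda$, exactly as in the last paragraph of the proof of Lemma~\ref{lem:pushfwd}.

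Both claims then drop out on reading off the degree $l$ part of the triangle. For $l=0$ the first term vanishes by $0>-c$ and the middle term equals $(\End(U)\otimes S_0)^G=\Gamma(Z\quot G,\Lambda_Z)$, giving the identification $R\overline{\xi^R_{E*}}\Lambda'_Z=\Lambda_Z$. For $-c<l<0$ the first term vanishes again, and the middle term $\Gamma(W\quot G,\Lambda)_l=(\End(U)\otimes S)^G_l$ vanishes because $S$ lives in nonnegative degrees; so the third term is zero, which is the asserted vanishing. The hard part is exactly the graded vanishing of $R\Gamma_{W_0\quot G}(\Lambda)$ in degrees $>-c$, but since this is precisely the output of the local-duality/Cohen-Macaulay calculation already performed in Lemma~\ref{lem:pushfwd}, there is no new obstacle beyond carefully bookkeeping the grading.
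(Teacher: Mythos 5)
Your proof is correct and follows essentially the same route as the paper: reduce to the linear case, realize the pushforward to $Z\quot G$ as $(\End(U)\otimes S_0\otimes R\Gamma(\PP(W_1)^{ss},\Oscr(l)))^G$, invoke a local cohomology triangle whose first term is $R\Gamma_{W_0\quot G}(\Lambda)$, and settle the degree bound $(R\Gamma_{W_0\quot G}\Lambda)_{>-c}=0$ via local duality, Cohen-Macaulayness, and the Grothendieck-duality computation in \eqref{eq:gduality}. The only cosmetic difference is that you use the single triangle $R\Gamma_J(S_1)\to S_1\to R\omega_1(S_1)\to$ with $\omega_1$ taken along the open locus $\PP(W_1)^{ss}$ directly (a slight mislabel, since the paper's $\omega$ in Lemma~\ref{lem:pushfwd} goes through the full $\tilde E=W_0\times\PP(W_1)$ and arrives at the same reduction after a second triangle $R\Gamma_{S_{>0}}\to\id\to R\omega$), which is a tidier packaging but the same underlying argument.
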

\begin{proof}
This is proved in a similar (but easier) way as Lemma \ref{lem:pushfwd}. We pass to the linear case for a point $x\in Z_i$.  
In this case $c_i=d_1=\dim W_1$ (with the notation as in \S\ref{subsec:linear}).  
Following the steps of the proof one is reduced to showing that 
$(\End(U)\otimes R\omega R\Gamma_{W_0\times W_1^{\text{null}}}(W,\Oscr_W))^G_{l}=0$ for $-d_1< l \leq 0$.  
Then we use the extra fact (after \eqref{eq:W0bound}) that $R\Gamma_{W_0}(W,\Oscr_W)$ is zero in degrees $>-d_1$ (in the proof of Lemma \ref{lem:pushfwd} it was only needed that it is $0$ in degrees $\geq 0$). The proof then further proceeds as the proof of the lemma, where the bound on $l$ again comes in at the end of the proof.
\end{proof}

Lemma \ref{rem:afterlempushfwd} (together with the proof of Corollary \ref{cor:ff})  makes it possible to construct an embedding of $D(\Lambda_Z)$ in $D(X^R/G)$.  Let 
\[\Lambda_{E^R}=\pi^R_{s*}\uREnd_{X^R}(s_*\Uscr_{E^R}).\] 
Let $\xi^R_E:E^R\to Z$ denote the restriction/corestriction of $\xi^R:X^R\to X$.

\begin{corollary}\label{cor:bric3}
Assume $\Lambda$ is Cohen-Macaulay. Then $R\overline{\xi^R_*}\Lambda_{E^R}\cong\Lambda_Z$. Denote by $\hat{\xi}_E$ the following morphism of DG-ringed spaces\footnote{$\Lambda_{E^R}$ is a sheaf of DG-algebras.} (obtained for $\overline{\xi^R}$)\footnote{We silently identify $\Lambda_Z$ with $i_*\Lambda_Z$ for $i:Z\quot G \hookrightarrow X\quot G$.}
\[
\hat{\xi}_E:(X^R\quot G,\Lambda_{E^R}) \to (X\quot G,\Lambda_Z).
\] 
Then there is a commutative diagram of derived categories
\begin{equation}\label{diagrambric4}
\xymatrix{
D(\Lambda_Z)\ar@{^{(}->}[rr]^{-\Lotimes_{\Lambda_Z} \Uscr_Z}\ar@{^{(}->}[d]_{L\hat{\xi}_E^\ast} &&D(Z/ G)\ar[d]^{Rs_*{L\xi_E^{R*}}}\\
D(\Lambda_{E^R})\ar@{^{(}->}[rr]_{-\Lotimes_{\Lambda_{E^R}} s_*\Uscr_{E^R}}& &D(X^R/G)
}
\end{equation}
\end{corollary}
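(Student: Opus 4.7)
The strategy mirrors the derivation of Corollary~\ref{cor:bric1} from Lemma~\ref{lem:pushfwd} and Corollary~\ref{cor:ff}. The heart of the proof will be the pushforward identity
\[
R\overline{\xi^R_*}\Lambda_{E^R}\cong \Lambda_Z
\]
(the analogue of Lemma~\ref{lem:pushfwd} in this setting, with $\Lambda_Z$ identified with its pushforward along $i_{Z\quot G}\colon Z\quot G\hookrightarrow X\quot G$). Once this is in hand, full faithfulness of $L\hat\xi_E^*$ becomes formal, full faithfulness of the horizontal arrows is the standard tilting embedding applied to the progenerators $\Uscr_Z$ and $s_*\Uscr_{E^R}$, and commutativity of the square is functorial from the factorization $\xi^R\circ s=i_Z\circ\xi^R_E$ together with base change for $-\otimes-$.

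To establish the pushforward identity I extend $\Uscr_{E^R}=\xi^{R*}_E\Uscr_Z$ to the locally free sheaf $\tilde\Uscr:=\xi^{R*}\Uscr$ on $X^R$, noting $s^*\tilde\Uscr\cong\Uscr_{E^R}$. Tensoring the Koszul resolution $0\to\Oscr_{X^R}(-E^R)\to\Oscr_{X^R}\to s_*\Oscr_{E^R}\to 0$ of the smooth divisor $E^R\subset X^R$ with $\tilde\Uscr$ gives the locally free resolution $[\tilde\Uscr(-E^R)\to\tilde\Uscr]$ of $s_*\Uscr_{E^R}$ in degrees $[-1,0]$. Computing $\uRHom_{X^R}(-,s_*\Uscr_{E^R})$ on this resolution, using $s^*\Oscr_{X^R}(-E^R)\cong \Oscr_{E^R}(1)$ together with the observation that a local defining equation of $E^R$ restricts to zero on $E^R$, yields a two-term complex with zero differential whose cohomology sheaves are $s_*\uEnd_{E^R}(\Uscr_{E^R})$ in degree $0$ and $s_*\uEnd_{E^R}(\Uscr_{E^R})(-1)$ in degree $1$. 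Applying $\pi^R_{s*}$ and then $R\overline{\xi^R_*}$, and exploiting $\overline{\xi^R}\circ\bar s=i_{Z\quot G}\circ\overline{\xi^R_E}$, each cohomology sheaf becomes an instance of Lemma~\ref{rem:afterlempushfwd}: the degree zero piece yields $\Lambda_Z$ on each component $Z_i$, while the twisted piece in degree one vanishes because (H2) together with $\mu>0$ forces $Z\subset X^u$, hence $c_i=\codim(Z_i,X)\ge 2$, placing $l=-1$ strictly in the vanishing range $-c_i<l<0$. The hypercohomology spectral sequence then collapses to give the claimed isomorphism.

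With the pushforward identity established, full faithfulness of $L\hat\xi_E^*$ follows verbatim as in Corollary~\ref{cor:ff}: for $\Fscr\in D(\Lambda_Z)$ one checks $\Fscr\xrightarrow{\sim} R\hat\xi_{E*}L\hat\xi_E^*\Fscr$ locally after passing to a K-projective resolution of $\Fscr$, the only input being the identity above. The main obstacle is treating $\Lambda_{E^R}$ as a genuine DG-algebra rather than a classical sheaf of algebras: the morphism $\hat\xi_E$ must be interpreted as a morphism of DG-ringed spaces, so pullback and pushforward are taken in the derived DG-sense, and the splitting of $\uREnd_{X^R}(s_*\Uscr_{E^R})$ displayed above is merely a quasi-isomorphism at the level of complexes and not of DG-algebras. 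Fortunately, only the underlying complex is needed to compute $R\overline{\xi^R_*}\Lambda_{E^R}$, which bypasses the DG subtlety; the tilting embedding in the bottom row and the commutativity of the square then follow from standard formal manipulations.
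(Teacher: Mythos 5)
Your proposal is correct and follows essentially the same route as the paper: the standard short exact sequence $0\to\xi^{R*}\Uscr(1)\to\xi^{R*}\Uscr\to s_*\Uscr_{E^R}\to 0$ (your Koszul resolution tensored with $\xi^{R*}\Uscr$), adjunction along the closed immersion $s$, and the vanishing input from Lemma~\ref{rem:afterlempushfwd} at the twist $l=-1$, followed by the formal transfer of full faithfulness as in Corollary~\ref{cor:ff}. The only additions are your explicit observation that the connecting map vanishes (so the triangle splits) and the explicit justification that $(H2)$ with $\mu>0$ gives $c_i\ge 2$ so that $l=-1$ lies in the admissible range $-c_i<l<0$; the paper leaves both of these implicit.
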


\begin{proof}
As in Corollary \ref{cor:ff}, it follows from $R\overline{\xi^R_*}\Lambda_{E^R}\cong \Lambda_Z$ that $L\hat{\xi}_E^*$ is fully faithful. Moreover, the horizontal arrows are fully faithful and the diagram commutes. Thus, it suffices to prove that $R\overline{\xi^R_*}\Lambda_{E^R}\cong\Lambda_Z$. 

There is a standard exact sequence on $X^R/G$
\begin{equation}\label{eq:eks0}
0\to \xi^{R*}\Uscr(1)\xrightarrow{t} \xi^{R*}\Uscr\to s_*\Uscr_{E^R}\to 0
\end{equation}
obtained by restriction to $X^R$ of a similar sequence valid for any blowup.

Applying $\uRHom_{X^R}(-,s_*\Uscr_{E^R})$ to  \eqref{eq:eks0} we get the distinguished triangle
\begin{multline}\label{eq:dis}
\uREnd_{X^R}(s_{*}\Uscr_{E^R})\to 
\uRHom_{X^R}(\xi^{R*}\Uscr,s_{*}\Uscr_{E^R})\to\\
\uRHom_{X^R}(\xi^{R*}\Uscr,s_{*}\Uscr_{E^R})(-1)\to.
\end{multline}
By adjointness,
\begin{equation}\label{eq:adj}
\uRHom_{X^R}(\xi^{R*}\Uscr,s_{*}\Uscr_{E^R})=
s_*\uRHom_{E^R}(\Uscr_{E^R},\Uscr_{E^R}).
\end{equation}
Applying $R\overline{\xi^R_*}\pi_{s*}^R$ to \eqref{eq:dis} we obtain by Lemma \ref{rem:afterlempushfwd} 
 that  $R\overline{\xi^R_*}\Lambda_{E^R}\cong \Lambda_Z$ as desired. 
\end{proof}

\section{Homologically homogeneous endomorphism sheaves} 
Endomorphism sheaves of vector bundles appeared in \S\ref{sec:embedding} above.
In this section we discuss the local properties of vector bundles whose endomorphism sheaves have good homological properties. 
This will be used in subsequent sections. 
More precisely the ``fullness'' property will be used in the proof of semi-orthogonal decomposition for the Kirwan resolution given in Theorem \ref{thm:sod}
and the ``saturation'' property will be important for the associated geometric interpretation
obtained in Corollary \ref{cor:external}.

\subsection{Equivariant vector bundles in the case of constant stabilizer dimension}\label{sec:conststab}
Let ~$\XZ$ be a $G$-equivariant smooth
$k$-scheme with a good quotient $\pi:\XZ\r \XZ\quot G$.
We assume that the stabilizers $(G_x)_{x\in \XZ}$ have
dimension independent of~$x$. In particular all orbits in $\XZ$ are
closed (as otherwise the closure of a nonclosed orbit would contain a
closed point with higher dimensional stabilizer) and hence all $G_x$
are reductive.

\medskip

For $x\in \XZ$ we let $H_x\subset G_x$ be the pointwise stabilizer of $T_x \XZ/T_x(Gx)$. This is a normal subgroup of $G_x$. Using the Luna slice
theorem one checks that $H_x$ has finite index in $G_x$ and in particular is reductive.

\begin{definition} Let $\Uscr$ be a $G$-equivariant vector bundle on $Z$.
\begin{enumerate}
\item
$\Uscr$ is \emph{saturated} if for every $x\in \XZ$ we have that the $G_x$-representation $\Uscr_x$ is (up to nonzero multiplicities) induced from $H_x$.
\item Assume that $G$ acts with finite stabilizers. Then we say that $\Uscr$ is \emph{full} if for all $x\in \XZ$, $\Uscr_x$ contains all irreducible $G_x$-representations.
\end{enumerate}
\end{definition}

\begin{lemma}\label{lem:full} Assume that $G$ acts with finite stabilizers and that $\Uscr$ is full. Then $\Uscr$ is a projective generator of $\Qch(\XZ/G)$, locally over $\XZ\quot G$,\footnote{$\pi_{s*}\Hom_Z(\Uscr,\Mscr)=0$ for $\Mscr\in \Qch(\XZ/G)$ implies $\Mscr=0$
(see e.g. \cite{VdB04a}).} and hence in particular 
\[
\Qch(\XZ/G)\cong \Qch(\Lambda)
\]
for $\Lambda=\pi_{s*}\uEnd_\XZ(\Uscr)$.
\end{lemma}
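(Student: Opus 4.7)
The plan is to reduce to a local affine model via Luna's slice theorem and then invoke Morita theory for skew-group algebras of finite groups. Since every stabilizer is finite, every $G$-orbit is closed, and for each $x\in \XZ$ Luna's slice theorem provides a strongly \'etale neighborhood $\phi\colon G\times^{G_x}S\to \XZ$ with $S$ an affine smooth $G_x$-slice at $x$; such charts have images saturated over $\XZ\quot G$ and their images jointly cover $\XZ\quot G$. By Lemma \ref{rem:vecbundle} I may further arrange that $\phi^*\Uscr \cong G\times^{G_x}(V\otimes \Oscr_S)$ with $V=\Uscr\otimes k(x)$, a $G_x$-representation. Since being a local projective generator of $\Qch(\XZ/G)$ (in the sense of the footnote) descends through strongly \'etale covers of $\XZ\quot G$, it suffices to verify the statement in a single such chart.

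Within such a chart $(G\times^{G_x}S)/G \cong [S/G_x]$ and $\phi^*\Uscr$ corresponds to the $G_x$-equivariant bundle $V\otimes \Oscr_S$. Writing $S=\Spec A$, I would identify $\Qch([S/G_x])$ with $\Mod(A\rtimes G_x)$, under which $V\otimes \Oscr_S$ becomes the right $(A\rtimes G_x)$-module $V\otimes A$. The crucial observation is that fullness of $\Uscr$ forces $V:=: k[G_x]$ in $\rep(G_x)$, since every irreducible of $G_x$ appears with positive multiplicity in each. Tensoring with $A$ over $k$ preserves this equivalence in $\add$, so $V\otimes A:=: k[G_x]\otimes A = A\rtimes G_x$ as right $(A\rtimes G_x)$-modules. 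Since $A\rtimes G_x$ is the free rank-one module over itself it is a projective generator of $\Mod(A\rtimes G_x)$, and therefore so is $V\otimes A$.

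The equivalence $\Qch(\XZ/G)\cong \Qch(\Lambda)$ would then follow from the standard Morita correspondence between a Grothendieck category and right modules over the endomorphism ring of a projective generator, applied locally over $\XZ\quot G$ along the framework of \cite{VdB04a}. The identification $\Lambda = \pi_{s*}\uEnd_\XZ(\Uscr)$ and the convention $\pi_{s*}=(-)^G$ ensure that this is the correct algebra. The main obstacle I foresee is not the local computation but the local-to-global passage: ``projective generator locally on $\XZ\quot G$'' must be shown to assemble into the stated equivalence on all of $\XZ/G$, which depends on the cited framework and on a careful matching of the quasi-coherent structure of $\Lambda$ on $\XZ\quot G$ with the descent data on the \'etale charts; this is precisely the content of the appeal to \cite{VdB04a}.
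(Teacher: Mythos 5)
Your proposal takes essentially the same route as the paper: reduce via the Luna slice theorem to a local affine chart $[S/G_x]$ with $G_x$ finite, use Lemma~\ref{rem:vecbundle} (or~\ref{lem:bundle}) to trivialize $\Uscr$ as $V\otimes\Oscr_S$, observe that fullness forces $V:=:k[G_x]$, and conclude projective generation from Morita theory for $\Mod(A\rtimes G_x)$. The paper's proof is terser (it simply asserts that $U$ containing all irreducibles of $G_x$ makes $\Uscr$ a projective generator, leaving the skew-group-ring comparison implicit), while you spell out the $A\rtimes G_x$ step and the closing Morita equivalence; both rest on the same cited framework of \cite{VdB04a} for the local-to-global assembly.
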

\begin{proof} 
We may check this on strong \'etale neighbourhoods of $x\in \XZ$. Therefore we may assume by Luna slice theorem that $\XZ/G=S/G_x$ where $S$ is a 
smooth connected affine slice at $x$, $G_x$ is finite, and that $\Uscr=U\otimes S$, where $U=\Uscr_x$ by Lemma \ref{lem:bundle}. 
Since $\Uscr$ is full, $U$ contains all irreducible representations of $G_x$, hence $\Uscr$ is projective generator.
\end{proof}
\subsection{Homologically homogeneous sheaves of algebras}\label{sec:hhgencodim1}
\begin{definition}[\cite{BH,stafford2008}\label{def:hh}]
A prime affine $k$-algebra $\Lambda$ is \emph{homologically homogeneous} if it is finitely generated as a module over its centre $R$ and if all simple $\Lambda$-modules have the same (finite) projective dimension.

A coherent sheaf $\Ascr$ of algebras on a $k$-scheme $X$ is homologically homogeneous if $\Ascr(U)$ is homologically homogeneous for every connected affine $U\subset X$.
\end{definition}
We refer to the foundational paper \cite{BH} as a general reference for homologically homogeneous rings.
We also recall from \cite[Lemma 4.2]{VdB04}
\begin{lemma} \label{lem:NCCR}
Assume that $X$ is normal with Gorenstein singularities. A NCCR on $X$ is homologically homogeneous.
\end{lemma}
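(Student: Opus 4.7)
The plan is to verify the two defining properties of homological homogeneity in turn: finiteness over the centre, and constancy of the projective dimensions of simple modules.

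For the first point, write $\Lambda=\End_R(M)$ with $M$ a nonzero finitely generated reflexive $R$-module. Then $\Lambda$ is automatically a finitely generated $R$-module, and since $R$ sits in the centre $Z(\Lambda)$, it is also finitely generated over $Z(\Lambda)$. One also needs $\Lambda$ to be prime: since $R$ is a normal domain, $M$ is torsion-free, so $\Lambda$ embeds in $\End_R(M)\otimes_R K\cong M_n(K)$ (where $K=\operatorname{Frac}R$ and $n=\operatorname{rk}M$), which is a simple ring; a subring of a simple ring that is a finitely generated torsion-free module over a domain is prime.

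For the second (main) point, I would use a noncommutative Auslander–Buchsbaum argument. Let $S$ be a simple $\Lambda$-module; since $\Lambda$ is a finitely generated $R$-algebra, the centre acts through a finite field extension of $k$, so by the Nullstellensatz $S$ is annihilated by a maximal ideal $\mathfrak{m}\subset R$, and $S$ has finite length over $R_{\mathfrak{m}}$. Localising at $\mathfrak{m}$, the noncommutative Auslander–Buchsbaum formula for a semilocal Noetherian ring $\Lambda_{\mathfrak{m}}$ of finite global dimension, module-finite over the central regular-enough base $R_{\mathfrak{m}}$, gives
\[
\operatorname{pd}_{\Lambda_{\mathfrak{m}}}(S)=\operatorname{depth}_{R_{\mathfrak{m}}}(\Lambda_{\mathfrak{m}})-\operatorname{depth}_{R_{\mathfrak{m}}}(S).
\]
The Cohen–Macaulay hypothesis on $\Lambda$ gives $\operatorname{depth}_{R_{\mathfrak{m}}}(\Lambda_{\mathfrak{m}})=\dim R_{\mathfrak{m}}$, while $S$ having finite length over $R_{\mathfrak{m}}$ forces $\operatorname{depth}_{R_{\mathfrak{m}}}(S)=0$. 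Finally, because $R$ is an affine domain (hence equidimensional and catenary), $\dim R_{\mathfrak{m}}=\dim R$ at every maximal ideal. Therefore $\operatorname{pd}_{\Lambda}(S)=\operatorname{pd}_{\Lambda_{\mathfrak{m}}}(S)=\dim R$, a common value independent of $S$.

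The routine bookkeeping is in step one; the main obstacle is justifying the noncommutative Auslander–Buchsbaum equality and the identification of $R$-depth with $\Lambda$-depth in the semilocal setting, but in our situation $\Lambda_{\mathfrak{m}}$ has finite global dimension and is module-finite over the central Noetherian local ring $R_{\mathfrak{m}}$, so this is a standard consequence of the Ext-characterisation of depth, as in \cite{BH}. The equidimensionality of affine domains is the other input that one should flag explicitly, since without it the ``same projective dimension'' conclusion would fail fibrewise.
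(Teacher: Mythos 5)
Your proof is correct. The paper does not actually prove this lemma---it simply cites \cite[Lemma 4.2]{VdB04}---and your argument (localize at the maximal ideal of $R$ supporting the given simple module, apply the noncommutative Auslander--Buchsbaum formula, then use Cohen--Macaulayness of $\Lambda$ over $R$ together with equidimensionality of the affine domain $R$) is essentially the argument appearing in that reference. Two small remarks worth folding in: for the precise form of Auslander--Buchsbaum you invoke, the paper already cites \cite[Proposition 2.3]{IyamaReiten}, which applies verbatim to module-finite algebras over Noetherian local rings without any locality assumption on $\Lambda_{\mathfrak m}$ itself; and your argument never uses the Gorenstein hypothesis on $X$, which enters the statement only because it is built into the definition of an NCCR.
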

We now assume that $X$ is a smooth $k$-scheme, $G$ is a reductive group acting with a good quotient $\pi:X\r X\quot G$.
We do not assume that $X$ satisfies $(H1)$. 
\begin{definition}\label{def:genincodim1}
Let $\Uscr$ be a $G$-equivariant vector bundle on $X$. 
$\Uscr$ is {\em generator in codimension one} of $\Qch(X/G)$ if 
$\pi_{s\ast} \uHom_X(\Uscr,\Mscr)=0$ for $\Mscr\in \Qch(X/G)$ implies $\codim \Supp_X \Mscr\ge 2$.
\end{definition}
\begin{theorem}\label{thm:saturated} 
Let $Z\subset X$ is the locus of maximal stabilizer dimension. 
Let $\Uscr$ be a $G$-equivariant vector bundle on $X$ such that $\pi_{s\ast} \uEnd_X(\Uscr)$ is homologically homogeneous on $X\quot G$.
Assume that $\Uscr$ is a generator in codimension one. Then
\begin{enumerate}
\item $\Uscr\mid Z$ is saturated.
\item If $G$ acts with finite stabilizers (and hence $Z=X$) then $\Uscr$ is full.
\end{enumerate}
\end{theorem}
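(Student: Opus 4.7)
The plan is to reduce both assertions to a local linear model via Luna's slice theorem and then exploit the structure of $\Lambda$ as an invariant-ring algebra that is homologically homogeneous.

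\textbf{Reduction to the linear case.} Fix $x\in X$ (with $x\in Z$ in part~(1)). Since both the HH hypothesis on $\Lambda$ and the generator-in-codimension-one hypothesis on $\Uscr$ are \'etale local on $X\quot G$, by the Luna slice theorem (\S\ref{subsec:prelimlin}) and Lemma~\ref{rem:vecbundle} we may replace $X$ by $W:=T_xS$ with its $G_x$-action and $\Uscr$ by $U\otimes\Oscr_W$, where $U=\Uscr_x$. Then $\Lambda=(\End U\otimes k[W])^{G_x}$ remains HH and $\Uscr$ remains a generator in codimension one; the task is to show $U$ is full (part~(2)) or saturated (part~(1)).

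\textbf{Local structure and base case.} For any closed orbit $G_xw\subset W$ with stabilizer $\Gamma$, completing at the image $y\in W\quot G_x$ and applying Luna at $w$ for the $\Gamma$-action shows that $\widehat\Lambda_y$ is Morita equivalent to $(\End U\otimes k[[T_wW]])^\Gamma$. Consequently the simple $\Lambda$-modules supported at~$y$ are indexed by the irreducibles of $\Gamma$ appearing in $U|_\Gamma$, and HH forces each to have projective dimension $\dim W\quot G_x$. The generator-in-codim-one hypothesis then supplies the base case: for every codimension one point $y\in W\quot G_x$ with generic stabilizer $\Gamma'$, every irreducible $V'$ of $\Gamma'$ must appear in $U|_{\Gamma'}$, since otherwise extending $V'$ to a codimension-one-supported $G_x$-equivariant sheaf $\Mscr$ would yield $\pi_{s*}\uHom(\Uscr,\Mscr)=0$.

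\textbf{Propagation via HH.} For part~(2), suppose for contradiction that $U|_{G_x}$ is not full. Let $H\subset G_x$ be the kernel of $G_x\to\GL(U)$, a normal subgroup through which $U$ factors. Then $\Lambda\cong(\End U\otimes k[W]^H)^{G_x/H}$, and HH of $\Lambda$ at the origin forces---via a Chevalley--Shephard--Todd/Clifford-theoretic analysis---the subgroup $H$ to act on $W$ as a pseudo-reflection group. But any pseudo-reflection $h\in H$ has a codimension one fixed locus in $W$ with generic stabilizer $\Gamma'\ni h$; the base case then forces every irreducible of $\Gamma'$ into $U|_{\Gamma'}$, in particular one whose restriction to the cyclic group $\langle h\rangle$ is non-trivial, contradicting $h\in\ker(G_x\to\GL(U))$. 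Hence $U|_{G_x}$ is full. For part~(1), the analogous argument runs on the induced action of $G_x/H_x$ on $W$: since $H_x$ acts trivially on $T_xS=W$, the $G_x$-representation $U$ being saturated is equivalent to fullness of $U$ viewed through the $G_x/H_x$-action.

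\textbf{Main obstacle.} The hardest point is making the Chevalley--Shephard--Todd-type regularity reduction rigorous for the noncommutative algebra $(\End U\otimes k[W]^H)^{G_x/H}$ rather than for the commutative invariant ring $k[W]^H$ alone: the $G_x/H$-action on $U$ could a priori smooth out singularities of $k[W]^H$ when they are combined, and one must bookkeep the isotypic decomposition of $\Lambda$ together with the Cohen--Macaulay rigidity inherent in HH in order to exclude this possibility.
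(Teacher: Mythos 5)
Your reduction to the linear case (Luna slice, Lemma \ref{lem:H12slices} and Lemma \ref{rem:vecbundle}) matches the paper's first step, but from there the approaches diverge completely, and your route has genuine gaps.

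The central obstruction is your ``propagation via HH'' step. You assert, and indeed flag as the main difficulty, that homological homogeneity of $\Lambda=(\End U\otimes k[W])^{G_x}$ forces $H=\ker(G_x\to\GL(U))$ to act on $W$ as a pseudo-reflection group, via a Chevalley--Shephard--Todd/Clifford analysis. No argument for this is given, and it is far from clear how to extract such a statement: $\Lambda$ is only module-finite over $k[W]^{G_x}$, and nothing in the HH condition directly constrains the intermediate invariant ring $k[W]^H$. Moreover, \emph{even granting} the CST claim, the argument you give only rules out pseudo-reflections in $H$, hence only establishes $H=1$, i.e.\ that $U$ is a \emph{faithful} $G_x$-representation. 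A representation can easily be faithful without containing every irreducible, so this does not prove fullness --- which is what part~(2) actually asserts. The same issue infects your treatment of part~(1): ``saturated'' is not the same as ``full when viewed through $G_x/H_x$''; it is the statement $\Uscr_x:=:\Ind_{H_x}^{G_x}\Res^{G_x}_{H_x}\Uscr_x$, which requires the pointwise-stabilizer $\GpH$ rather than the kernel of the representation, and a separate Cohen--Macaulay covariants argument (the paper's Lemma \ref{lem:cm'}).

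The paper proves both assertions simultaneously via a single mechanism (Lemma \ref{lem:allrep0}): set $Q=U\otimes S$ and $P=\Ind_K^G\Res_K^G U\otimes S$ (with $K=\GpH$, or $K$ trivial when $G$ is finite), and consider the evaluation map $\phi:\Hom_{G,S}(Q,P)\otimes_\Lambda Q\to P$. Homological homogeneity gives, via the Auslander--Buchsbaum formula of \cite{IyamaReiten} (and Lemma \ref{lem:cm'} when $K=\GpH$), that the relevant modules are projective over $\Lambda$ and hence reflexive; the generator-in-codimension-one hypothesis forces the kernel and cokernel of $\phi$ to be supported in codimension $\geq 2$; reflexivity then makes $\phi$ an isomorphism. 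Tensoring with $S/S_{>0}$ exhibits $\Ind_K^G\Res_K^G U$ as a quotient of $U^{\oplus N}$, which is exactly the desired conclusion. Your ``base case'' observation about codimension-one stabilizers, while morally in the right direction, is not developed to the point where it can substitute for this reflexivity/evaluation-map argument, and the CST step that was supposed to bridge from codimension one to the deeper strata is precisely the missing ingredient.
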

\begin{proof} Using Lemmas \ref{lem:H12slices}, \ref{lem:bundle}  we may reduce to the linear case. 
The result then follows from Lemma \ref{lem:allrep0} below.
\end{proof}
The next proposition says that generation in codimension one is automatic if $X$ is particularly nice.
We say that $X$ is \hypertarget{gnc}{{\em generic}} \cite{SVdB}, if the set $X^{\mathbf{s}}$ of stable points with trivial stabilizers satisfies $\codim(X\setminus X^{\mathbf{s}},X)>1$. 
\begin{proposition}\label{prop:NCCRimpliesU1}
If $X$ is \hyperlink{gnc}{generic} then the following holds true.
\begin{enumerate}
  \item\label{sm}  Every nonzero $G$-equivariant vector bundle $\Uscr$ on $X$ is a generator in codimension $1$. 
  \item\label{rt} $\Lambda=\pi_{s*}\uEnd_X(\Uscr)$ is an NCCR if and only if $\Lambda$ is homologically homogeneous and $X\quot G$ is Gorenstein. 
\end{enumerate}
\end{proposition}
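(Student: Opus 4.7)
Both parts hinge on the genericity hypothesis: the free locus $X^{\mathbf{s}}\subseteq X$ has complement of codimension $\geq 2$, and on it $\pi\colon X^{\mathbf{s}}\to X^{\mathbf{s}}\quot G$ is a principal $G$-bundle. The plan is to transfer everything onto this nice locus via descent and then extend back to $X\quot G$ by codimension-two arguments.

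For (1), take $\Mscr\in\Qch(X/G)$ with $\pi_{s*}\uHom_X(\Uscr,\Mscr)=0$. Descent along the principal bundle identifies the restriction of $\pi_{s*}\uHom_X(\Uscr,\Mscr)$ to $X^{\mathbf{s}}\quot G$ with $\uHom(\Uscr_0,\Mscr_0)$, where $\Uscr_0$ and $\Mscr_0$ are the descents of $\Uscr|_{X^{\mathbf{s}}}$ and $\Mscr|_{X^{\mathbf{s}}}$. Because $\Uscr$ is a nonzero vector bundle, $\Uscr_0$ is locally free of positive rank, so $\uHom(\Uscr_0,-)$ is faithful and forces $\Mscr_0=0$; hence $\Supp\Mscr\subseteq X\setminus X^{\mathbf{s}}$, whose codimension is at least two.

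The forward direction of (2) is immediate: by definition an NCCR lives over a normal Gorenstein base, so $X\quot G$ must be Gorenstein, and Lemma \ref{lem:NCCR} yields that $\Lambda$ is homologically homogeneous. For the converse, assume $\Lambda$ is homologically homogeneous and $X\quot G$ is Gorenstein; put $R=\Oscr_{X\quot G}$ (which is automatically normal, $X$ being smooth and $G$ reductive) and $M:=\pi_{s*}\Uscr$. The Brown--Hajarnavis theory \cite{BH} delivers finite global dimension of $\Lambda$ and Cohen--Macaulayness of $\Lambda$ over $R$ for free, so it only remains to produce $M$ nonzero, finitely generated and reflexive with $\Lambda=\End_R(M)$. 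On the open $U:=X^{\mathbf{s}}\quot G$ (whose complement in $X\quot G$ has codimension $\geq 2$) descent gives that $M|_U$ is a vector bundle $M_0$, that $\Lambda|_U=\uEnd(M_0)$, and hence that the canonical map $\Lambda\to\End_R(M)$ restricts to an isomorphism on $U$. Since $\Lambda$ is reflexive (being MCM over Gorenstein $R$), an isomorphism $\Lambda\cong\End_R(M)$ will follow as soon as $M$ is known to be reflexive.

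The main obstacle is therefore establishing reflexivity of $M$, which amounts to an $S_2$-check on the normal variety $X\quot G$. I would handle this by reducing \'etale-locally via Luna slices (using Lemma \ref{rem:vecbundle}) to the linear case $\Uscr=V\otimes\Oscr_X$ for a $G$-representation $V$, where the reflexivity of $(V\otimes\Sym W^\vee)^G$ over $(\Sym W^\vee)^G$ is classical invariant theory and is the exact input used in the NCCR constructions of \cite{SVdB}. Everything else in the backward implication is then assembly of standard facts.
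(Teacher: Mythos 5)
Your proposal matches the paper's proof in all essentials: part (1) is proved by restricting to the free locus $X^{\mathbf{s}}$, descending to the quotient, and using that $\Uscr$ descends to a nonzero vector bundle so $\uHom(\Uscr_0,-)$ is faithful; part (2) uses Lemma \ref{lem:NCCR} for $(\Rightarrow)$ and reduces $(\Leftarrow)$ to showing $\Lambda\cong\End_R(\pi_{s*}\Uscr)$ with $\pi_{s*}\Uscr$ reflexive. The paper simply cites \cite[Lemma 4.1.3]{SVdB} for these last two facts, which is exactly the ingredient you isolate (and propose to reprove by Luna-slice reduction to the linear case), so there is no gap --- you are just slightly more explicit about the extension-from-$U$ step.
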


\begin{proof}
\begin{enumerate}
\item
By definition $G$ acts freely on $X^{\bold{s}}$. If $\pi_{s\ast} \uHom_X(\Uscr,\Mscr)=0$ then we may restrict to obtain $\pi_{s\ast} (\uHom_X(\Uscr,\Mscr)\mid X^{\bold{s}})=0$ and by descent we get
$\Mscr\mid X^{\bold{s}}=0$. We now use $\codim(X\setminus X^{\bold{s}},X)\ge 2$. 
\item
For $(\Rightarrow)$ we use Lemma \ref{lem:NCCR}
and the fact that an NCCR (in \cite{VdB04}) is defined for Gorenstein schemes.
For $(\Leftarrow)$ we moreover use that $X$ is generic and therefore $\Lambda\cong \uEnd_{X\quot G}(\pi_{s*}\Uscr)$ and $\pi_{s*}\Uscr$ is reflexive  (see e.g. \cite[Lemma 4.1.3]{SVdB}).  Then $\Lambda$ is an NCCR by definition.\qedhere
\end{enumerate}
\end{proof}

\subsection{The linear case}
We fix some notation that will be in use throughout this section. Let $W=W_0\oplus W_1$, $W_0=W^{\G_e}$  be representations of
$\G$ as in \S\ref{subsec:linear}.
Put $S=k[W]$, graded by giving the
elements of $W^\vee_i$ degree $i$. We define $\GpH\supset G_e$ as the pointwise stabilizer of $W_0$ (this is a normal subgroup of $G$).  

\medskip

For arbitrary representation $U$ of $\G$ we denote $M_{\G,W_?}(U):=(U\otimes k[W_?])^\G$ where $W_?\in \{W,W_0,W_1\}$. We may omit $W_?$ or $G$  in the notation if  they are clear from the context. 
The following lemma may be of independent interest.
\begin{lemma}\label{lem:cm'}
 $M(U)$ is Cohen-Macaulay $S^G$-module if and only if $M_{\GpH}(U)$ is Cohen-Macaulay $S^{\GpH}$-module. 
\end{lemma}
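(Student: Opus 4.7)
The plan is to exploit the fact that $\GpH \supseteq G_e$, which makes $F := G/\GpH$ a \emph{finite} group. (Since $W_0 = W^{G_e}$, the identity component $G_e$ fixes $W_0$ pointwise, hence $G_e \subseteq \GpH$; and $\GpH$ is normal in $G$ as the pointwise stabilizer of the $G$-stable subspace $W_0$.) Thus $F$ acts on $S^\GpH$ with invariants $S^G$, and on $M_\GpH(U)$ with invariants $M(U)$, and the $F$-action on $W_0$ is faithful by the very definition of $\GpH$. Because $\GpH$ acts trivially on $W_0$, one has the factorization $S^\GpH = k[W_0] \otimes_k B$ with $B := k[W_1]^\GpH$, and analogously $M_\GpH(U) = k[W_0] \otimes_k N$ with $N := (U \otimes k[W_1])^\GpH$.

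First I would observe that $S^G \hookrightarrow S^\GpH$ is a finite ring extension (standard invariant theory for the finite group $F$ in characteristic zero). Consequently $\depth$ and Krull $\dim$ of any finitely generated $S^\GpH$-module are the same whether computed over $S^\GpH$ or over $S^G$, so the lemma reduces to: \emph{$M_\GpH(U)$ is Cohen--Macaulay over $S^G$ iff $M(U)$ is Cohen--Macaulay over $S^G$}.

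Next I would establish the $S^G$-module isotypic decomposition
\begin{equation*}
M_\GpH(U) \;\cong\; \bigoplus_{\chi \in \operatorname{Irr}(F)} M(U \otimes \chi^*)^{\oplus \dim \chi}
\end{equation*}
via Frobenius reciprocity for $\GpH \subseteq G$, using $k[G/\GpH] \cong \bigoplus_\chi \chi^{\oplus \dim \chi}$ as $G$-representations. The trivial summand is $M(U)$. The easy direction ($M_\GpH(U)$ CM $\Rightarrow$ $M(U)$ CM) is then immediate: $M(U)$ is a direct $S^G$-summand of $M_\GpH(U)$ via the Reynolds operator (as $|F|$ is invertible in $k$), and a direct summand of a Cohen--Macaulay module is Cohen--Macaulay.

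For the converse, the decomposition shows one must prove Cohen--Macaulayness of every twist $M(U \otimes \chi^*)$ given only Cohen--Macaulayness of $M(U)$. I would attack this via faithful flatness: $M_\GpH(U)$ CM over $S^\GpH$ is equivalent to $N$ CM over $B$. To extract this from Cohen--Macaulayness of the invariants $(k[W_0] \otimes N)^F$ over $(k[W_0] \otimes B)^F$, I would localize at a closed point of $\Spec k[W_0]^F$ whose preimage in $\Spec k[W_0]$ consists of $|F|$ points (such points exist because $F$ acts faithfully on $W_0$). Over such a point, $\Spec k[W_0] \to \Spec k[W_0]^F$ is \'etale; base-changing by $B$ makes $\Spec S^\GpH \to \Spec S^G$ \'etale over the relevant locus. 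Over this \'etale locus depths are preserved and a chase gives Cohen--Macaulayness of $N$ over $B$, hence of $M_\GpH(U)$ over $S^\GpH$, completing the proof.

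The main obstacle is this last step. Abstract direct-summand arguments are insufficient because Cohen--Macaulayness is generally \emph{not} preserved when passing from $F$-invariants to the full module: one must essentially use the linearity of the $F$-action on $W_0$ and the existence of orbits with trivial $F$-stabilizer to transfer Cohen--Macaulayness from $S^G$ to $B$.
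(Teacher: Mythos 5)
Your proof is correct but follows a genuinely different route from the paper's for the hard direction. Both proofs set up the same framework (the same decomposition $M_\GpH(U)=k[W_0]\otimes M_{\GpH,W_1}(U)$, the direct-summand argument for the easy direction via the finiteness of $S^G\subset S^\GpH$, and the crucial input that $G/\GpH$ acts faithfully on $W_0$). Where they diverge is the proof that $M(U)$ Cohen--Macaulay forces $M_\GpH(U)$ Cohen--Macaulay. The paper decomposes $M(U)=\bigoplus_{V\in\rep(G/\GpH)}M_{W_0}(V^\ast)\otimes M_{W_1}(U\otimes V)$ and applies the Goto--Watanabe theorem on Cohen--Macaulayness of tensor products, using faithfulness to guarantee $M_{W_0}(V^\ast)\neq 0$ for \emph{every} irreducible $V$, so that every summand $M_{W_1}(U\otimes V)$ inherits the Cohen--Macaulay property; these are then reassembled into $M_{\GpH,W_1}(U)$. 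You instead use faithfulness to produce a point of $W_0$ with trivial stabilizer, localize there so that $\Spec S^\GpH\to\Spec S^G$ becomes finite \'etale, and transport Cohen--Macaulayness across this flat map (Galois descent identifies $M(U)\otimes_{S^G}S^\GpH$ with $M_\GpH(U)$ on the free locus); finally you kill a regular system of parameters of the regular local ring $(k[W_0])_m$ to peel off the $k[W_0]$ factor and obtain Cohen--Macaulayness of $N$ over $B$, hence of $M_\GpH(U)=k[W_0]\otimes N$ over $S^\GpH$. Your approach avoids Goto--Watanabe in its full strength (you only need the easy ``tensor with a polynomial ring'' direction) and avoids the representation-theoretic input that every irreducible of a finite group occurs in the coordinate ring of a faithful representation; the trade-off is that you must invoke \'etale descent and be careful that the free locus in $W_0$ is nonempty and that Cohen--Macaulayness of $N$ at every prime of $B$ can be tested after the flat extension $B\to A_m\otimes B$ with $m$ free. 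Both proofs are valid; yours is arguably more geometric, the paper's more representation-theoretic and self-contained given the Goto--Watanabe reference already in play elsewhere in the paper.
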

\begin{proof}
  The if direction follows by applying $(-)^{\G/\GpH}$ so we 
  concentrate on the only if direction and assume that $M(U)$ is
a  Cohen-Macaulay $S^\G$-module.

We have as $G/\GpH$-representations
\begin{equation}
\label{eq:covariants}
\begin{aligned}
M_{\GpH}(U)&=k[W_0]\otimes M_{\GpH,W_1}(U)\\
&=\bigoplus_{V,V'\in \rep(\G/\GpH)} (M_{W_0}(V^*)\otimes V)\otimes (M_{W_1}(U\otimes V')\otimes V'^*),
\end{aligned}
\end{equation}
where in the second line we use e.g.
\begin{align*}
M_{\GpH,W_1}(U)&=(U\otimes k[W_1])^{\GpH}\\
&=\bigoplus_{V'\in \rep(G/\GpH)} ((U\otimes k[W_1])^{\GpH}\otimes V')^{\G/\GpH}\otimes V^{\prime\ast}\\
&=\bigoplus_{V'\in \rep(G/\GpH)} ((U\otimes V'\otimes k[W_1])^{\GpH})^{\G/\GpH}\otimes V^{\prime\ast}\\
&=\bigoplus_{V'\in \rep(G/\GpH)} (U\otimes V'\otimes k[W_1])^{\G}\otimes V^{\prime\ast}.
\end{align*}

Applying $\G/\GpH$ to \eqref{eq:covariants} we obtain as $S^G$-modules
\begin{equation}\label{M(U)}
M(U)=\bigoplus_{V\in \rep(\G/\GpH)}M_{W_0}(V^*)\otimes M_{W_1}(U\otimes V).
\end{equation}
As $k[W_0]^{\G/\GpH}\otimes k[W_1]^G=k[W_0]^{\G/\GpH}\otimes (k[W_1]^{\GpH})^{G/\GpH}
\subset (k[W_0]\otimes k[W_1]^{\GpH})^{G/\GpH}
=k[W]^\G$ is a finite
extension of rings and by hypotheses $M(U)$ is a Cohen-Macaulay $k[W]^\G$-module,
$M(U)$ is also a Cohen-Macaulay
$k[W_0]^{\G/\GpH}\otimes k[W_1]^G$-module. By \eqref{M(U)},
$M_{W_0}(V^*)\otimes M_{W_1}(U\otimes V)$ is then a Cohen-Macaulay
$k[W_0]^{\G/\GpH}\otimes k[W_1]^\G$-module.

Consequently, $M_{W_1}(U\otimes V)$ is a Cohen-Macaulay $k[W_1]^G$-module if $M_{W_0}(V^*)\neq 0$ by \cite[Theorem (2.2.5)]{GotoWatanabe}. Since $\G/\GpH$ acts faithfully on $W_0$ (this is the
point where the definition of $\GpH$ is used) it follows (e.g by the proof of \cite[Theorem II.7.1]{Alperin}) that $M_{W_0}(V^*)=M_{\G/\GpH,W_0}(V^*)\neq 0$ for every $V\in \rep(\G/\GpH)$. Thus, 
\[M_{\GpH,W_1}(U)=\bigoplus_{V\in \rep(\G/\GpH)}M_{W_1}(U\otimes V)\otimes V^*\] is a Cohen-Macaulay $k[W_1]^\G$-module. 
Hence $M_{\GpH}(U)=k[W_0]\otimes M_{\GpH,W_1}(U)$ is a Cohen-Macaulay $k[W_0]\otimes k[W_1]^{\G}$-module. Since $k[W_0]\otimes k[W_1]^{\G}\subset k[W]^{\GpH}$ is a finite ring extension, $M_{\GpH}(U)$ is a Cohen-Macaulay $k[W]^{\GpH}$-module.
\end{proof}
The following lemma gives a necessary condition for $\Lambda=\End_{\G,S}(U\otimes S)$ to be homologically homogeneous.
\begin{lemma}\label{lem:allrep0} 
Assume that $\Lambda=\End_{\G,S}(U\otimes S)$ is homologically homogeneous and that $U\otimes \Oscr_W$ is a generator of $\Qch(W/\G)$ in codimension $1$.  
Then $U$ and $\Ind_{\GpH}^\G \Res^{\G}_{\GpH}U$ contain the same irreducible $G$-representations. Moreover, if $\G$ is finite then $U$ contains all irreducible $\G$-representations.
\end{lemma}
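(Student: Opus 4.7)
The plan is to analyse simple $\Lambda$-modules at two distinguished closed points of $\Spec S^\G$ — the origin $\pi(0)=0$ and a generic point of $W_0$ — via the Luna slice theorem and a Morita equivalence, and then use homological homogeneity together with generation in codimension one to force a match at the level of $\G$-representations.

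First, since $\GpH \supset \G_e$ is a normal subgroup of $\G$, the quotient $\G/\GpH$ is finite (a quotient of $\pi_0(\G)$) and acts faithfully on $W_0$ by the definition of $\GpH$. Hence for a generic $x \in W_0$ the stabilizer $\G_x$ equals $\GpH$ and the orbit $\G x$ is finite (in particular closed). Because $\G x$ is $0$-dimensional, the Luna slice at $x$ may be taken to be all of $W$, so after \'etale base change around $\pi(x)$ the ring $S^\G$ is identified with $S^{\GpH}$ and $\Lambda$ with $\End_{\GpH,S}(U\otimes S)$. A standard Morita argument then puts the simple $\Lambda$-modules supported at $\pi(x)$ in bijection with the irreducible $\GpH$-representations appearing in $U|_{\GpH}$, while the simple $\Lambda$-modules at $0$ correspond to the irreducible $\G$-representations in $U$. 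Correspondingly, the indecomposable projective $\Lambda$-modules are of the form $P_{V'} := \pi_{s\ast}\uHom_W(\Uscr, V'\otimes\Oscr_W) = M_\G(U^*\otimes V')$, labelled by the irreducible $\G$-reps $V' \subset U$.

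Now let $V$ be an irreducible $\G$-rep which appears in $\Ind_{\GpH}^\G\Res_{\GpH}^\G U$, i.e.\ such that $\Hom_{\GpH}(U,V)\neq 0$. By Frobenius reciprocity the $\Lambda$-module
\[
N_V \;:=\; \pi_{s\ast}\uHom_W(\Uscr,\, V\otimes\Oscr_{\G x}) \;=\; \Hom_{\GpH}(U,V)
\]
is nonzero and supported at $\pi(x)$, so its composition factors are among the simples identified in the previous step. The adjunction moreover gives $\Hom_\Lambda(P_{V'}, N_V) = \Hom_{\GpH}(V',V)$, so a projective cover of $N_V$ assembles from the $P_{V'}$ with $V'\subset U$. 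The main step is now to combine this with homological homogeneity (which forces finite projective resolutions of length $\dim W\quot\G$ for every simple) and with the generation in codimension one hypothesis (which ensures that the $P_{V'}$ exhaust all indecomposable projectives of $\Lambda$, without phantom projectives appearing in codimension one) to conclude that $V$ itself must in fact appear in $U$. By Clifford theory, irreducible $\G$-representations whose restrictions to $\GpH$ share a $\GpH$-irrep form a Clifford packet; what the combined argument should yield is that each such packet is either entirely contained in $U$ or entirely disjoint from it. I expect this assembly — in particular, the translation of hh into a statement about composition series of $N_V$ against the local structure of $\Lambda$ at $\pi(x)$, and the use of generation in codimension one to rule out ``missing'' indecomposable projectives — to be the main technical obstacle.

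For the ``moreover'' statement, observe that when $\G$ is finite we have $\G_e = 1$, so $\GpH = \ker(\G \to \GL(W))$. If $\G$ acts faithfully then $\GpH = 1$, and $\Ind_1^\G\Res_1^\G U = U\otimes k[\G]$ already contains every irreducible $\G$-representation, so the first part forces $U$ to contain every $\G$-irrep. The general finite case reduces to the faithful one by replacing $\G$ with $\G/\GpH$, using that both hypotheses (homological homogeneity of $\Lambda$ and generation in codimension one of $\Uscr$) descend along this quotient.
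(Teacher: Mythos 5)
Your proposal takes a genuinely different route from the paper, but it has a critical gap that you yourself flag, and the reduction for the ``moreover'' statement does not work as stated.

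The paper's proof is much more direct. Set $Q=U\otimes S$ and $P=\Ind_K^\G\Res_K^\G U\otimes S$ (with $K=\GpH$, or $K$ trivial when $\G$ is finite). One considers the evaluation map $\phi:\Hom_{\G,S}(Q,P)\otimes_\Lambda Q\to P$ of $(\G,S)$-modules. The key point is to show $\phi$ is an isomorphism; then writing $\Hom_{\G,S}(Q,P)$ as a quotient of $\Lambda^{\oplus N}$ exhibits $P$, and hence (tensoring with $S/S_{>0}$) $\Ind_K^\G\Res_K^\G U$, as a quotient of $U^{\oplus N}$, which gives the conclusion. To prove $\phi$ is an isomorphism, the paper shows the source and target are reflexive $(\G,S)$-modules and that the kernel and cokernel are supported in codimension $\ge 2$; the first point uses either the Auslander--Buchsbaum formula ($Q$ projective, for $\G$ finite with $K=1$) or the transfer lemma \ref{lem:cm'} between $S^\G$- and $S^\GpH$-Cohen--Macaulayness (to deduce $\Hom_{\G,S}(Q,P)$ is CM, hence projective by \cite[Proposition 2.3]{IyamaReiten}), and the second point is precisely the codimension-$1$ generation hypothesis.

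Your approach via Luna slices at a generic point of $W_0$, Morita theory, and Clifford packets is an interesting alternative framing, but the decisive step — that homological homogeneity plus generation in codimension one force $V$ to appear in $U$ whenever $V$ appears in $\Ind_{\GpH}^\G\Res_{\GpH}^\G U$ — is exactly what you admit is unresolved (``I expect this assembly \dots to be the main technical obstacle''). This is not a minor assembly problem: the local analysis near $\pi(x)$ for generic $x\in W_0$ only sees $\GpH$-representations, so passing from that local picture to a statement about which $\G$-irreducibles occur in $U$ is the whole content of the lemma. Moreover, your claim that ``generation in codimension one \dots ensures that the $P_{V'}$ exhaust all indecomposable projectives'' is not what codimension-$1$ generation says; in the paper that hypothesis is used to bound the support of the kernel/cokernel of $\phi$, not to classify indecomposable projectives.

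The ``moreover'' part also has a gap. You reduce to the faithful case by ``replacing $\G$ with $\G/\GpH$'', but $U$ is a $\G$-representation that does not descend to $\G/\GpH$ unless $\GpH$ acts trivially on $U$, so the hypotheses do not transfer along this quotient. The paper instead takes $K=1$ directly and establishes projectivity of $Q$ over $\Lambda$ via the Auslander--Buchsbaum formula for $\G$ finite (using homological homogeneity, hence finite global dimension and CM, of $\Lambda$), giving the result unconditionally without any faithfulness reduction.
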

\begin{proof}
Let $P=\Ind_{K}^\G\Res_{K}^{G} U\otimes S$ for
$K=\GpH$, or alternatively  $K$ may be the trivial group if $\G$ is finite, 
and put  $Q=U\otimes S$. Consider the evaluation map of $(G,S)$-modules
\[
\phi: \Hom_{\G,S}(Q,P)\otimes_{\Lambda} Q\to P.
\]
We will prove below that $\phi$ is an isomorphism and hence in particular surjective.  Assuming this is the case then by writing $\Hom_{\G,S}(Q,P)$
as a quotient of $\Lambda^{\oplus N}$ as right $\Lambda$-module we find that $P$ is a quotient of $Q^{\oplus N}$ as $(G,S)$-module. Tensoring with $S/S_{>0}$ we obtain
that $\Ind_{K}^\G\Res_{K}^G U$ is a quotient of $U^{\oplus N}$. Since $U$ is a summand of $\Ind_{K}^\G\Res_{K}^{G} U$
this proves that $U$ and $\Ind_{K}^\G\Res_{K}^{G} U$ contain the same irreducible $G$-representation.

\medskip

Now we turn to proving that $\phi$ is an isomorphism. If $G$ is finite then $Q$ is a projective $\Lambda$-module by the  Auslander-Buchsbaum formula \cite[Proposition 2.3]{IyamaReiten}
and if $K=\GpH$ then
we show in the next paragraph that $\Hom_{\G,S}(Q,P)$ is a projective
$\Lambda$-module. Hence in both cases ($G$ finite or $K=\GpH$) $\phi$ is a map between reflexive
$(G,S)$-modules. Now the  kernel and the cokernel of the evaluation map
are supported in codimension $2$ as $Q$ is by assumption a generator in codimension~$1$. 
Hence $\phi$ is an isomorphism.

\medskip

Suppose now $K=\GpH$. Let $V=\Hom(U,\Ind_{\GpH}^G\Res_{\GpH}^{G}U)$. Then 
$\Hom_{\G,S}(Q,P)=M(V)$ and as promised we have to show that $M(V)$ is a projective $\Lambda$-module. 
Now note that $\Res_{\GpH}^G\Ind_{\GpH}^G\Res_{\GpH}^{G}U$ and
$\Res_{\GpH}^GU$ are equal up to multiplicities  (see e.g. the proof of \cite[Lemma 4.5.1]{SVdB}). 
Using the definition of $V$ it then follows easily that $\Res^G_{\GpH}V$ and $\Res^G_{\GpH} \End(U)$ are equal 
up to multiplicities. Since $\Lambda=M(\End(U))$ is assumed to be homologically homogeneous,
it is in particular a Cohen-Macaulay $S^G$-module (e.g. \cite[Theorem 2.3]{stafford2008}) and hence it follows by Lemma \ref{lem:cm'} 
that $M_{\GpH}(\End(U))$ is a Cohen-Macaulay $S^{\GpH}$-module. Thus the same is true
for $M_{\GpH}(V)$. Using Lemma \ref{lem:cm'} again we obtain that
$M(V)$ is Cohen-Macaulay $S^G$-module. Hence $M(V)$ is a projective
$\Lambda$-module, using \cite[Proposition 2.3]{IyamaReiten} again.
\end{proof}

\section{Semi-orthogonal decomposition}\label{sec:orlovsod}
We assume that $X$ is as \S\ref{sec:general} and that $X$ in addition satisfies \eqref{H2}. Assume we are in the setting of \S\ref{sec:setting}, in particular \S\ref{subsec:Kstep}. 

\medskip

In \S\ref{sec:embedding} we embedded $D(\Lambda)$ for $\Lambda=\pi_{s\ast}\uEnd_X(\Uscr)$ in $D(X^{\K}/G)$ via $D(\Lambda')$ for $\Lambda'=\pi^{\K}_{s\ast} \uEnd_{X^R}(\xi^{R*}\Uscr)$. 
If $\Lambda$ is Cohen-Macaulay then so is $\Lambda'$ by Lemma \ref{lem:cm}, and this enabled us to embed $D(\Lambda)$ in $D(\fks/G)$. However,  a similar statement for finite global dimension is
not true. The reader may consult \S\ref{sec:example} for an explicit counterexample. This hampers the inductive construction of semi-orthogonal decomposition of $D(\fks/G)$ with $D(\Lambda)$ as a component. 
In order to remediate the situation we need to tweak the vector 
bundle $\xi^{R*}\Uscr$ by adding suitable twists.

\medskip

Let $\Uscr$ be a vector bundle on $X/G$   
and let $N$ be as in Proposition \ref{prop:Reichsteinproperties}\eqref{5}. 
Put 
\begin{equation}
\label{eq:tweak}
{\Uscr^R}=
\bigoplus_{i=0}^{N-1} (\xi^{R\ast} \Uscr)(i), \quad 
\Lambda^{\K}=\pi^{\K}_{s\ast} \uEnd_{X^{\K}}(\Uscr^{\K}).
\end{equation}
\begin{remark}\label{rem:shiftU}
The most important property of $\Uscr^R$ is that $\Uscr^R(1)\cong \Uscr^R$ locally over $X^R\quot G$. This follows by the definition of $N$. 
\end{remark}

The advantage of $\Uscr^R,\Lambda^R$ (in contrast to $\Uscr',\Lambda'$) is that they inherit more favorable properties from $\Uscr$, $\Lambda$.

\subsection{Some subcategories of $D(X^R/G)$}\

\subsubsection{Local generators}
We slightly generalise some definitions and results from \cite[\S 3.5]{SVdB3}. 
Let $Y$, $Y'$ be  smooth $G$-varieties such that good quotients $Y\to Y\quot G$, $Y'\to Y'\quot G$, respectively, exist. Let $\phi:Y'\to Y$ be  a $G$-equivariant map, denote by   $\bar{\phi}:Y'\quot G\to Y\quot G$ the corresponding quotient map. (In our application below $\bar{\phi}$ will be proper.)
For open $U\subset Y\quot G$ 
we write $\tilde{U}=U\times_{Y\quot G} Y'\subset Y'$. 
 
\begin{definition}
\label{def:locgen} Let $(E_i)_{i\in I}$
be a collection of objects in $D(Y'/G)$. The category $\Dscr$  \emph{locally generated over $Y\quot G$} by $(E_i)_{i\in I}$ is the full subcategory of $D(Y'/G)$ 
spanned by all objects $\Fscr$ such that for every affine open
$U\subset Y\quot G$ 
the object $\Fscr{|}\tilde{U}$ 
is in the  subcategory 
of $D(\tilde{U}/G)$
generated\footnote{Assume $\Tscr$ is a triangulated category closed under coproduct. Let $\Sscr=(T_i)_{i\in I}$ be a set of objects in $\Tscr$. Then the subcategory of $\Tscr$ {\em generated} by $\Sscr$ is the smallest triangulated subcategory of $\Tscr$ closed under isomorphism and coproduct which contains $\Sscr$.} by  $(E_i{|}\tilde{U})_i$. 
 We use the notation $\Dscr=\langle E_i\mid i\in I\rangle^\lll_{Y\quot G}$. 
\end{definition}

The objects $F,H\in D(Y'/G)$ are \emph{locally isomorphic over $Y\quot G$} if there exists a covering $Y\quot G=\bigcup_{i\in I} U_i$ 
such that $F{\mid} \tilde{U}_i\cong H{\mid} \tilde{U}_i$ 
 for all $i$.

 In loc. cit.\ we only considered the case $Y'=Y$, $\phi=\id$ (so 
 there was no ``over $Y\quot G$''). The proofs of the following
 analogues of the results from loc. cit.\ remain valid in this slightly
 more general setting; note only that instead of $\pi_{s*}$ for
 $\pi_s:Y'/G\to Y'\quot G$ we use $R\bar{\phi}_*\pi_{s*}$ (taking into account that $\phi$ is now not the identity)
and that in loc.\ cit.\ we used small categories, instead of the large, cocomplete, categories we are using here.
\begin{lemma}\cite[Lemma 3.5.3]{SVdB3}\label{lem:onecovering}
 Let $(E_i)_{i\in I}$ be a collection of perfect objects
  in $D(Y'/G)$ and let 
   {$\Fscr\in D(Y'/G)$}.  Let $Y\quot
  G=\bigcup_{j=1}^n U_j$ 
   be a finite open affine covering of $Y\quot G$.  
    If
  for all $j$ one has that $\Fscr{\mid}\tilde{U}_j$ is in the 
subcategory of $D(\tilde{U}_j/G)$ 
  {generated by}
  $(E_i{|}\tilde{U}_j)_i$ then $\Fscr$ is in the subcategory of $D(Y'/G)$
locally 
generated over $Y\quot G$ 
by $(E_i)_{i\in I}$.
\end{lemma}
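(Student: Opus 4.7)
The plan is to reduce from the hypothesis on the given finite affine cover $\{U_j\}$ to the local-generation criterion for an arbitrary affine open $U\subset Y\quot G$ by means of a Čech / Mayer-Vietoris argument after first refining to principal opens. First I would fix any affine open $U\subset Y\quot G$ and form the intersections $V_j:=U\cap U_j$; since $Y\quot G$ is separated, these and all their multi-intersections $V_S:=\bigcap_{s\in S}V_s$ are affine. Because the restriction functor $D(\tilde{U}_j/G)\to D(\tilde{V}_j/G)$ is triangulated, preserves coproducts, and sends $E_i|\tilde{U}_j$ to $E_i|\tilde{V}_j$, the hypothesis yields $\Fscr|\tilde{V}_S\in\langle E_i|\tilde{V}_S\rangle$ for every nonempty $S$.

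Next, each $V_j$, being a quasi-compact open in the affine scheme $U$, is a finite union of principal opens of $U$. Refining to such a principal cover $\{D_k\}_{k=1}^m$ of $U$ (and applying the same restriction argument), I get $\Fscr|\tilde{D}_T\in\langle E_i|\tilde{D}_T\rangle$ for every nonempty $T\subset\{1,\dots,m\}$, where each $D_T:=\bigcap_{t\in T}D_t$ is again principal. The standard Čech resolution then writes $\Fscr|\tilde{U}$ as a finite iterated cone of terms of the form $R(j_T)_*(\Fscr|\tilde{D}_T)$, where $j_T\colon\tilde{D}_T\hookrightarrow\tilde{U}$ is the open inclusion. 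It thus suffices to show
\[
R(j_T)_*\,\langle E_i|\tilde{D}_T\rangle \;\subseteq\; \langle E_i|\tilde{U}\rangle_{D(\tilde{U}/G)}.
\]

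For this key step, the inclusion $j_T\colon\tilde{D}_T\hookrightarrow\tilde{U}$ is affine (being the pullback of the principal open inclusion $D_T\hookrightarrow U$), so $R(j_T)_*$ is exact and preserves coproducts. Since each $E_i$ is perfect, the projection formula gives
\[
R(j_T)_*(E_i|\tilde{D}_T)\;\cong\;(E_i|\tilde{U})\Lotimes_{\Oscr_{\tilde{U}}}\,R(j_T)_*\Oscr_{\tilde{D}_T},
\]
and $R(j_T)_*\Oscr_{\tilde{D}_T}=\Oscr_{\tilde{U}}[\bar f_T^{-1}]$ is the homotopy colimit of the sequence $\Oscr_{\tilde{U}}\xrightarrow{\bar f_T}\Oscr_{\tilde{U}}\xrightarrow{\bar f_T}\cdots$, visibly lying in $\langle\Oscr_{\tilde{U}}\rangle$. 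Tensoring with the perfect $E_i|\tilde{U}$ preserves triangles and coproducts, so $R(j_T)_*(E_i|\tilde{D}_T)\in\langle E_i|\tilde{U}\rangle$; the same conclusion for arbitrary objects of $\langle E_i|\tilde{D}_T\rangle$ then follows because $R(j_T)_*$ is itself triangulated and coproduct-preserving.

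The main obstacle I anticipate is the verification in the third paragraph, i.e.\ that the projection formula and the coproduct-preservation of $R(j_T)_*$ remain valid on the quotient stack $\tilde{U}/G$ rather than on a scheme. Both are standard properties for affine morphisms of quasi-compact quasi-separated algebraic stacks (Lipman-Neeman type results), and our setting — $G$ reductive acting on a separated scheme $Y'$ with good quotient — comfortably fits this framework; but this is where one must be careful, and it is precisely the point at which the proof differs from the scheme-theoretic version in \cite{SVdB3}.
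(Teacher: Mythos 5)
Your proof is correct and follows the expected route: fix an affine $U\subset Y\quot G$, refine the induced cover of $U$ to one by principal opens, invoke Mayer--Vietoris/\v{C}ech to reduce to the pushforwards $R(j_T)_*(\Fscr|\tilde{D}_T)$, and then use the projection formula together with the fact that $j_{T*}\Oscr_{\tilde{D}_T}$ is a homotopy colimit of copies of $\Oscr_{\tilde{U}}$ (since $D_T$ is principal) to conclude that each such pushforward lies in $\langle E_i|\tilde{U}\rangle$. This is essentially the argument underlying \cite[Lemma 3.5.3]{SVdB3}, adapted notationally to the setting $Y'\ne Y$.

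Two small remarks. First, your closing caveat mislocates the novelty: the original \cite[Lemma 3.5.3]{SVdB3} is already a statement on quotient stacks (the case $Y'=Y$, $\phi=\id$), so the stack-theoretic care with the projection formula and coproduct-preservation of $R(j_T)_*$ was already present there; the passage to general $\phi:Y'\to Y$ is genuinely only notational, since everything happens over $Y\quot G$ and $\tilde{D}_T\hookrightarrow\tilde{U}$ is just the base change of $D_T\hookrightarrow U$. Second, you invoke perfectness of $E_i$ when applying the projection formula, but since $j_T$ is an affine morphism (base change of a principal open immersion of affines) the projection formula holds for $R(j_T)_*$ without any perfectness hypothesis on $\Fscr$; so your argument in fact does not use the perfectness of the $E_i$. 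That hypothesis is carried in the statement for consistency with neighboring results in \cite{SVdB3} rather than because this particular proof needs it.
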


The following result shows that semi-orthogonal decompositions can be constructed locally.
\begin{proposition}\cite[Proposition 3.5.8]{SVdB3}
\label{th:recognition}
Let $I$ be a  totally ordered set. Assume 
{$\Dscr\subset D(Y'/G)$} is locally  generated over $Y\quot G$ by
a collection of subcategories $\Dscr_i$
closed under 
{coproduct and} local isomorphism over $Y\quot G$.
Assume that $R\bar{\phi}_*\pi_{s\ast}\uRHom_{Y'}(\Dscr_i,\Dscr_j)=0$ for $i>j$. Then $\Dscr$ is generated by $(\Dscr_i)_i$
and in particular we have a semi-orthogonal decomposition $\Dscr=\langle \Dscr_i\mid i\in I\rangle$.
\end{proposition}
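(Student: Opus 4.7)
The plan is to combine the standard recognition principle for semi-orthogonal decompositions with a local-to-global argument driven by the vanishing of the push-down of $\uRHom$-sheaves. The proof has two halves: upgrade the pointwise (actually sheafwise) orthogonality hypothesis to genuine Hom-vanishing in $D(Y'/G)$, and then glue the local mutation triangles to a global decomposition tower.

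\textbf{Step 1 (Global semi-orthogonality).} For $A\in\Dscr_i$ and $B\in\Dscr_j$ with $i>j$, since $\pi_s:Y'/G\to Y'\quot G$ is the quotient map of a good quotient (so $R\Gamma(Y'/G,-)=R\Gamma(Y'\quot G,\pi_{s*}(-))$) and $\bar\phi:Y'\quot G\to Y\quot G$ is a morphism of schemes, one has
\[
\Hom_{D(Y'/G)}(A,B)
=R\Gamma\bigl(Y\quot G,\;R\bar\phi_*\pi_{s*}\uRHom_{Y'}(A,B)\bigr)=0
\]
by the hypothesis. Hence the $\Dscr_i$ sit semi-orthogonally inside $D(Y'/G)$.

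\textbf{Step 2 (Generation).} Let $\Escr\subseteq D(Y'/G)$ be the triangulated, coproduct-closed subcategory generated by $\bigcup_i\Dscr_i$; one must show $\Dscr\subseteq\Escr$. Fix $\Fscr\in\Dscr$ and a finite affine open covering $Y\quot G=\bigcup_k U_k$. Local generation gives, on each $\tilde U_k$, a filtration of $\Fscr|_{\tilde U_k}$ whose graded pieces lie in the relevant $\Dscr_i|_{\tilde U_k}$. I would build the global filtration by induction on a cofinal sequence of finite sub-intervals of $I$: for a maximal index $i_0$ appearing, produce the right mutation triangle $\Fscr^{<i_0}\to\Fscr\to\Fscr^{i_0}\to$ by gluing its local versions on the $\tilde U_k$. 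Step 1, applied to the restrictions to $\tilde U_k$ (for which the vanishing hypothesis is preserved because $R\bar\phi_*\pi_{s*}\uRHom_{Y'}$ restricts to opens in $Y\quot G$), guarantees that these local mutations are unique up to unique isomorphism, so the cocycle condition on double overlaps $\tilde U_k\cap\tilde U_\ell$ is automatic; the glued pieces $\Fscr^{<i_0}$ and $\Fscr^{i_0}$ lie in the prescribed $\Dscr$'s because each $\Dscr_i$ is closed under local isomorphism over $Y\quot G$. Applying Lemma~\ref{lem:onecovering} to confirm that what we have built is recognised globally, then iterating the induction, places $\Fscr$ in $\Escr$.

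Combining Steps 1 and 2 yields the semi-orthogonal decomposition $\Dscr=\langle\Dscr_i\mid i\in I\rangle$. The main obstacle is the gluing step in Step 2: one must exploit the sheaf-level Hom-vanishing to promote local mutations to global ones, and handle a totally ordered $I$ that may be infinite. The uniqueness-of-mutation trick (possible precisely because of the hypothesis) is what makes the cocycle check automatic; the coproduct-closure of each $\Dscr_i$ together with the finiteness of the affine cover provided by Lemma~\ref{lem:onecovering} lets one pass from finite-stage decompositions to the full statement.
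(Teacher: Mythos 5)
The paper gives no independent proof of this result: it is quoted verbatim from \cite[Proposition 3.5.8]{SVdB3}, with a remark that the argument there (for $Y'=Y$, $\phi=\id$) carries over after replacing $\pi_{s*}$ by $R\bar\phi_*\pi_{s*}$. So your submission is an independent reconstruction, and it has to stand on its own.

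Your Step~1 is fine: taking $R\Gamma(Y\quot G,-)$ of the vanishing hypothesis does give global semi-orthogonality between the $\Dscr_i$.

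The gap is in Step~2, and it is a real one, not a matter of detail. You want to glue the local mutation triangles $(\Fscr^{<i_0}\to\Fscr\to\Fscr^{i_0})|_{\tilde U_k}$ into a global triangle, and you argue that because the local mutation is unique up to \emph{unique} isomorphism, the cocycle condition on overlaps is automatic and gluing follows. But unique transition isomorphisms on overlaps (and the resulting automatic cocycle identity on triple overlaps) are only part of the descent datum needed to glue an object of a derived category. Triangulated categories do not satisfy Zariski descent for objects or even for morphisms: the obstruction to gluing a morphism $\Fscr\to T$ from compatible local ones sits in $H^1$ of the $H^{-1}$ of the relevant $\uRHom$-sheaf, and the obstruction to gluing the object $\Fscr^{i_0}$ sits in still higher cohomology of its negative $\underline{\Ext}$-sheaves; neither vanishes merely because the gluing maps are unique. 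Your own claim that ``the glued pieces lie in the prescribed $\Dscr$'s because each $\Dscr_i$ is closed under local isomorphism'' already presupposes the glued object exists, which is precisely what has to be shown. A correct argument must produce the projection globally by some other device — e.g.\ by Bousfield localization together with a verification that the Bousfield triangle commutes with restriction to the $\tilde U_k$, or by using explicit models for the projections (as in Lemma~\ref{lem:ff}, where the $\Dscr_i$ are generated by perfect complexes and the projection functor is given by an explicit $\uRHom$/$\Lotimes$ formula which manifestly restricts correctly). Without one of these ingredients, ``the local mutations glue'' is an assertion, not a proof.

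A secondary issue: your induction ``on a cofinal sequence of finite sub-intervals of $I$'' is not well-defined for a general totally ordered $I$, and it is not clear that the iterated filtrations stabilize or converge; even for countable $I$ this needs an argument (a homotopy colimit/telescope construction inside the coproduct-closed category, say), which you do not give.
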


It is convenient to pick for every $E\in D(Y'/G)$ a $K$-injective resolution (with injective terms)  
$E\r I_E$ and to represent $R\bar{\phi}_*\pi_{s\ast}\uRHom_{Y'}(E,F)$ on $Y\quot G$ by the complex of sheaves
$U\mapsto \bar{\phi}_*\Hom_{\tilde{U}}(I_E{\mid}\tilde{U},I_F{\mid}\tilde{U})^G$.\footnote{It is enough to show that each term in $\uHom_{Y'}(I_E,I_F)$, and consequently in $\uHom_{Y'}(I_E,I_F)^G$, is flabby, 
since flabby sheaves are acyclic for $\bar{\phi}_*$ and $\bar{\phi}_*$ has finite cohomological dimension.  
Let $\Fscr,\Iscr\in \Qch(Y'/G)$ with $\Iscr$ injective.  
Let $j:U\hookrightarrow Y'$ be an open immersion.   
As $j_!j^*\Fscr\hookrightarrow \Fscr$ and $\Iscr$ is injective in $\Mod(Y'/G)$ (as $Y'$ is noetherian),  
$\uHom_{Y'}(\Fscr,\Iscr)\twoheadrightarrow\uHom_X(j_!j^*\Fscr,\Iscr)=\uHom_{U'}(j^*\Fscr,j^*\Iscr)=\uHom_{U'}(\Fscr\mid_{U'},\Iscr\mid_{U'})$.
} 
With this representation
\[\Lambda:=R\bar{\phi}_*\pi_{s\ast}\uREnd_{Y'}(E):=\allowbreak R\bar{\phi}_*\pi_{s\ast} \uRHom_{Y'}(E,E)\]
is a sheaf of DG-algebras
on $Y\quot G$
and $R\bar{\phi}_*\pi_{s\ast}\uRHom_{Y'}(E,F)$ is a sheaf of right $\Lambda$-DG-modules.

\begin{lemma}\cite[Lemma 3.5.6]{SVdB3} \label{lem:ff}
Assume that $\Dscr\subset D(Y'/G)$ is locally  
generated over $Y\quot G$ by the perfect complex $E$. Let $\Lambda=R\bar{\phi}_*\pi_{s\ast}\uREnd_{Y'}(E)$ be the sheaf of DG-algebras on $Y\quot G$ as defined above.
The functors
\begin{align*}
\Dscr\r D(\Lambda)&:F\mapsto R\bar{\phi}_*\pi_{s\ast}\uRHom_{Y'}(E,F),\\
D(\Lambda)\r \Dscr&:H\mapsto \bar{\phi}^{-1}H\Lotimes_{\bar{\phi}^{-1}\Lambda} E
\end{align*}
are well-defined (the second functor is computed starting from a $K$-flat resolution\footnote{Such a $K$-flat resolution is constructed in the same way as for DG-algebras
(see \cite[Theorem 3.1.b]{Keller1}). One starts from the observation that for every $M\in D(\Lambda)$ there is a morphism $\bigoplus_{i\in I} j_{i!}(\Lambda{\mid} U_i)\r M$
with open immersions $(j_i:U_i\r X\quot G)_{i\in I}$, which is an epimorphism on the level of cohomology.
 }
 of $H$) and yield inverse equivalences between $\Dscr$ and $D(\Lambda)$.
\end{lemma}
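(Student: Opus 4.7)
The plan is to prove this as a standard ``tilting-type'' equivalence: $E$ plays the role of a (local) compact generator of $\Dscr$ and $\Lambda$ is its derived endomorphism DG-algebra, both taken relative to $Y\quot G$. First I would check that the two functors are well-defined. For the Hom functor, the $K$-injective resolution $I_E$ (with flabby terms, as noted in the footnote preceding the lemma) makes $R\bar{\phi}_\ast\pi_{s\ast}\uRHom_{Y'}(E,F)$ into an honest sheaf of right $\Lambda$-DG-modules. For the tensor functor, the existence of $K$-flat resolutions over $\bar{\phi}^{-1}\Lambda$ sketched in the other footnote makes the derived tensor product well-defined. The standard $(\bar{\phi}^{-1},R\bar{\phi}_\ast)$ adjunction combined with the sheaf-level tensor-Hom adjunction then assembles into an adjoint pair, and the task reduces to showing that the unit and counit of this adjunction are isomorphisms.

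Next I would argue locally over $Y\quot G$. Both functors commute with restriction to an affine open $U\subset Y\quot G$ (using that $\bar{\phi}^{-1}$ and $R\bar{\phi}_\ast$ are compatible with restriction to the base, and that the relevant resolutions restrict well), and by Lemma \ref{lem:onecovering} being a local isomorphism on a finite affine cover suffices. On such a $\tilde{U}/G$, the counit map
\[
c_F: \bar{\phi}^{-1}R\bar{\phi}_\ast\pi_{s\ast}\uRHom_{Y'}(E,F)\Lotimes_{\bar{\phi}^{-1}\Lambda} E\to F
\]
is a natural transformation between coproduct-preserving exact functors of $F$ (here perfectness of $E$ is used to ensure $\uRHom_{Y'}(E,-)$ commutes with coproducts). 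Hence the full subcategory of $F\in D(\tilde{U}/G)$ on which $c_F$ is an isomorphism is triangulated and closed under coproducts. It visibly contains $E\mid\tilde{U}$, where $c_E$ reduces to $\Lambda\Lotimes_\Lambda E\cong E$. By local generation of $\Dscr$ over $Y\quot G$ by $E$, this subcategory therefore contains $\Dscr\mid\tilde{U}$, so $c_F$ is an isomorphism for all $F\in \Dscr$.

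For the unit $u_H: H\to R\bar{\phi}_\ast\pi_{s\ast}\uRHom_{Y'}(E,\bar{\phi}^{-1}H\Lotimes_{\bar{\phi}^{-1}\Lambda}E)$, I would first verify that the image functor really lands in $\Dscr$: using a $K$-flat resolution of $H$ whose terms are coproducts of objects of the form $j_{i!}(\Lambda\mid U_i)$, as in the footnote, one reduces to checking that each such term tensored with $E$ is locally generated over $Y\quot G$ by $E$, which is immediate. Then the subcategory of $H\in D(\Lambda)$ on which $u_H$ is an isomorphism is a localizing subcategory; it contains $\bar{\phi}^{-1}\Lambda$ because for $H=\Lambda$ the map unfolds to the identity $\Lambda\to R\bar{\phi}_\ast\pi_{s\ast}\uREnd_{Y'}(E)$ by the very definition of $\Lambda$. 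Since $\bar{\phi}^{-1}\Lambda$ generates $D(\Lambda)$ (via the $K$-flat resolutions above), the unit is an isomorphism on all of $D(\Lambda)$.

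The principal obstacle will be the bookkeeping involving sheaves of DG-algebras: correctly defining the $K$-flat resolutions over $\bar{\phi}^{-1}\Lambda$, checking their compatibility with restriction to opens $U\subset Y\quot G$, and verifying that the claimed adjunction is genuinely sheaf-theoretic rather than merely a sections-level statement. Once these foundational issues are handled, the argument is a routine application of the ``devissage from a compact generator'' technique in triangulated categories, enabled here by perfectness of $E$ and the local generation hypothesis on $\Dscr$.
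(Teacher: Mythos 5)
Your proposal follows the standard tilting/devissage argument that the cited \cite[Lemma 3.5.6]{SVdB3} uses; the paper itself only observes that that proof carries over to the relative setting with $\pi_{s*}$ replaced by $R\bar\phi_*\pi_{s*}$, so the overall strategy (adjunction, counit via local generation of $\Dscr$ by the perfect complex $E$, unit via the footnote's $K$-flat resolutions) is the right one.

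Two points that need tightening. First, a notational slip that matters: $\bar\phi^{-1}\Lambda$ lives on $Y'\quot G$ and is not an object of $D(\Lambda)$, and $\Lambda$ by itself need not generate $D(\Lambda)$ when $Y\quot G$ is non-affine; the correct generating family is $\{j_{i!}(\Lambda\mid U_i)\}$, exactly as the footnote supplies. The devissage for the unit should therefore go: isomorphism is local over $Y\quot G$, the functors commute with restriction to affine opens, and over each affine $U$ the unit on $\Lambda\mid U$ is the identity by definition of $\Lambda$. Second, your assertion that each $T(j_{i!}(\Lambda\mid U_i))$ lands in $\Dscr$ is not ``immediate''. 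Computing, $T(j_{i!}(\Lambda\mid U_i))$ is an extension-by-zero of $E\mid\tilde U_i$ from the preimage of $U_i$; but extension-by-zero of a quasi-coherent complex need not have quasi-coherent cohomology, and for an affine $V$ meeting $U_i$ partially you must argue that $j_!\bigl(E\mid\widetilde{V\cap U_i}\bigr)$ lies in the localizing subcategory $\langle E\mid\tilde V\rangle$ of $D(\tilde V/G)$, which is not automatic since that subcategory is not a priori closed under $j_!$. This is precisely the sort of sheaf-theoretic bookkeeping you correctly flag as the principal obstacle; it is handled in \cite{SVdB3} but should not be waved away as immediate.
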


\subsubsection{Locally generated subcategories}\label{subsubsec:localgensubcat}
We define some locally generated subcategories of $D(X^R/G)$ which we  will need for the semi-orthogonal decomposition of $D(\Lambda^R)$. 

Let 
\[
\Cscr_{X^R}:=\langle \Uscr^R\rangle^\lll_{X^R\quot G}\subset D(X^R/G).
\]
Our aim will be to define a semi-orthogonal decomposition of $\Cscr_{X^R}$.

Let
\begin{equation}\label{eq:bric6}
\tilde{\Cscr}_X:=\langle \xi^{R*}\Uscr\rangle^\lll_{X\quot G}\subset \Cscr'_{X^R}:=\langle \xi^{R*}\Uscr\rangle^\lll_{X^R\quot G}\subset \Cscr_{X^R}.
\end{equation}
Let $Y$ be a connected component of $Z$, $G_Y$ be the stabilizer of $Y$ in $Z$ (not pointwise), let $E^R_Y=(\xi^{R})^{-1}(Y)$ and let $s_Y:E^R_Y/G_Y\to X^R/G$ be the inclusion. Let $\Uscr_Y$, $\Uscr_{E^R_Y}$ be the restrictions of $\Uscr$, $\xi^{R*}\Uscr$ to $Y$, $E^R_Y$, respectively.
 Let $\pi_{Y,s}:Y/G_Y\to Y\quot G_Y$,
$\pi_{E^R_Y,s}:E^R_Y/G_Y\to E^R_Y\quot G_Y$,
 be the quotient maps.  
Let 
\[
\Cscr_{Y,n}=\langle s_{Y*}\Uscr_{E^R_Y}(n)\rangle^\lll_{X\quot G}.
\]
\begin{lemma}\label{lem:cyn}
We have $\Cscr_{Y,n}\subset \Cscr_{X^R}$ for all $n\in \ZZ$. 
\end{lemma}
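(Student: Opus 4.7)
The plan is to first prove the single-object statement $s_{Y*}\Uscr_{E^R_Y}(n)\in \Cscr_{X^R}$ for every $n\in \ZZ$, and then to upgrade this to the claimed inclusion of locally generated subcategories via Lemma \ref{lem:onecovering}.

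For the first step, I would begin by showing that every twist $\xi^{R*}\Uscr(n)$ lies in $\Cscr_{X^R}$. By Remark \ref{rem:shiftU} we have $\Uscr^R(1)\cong \Uscr^R$ locally over $X^R\quot G$, so by induction $\Uscr^R(k)\cong \Uscr^R$ locally for every $k\in \ZZ$; combined with the defining decomposition $\Uscr^R=\bigoplus_{i=0}^{N-1}\xi^{R*}\Uscr(i)$, this forces each twist $\xi^{R*}\Uscr(n)$ to be locally a direct summand of $\Uscr^R$ over $X^R\quot G$, hence $\xi^{R*}\Uscr(n)\in \Cscr_{X^R}$. Twisting the standard blow-up exact sequence \eqref{eq:eks0} by $\Oscr(n)$,
\[
0\to \xi^{R*}\Uscr(n+1)\to \xi^{R*}\Uscr(n)\to s_*\Uscr_{E^R}(n)\to 0,
\]
then exhibits $s_*\Uscr_{E^R}(n)$ as the cone of a morphism between objects of $\Cscr_{X^R}$, so $s_*\Uscr_{E^R}(n)\in \Cscr_{X^R}$. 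To isolate the summand indexed by $Y$, decompose the exceptional divisor as $E^R=\bigsqcup_i E^R_{Z_i}$ where $Z_i=G\cdot Y_i$ runs over $G$-orbits of connected components of $Z$, giving $s_*\Uscr_{E^R}(n)=\bigoplus_i s_{Y_i*}\Uscr_{E^R_{Y_i}}(n)$. Since the $Y_i\quot G_i$ are pairwise disjoint and closed in $X\quot G$, one can cover $X^R\quot G$ by finitely many affine opens, each of which either misses $\overline{\xi^R}^{-1}(Y\quot G_Y)$ (so $s_{Y*}\Uscr_{E^R_Y}(n)$ vanishes there) or misses $\bigcup_{Y'\neq Y}\overline{\xi^R}^{-1}(Y'\quot G_{Y'})$ (so the $Y$-summand coincides with $s_*\Uscr_{E^R}(n)$). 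On each chart the restriction of $s_{Y*}\Uscr_{E^R_Y}(n)$ therefore lies in the subcategory generated by $\Uscr^R$, and since $s_{Y*}\Uscr_{E^R_Y}(n)$ is perfect (as a direct summand of the perfect two-term complex $s_*\Uscr_{E^R}(n)$), Lemma \ref{lem:onecovering} yields $s_{Y*}\Uscr_{E^R_Y}(n)\in \Cscr_{X^R}$.

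For the second step, take an arbitrary $\Fscr\in \Cscr_{Y,n}$ and a finite affine cover $X^R\quot G=\bigcup_k W_k$ with each $W_k$ contained in $\overline{\xi^R}^{-1}(V_k)$ for some affine $V_k\subset X\quot G$ (such a refinement exists by quasi-compactness of $X^R\quot G$). By the definition of $\Cscr_{Y,n}$, $\Fscr\mid \tilde{V_k}$ lies in the subcategory of $D(\tilde{V_k}/G)$ generated by $s_{Y*}\Uscr_{E^R_Y}(n)\mid \tilde{V_k}$; restricting further to $\tilde{W_k}$ preserves this, and combined with what was established above we obtain $\Fscr\mid \tilde{W_k}\in \langle \Uscr^R\mid \tilde{W_k}\rangle$. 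A second application of Lemma \ref{lem:onecovering} gives $\Fscr\in \Cscr_{X^R}$, as desired. The main bookkeeping point is to arrange the two levels of local-to-global correctly---once over $X\quot G$ for the definition of $\Cscr_{Y,n}$ and once over $X^R\quot G$ for that of $\Cscr_{X^R}$---but beyond this no real obstacle arises.
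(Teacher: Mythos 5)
Your proof is correct and follows the same route as the paper: showing $\xi^{R*}\Uscr(n)\in\Cscr_{X^R}$ via Remark~\ref{rem:shiftU}, deducing $s_*\Uscr_{E^R}(n)\in\Cscr_{X^R}$ from the twisted blow-up sequence~\eqref{eq:eks0}, and then isolating the $Y$-summand. Two minor remarks: your covering argument in the first step is more work than needed, since $s_{Y*}\Uscr_{E^R_Y}(n)$ is a direct summand of $s_*\Uscr_{E^R}(n)$ (under the identification $E^R_Y/G_Y\cong E^R_{\cup_g gY}/G$) and $\Cscr_{X^R}$ is thick (each local generated subcategory is triangulated and closed under coproducts, hence closed under summands by the Eilenberg swindle), so membership follows immediately; on the other hand, your second step spelling out via Lemma~\ref{lem:onecovering} why a single object in $\Cscr_{X^R}$ drags along the whole subcategory locally generated over the coarser base $X\quot G$ is a legitimate and welcome elaboration of a point the paper leaves implicit.
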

\begin{proof}
Recall the standard exact sequence \eqref{eq:eks0} on $X^R/G$
\begin{equation}\label{eq:eks}
0\to \xi^{R*}\Uscr(1)\xrightarrow{t} \xi^{R*}\Uscr\to s_*\Uscr_{E^R}\to 0.
\end{equation}

Let $n\in \ZZ$. 
Since $\xi^{R*}\Uscr(n)$ belongs to $\Cscr_{X^R}$  by Remark \ref{rem:shiftU} (and the definition of $\Uscr^R$), it follows from (twisting) \eqref{eq:eks} that $s_*\Uscr_{E^R}(n)\in \Cscr_{X^R}$. 
As the (local) generator  $s_{Y*}\Uscr_{E^R_Y}(n)$ of $\Cscr_{Y,n}$ is its direct summand (identifying $Y/G_Y=(\cup_{g\in G}gY)/G$, $E^R_Y/G_Y=E^R_{\cup_{g\in G}gY}/G$) the lemma follows. 
\end{proof}

\begin{proposition}\label{prop:sodC}
Assume that $\Lambda$ is Cohen-Macaulay. 
Let $Z_1,\dots,Z_t$ be representatives for the orbits of the $G$-action on the connected components of $Z$. There is a semi-orthogonal decomposition
\[\Cscr_{X^R}=\langle \tilde{\Cscr}_X,\Cscr_{Z_1,0},\dots, \Cscr_{Z_1,c_1-2},\dots,\Cscr_{Z_t,0},\dots, \Cscr_{Z_t,c_t-2}\rangle,\]
where $c_i=\codim(Z_i,X)$. Moreover, the components corresponding to different $Z_i$ are orthogonal. 
\end{proposition}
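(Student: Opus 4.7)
The plan is to apply the local recognition criterion Proposition \ref{th:recognition} to the morphism $\overline{\xi^R}\colon X^R\quot G\to X\quot G$. Two ingredients are required: (i) local generation, i.e.\ that $\tilde{\Cscr}_X$ together with the $\Cscr_{Z_i,k}$, $0\le k\le c_i-2$, jointly locally generate $\Cscr_{X^R}$ over $X^R\quot G$; and (ii) the semi-orthogonality vanishings of $R\overline{\xi^R_*}\pi^R_{s*}\uRHom_{X^R}(-,-)$ between these components in the prescribed order. By Lemma \ref{lem:onecovering} both conditions can be checked locally, so using Lemma \ref{lem:etale} and Lemma \ref{rem:vecbundle} I reduce to the linear case $X=W$, $\Uscr=U\otimes\Oscr_W$, where by Lemma \ref{lem:local} the exceptional locus $E^R=W_0\times\PP(W_1)^{ss}$ is a $\PP^{c-1}$-bundle over $Z=W_0$.

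For (i), the generators $(\xi^{R*}\Uscr)(i)$, $0\le i\le N-1$, of $\Cscr_{X^R}$ are manipulated via the short exact sequence \eqref{eq:eks} twisted by $(i)$,
\[
0\to(\xi^{R*}\Uscr)(i+1)\to(\xi^{R*}\Uscr)(i)\to s_*\Uscr_{E^R}(i)\to 0,
\]
which upon iteration places $(\xi^{R*}\Uscr)(j)$, $j\ge 0$, in the subcategory generated by $\xi^{R*}\Uscr$ and the $s_*\Uscr_{E^R}(k)$ for $0\le k\le j-1$. To confine the needed range of $k$ to $[0,c_i-2]$ on each component $Z_i$, I will invoke the relative Beilinson--Koszul resolution on the $\PP^{c_i-1}$-bundle $E^R_{Z_i}\to Z_i$: this expresses $s_{Z_i*}\Uscr_{E^R_{Z_i}}(k)$ for $k\ge c_i-1$ as an iterated cone of twists with $0\le k'\le c_i-2$, with one ``extra'' twist absorbed into $\tilde{\Cscr}_X$ via the SES. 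The closure of the loop to reach the entire initial range $i\in[0,N-1]$ will use the local isomorphism $\Uscr^R(1)\cong\Uscr^R$ from Remark \ref{rem:shiftU}.

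For (ii), the Hom sheaves from $s_{Z_i*}\Uscr_{E^R_{Z_i}}(k)$ into $\xi^{R*}\Uscr$ (respectively into $s_{Z_i*}\Uscr_{E^R_{Z_i}}(k')$) will be computed by applying $\uRHom_{X^R}(-,\xi^{R*}\Uscr)$ (respectively $\uRHom_{X^R}(-,s_{Z_i*}\Uscr_{E^R_{Z_i}}(k'))$) to the SES \eqref{eq:eks} and invoking the adjunction for the closed immersion $s_{Z_i}$. After applying $R\overline{\xi^R_*}\pi^R_{s*}$, both types of Homs reduce to expressions of the shape $R\overline{\xi^R_{E*}}\pi^R_{E,s,*}(\uEnd_{E^R}(\Uscr_{E^R})(l))\mid_{Z_i\quot G_i}$ for $l$ in the range $-c_i<l<0$, which vanish by Lemma \ref{rem:afterlempushfwd}; the allowed range $0\le k\le c_i-2$ (respectively $c_i-2\ge k'>k\ge 0$) translates precisely into this range of~$l$. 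Orthogonality between $\Cscr_{Z_i,k}$ and $\Cscr_{Z_j,k'}$ for distinct $i,j$ is immediate: the loci $E^R_{Z_i}$ and $E^R_{Z_j}$ are disjoint in $X^R$, so the sheaf $\uRHom$ already vanishes on $X^R$ before any pushforward.

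The main obstacle I anticipate is step (i): the relative Beilinson--Koszul resolution on the semi-stable locus $\PP(W_1)^{ss}$, the SES-based absorption of the ``extra'' twist into $\tilde{\Cscr}_X$, and the closure of the loop via Remark \ref{rem:shiftU}, together require careful bookkeeping to ensure that every generator $(\xi^{R*}\Uscr)(i)$, $0\le i\le N-1$, is reached. Once (i) is secured, the semi-orthogonality (ii) will follow essentially mechanically from the SES \eqref{eq:eks} and Lemma \ref{rem:afterlempushfwd}.
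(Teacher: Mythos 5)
Your semi-orthogonality part (ii) matches the paper's proof: apply $\uRHom_{X^R}(-,-)$ to the exact sequence \eqref{eq:eks}, use adjunction for $s_{Z_i}$, and reduce to the vanishing in Lemma \ref{rem:afterlempushfwd}; orthogonality between distinct $Z_i$ is indeed immediate from disjoint supports. (The paper additionally uses Lemma \ref{lem:square} to manipulate the resulting Hom-sheaf triangle, a detail you elide but which is only bookkeeping.)

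However, your generation argument (i) has a genuine gap, and it is exactly at the step you flagged as ``closure of the loop.'' Showing that $\xi^{R*}\Uscr(i)\in\langle \tilde{\Cscr}_X,\Cscr_{Z_j,0},\dots,\Cscr_{Z_j,c_j-2}\rangle$ for $0\le i\le N-1$ --- i.e.\ that the perfect complex $\Uscr^R$ lies in the candidate subcategory --- does \emph{not} yield $\Cscr_{X^R}\subseteq\langle\tilde{\Cscr}_X,\Cscr_{Z_j,k}\rangle$. The category $\Cscr_{X^R}=\langle\Uscr^R\rangle^\lll_{X^R\quot G}$ is defined by \emph{local} generation over $X^R\quot G$, which is strictly weaker than global generation (even after the reduction to $X\quot G$ affine): an object of $\Cscr_{X^R}$ is only required to be built from $\Uscr^R$ after restricting to each affine chart of $X^R\quot G$, and these local builds need not glue into a global one. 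Invoking Remark \ref{rem:shiftU} does not bridge this gap, since that remark is itself a local statement over $X^R\quot G$. The paper closes the gap with a separate argument: assuming $\pi^R_{s*}((\Uscr^R)^\vee\otimes\Fscr)\neq 0$ for $\Fscr\in\Cscr_{X^R}$, it uses the descended ample line bundle $\Mscr$ on $X^R\quot G$ from Proposition \ref{prop:Reichsteinproperties}\eqref{5} and properness of $\overline{\xi^R}$ from Proposition \ref{prop:Reichsteinproperties}\eqref{2} to produce a nonzero map $\Mscr(m)\to\pi^R_{s*}((\Uscr^R)^\vee\otimes\Fscr)$ for $m\ll 0$, which translates into a nonzero $\RHom_{X^R/G}(\xi^{R*}\Uscr(i+mN),\Fscr)$ for some $i$; this shows that the twists $\{\xi^{R*}\Uscr(i)\}_{i\in\ZZ}$ jointly detect zero in $\Cscr_{X^R}$, and hence genuinely generate it. Without this ampleness/properness step, your generation claim is not established. (Your Beilinson--Koszul device for reducing the range of twists is the same idea as the paper's citation of \cite[Lemma 3.2.2]{VdB04a}, so that part of your plan is fine once the detection-of-zero argument is added.)
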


\begin{proof}
By \eqref{eq:bric6}, Lemma \ref{lem:cyn}, we have $\tilde{\Cscr}_X,\Cscr_{Z_i,n}\subset \Cscr_{X^R}$, respectively.  
We apply Proposition \ref{th:recognition}.  

By definition of
$\Cscr_{Z_i,n}$ it is clear that the components corresponding to
  different $i$ are orthogonal.
    To obtain the orthogonality for $\Cscr_{Z_i,n}$ for fixed $i$, 
    let us first recall \eqref{eq:adj} for an easier reference
    \begin{equation}\label{eq:adj1}
\uRHom_{X^R}(\xi^{R*}\Uscr,s_{*}\Uscr_{E^R})=
s_*\uRHom_{E^R}(\Uscr_{E^R},\Uscr_{E^R}).
\end{equation} 
Applying
  $\uRHom_{X^R}(-,s_*\Uscr_{E^R}(n))$ to \eqref{eq:eks}\footnote{We obtain  an analogue of \eqref{eq:dis}.} and
  using \eqref{eq:adj1} it is then enough to show that
\[
R\overline{\xi^R_*}\pi_{s*}(s_{Z_i*}\uRHom_{E^R_{Z_i}}(\Uscr_{E^R_{Z_i}},\Uscr_{E^R_{Z_i}}(l)))=0
\]
for $-(c_i-2)-1\le l<0$. This holds by Lemma \ref{rem:afterlempushfwd}. 

To obtain the semi-orthogonality of $\Cscr_{Z_i,n}$ for $n=0,\ldots,c_i-2$
and $\tilde{\Cscr}_X$ we need 
\begin{equation}\label{eq:nog1}
R\overline{\xi^R_*}\pi_{s*}\uRHom_{X^R}(s_{Z_i*}\Uscr_{E^R_{Z_i}}(n),\xi^{R*}\Uscr)=0
\end{equation}
for $l$ in the indicated range.

We first apply $\uRHom_{X^R/G}(-,\xi^{R*}\Uscr)$ to \eqref{eq:eks} and obtain the distinguished triangle
\begin{multline*}
\uRHom_{X^R}(s_*\Uscr_R,\xi^{R*}\Uscr)\to \uRHom_{X^R}(\xi^{R*}\Uscr,\xi^{R*}\Uscr)\\\xrightarrow[]{\uRHom_{X^R}(\xi^{R*}\Uscr\otimes t,\xi^{R*}\Uscr)}
\uRHom_{X^R}(\xi^{R*}\Uscr(1),\xi^{R*}\Uscr)\to
\end{multline*}
where $t:\Oscr_{X^R/G}(1)\to \Oscr_{X^R/G}$ denotes the canonical map. 
By Lemma \ref{lem:square} below  
this may be rewritten as 
\begin{multline*}
\uRHom_{X^R}(s_*\Uscr_R,\xi^{R*}\Uscr)\to \uRHom_{X^R}(\xi^{R*}\Uscr,\xi^{R*}\Uscr)\\\xrightarrow[]{\uRHom_{X^R}(\xi^{R*}\Uscr,\xi^{R*}\Uscr\otimes t)}
\uRHom_{X^R}(\xi^{R*}\Uscr,\xi^{R*}\Uscr(-1))\to.
\end{multline*}
By applying $\uRHom_{X^R/G}(\xi^{R*}\Uscr,-(-1))$ to \eqref{eq:eks}
 we then deduce that 
 \[
 \uRHom_{X^R}(s_*\Uscr_{E^R},\xi^{R*}\Uscr)\cong \uRHom_{X^R}(\xi^{R*}\Uscr,s_*\Uscr_{E^R}(-1))[-1].
 \]
 Twisting and applying \eqref{eq:adj1} we moreover have
  \[
 \uRHom_{X^R}(s_*\Uscr_{E^R}(n),\xi^{R*}\Uscr)\cong s_*\uRHom_{E^R}(\Uscr_{E^R},\Uscr_{E^R}(-1-n))[-1].
 \]
Applying $R\overline{\xi^R}_*\pi_{s*}$ and using Lemma \ref{rem:afterlempushfwd}, we obtain \eqref{eq:nog1}.

We now prove the generation property. 
We reduce to the affine $X$ containing one representative $Z_j$ of connected components of $Z$ by Proposition \ref{th:recognition} (and Lemma \ref{lem:onecovering}). By \eqref{eq:eks}, it follows that $\langle \tilde{\Cscr}_X,\Cscr_{Z_j,0},\dots, \Cscr_{Z_j,c_j-2}\rangle$ contains $\xi^{R*}\Uscr(i)$ for $0\leq i\leq c_j-1$. 
By the proof of \cite[Lemma 3.2.2]{VdB04a} (which applies in the $G$-equivariant setting) it follows that $\Cscr_{X^R}$ contains $\xi^{R*}\Uscr(i)$ for all $i\in \ZZ$. We now show that $(\xi^{R*}\Uscr(i))_{i\in \ZZ}$ generate $\Cscr_{X^R}$. 
Let $0\neq \Fscr\in D(X^R/G)$. 
We need to show that 
\begin{multline*}
(\RHom_{X^R/G}(\xi^{R*}\Uscr(i),\Fscr)=0 \text{ for all $i\in\ZZ$})\implies \\
\pi^R_{s*}\uRHom_{X^R}(\Uscr^R,\Fscr)=\pi^R_{s*}((\Uscr^R)^\vee\otimes \Fscr)=0
\end{multline*}
or equivalently
\begin{multline*}
(\pi^R_{s*}\uRHom_{X^R}(\Uscr^R,\Fscr)=\pi^R_{s*}((\Uscr^R)^\vee\otimes \Fscr)\neq 0)\implies \\
\RHom_{X^R/G}(\xi^{R*}\Uscr(i),\Fscr)\neq 0 \text{ for some $i\in\ZZ$}
.
\end{multline*}
We assume that $\pi^R_{s*}((\Uscr^R)^\vee\otimes \Fscr)\neq 0$. 
Recall that $\Oscr(N)=\pi^{R*}_s\Mscr$ for an ample line bundle $\Mscr$ on $X^R\quot G$ by Proposition \ref{prop:Reichsteinproperties}\eqref{5}.  
Then $\Hom_{X^R\quot G}(\Mscr(m),\pi^R_{s*}((\Uscr^R)^\vee\otimes \Fscr)) \neq 0$ for $m\ll 0$ (since $X^R\quot G$ is proper over affine $X\quot G$ by Proposition \ref{prop:Reichsteinproperties}\eqref{2}). Thus
\begin{multline*}
0\neq \RHom_{X^R\quot G}(\Mscr(m),\pi^R_{s*}((\Uscr^R)^\vee\otimes \Fscr))= \RHom_{X^R/ G}(\Oscr(mN),(\Uscr^R)^\vee\otimes \Fscr)=\\
\oplus_{0\leq i<N}\RHom_{X^R/G}(\xi^{R*}\Uscr(i+mN),\Fscr)
\end{multline*}
and the generation follows. 
\end{proof}

We have used the following lemma.
\begin{lemma}
\label{lem:square}
Let $\Fscr$, $\Gscr\in \Qch(X^R/G)$ and $t:\Oscr_{X^R/G}(1)\to \Oscr_{X^R/G}$ the canonical map. Then the following diagram is commutative
\[
\xymatrix{
\uRHom_{X^R}(\Fscr,\Gscr)\ar[d]_{\uRHom_{X^R}(\Fscr,t)}\ar@{=}[r]&\uRHom_{X^R}(\Fscr,\Gscr)\ar[d]^{\uRHom_{X^R}(t,\Gscr)}\\
\uRHom_{X^R}(\Fscr,\Gscr(-1))\ar[r]_{\cong}&\uRHom_{X^R}(\Fscr(1),\Gscr)
}
\] 
\end{lemma}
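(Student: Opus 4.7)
The plan is to identify both corners of the bottom row with a common twist of $\uRHom_{X^R}(\Fscr,\Gscr)$ via the projection formula, and then observe that after these identifications the two vertical arrows become the \emph{same} ``multiplication by $t$'' morphism; commutativity is then immediate.

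Since $\Oscr(1)$ is invertible, the projection formula yields canonical isomorphisms, natural in both arguments,
\[
\uRHom_{X^R}(\Fscr,\Gscr(-1))\;\cong\;\uRHom_{X^R}(\Fscr,\Gscr)\otimes\Oscr(-1)\;\cong\;\uRHom_{X^R}(\Fscr(1),\Gscr),
\]
whose composition I take as the bottom isomorphism of the diagram. Let
\[
\tau\colon \uRHom_{X^R}(\Fscr,\Gscr)\longrightarrow \uRHom_{X^R}(\Fscr,\Gscr)\otimes\Oscr(-1)
\]
denote the natural morphism obtained by tensoring, in the $\Oscr$-factor, with the canonical map $\Oscr\to\Oscr(-1)$ corresponding to $t$ under $\Hom(\Oscr(1),\Oscr)\cong H^0(\Oscr(-1))$.

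Unwinding the definitions, the left vertical arrow is $\uRHom_{X^R}(\Fscr,-)$ applied to the ``multiplication by $t$'' map $\Gscr\to\Gscr(-1)$; by naturality of the first projection-formula isomorphism above, it coincides with $\tau$. Dually, the right vertical arrow is $\uRHom_{X^R}(-,\Gscr)$ applied to the map $\Fscr(1)\to\Fscr$ induced by $t$, and by naturality of the second isomorphism it also coincides with $\tau$. Hence both composites from the upper-left to the lower-left corner equal $\tau$, which is precisely the desired commutativity.

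The whole argument is a formal consequence of the bifunctoriality of $\uRHom_{X^R}$ together with the projection formula for invertible sheaves; the only point requiring minor care is that the two projection-formula isomorphisms be chosen with compatible orientations, so that ``multiplication by~$t$'' appears on the nose rather than up to a unit. This poses no real obstacle.
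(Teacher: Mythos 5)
Your proof is correct. The paper actually provides no proof of Lemma~\ref{lem:square} at all, treating it as a formal exercise in bifunctoriality, and your argument supplies exactly the expected one: identifying both lower corners with $\uRHom_{X^R}(\Fscr,\Gscr)\otimes\Oscr(-1)$ via the projection formula, under which both vertical arrows become tensoring (in the $\Oscr$-factor) with the section of $\Oscr(-1)$ corresponding to $t$, so the square commutes by naturality.
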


\subsection{Orlov's semi-orthogonal decomposition for the Reichstein transform}
We are now ready to formulate our next main result which is \emph{an analogue for the Reichstein transform of Orlov's semi-orthogonal decomposition for a blowup \cite{Orlov}}.
\begin{theorem}\label{thm:sodC} 
Let $X$ be a smooth $G$-scheme such that a good quotient $\pi:X\r X\quot G$ exists. Assume furthermore that $(X,G)$ satisfies \eqref{H2}.\footnote{\eqref{H2} was imposed at the beginning of \S\ref{sec:orlovsod} and has been used throughout \S\ref{subsubsec:localgensubcat} implicitly via results in \S\ref{sec:embedding}.}
Let $Z\subset X$ be the locus of maximal stabilizer dimension and
let $Z_1,\dots,Z_t$ be representatives for the orbits of the $G$-action on the connected components of $Z$. Let $G_i$ be the stabilizer of $Z_i$. 

\medskip

 Let $\Uscr$ be a $G$-equivariant vector bundle
on $X$  such that $\pi_{s*}\uEnd_X(\Uscr)$ is Cohen-Macaulay,
and put
\[
\begin{array}{rcll}
\Cscr_{X^R}&\coloneqq&\langle \Uscr^R\rangle^\lll_{X^R\quot  G}&\subset D(X^R/G),\\
\Cscr_X&\coloneqq&\langle \Uscr\rangle^\lll_{X\quot  G}&\subset D(X/G),\\
\Cscr_{Z_i}&\coloneqq&\langle \Uscr_{Z_i}\rangle^\lll_{Z_i\quot G_i}&\subset D(Z_i/G_i).
\end{array}
\]
Let $\xi^R_E:E^R\to Z$ denote the restriction/corestriction of $\xi^R:X^R\to X$. 

\medskip

The following holds. 
\begin{enumerate}
\item\label{eq:nog2}  
$L\xi^{R*}:D(X/G)\r D(X^R/G)$ is fully faithful when restricted to $\Cscr_X$.
\item\label{eq:nog3} 
The composition 
\[
F_i:D(Z_i/G_i)\hookrightarrow D(Z/G)\xrightarrow{L\xi_E^{R\ast}} D(E^R/G)\xrightarrow{Rs_\ast} D(X^R/G)
\]
is fully faithful when restricted to $\Cscr_{Z_i}$. 
\item
There is a semi-orthogonal decomposition of $\Cscr_{X^R}$
\[\langle L\xi^{R\ast} \Cscr_X,(F_1\Cscr_{Z_1})(0),\dots, (F_1\Cscr_{Z_1})(c_1-2),\dots,(F_t\Cscr_{Z_t})(0),\dots, (F_t\Cscr_{Z_t})(c_t-2)\rangle
\]
where $c_i=\codim(Z_i,X)$. 
Moreover, the components corresponding to different $Z_i$ are orthogonal. 
\end{enumerate}
\end{theorem}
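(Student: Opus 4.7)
The plan is to deduce the theorem from Proposition \ref{prop:sodC}, which already provides the semi-orthogonal decomposition $\Cscr_{X^R} = \langle \tilde{\Cscr}_X, \Cscr_{Z_1,0},\dots,\Cscr_{Z_t,c_t-2}\rangle$. What remains is to match each piece of this decomposition with the corresponding piece in the present statement, i.e.\ to show $L\xi^{R*}\Cscr_X = \tilde{\Cscr}_X$ and $F_i\Cscr_{Z_i}(k) = \Cscr_{Z_i,k}$, together with the corresponding full-faithfulness assertions (1) and (2). The semi-orthogonality and orthogonality assertions in (3) are then inherited directly from Proposition \ref{prop:sodC}.

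For (1), I would use Lemma \ref{lem:ff} to identify both $\Cscr_X$ and $\tilde{\Cscr}_X$ with $D(\Lambda)$. Under the Cohen-Macaulay hypothesis, $\pi_{s*}\uREnd_X(\Uscr) = \pi_{s*}\uEnd_X(\Uscr) = \Lambda$ is a classical sheaf of algebras, and similarly $\pi^R_{s*}\uREnd_{X^R}(\xi^{R*}\Uscr) = \Lambda'$; Lemma \ref{lem:pushfwd} then provides the isomorphism $R\overline{\xi^R_*}\Lambda' \cong \Lambda$, so both categories are equivalent to $D(\Lambda)$ via the functors $\Fscr \mapsto R\pi_{s*}\uRHom_X(\Uscr,\Fscr)$ and $\Gscr \mapsto R\overline{\xi^R_*}\pi^R_{s*}\uRHom_{X^R}(\xi^{R*}\Uscr,\Gscr)$ respectively. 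The functor $L\xi^{R*}$ sends the generator $\Uscr$ of $\Cscr_X$ to the generator $\xi^{R*}\Uscr$ of $\tilde{\Cscr}_X$, and a short diagram chase (using that $\xi^{R*}\Uscr$ is a vector bundle, so $L\xi^{R*}$ commutes with $\uRHom(\Uscr,-)$) shows that the induced map of endomorphism algebras is the canonical identification from Lemma \ref{lem:pushfwd}. Hence $L\xi^{R*}$ corresponds to the identity functor on $D(\Lambda)$, yielding full faithfulness and the equality $L\xi^{R*}\Cscr_X = \tilde{\Cscr}_X$.

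Part (2) proceeds by an entirely parallel argument in which Lemma \ref{lem:pushfwd} is replaced by Corollary \ref{cor:bric3}: Lemma \ref{lem:ff} identifies both $\Cscr_{Z_i}$ and $\Cscr_{Z_i,0}$ with $D(\Lambda_{Z_i})$, and the functor $F_i = Rs_*\circ L\xi_E^{R*}$ sends the generator $\Uscr_{Z_i}$ to $s_*\Uscr_{E^R_{Z_i}}$, which is the generator of $\Cscr_{Z_i,0}$. Twisting by $\Oscr(k)$ is an auto-equivalence of $D(X^R/G)$ carrying $\Cscr_{Z_i,0}$ onto $\Cscr_{Z_i,k}$, so the composition $(-)(k)\circ F_i : \Cscr_{Z_i} \to \Cscr_{Z_i,k}$ is fully faithful with image $\Cscr_{Z_i,k}$. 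Combining (1), (2) with Proposition \ref{prop:sodC} immediately yields (3).

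The main technical point, as in a few similar arguments earlier in the paper, is verifying that under the Lemma \ref{lem:ff} equivalences the functors $L\xi^{R*}$ and $F_i$ really do correspond to the identity (i.e.\ that the unit maps induced on endomorphism algebras coincide with the isomorphisms supplied by Lemma \ref{lem:pushfwd} and Corollary \ref{cor:bric3}). This should follow from the naturality of the constructions, but it is the step where one must keep careful track of generators and their DG-algebras of endomorphisms; no conceptually new input beyond the results already established is expected.
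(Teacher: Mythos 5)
Your proposal is correct and takes essentially the same route as the paper: parts (1) and (2) are exactly the content of Corollaries~\ref{cor:bric1} and~\ref{cor:bric3}, and (3) follows from Proposition~\ref{prop:sodC} once one identifies $L\xi^{R*}\Cscr_X=\tilde{\Cscr}_X$ and $F_i\Cscr_{Z_i}(k)=\Cscr_{Z_i,k}$. Your more detailed unpacking of (1) and (2) through Lemma~\ref{lem:ff} (identifying $\Cscr_X$ and $\tilde{\Cscr}_X$, resp.\ $\Cscr_{Z_i}$ and $\Cscr_{Z_i,0}$, with the appropriate $D(\Lambda)$-type categories) is precisely the alternative route that the paper itself records in the remark at the end of the proof of Corollary~\ref{cor:sod}, so no new ingredients are involved.
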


\begin{proof}
{ }
(1) This follows from Corollary \ref{cor:bric1}. 

(2) This follows from Corollary \ref{cor:bric3}.

(3) In the notation of \S\ref{subsubsec:localgensubcat},   $F_i\Cscr_{Z_i}(n)=\Cscr_{Z_i,n}$, $L\xi^{R*}\Cscr_X=\tilde{\Cscr}_X$.  
The claim then follows immediately from Proposition \ref{prop:sodC}.
 \end{proof}

\begin{corollary}\label{cor:sod} 
Let the notations and assumptions be as in the previous theorem and define in addition sheaves of algebras on $X^R\quot G$, $X\quot G$, $Z_i\quot G_i$ via:
\[
\Lambda^R\coloneqq \pi^R_{s\ast} \uEnd_{X^R}(\Uscr^R), \quad \Lambda\coloneqq \pi_{s\ast} \uEnd_X(\Uscr),\quad \Lambda_{Z_i}\coloneqq \pi_{Z_i,s,\ast} \uEnd_X^R(\Uscr_{Z_i})
\]
where $\pi_{Z_i}:Z_i\r Z_i\quot G_i$ is the good quotient.

\medskip

There is a semi-orthogonal decomposition
\[
D(\Lambda^{\K})\cong \langle D(\Lambda),\underbrace{D(\Lambda_{Z_1}),\ldots,  D(\Lambda_{Z_1})}_{c_1-1},\ldots,
\underbrace{ D(\Lambda_{Z_t}),\ldots,  D(\Lambda_{Z_t})}_{c_t-1}\rangle
\]
where   $c_i=\codim(Z_i,X)$.  
Moreover, the components corresponding to different $Z_i$ are orthogonal. 
\end{corollary}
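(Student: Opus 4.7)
The plan is to deduce Corollary \ref{cor:sod} as an essentially formal algebraic reformulation of Theorem \ref{thm:sodC}, using Lemma \ref{lem:ff} as the bridge between locally generated subcategories of the geometric derived categories and derived categories of (sheaves of) algebras.

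First I would apply Lemma \ref{lem:ff} to each of the three locally generated subcategories $\Cscr_{X^R}$, $\Cscr_X$, $\Cscr_{Z_i}$ appearing in Theorem \ref{thm:sodC}. In each case the local generator ($\Uscr^R$, $\Uscr$, $\Uscr_{Z_i}$) is a genuine vector bundle, so $\uREnd$ agrees with $\uEnd$ concentrated in degree zero; taking $\phi=\id$ (so $\bar\phi=\id$) in the definition preceding Lemma \ref{lem:ff}, the associated sheaves of DG-algebras reduce to the ordinary sheaves of algebras $\Lambda^R$, $\Lambda$, $\Lambda_{Z_i}$ on $X^R\quot G$, $X\quot G$, $Z_i\quot G_i$ respectively. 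Lemma \ref{lem:ff} then yields equivalences
\[
\Cscr_{X^R}\simeq D(\Lambda^R),\qquad \Cscr_X\simeq D(\Lambda),\qquad \Cscr_{Z_i}\simeq D(\Lambda_{Z_i}).
\]

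Second, I would transport the semi-orthogonal decomposition of Theorem \ref{thm:sodC}(3) across these equivalences. The embedding $L\xi^{R\ast}\colon\Cscr_X\hookrightarrow\Cscr_{X^R}$ of Theorem \ref{thm:sodC}(1) identifies a full subcategory of $\Cscr_{X^R}$ equivalent to $D(\Lambda)$; the embedding $F_i\colon\Cscr_{Z_i}\hookrightarrow\Cscr_{X^R}$ of Theorem \ref{thm:sodC}(2), composed with the auto-equivalence $(-)(k)$ of $D(X^R/G)$ for $0\le k\le c_i-2$, produces $c_i-1$ full subcategories each equivalent to $D(\Lambda_{Z_i})$. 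The semi-orthogonality in Theorem \ref{thm:sodC}(3) then yields the desired semi-orthogonal decomposition of $D(\Lambda^R)$, and the orthogonality between components indexed by distinct $Z_i$ transfers automatically.

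The only point requiring a moment's care is the compatibility check that, under the equivalence $\Cscr_{X^R}\simeq D(\Lambda^R)$, the subcategory $L\xi^{R\ast}\Cscr_X$ really corresponds to $D(\Lambda)$ (and similarly for the $(F_i\Cscr_{Z_i})(k)$). Concretely, one must verify that the $\RHom$-sheaves governing the subcategory of $\Lambda^R$-modules generated by the image of $\Lambda$ match the $\RHom$-sheaves on the geometric side. This is however precisely the content of Lemma \ref{rem:afterlempushfwd} together with Corollaries \ref{cor:ff} and \ref{cor:bric3}, so no new work is needed. I do not anticipate any serious obstacle: the corollary is in essence a transcription of Theorem \ref{thm:sodC} into the language of algebras via Lemma \ref{lem:ff}.
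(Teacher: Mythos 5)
Your proposal takes essentially the same route as the paper: apply Theorem \ref{thm:sodC} for the geometric semi-orthogonal decomposition of $\Cscr_{X^R}$, then invoke Lemma \ref{lem:ff} with $\phi=\id$ on $X^R$, $X$, $Z_i$ separately to identify $\Cscr_{X^R}\simeq D(\Lambda^R)$, $\Cscr_X\simeq D(\Lambda)$, $\Cscr_{Z_i}\simeq D(\Lambda_{Z_i})$, and transport. This is precisely how the paper proves the corollary. One small remark: the ``compatibility check'' you flag in your last paragraph is not actually required for the statement as given --- since Corollary \ref{cor:sod} only asserts the existence of a semi-orthogonal decomposition whose components are \emph{equivalent} to $D(\Lambda)$ and $D(\Lambda_{Z_i})$, the equivalences from Lemma \ref{lem:ff} together with the fully faithful functors of Theorem \ref{thm:sodC}(1),(2) already give this; no further matching of $\RHom$-sheaves is needed. (The references you cite there, Lemma \ref{rem:afterlempushfwd} and Corollaries \ref{cor:ff}, \ref{cor:bric3}, are in fact what one would use for the paper's \emph{alternative} proof, which works entirely inside $D(X^R/G)$ and applies Lemma \ref{lem:ff} with $\phi=\xi^R$ to $\tilde{\Cscr}_X$ and $\phi=\id$ to $\Cscr_{Z_i,n}$, identifying the resulting sheaves of DG-algebras with $\Lambda$, $\Lambda_{Z_i}$ via Lemma \ref{lem:pushfwd} and Corollary \ref{cor:bric3}.)
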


\begin{proof}
This is an immediate consequence of Theorem \ref{thm:sodC} using Lemma \ref{lem:ff}.

Note that we could also deduce this corollary from Proposition \ref{prop:sodC} and Lemma \ref{lem:ff} together with results from \S\ref{sec:embedding}; i.e. for $\Cscr_{X^R}$, $\Cscr_{Z_i,n}$ we could apply the lemma with $Y'=Y=X^R,\phi=\id$, and then use Corollary \ref{cor:bric3} to further describe the latter, and for $\tilde{\Cscr}_X$ with $Y'=X^R,Y=X,\phi=\xi^R$ followed by Lemma \ref{lem:pushfwd}.  
\end{proof}

\subsection{Properties of  $\Uscr^R,\Lambda^R$ inherited from $\Uscr,\Lambda$}\label{sec:inherit}
For use below we recall that  $\Uscr_{E^R}$ was defined as the restrictions of $\xi^{R*}\Uscr$  to the exceptional divisor $E^R/G$ for the morphism $\xi^R:X^R/G\to X/G$. We similarly let~$\Uscr^R_{E^R}$ be the restriction of $\Uscr^R$
to $E^R/G$.

\begin{lemma}\label{lem:descofLambdaR}
The sheaf of rings on $E^R\quot G$
\[
\bar{\Lambda}:=\oplus_{n=-\infty}^\infty \pi^R_{E,s,*}\uHom_{E^R}(\Uscr^R_{E^R},\Uscr^R_{E^R}(n))
\]
is strongly graded. 
If  in the linear case as in Lemmas \ref{lem:local}, \ref{rem:vecbundle} 
then on $E^R\quot G$
\[
\bar{\theta}_*\Lambda^R\cong \bar{\Lambda}_{\geq 0}
\]
where $\bar{\theta}:X^R\quot G\to E^R\quot G$ \eqref{eq:diagramE} is the splitting of the  inclusion $E^R\quot G\to X^R\quot G$.
\end{lemma}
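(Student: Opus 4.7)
The plan is to treat the two assertions separately. For the first, that $\bar{\Lambda}$ is strongly graded, the key input is Remark \ref{rem:shiftU}, which provides a local isomorphism $\Uscr^R\xrightarrow{\sim}\Uscr^R(1)$ over $X^R\quot G$. Restricting along the closed immersion $\bar{s}:E^R\quot G\hookrightarrow X^R\quot G$, I get a local isomorphism $\phi:\Uscr^R_{E^R}\xrightarrow{\sim}\Uscr^R_{E^R}(1)$ over $E^R\quot G$. This $\phi$ is a local section of $\bar{\Lambda}_1$, its inverse is a local section of $\bar{\Lambda}_{-1}$, and iterating gives local sections $\phi^n\in \bar{\Lambda}_n$, $\phi^{-n}\in \bar{\Lambda}_{-n}$ with $\phi^n\phi^{-n}=\id_{\Uscr^R_{E^R}}\in \bar{\Lambda}_0$. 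Hence any $\alpha\in \bar{\Lambda}_0$ can locally be written as $\phi^n\cdot(\phi^{-n}\alpha)\in \bar{\Lambda}_n\cdot \bar{\Lambda}_{-n}$, which yields $\bar{\Lambda}_0=\bar{\Lambda}_n\bar{\Lambda}_{-n}$ locally on $E^R\quot G$. As strong gradedness is a local condition, this suffices.

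For the linear-case identification $\bar{\theta}_*\Lambda^R\cong\bar{\Lambda}_{\geq 0}$, I would exploit the structure of $\theta:W^R\to E^R$ as an affine morphism (the total space of a line bundle) afforded by Lemma \ref{lem:local}, which gives $\theta_*\Oscr_{W^R}=\bigoplus_{n\geq 0}\Oscr_{E^R}(n)$. The key geometric input is the identification $\Oscr_{W^R}(1)\cong\theta^*\Oscr_{E^R}(1)$: indeed, $\Oscr_{W^R}(1)$ is the ideal sheaf of the zero section $E^R\hookrightarrow W^R$ of the line bundle, and that ideal sheaf is precisely $\theta^*\Oscr_{E^R}(1)$. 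Using Lemma \ref{rem:vecbundle} to reduce to $\Uscr=U\otimes\Oscr_W$, one has $\Uscr^R=\theta^*\Uscr^R_{E^R}$, so the projection formula delivers
\[
\theta_*\uEnd_{W^R}(\Uscr^R)=\uEnd_{E^R}(\Uscr^R_{E^R})\otimes_{\Oscr_{E^R}}\theta_*\Oscr_{W^R}=\bigoplus_{n\geq 0}\uHom_{E^R}(\Uscr^R_{E^R},\Uscr^R_{E^R}(n)).
\]
Applying $\pi^R_{E,s,*}$ and using the commutativity $\bar{\theta}_*\pi^R_{s*}=\pi^R_{E,s,*}\theta_*$ visible in diagram \eqref{eq:diagramE} then yields $\bar{\theta}_*\Lambda^R\cong \bar{\Lambda}_{\geq 0}$ at the level of $\Oscr_{E^R\quot G}$-modules.

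The step I expect to require the most care is not any of the computations above, but checking that this identification respects the algebra structures: the left-hand side carries multiplication coming from composition in $\uEnd_{W^R}(\Uscr^R)$, while the right-hand side is multiplied by composition-with-twist of graded homomorphisms. The natural way to verify compatibility is via the equivalence between quasi-coherent sheaves on the affine morphism $W^R\to E^R$ and modules over $\theta_*\Oscr_{W^R}=\bigoplus_{n\geq 0}\Oscr_{E^R}(n)$: under this equivalence $\Uscr^R$ corresponds to the induced module $\bigoplus_{n\geq 0}\Uscr^R_{E^R}(n)$, whose endomorphism algebra over $\theta_*\Oscr_{W^R}$ is tautologically $\bigoplus_{n\geq 0}\Hom(\Uscr^R_{E^R},\Uscr^R_{E^R}(n))$ with the graded composition law defining $\bar{\Lambda}_{\geq 0}$. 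This compatibility is the delicate but routine endpoint of the argument.
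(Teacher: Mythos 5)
Your proposal is correct and takes essentially the same route as the paper: strong gradedness follows from Remark~\ref{rem:shiftU} giving a local unit of degree~$1$, and the linear-case identification comes from $\Uscr^R=\theta^*\Uscr^R_{E^R}$ together with $\theta_*\theta^*\Uscr^R_{E^R}=\bigoplus_{n\ge 0}\Uscr^R_{E^R}(n)$ and commuting $\bar\theta_*\pi^R_{s*}=\pi^R_{E,s,*}\theta_*$. The additional care you take about the compatibility of multiplicative structures (via modules over $\theta_*\Oscr_{W^R}$) is a reasonable elaboration but is left implicit in the paper's proof.
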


\begin{proof}
We start by proving that $\bar{\Lambda}$ is strongly graded.\footnote{A $\ZZ$-graded ring $\Gamma$ is strongly graded if $1\in \Gamma_1 \Gamma_{-1}$, $1\in \Gamma_{-1}\Gamma_1$.} 
Since $\Uscr_{E^R}^R(1)\cong \Uscr_{E^R}^R$
 locally over $E^R\quot G$ (restricting the local isomorphism $\Uscr^R(1)\cong \Uscr^R$ over $X^R\quot G$ in Remark \ref{rem:shiftU} to $E^R/G$), $\bar{\Lambda}$ has a unit in degree $1$ and it is thus strongly graded.

We now prove the second statement. We have $\Uscr^R=\theta^*\Uscr^R_{E^R}$ (using the linearity assumption $\Uscr=U\otimes \Oscr_W$). 
We compute
\begin{align*}
\bar{\theta}_*\Lambda^R&=\bar{\theta}_*\pi^R_{s*}\uEnd_{X^R}(\theta^*\Uscr^R_{E^R})\\
&\cong \pi^R_{E,s,*}\uHom_{E^R}(\Uscr^R_{E^R},\theta_*\theta^*\Uscr^R_{E^R})\\
&\cong \oplus_{n=0}^\infty \pi^R_{E,s,*}\uHom_{E^R}(\Uscr^R_{E^R},\Uscr^R_{E^R}(n)).\qedhere
\end{align*}
\end{proof}

\begin{lemma}\label{lem:homologicallyhom} Let $X$ be a scheme and let $A$ be a strongly graded sheaf of algebras on $X$. 
If $A$ is homologically homogeneous then $A_0$
and $A_{\ge 0}$ are  homologically homogeneous  on $X$.
\end{lemma}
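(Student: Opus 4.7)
Both claims can be verified locally on $X$, since homological homogeneity of a sheaf of algebras is tested on connected affine opens. I would therefore reduce to the affine situation: take $A$ to be the corresponding strongly $\ZZ$-graded ring, so that each $A_n$ is an invertible $A_0$-bimodule, and Dade's theorem identifies the category of graded $A$-modules with that of $A_0$-modules via $(-)_0$ and $A\otimes_{A_0}(-)$. This equivalence preserves finite generation, projectivity and projective dimension, and it matches simple graded $A$-modules with simple $A_0$-modules.

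For $A_0$ I would verify the three hallmarks of homological homogeneity. Module-finiteness of $A_0$ over the Noetherian centre $Z(A_0)$ follows from $A_0$ being an $A_0$-bimodule summand of the centre-finite $A$, together with the inclusion $Z(A)_0\subseteq Z(A_0)$. Finite global dimension of $A_0$ follows from Dade's identification of $\gldim A_0$ with the graded global dimension of $A$, itself bounded by $\gldim A<\infty$. Constancy of projective dimension on simple $A_0$-modules translates through Dade to constancy of the ungraded projective dimension on simple graded $A$-modules; combining the strongly graded hypothesis with a homogeneous nonzero-divisor $c$ of positive degree and the short exact sequence
\[
0\to T\xrightarrow{\cdot c}T\to U\to 0
\]
(with $T$ graded simple and $U$ ungraded simple) yields $\operatorname{pd}_A U=\operatorname{pd}_A T+1$, so that the constant value of $\operatorname{pd}_A U$ guaranteed by homological homogeneity of $A$ propagates to a constant value of $\operatorname{pd}_A T$, and hence of $\operatorname{pd}_{A_0}$ on simples.

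For $A_{\ge 0}$, on a Zariski refinement of $X$ on which the invertible $A_0$-bimodule $A_1$ is trivialized by a section $t\in A_1$ with two-sided inverse in $A_{-1}$, the ring $A_{\ge 0}$ identifies with the skew polynomial extension $A_0[t;\sigma]$, where $\sigma=\operatorname{Ad}(t)\in\Aut(A_0)$. Standard results on skew polynomial extensions then preserve homological homogeneity: global dimension rises by exactly one, module-finiteness over the centre is retained via a suitable central power of $t$, and Cohen-Macaulayness over the centre is inherited through the Rees-type filtration by $t$-degree. Locality of homological homogeneity then yields the global conclusion. The main obstacle I anticipate is the constancy-of-projective-dimension step for simple $A_0$-modules in the previous paragraph: Dade's theorem delivers finiteness of $\gldim A_0$ and module-finiteness formally, but pinning down a single common value of the projective dimension on simples genuinely uses the strongly graded structure to produce a homogeneous regular element of positive degree bridging graded and ungraded simples of $A$.
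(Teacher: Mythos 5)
Your route differs from the paper's in two respects, and the first contains a genuine error. For $A_0$, the paper cites Dade's theorem together with \cite[Proposition~2.9]{stafford2008}, rather than rederiving the constancy of projective dimension directly. You correctly flag this step as the crux, but the proposed bridge does not exist: if $A$ is strongly graded and $T$ is a graded simple $A$-module, then $T_n\neq 0$ for \emph{every} $n$ (since $A_{m-n}A_{n-m}=A_0\ni 1$), and for any central homogeneous $c$ of positive degree the map $\cdot c:T\to T$ has graded image, so $cT=0$ or $cT=T$. In neither case is $T/cT$ a nonzero ungraded simple module, so the short exact sequence $0\to T\xrightarrow{\cdot c}T\to U\to 0$ you posit cannot occur, and the equality $\operatorname{pd}_AU=\operatorname{pd}_AT+1$ has no foundation. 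The needed link between graded and ungraded simples is exactly what \cite[Proposition~2.9]{stafford2008} supplies, and it should be invoked (or argued by a different device), not reconstructed this way.

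For $A_{\geq 0}$, the paper's argument is considerably lighter and needs no trivialization of $A_1$: one adjoins a variable $t$ in degree $-1$, observes that $A[t]$ is again strongly graded with $A[t]_0\cong A_{\geq 0}$, and then applies \cite[Theorem~7.3]{BH} ($A$ hh implies $A[t]$ hh) followed by the degree-zero case just established. Your route instead requires that, after Zariski refinement, $A_1$ become free as an $A_0$-bimodule on a unit $t$, i.e.\ that the strongly graded sheaf become a crossed product locally. This is not automatic: an invertible $A_0$-bimodule need not be of the form ${}_1(A_0)_\sigma$, and ``strongly graded'' is strictly weaker than ``crossed product'' even after localizing on $X$. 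Moreover, even where such a trivialization holds you would need an hh-preservation statement for Ore extensions $A_0[t;\sigma]$, which is not the ordinary polynomial case of \cite[Theorem~7.3]{BH}. The $A[t]$ trick avoids both issues.
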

\begin{proof} 
This is a local statement so we may assume that $A$ is a strongly graded ring. It is clear that $A[t]$ for $|t|=-1$ is also strongly graded.
Since $A_{\geq 0}\cong A[t]_0$ we need two facts:
\begin{enumerate}{}
\item If $A$ is homologically homogeneous then so is $A[t]$.
\item If $A$ is strongly graded and homologically homogeneous then so is $A_0$.
\end{enumerate}
The first fact is \cite[Theorem 7.3]{BH}. The second follows since the categories of $A_0$-modules and graded $A$-modules are in this case equivalent \cite[Theorem I.3.4]{gradedringtheory}, and by \cite[Proposition 2.9]{stafford2008}. 
\end{proof}

The next proposition exhibits some properties of the pair $(X,\Uscr)$
which lift to the pair $(X^R,\Uscr^R)$. 
\begin{proposition}\label{prop:want}
\begin{enumerate}
\item\label{T1}
If $\Lambda=\pi_{s\ast} \uEnd_X(\Uscr)$ 
is  homologically homogeneous on $X\quot G$
then the same is true for $\Lambda^{\K}=\pi^{\K}_{s\ast} \uEnd_{X^{\K}}(\Uscr^{\K})$. 
\item\label{T2} 
 If $\Uscr$ is generator in codimension one then the same is true for $\Uscr^{R}$.
\end{enumerate}
\end{proposition}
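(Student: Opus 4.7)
The plan is to reduce both statements to the linear case via the \'etale-local Luna slice theorem (Lemmas \ref{lem:etale} and \ref{rem:vecbundle}), since both homological homogeneity and the generator-in-codimension-one property are local on $X^R \quot G$. In the linear setting $X = W$ is a $G$-representation with $\Uscr = U \otimes \Oscr_W$, and $X^R \quot G$ admits a natural open cover by the complement of the exceptional divisor $E^R \quot G$ (where $\xi^R$ is an isomorphism onto $X \setminus \bar{Z}$) together with affine charts $\Spec((S^G_f)_{\ge 0})$ for $f \in S^G_{>0}$ homogeneous, as in Lemma \ref{rem:reichsteincovering}.

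For (1), on the open complement $\xi^R$ is an isomorphism and $\Uscr^R = \bigoplus_i (\xi^{R*}\Uscr)(i)$ is built from direct sums of twists of $\Uscr$, so $\Lambda^R$ is locally Morita equivalent to the restriction of $\Lambda$ and thus homologically homogeneous there. Over a chart meeting $E^R \quot G$, Lemma \ref{lem:descofLambdaR} provides the identification $\bar{\theta}_* \Lambda^R \cong \bar{\Lambda}_{\ge 0}$ with $\bar{\Lambda}$ strongly graded, so by Lemma \ref{lem:homologicallyhom} it suffices to prove $\bar{\Lambda}$ is homologically homogeneous. Via the equivalence \cite[Theorem I.3.4]{gradedringtheory} between graded $\bar{\Lambda}$-modules and $\bar{\Lambda}_0$-modules (combined with \cite[Proposition 2.9]{stafford2008}), this reduces further to homological homogeneity of $\bar{\Lambda}_0 = \pi^R_{E,s,*}\uEnd_{E^R}(\Uscr^R_{E^R})$, which one extracts from the homological homogeneity of $\Lambda$ using Cohen-Macaulayness of $\Lambda^R$ (which follows from a direct-summand extension of Lemma \ref{lem:cm} applied to $\Uscr^R$) together with a finite-global-dimension argument over the projective bundle $\PP(W_1)^{ss}/G$ underlying $E^R$.

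For (2), assume $\Mscr \in \Qch(X^R/G)$ satisfies $\pi^R_{s*}\uHom_{X^R}(\Uscr^R, \Mscr) = 0$. Restriction to $X^R \setminus E^R \cong X \setminus \bar{Z}$ combined with the generator-in-codimension-one hypothesis on $\Uscr$ forces the support of $\Mscr|_{X^R \setminus E^R}$ to have codimension $\ge 2$ in $X$; since $\bar{Z} \subset X^u$ has codimension $\ge 2$ by \eqref{H2} (cf.\ the proof of Proposition \ref{prop:Reichsteinproperties}\eqref{3}), the same codimension bound lifts to $X^R$. It remains to rule out that $\Mscr$ contains an entire component of $E^R$ in its support (codimension $1$ in $X^R$). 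Pushing the vanishing of $\pi^R_{s*}\uHom(\xi^{R*}\Uscr(i), \Mscr) = 0$ for $i = 0, \ldots, N-1$ down to $E^R$ via the exact sequence \eqref{eq:eks}, one obtains $\pi^R_{E,s,*}(\Uscr_{E^R}^\vee \otimes \Oscr(-i) \otimes \Mscr_E) = 0$ for the full range of $i$; since the characters $\Oscr(-i)|_{E^R}$ for $0 \le i < N$ exhaust the cyclic group trivialised by $\Oscr(N)|_{E^R}$, and $\Uscr|_Z$ is saturated by Theorem \ref{thm:saturated} applied to $\Uscr$, these vanishings force $\Mscr_E = 0$, a contradiction.

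The principal obstacle is the final step in (1): establishing homological homogeneity of $\bar{\Lambda}_0$ on $E^R \quot G$. The tweak in the definition \eqref{eq:tweak} of $\Uscr^R$ --- assembling all $N$ twists $\Oscr(0), \ldots, \Oscr(N-1)$ rather than only $\xi^{R*}\Uscr$ --- is precisely what preserves finite global dimension (this fails without the enhancement, as Example \ref{sec:example} illustrates), and a careful bookkeeping of the stabilizer characters along the exceptional divisor is needed to verify the constant-projective-dimension condition of Definition \ref{def:hh}.
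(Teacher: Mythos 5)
Your reduction to the linear case and your identification of Lemmas \ref{lem:descofLambdaR} and \ref{lem:homologicallyhom} as the right tools are the correct first moves, but both parts of your argument break down at the decisive step.

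For (1), after invoking Lemma \ref{lem:descofLambdaR} and Lemma \ref{lem:homologicallyhom} you say the problem ``reduces further to homological homogeneity of $\bar{\Lambda}_0$,'' but Lemma \ref{lem:homologicallyhom} runs only in the direction ``$A$ homologically homogeneous $\Rightarrow$ $A_0$ and $A_{\ge 0}$ homologically homogeneous,'' so it does not let you deduce $\bar{\Lambda}$ or $\bar{\Lambda}_{\ge 0}$ homologically homogeneous from $\bar{\Lambda}_0$ being so. More seriously, the step you then describe --- ``which one extracts from the homological homogeneity of $\Lambda$ using Cohen-Macaulayness of $\Lambda^R$ together with a finite-global-dimension argument over the projective bundle'' --- is not an argument. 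The entire content of the paper's proof lies in identifying the graded sheaf $\Gamma = \bigoplus_n \pi^R_{E,s,*}\uHom_{E^R}(\Uscr_{E^R},\Uscr_{E^R}(n))$ on $E^R\quot G$ with the sheaf $\hat{\Lambda}$ obtained from the graded $k[W]^G$-algebra $\Lambda$ by localization at homogeneous $f\in (k[W]^G)_{>0}$ (i.e.\ $\hat\Lambda(U_f)=\Lambda_f$). That isomorphism, verified by direct computation, is what transfers homological homogeneity from $\Lambda$ to $\Gamma$; then $\bar\theta_*\check\Lambda$ is an $N\times N$ matrix of grading shifts of $\Gamma$, so it too is homologically homogeneous, and Lemma \ref{lem:homologicallyhom} finishes. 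Your sketch never supplies the bridge from $\Lambda$ to anything living on $E^R\quot G$.

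For (2), your invocation of Theorem \ref{thm:saturated} is not available: that theorem requires both that $\pi_{s*}\uEnd_X(\Uscr)$ be homologically homogeneous \emph{and} that $\Uscr$ be a generator in codimension one, whereas the hypothesis of part (2) is only the second. Moreover, the concluding sentence ``since the characters $\Oscr(-i)|_{E^R}$ for $0\le i<N$ exhaust the cyclic group trivialised by $\Oscr(N)|_{E^R}$ \ldots\ these vanishings force $\Mscr_E=0$'' is an assertion, not a proof: the generic stabilizer $H$ on $E^R$ is an extension of a cyclic group $H/K$ by the generic finite stabilizer $K$ on $W$, the order of $H/K$ only divides $N$, and passing from vanishing of $H$-invariants to vanishing of the sheaf requires exactly the combinatorics you have skipped. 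The paper's route is a pointwise computation: for a generic $y=[x]\in E^R$ one writes $\Uscr^R_y = \bigoplus_{i=0}^{N-1} U\otimes(\ell^*)^{\otimes i}$ as an $H=G_y$-representation (with $\ell$ the line through $x$), notes that $K=\ker\alpha$ has all its irreducibles in $U$ by the generator-in-codimension-one hypothesis and that $H/K$ is necessarily finite cyclic by (H2), and then applies the recognition criterion Lemma \ref{thm:recognition} to conclude that $\Uscr^R_y$ contains all irreducibles of $H$. Your sheaf-theoretic contradiction could in principle be made to work, but as written it substitutes an unavailable theorem and an unverified final implication for the actual representation-theoretic content.
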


\begin{proof}
\eqref{1}
We  reduce to the linear case by Lemmas \ref{lem:etale}, \ref{rem:vecbundle}. 
Let $\check\Lambda$ be   the sheaf of rings on $X^R\quot G$ defined by 
\[
\bar{\theta}_*\check\Lambda= \bar{\Lambda}=\oplus_{n=-\infty}^\infty \pi^R_{E,s,*}\uHom_{E^R/G}(\Uscr^R_{E^R},\Uscr^R_{E^R}(n))
\]
where the right-hand side is to be viewed as a sheaf of  $\bar{\theta}_\ast\Oscr_{X^R\quot G}$ algebras.
Let 
\[
\Gamma=\oplus_{n=-\infty}^\infty \pi^R_{E,s,*}\uHom_{E^R/G}(\Uscr_{E^R},\Uscr_{E^R}(n)).
\]
Using the definition of $N$ we get
\[
\bar{\theta}_{*}\check\Lambda\cong
\begin{pmatrix}
\Gamma&\Gamma(1)&\cdots&\Gamma(N-1)\\
\Gamma(-1)&\ddots&&\vdots\\
\vdots&&&\\
\Gamma(-N+1)&\cdots&&\Gamma
\end{pmatrix}
\]
as sheaves of $\ZZ$-graded algebras on $E^R\quot G$, where $?(i)$ denotes the grading shift. 

Let $R=k[W]^G$ with $W$ graded as in Lemma \ref{rem:reichsteincovering}. 
Note that $\Proj R\cong E^R\quot G$. 
Let $\hat\Lambda$ be the sheaf of graded $\Oscr_{E^R\quot G}$-algebras
 associated to the $k[W]^G$-algebra $\Lambda$, defined by $\hat\Lambda(U_f)=\Lambda_f$, where $U_f=\Spec((R_f)_0)$ for $f\in R_{>0}$.  

 We claim that $\hat{\Lambda}\cong{\Gamma}$. 
 We will now confuse quasi-coherent sheaves on affine schemes with their global sections. 
 First note that $\Lambda=(\End(U)\otimes k[W])^G$. 
 Hence $\hat{\Lambda}(U_f)=\Lambda_f=(\End(U)\otimes k[W]_f)^G$. 
 On the other hand,
 \begin{align*}
 \Gamma(U_f)&=
 \oplus_{n=-\infty}^\infty \pi_{E,s,*}^R\uHom_{E^R/G}(U\otimes \Oscr_{E^R},U\otimes \Oscr_{E^R}(n))(U_f)\\
 &=(\End(U)\otimes\oplus_{n=-\infty}^\infty\Gamma(E^R_f,\Oscr(E^R)(n)))^G\\
 &=(\End(U)\otimes \oplus_{n=-\infty}^\infty(k[W]_f)_n)^G\\
 &=(\End(U)\otimes k[W]_f)^G.
 \end{align*}

Since $\Lambda$ is homologically homogeneous, so is $\hat{\Lambda}$ and therefore $\Gamma$. Thus, $\bar{\theta}_*\check\Lambda$ is homologically homogeneous. Since $\bar{\theta}_*\check\Lambda$ is strongly graded by Lemma \ref{lem:descofLambdaR} and $\bar{\theta}_*\Lambda^R=(\bar{\theta}_*\check\Lambda)_{\geq 0}$ by Lemma \ref{lem:descofLambdaR}, $\Lambda^R$ is homologically homogeneous by Lemma \ref{lem:homologicallyhom}.

\eqref{T2} 
 Also \eqref{T2} can be checked \'etale locally, so we can reduce to the linear case by Lemmas \ref{lem:etale}, \ref{rem:vecbundle}. 
 Since $X$ and $X^R$ differ in codimension $1$ by the exceptional divisor $E^R$, we need to show that, generically  on  $E^R$, 
$\Uscr^R$ generates $D(E^R/G)$. 
It is enough to check that $\Uscr^R_y=\Uscr^R\otimes k(y)$ contains all the irreducible representations of $G_y$ for a generic point $y\in E^R$. 
Denote $H=G_y$ and let $x$ a generic point in $W$ such that $y=[x]$. 
 Then $H$ (not necessarily pointwise) stabilizes the line $\ell$ passing through $0$, $x$, and the action of $H$ on $\ell$ is then given by a character $\alpha\in X(H)$.  
 Let $K=\ker
 \alpha$, which is the (finite) stabilizer of $x$. Thus,
 $H/K$
 can be considered as a subgroup of $G_m$,
 and it is therefore cyclic or $G_m$.
 However, if the $H/K$
 were $G_m$
 then $0$
 would be in the closure of the orbit of $x$.
 This is a contradiction since $x$
 is generic in $W$
 and $W$
 satisfies \eqref{H2}.
 Thus, $H/K$
 is a finite cyclic group and it acts on the line $\ell$
 by a generator of $X(H/K)$.
 Since $\Uscr^R_y=\oplus_{i=0}^{N-1}
 U\otimes (\ell^*)^{\otimes i}$ as
 $H$-representations
 and $(\ell^\ast)^{\otimes
   N}$ is trivial
 by the definition of $N$
 (see Proposition \ref{prop:Reichsteinproperties}\eqref{5}), and by
 the assumption $U$
 contains all irreducible representations of $K$,
 Lemma \ref{thm:recognition} below implies that all irreducible
 representations of $H=G_y$ are contained in $\Uscr^R_y$.
\end{proof}

The following lemma was used in the proof of Proposition \ref{prop:want}\eqref{T2}, which might also be of independent interest when viewed as a recognition criterion for induced representations. 

\begin{lemma}[Recognition criterion]\label{thm:recognition}
Let $K$ be a normal subgroup of $H$ such that $H/K$ is finite and let $V$ be a representation of $H$. If $V\otimes V':=:V$ for every representation $V'$ of $H/K$, then $V:=:\Ind_{K}^H\Res^{H}_{K}V$ (see \S\ref{sec:notation}). 
\end{lemma}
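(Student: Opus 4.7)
The plan is to reduce the statement to a concrete identification of $\Ind_K^H\Res_K^H V$ and then extract the two containments $V\in\operatorname{add}(\Ind_K^H\Res_K^H V)$ and $\Ind_K^H\Res_K^H V\in\operatorname{add}(V)$ from the hypothesis.

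The first step is to invoke the projection formula (which is classical for induction/restriction between a subgroup and a group):
\[
\Ind_K^H\Res_K^H V \;\cong\; V\otimes \Ind_K^H k,
\]
where $k$ denotes the trivial $K$-representation. Since $H/K$ is finite, $\Ind_K^H k$ is precisely the regular representation $k[H/K]$ of $H/K$, viewed as an $H$-representation via the surjection $H\twoheadrightarrow H/K$. Thus
\[
\Ind_K^H\Res_K^H V \;\cong\; V\otimes k[H/K] \;\cong\; \bigoplus_{V'\in\rep(H/K)} (V\otimes V')^{\dim V'}.
\]

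With this in hand, both containments become immediate. For $V\in\operatorname{add}(\Ind_K^H\Res_K^H V)$, note that the trivial $H/K$-representation appears in the decomposition of $k[H/K]$, so $V = V\otimes k$ is a direct summand of $V\otimes k[H/K]$. For the reverse containment, the hypothesis $V\otimes V':=:V$ gives, for each irreducible $V'\in\rep(H/K)$, an integer $m(V')$ such that $V\otimes V'$ is a direct summand of $V^{\oplus m(V')}$. Summing over the finitely many irreducibles of $H/K$ (finite since $H/K$ is finite) with the appropriate multiplicities yields that $V\otimes k[H/K]$ is a direct summand of $V^{\oplus N}$ with $N=\sum_{V'} m(V')\dim V'$.

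There is no real obstacle here: the only point to get right is the projection formula identification (and the fact that $\Ind_K^H k$ is the regular $H/K$-representation), after which the symmetric pair of containments defining ``$:=:$'' follows mechanically, using the trivial summand of $k[H/K]$ in one direction and the finiteness of $\rep(H/K)$ in the other.
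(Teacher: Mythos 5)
Your proof is correct and takes essentially the same approach as the paper: both hinge on the projection formula identification $\Ind_K^H\Res_K^H V\cong V\otimes k[H/K]$, after which the conclusion follows from the hypothesis. The paper simply applies the hypothesis directly with $V'=k[H/K]$ (since the hypothesis is stated for every $H/K$-representation, not just irreducibles), whereas you unpack $k[H/K]$ into irreducibles and reassemble; this is a minor presentational difference, not a different method.
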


\begin{proof}
$\tilde{V}:=\Ind_{K}^H\Res^H_KV\cong k[H/K]\otimes V$ as $H$-representations, where the action on the right-hand side is diagonal.
Since by the assumption $k[H/K]\otimes V:=:V$, we obtain $\tilde{V}:=:V$ as desired.
\end{proof}

\subsection{Semi-orthogonal decomposition of the Kirwan resolution}

In the next theorem we collect the results we have obtained.
\begin{theorem}\label{thm:sod}
Let $X$ be a smooth $G$-scheme such that a good quotient $\pi:X\r X\quot G$ exists. Assume furthermore that $(X,G)$ satisfies \eqref{H2}.\footnote{\eqref{H2} was imposed at the beginning of \S\ref{sec:orlovsod} and has been used throughout the section explicitly or implicitly via results in \S\ref{sec:embedding}.} 
Let $\Uscr$ be a $G$-equivariant vector bundle
on $X$.
 Assume that $\Lambda:=\pi_{s\ast} \uEnd_X(\Uscr)$ is homologically homogeneous on $X\quot G$ and that $\Uscr$ is a generator in codimension $1$ (see Definitions \ref{def:hh}, \ref{sec:hhgencodim1}).   

{}
\medskip

Let us assume that the Kirwan resolution $\fks/G$ is obtained by
performing $n$ successive Reichstein transforms and $Z_j$ is blown-up
at the $j$-th step in $X_j$.  Let $Z_{j1},\dots,Z_{jt_j}$ be
representatives for the orbits of the $G$-action on the connected
components of $Z$ and let $G_{Z_{ji}}$ be the stabilizer of $Z_{ji}$ (as a connected component).  Denote by
$\pi_{Z_{ji}}:Z_{ji}\to Z_{ji}\quot G_{ji}$ the quotient map. Let
$\Uscr_0=\Uscr$ and let $\Uscr_{i}=\Uscr_{i-1}^R$, $1\leq i\leq
n$ where $(-)^R$ is as in \eqref{eq:tweak}.
Let $\Uscr_{j,Z_{ji}}$ be the restriction of $\Uscr_j$ to $Z_{ji}$ and
set
$\Lambda_{Z_{ji}}=\pi_{Z_{ji},s,*}\uEnd_{Z_{ji}}(\Uscr_{j,Z_{ji}})$.

\medskip

There exists a semi-orthogonal decomposition
\begin{equation}
\label{eq:sod}
\langle D(\Lambda), 
D(\Lambda_{Z_{ji}})_{1\leq j\leq n, 1\leq i\leq t_j,0\leq k\leq c_{ji}-2}\rangle
\end{equation}
of $D(\fks/G)$, where $c_{ji}:=\codim(Z_{ji},X_j)$, and the terms appear in the lexicographic order (according to the label $(j,i,k)$). 
\end{theorem}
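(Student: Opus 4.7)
The plan is an induction on the number $n$ of Reichstein transforms used in constructing the Kirwan resolution $\fks/G$. The inductive step is a single application of Corollary \ref{cor:sod}, while the base case---when $G$ already acts on $X$ with only finite stabilizers---amounts to the identification $D(X/G) \cong D(\Lambda)$ via the bundle $\Uscr$ viewed as a tilting object.

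For the inductive step, suppose the theorem is known for resolutions requiring strictly fewer Reichstein transforms, and apply it to the intermediate scheme $X_1 = X^R$ equipped with the twisted bundle $\Uscr_1 = \Uscr^R$ from \eqref{eq:tweak}. The hypotheses of the theorem are inherited by $(X_1, \Uscr_1)$ thanks to Proposition \ref{prop:Reichsteinproperties}\eqref{3} (preserving (H2)) and Proposition \ref{prop:want} (preserving homological homogeneity of the endomorphism algebra and generation in codimension one); moreover $\mu(X_1) < \mu(X)$ by Proposition \ref{prop:Reichsteinproperties}\eqref{4}, so the induction really terminates. By the induction hypothesis, $D(\fks/G)$ admits a semi-orthogonal decomposition with $D(\Lambda_1)$ as its leftmost block, followed by the blocks $D(\Lambda_{Z_{ji}})$ for $j \ge 2$ in the prescribed lexicographic order. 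Corollary \ref{cor:sod}, applied to the first Reichstein transform, further splits
\[
D(\Lambda_1) \;\cong\; \big\langle D(\Lambda),\; D(\Lambda_{Z_{1i}})_{1 \le i \le t_1,\; 0 \le k \le c_{1i}-2}\big\rangle,
\]
with the blocks for distinct orbits mutually orthogonal. Splicing this into the inductive decomposition yields the full decomposition \eqref{eq:sod}; that the composite embeddings agree with the global embedding provided by Proposition \ref{prop:embedding} follows from the explicit construction in Corollary \ref{cor:bric1} together with the compatibility of Lemma \ref{lem:ff} under composition.

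The base case is where the essential content lies. When $G$ acts on $X$ with only finite stabilizers, the preserved hypotheses (homological homogeneity of $\Lambda$ and generation in codimension one for $\Uscr$) combine with Theorem \ref{thm:saturated}\eqref{2} to force $\Uscr$ to be full; Lemma \ref{lem:full} then produces the tilting equivalence $D(X/G) \cong D(\Lambda)$. Applied at the top of the tower---that is, with $X$ replaced by $\fks = X_n$ and $\Uscr$ by $\Uscr_n$, both of which inherit the hypotheses through $n$ iterations of Proposition \ref{prop:want}---this identifies $D(\fks/G)$ with $D(\Lambda_n)$. I expect the principal obstacle to be precisely this base identification: without the grading twist built into $\Uscr^R = \bigoplus_{i=0}^{N-1}(\xi^{R*}\Uscr)(i)$, neither fullness at the final stage nor Orlov-type splittings at each intermediate stage are available. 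This is exactly the reason Corollary \ref{cor:sod} (rather than the naive embedding of Proposition \ref{prop:naive}) is the correct engine driving the induction.
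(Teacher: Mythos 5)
Your proof is correct and follows essentially the same route as the paper: iterate Corollary~\ref{cor:sod}, use Propositions~\ref{prop:Reichsteinproperties} and~\ref{prop:want} to carry the hypotheses along, and at the final (finite-stabilizer) stage invoke Theorem~\ref{thm:saturated}\eqref{2} and Lemma~\ref{lem:full} to identify $D(\Lambda_n)$ with $D(\fks/G)$. The only detail you gloss over is that Lemma~\ref{lem:full} gives an equivalence of abelian categories $\Qch(\fks/G)\cong\Qch(\Lambda_n)$, and the paper additionally needs $D_{\Qch}(-)=D(\Qch(-))$ (via \cite{HallNeemanRydh}) to pass to the unbounded derived categories.
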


\begin{remark}\label{rem:NCCRimpliesU1}
The assumptions on $\Uscr$ and $\Lambda$ are satisfied if we assume that $\Lambda$ is an NCCR of $X\quot G$ and $X$ is \hyperlink{gnc}{``generic''}. See Proposition \ref{prop:NCCRimpliesU1}. 
\end{remark}

\begin{proof}[Proof of Theorem \ref{thm:sod}]
The theorem follows from Corollary \ref{cor:sod}, once we prove that when we perform the last {Reichstein transform} we get $D(\Lambda^{\K})\cong D(\fks/G)$. 

Assume thus that we are at the last step of the Kirwan resolution.  
We have $\Lambda^R=\pi_{s*}^R\uEnd_{\fks}(\Uscr_n)$.   
Moreover, $\fks/G$ is a smooth Deligne-Mumford stack, $\Uscr_n$ is generator in codimension $1$ by Proposition \ref{prop:want}\eqref{T2} (and the assumption on $\Uscr$), and $\Lambda^R$ is homologically homogeneous by Proposition \ref{prop:want}\eqref{T1} (and the assumption on $\Lambda$). 
Hence, by Theorem \ref{thm:saturated}, $\Uscr_n$ is full. 
Consequently, Lemma \ref{lem:full} implies that $\Qch(\fks/G)\cong \Qch(\Lambda^R)$.  
Then, $D(\Lambda^R)\cong D(\fks/G)$ as $D_{\Qch}(-)=D(\Qch(-))$ in our case by (the proof of) \cite[Theorem 1.2]{HallNeemanRydh}.  
\end{proof}
\begin{remark} The embedding $D(\Lambda)\hookrightarrow D(\fks/G)$ obtained from \eqref{eq:sod} is the same one as the one obtained from the diagonal in \eqref{diagram}. Indeed tracing through the various constructions we find that both embeddings are obtained as the composition of $D(\Lambda)\cong \langle \Uscr\rangle^\lll_{X\quot G}\subset D(X/G)$ with the pullback $D(X/G)\r D(\fks/G)$.
\end{remark}
\begin{remark}\label{rem:spoiler}
Theorem \ref{thm:sod} will not be the end of our story as we will show in \S\ref{sec:geometric} that 
the components $D(\Lambda_{Z_{ji}})$ of the semi-orthogonal decomposition \eqref{eq:sod} can be decomposed further as sums of derived categories of Azumaya algebras on smooth Deligne-Mumford stacks.
In other words the ``extra'' components to be added to the noncommutative resolution to obtain the Kirwan resolution are very close to commutative (they are ``gerby''). 
The precise statement, which is given in Corollary \ref{cor:external}, is a bit technical but it becomes very easy in the case that $G$ is abelian. In that case we have
\[
D(\Lambda_{Z_{ji}})\cong D(Z_{ji}/(G/H_{ji}))^{\oplus N_{ji}}
\]
where $H_{ji}$ is the  stabilizer of $Z_{ji}$ and $N_{ji}$ is the number of distinct $H_{ji}$-characters occurring in $\Uscr_{j,Z_{ji},x}$ for some $x\in Z_{ji}$. Thus in the abelian case the extra components are truly commutative.
\end{remark}

\subsection{A counterexample}
\label{sec:example}
Here we give an explicit example of a Cohen-Macaulay~$\Lambda$ such that~$\gldim \Lambda<\infty$, $\gldim \Lambda'=\infty$. This was announced in the beginning of \S\ref{sec:orlovsod}, from where we borrow the notations.

\begin{example}
Assume that $\Lambda$ is homologically homogeneous graded algebra and let $R$ be the center of $\Lambda$.  
We note that $(\Lambda_f)_{\geq 0}$ and $(\Lambda_f)_0$ for a homogeneous $f\in R_{>0}$ need not have finite global dimension.   As explained  in the first paragraph of the proof of Lemma \ref{lem:cm}, $\Lambda'$ is locally
of the form $(\Lambda_f)_{\ge 0}$.

For example, let $G=G_m$ act on a $4$-dimensional vector space $W$ with weights $-2,-1,1,2$. Let $U$ be another $G$-representation with weights $0,1,2$. Let $S=\Sym W^\vee$, $R=S^G$, $\Lambda=(\End(U)\otimes S)^{G}$. Then $\Lambda$ is an NCCR of $R$ \cite[Theorem 8.9]{VdB04}, and thus in particular homologically homogeneous. We let~$f$ be the product of the weight vectors in $W^\vee\subset S$ with
weights $-2,2$
 (which is $G$-invariant and thus belongs to $R$), and claim that $\gldim (\Lambda_f)_0=\infty$, which implies $\gldim (\Lambda_f)_{\geq 0}=\infty$ by \cite[Lemma 4.3.2]{SVdB}. 

Note that $B:=(\Lambda_f)_0=(\End(U)\otimes (S_f)_0)^G=M_{G,(S_f)_0}(\End U)$. By \cite[Lemma 4.2.1]{SVdB}, it is enough to show that the global dimension of $(\End(U)\otimes k[N_x])^{G_x}$ is infinite for some closed point $x\in \Spec((S_f)_0)$ with closed $G_m$-orbit and with (linear) slice $N_x$. 
To compute the slice $N_x$ we observe that $\Spec((S_f)_0)$ is an open subset in  $\PP(W)=(W\setminus \{0\})/G_m$. Hence we may compute the slice in $\PP(W)$. Let $x^*$ in $W$ be a lift of $x$ and let $N_{x^*}$ be the slice in $W$ of $x^*$ for the $G\times G_m$-action. Then
  it is easy to see that $N_x\quot G_x$ 
   is the same as $N_{x^*}\quot (G\times G_m)_{x^*}$.   
The weights of $W$, $U$ as $G\times G_m$-representation are respectively $(-2,1),(-1,1),(1,1),(2,1)$ and $(0,0),(1,0),(2,0)$. 
We take the point $x=[a:0:0:b]\in \Spec (S_f)_0\subset \PP(W)$. The stabilizer of $x^*$ is $\ZZ_4$, embedded in $G\times G_m$ via $(\epsilon,\epsilon^2)$, where $\epsilon$ is a primitive $4$-th root of unity. The actions of $\ZZ_4$ on $N_{x^*}$, $U$ have weights $1/4,3/4$ and $0,1/4,2/4$, respectively. 
Thus, $N_x\quot G_x\cong N_{x^*}\quot \ZZ_4$ is a Gorenstein singularity. 
We moreover have  $(\End(U)\otimes k[N_x])^{G_x}= (\End(U)\otimes k[N_{x^*}])^{\ZZ_4}=\End_{k[N_{x^*}]^{\ZZ_4}}((U\otimes k[N_{x^*}])^{\ZZ_4})$, where the last equality follows by \cite[Lemma 4.1.3]{SVdB} because $N_{x^*}$ is a \hyperlink{gnc}{generic}  $\ZZ_4$-representation. 
If $B$ would  have finite global dimension it would be an NCCR. However this is impossible  by \cite[Proposition 4.5]{IW} since 
$(U\otimes k[N_{x^*}])^{\ZZ_4}$ is not an ``MM-module'' (see loc.cit.) as $U\subset k\ZZ_4$ (as $\ZZ_4$-representation) and $\End_{k[N_{x^*}]^{\ZZ_4}}(k\ZZ_4\otimes k[N_{x^*}])^{\ZZ_4})$ is a Cohen-Macaulay $k[N_{x^*}]^{\ZZ_4}$-module.
\end{example}

\section{Endomorphism sheaves in the case of constant stabilizer dimension}\label{sec:geometric}
In this section, on smooth quotients stacks with constant stabilizer
dimension, we give a geometric (``gerby'') interpretation of sheaves
of endomorphism algebras of vector bundles. In particular, this
applies to $\Lambda_{Z_{ji}}$ appearing in Theorem \ref{thm:sod}.

\subsection{Normalizer of a representation}
We discuss some technical results we need later on. Let $H\subset G$ be an inclusion of reductive groups. We recall the following result for further reference. 
\begin{lemma}\cite[Lemma 1.1]{LunaRichardson}\label{lem:N(H)reductive}
$N(H)$ is reductive.
\end{lemma}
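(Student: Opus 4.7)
The plan is to work at the level of Lie algebras: I would first show that $\Lie(N(H)) = \mathfrak{n}_{\mathfrak{g}}(\mathfrak{h})$ is reductive as a Lie algebra, and then invoke the characteristic-zero equivalences that a connected linear algebraic group is reductive iff its Lie algebra is, and that an algebraic group is reductive iff its identity component is.

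The key computation is to decompose $\mathfrak{n}_{\mathfrak{g}}(\mathfrak{h})$ using the $\mathfrak{h}$-module structure on $\mathfrak{g}$. By complete reducibility (which applies because $\mathfrak{h}$ is reductive) there is an $\mathfrak{h}$-equivariant splitting $\mathfrak{g} = \mathfrak{h} \oplus \mathfrak{m}$. For $X \in \mathfrak{m}$, $\mathfrak{h}$-equivariance forces $[\mathfrak{h},X] \subseteq \mathfrak{m}$, so the condition $X \in \mathfrak{n}_{\mathfrak{g}}(\mathfrak{h})$ automatically promotes to $[\mathfrak{h},X] \subseteq \mathfrak{h} \cap \mathfrak{m} = 0$. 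This yields
\[
\mathfrak{n}_{\mathfrak{g}}(\mathfrak{h}) = \mathfrak{h} + \mathfrak{z}_{\mathfrak{g}}(\mathfrak{h}),
\]
and decomposing $\mathfrak{h} = \mathfrak{h}_{ss} \oplus Z(\mathfrak{h})$, with $Z(\mathfrak{h}) \subseteq \mathfrak{z}_{\mathfrak{g}}(\mathfrak{h})$ and $\mathfrak{h}_{ss} \cap \mathfrak{z}_{\mathfrak{g}}(\mathfrak{h}) = 0$, this becomes an internal direct sum of commuting subalgebras $\mathfrak{n}_{\mathfrak{g}}(\mathfrak{h}) = \mathfrak{h}_{ss} \oplus \mathfrak{z}_{\mathfrak{g}}(\mathfrak{h})$.

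To finish I would observe that $\mathfrak{h}_{ss}$ is semisimple and that $\mathfrak{z}_{\mathfrak{g}}(\mathfrak{h})$ is reductive by the classical (characteristic-zero) fact that the centralizer of a reductive subalgebra in a reductive Lie algebra is reductive; a direct sum of two commuting reductive Lie algebras is itself reductive. The main input is thus the reductivity of $\mathfrak{z}_{\mathfrak{g}}(\mathfrak{h})$, which is the more delicate of the centralizer/normalizer statements; indeed Luna and Richardson establish both reductivity assertions in tandem in their Lemma 1.1.
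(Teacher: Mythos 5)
Your reduction to the Lie algebra level breaks down at the very first step, because the identity $\Lie(N_G(H))=\mathfrak{n}_{\mathfrak g}(\mathfrak h)$ is only valid when $H$ is \emph{connected}. For connected $H$ one has $N_G(H)=\{g\in G:\Ad(g)\mathfrak h=\mathfrak h\}$ and the formula for the Lie algebra of a transporter gives what you wrote; but for disconnected $H$ (which is the case the paper actually needs --- $H$ is a generic stabilizer, typically finite or with several components) the correct description is
\[
\Lie(N_G(H))=\{X\in\mathfrak g:(1-\Ad(h))X\in\mathfrak h\ \text{for all }h\in H\},
\]
and this is in general a \emph{proper} Lie subalgebra of $\mathfrak{n}_{\mathfrak g}(\mathfrak h)$. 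A concrete instance: take $G=\GL_2$ and $H=\{\pm I,\pm\left(\begin{smallmatrix}0&1\\1&0\end{smallmatrix}\right)\}$, a finite reductive subgroup. Then $\mathfrak h=0$ and $\mathfrak{n}_{\mathfrak g}(\mathfrak h)=\mathfrak g\cong\gl_2$, which is $4$-dimensional, whereas $N_G(H)^\circ=Z_G(H)^\circ$ is a $2$-dimensional torus. So $\Lie(N(H))\subsetneq\mathfrak{n}_{\mathfrak g}(\mathfrak h)$, and a Lie subalgebra of a reductive Lie algebra need not be reductive, so your computation of $\mathfrak{n}_{\mathfrak g}(\mathfrak h)=\mathfrak h_{ss}\oplus\mathfrak z_{\mathfrak g}(\mathfrak h)$, while correct as far as it goes, does not by itself tell you anything about $\Lie(N(H))$.

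What actually holds is $\Lie(N_G(H))=\mathfrak h+\mathfrak z_{\mathfrak g}(H)$ (using complete reducibility of the finite group $H/H_e$ acting on $\mathfrak z_{\mathfrak g}(\mathfrak h)$ to separate the $Z(\mathfrak h)$-part from a genuinely $H$-invariant part), so the crucial input is reductivity of $\mathfrak z_{\mathfrak g}(H)=\Lie(Z_G(H))$, not of $\mathfrak z_{\mathfrak g}(\mathfrak h)$. These differ exactly by taking $H/H_e$-invariants, and deducing the former from the latter requires yet another ``classical fact'' (fixed points of a finite group of automorphisms of a reductive Lie algebra are reductive). In other words, once $H$ is disconnected the normalizer statement is genuinely entangled with the \emph{group-level} centralizer statement, which is precisely why Luna and Richardson prove $Z_G(H)$ and $N_G(H)$ reductive together rather than by a purely infinitesimal computation. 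The paper itself gives no proof --- it simply cites \cite[Lemma 1.1]{LunaRichardson} --- so there is no ``paper's approach'' to compare against, but as a freestanding argument your proposal needs to either restrict to connected $H$ (insufficient for this paper) or replace $\mathfrak n_{\mathfrak g}(\mathfrak h)$ by the correct $\mathfrak h+\mathfrak z_{\mathfrak g}(H)$ and then supply the reductivity of $\mathfrak z_{\mathfrak g}(H)$.
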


 Let $V$ be an irreducible representation of 
$H$. Let $g\in N(H)$. Denote by $\sigma_g=g^{-1}\cdot g:H\to H$ and by ${}_{\sigma_{g}}V$  the corresponding twisted $H$-representation (i.e. the action of $h\in H$ on ${}_{\sigma_g}V_i$ is
  $h.v:=(g^{-1}hg)v$). 
We set 
\[
N_V(H):=\{u\in N(H)\mid {}_{\sigma_{u}}\!V\underset{H}{\cong} V\}
\]
so that we have inclusions
\[
H\subset N_V(H)\subset N(H).
\]
\begin{lemma}\label{lem:finiteisoV} 
The  index of $N_V(H)$ in $N(H)$ is finite. 
\end{lemma}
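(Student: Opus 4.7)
The plan is to realize $N_V(H)$ as the stabilizer of $[V]$ for a natural $N(H)$-action on the set $\mathrm{Irr}(H)$ of isomorphism classes of irreducible $H$-representations. Concretely, $u \in N(H)$ acts by $[W] \mapsto [{}_{\sigma_u}\!W]$ (this is a left action because $\sigma_{uu'} = \sigma_{u'} \circ \sigma_u$), and the stabilizer of $[V]$ is by definition $N_V(H)$. Hence $[N(H):N_V(H)] = |N(H) \cdot [V]|$, and it suffices to prove that this orbit is finite.

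Twisting preserves dimension, so the orbit lies in the subset $\mathrm{Irr}_{\dim V}(H) \subseteq \mathrm{Irr}(H)$ of isomorphism classes of irreducible $H$-representations of dimension exactly $\dim V$. The key reduction is then that $\mathrm{Irr}_{\dim V}(H)$ is finite for the reductive group $H$. For the identity component $H_e$, which is connected reductive, irreducibles are parametrized by dominant weights of a maximal torus, and the Weyl dimension formula shows that each fixed dimension is attained by only finitely many such weights. The general (possibly disconnected) case will follow by Clifford theory, using that $H/H_e$ is finite: an irreducible $H$-representation of dimension $\dim V$ is determined by the $H/H_e$-orbit of an $H_e$-isotypic component (an irreducible $H_e$-representation of dimension at most $\dim V$) together with an extension to its stabilizer in $H$, and all of these data range over finite sets.

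A slicker alternative that bypasses Clifford theory altogether is to show directly that $N(H)_e \subseteq N_V(H)$; this bounds $[N(H):N_V(H)]$ by the (finite) number of connected components of the algebraic group $N(H)$. For this, one observes that $\chi_{{}_{\sigma_u}\!V}(h) = \chi_V(u^{-1}hu)$ depends regularly on $u \in N(H)_e$, and for each irreducible character $\chi$ of $H$ of dimension $\dim V$ the condition $\chi_V \circ \sigma_u = \chi$ cuts out a Zariski closed subset of $N(H)_e$. These finitely many closed subsets would cover the irreducible variety $N(H)_e$, so one of them must equal all of $N(H)_e$, forcing $u \mapsto [{}_{\sigma_u}\!V]$ to be constant there. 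Either way, the main technical point to justify is the finiteness of $\mathrm{Irr}_{\dim V}(H)$ (equivalently, the discreteness of the set of irreducible characters of a given dimension); once that standard fact is in place, the lemma follows immediately.
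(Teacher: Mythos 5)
Your proof is correct, but it takes a genuinely different route from the paper's. The paper shows that the kernel of the natural map $N(H)\to\operatorname{Out}(H)$ is contained in $N_V(H)$ (since an inner twist of $V$ is isomorphic to $V$), and then proves that the image of this map is finite by structure theory: after reducing to $H$ and $K=N(H)$ connected, it invokes Springer's theorem that $K=HQ$ with $Q$ commuting with $H$, so the image in $\operatorname{Out}(H)$ is trivial. That argument is purely group-theoretic and makes essential use of Lemma~\ref{lem:N(H)reductive} (reductivity of $N(H)$). Your argument is representation-theoretic: it lands the $N(H)$-orbit of $[V]$ in the finite set $\mathrm{Irr}_{\dim V}(H)$, whose finiteness you correctly reduce to the Weyl dimension formula for $H_e$ plus a Clifford-theory step for the finite quotient $H/H_e$. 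A small gain of your approach is that it needs only the reductivity of $H$, not of $N(H)$, so Lemma~\ref{lem:N(H)reductive} becomes superfluous (you only need $N(H)$ to be a linear algebraic group, hence with finitely many components, for the ``slicker alternative''). The cost is that $\#\mathrm{Irr}_{\dim V}(H)<\infty$ for disconnected reductive $H$ takes more work to pin down than the paper's reduction to the connected case via $\operatorname{Out}(H)\to\Aut(H_e)/\operatorname{Inn}(H)$. Note also that your ``slicker alternative'' showing $N(H)_e\subseteq N_V(H)$ is, as you observe, not actually independent of the finiteness of $\mathrm{Irr}_{\dim V}(H)$: to conclude that one of the Zariski-closed pieces covers the irreducible $N(H)_e$ you need finitely many of them (an irreducible variety over an arbitrary algebraically closed field can be covered by infinitely many proper closed subsets), so the Clifford-theory input is still implicitly present.
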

\begin{proof} 
We  claim that if a reductive group $H$ is a normal subgroup in a reductive group $K$, then the image of the map 
\begin{equation}
\label{eq:outmap}
K\to \operatorname{Out}(H)=\operatorname{Aut}(H)/\operatorname{Inn}(H),\quad k\mapsto (h\mapsto khk^{-1})
\end{equation}
is finite. We apply this with $K=N(H)$, which is reductive by Lemma \ref{lem:N(H)reductive}. If $u\in N(H)$ is in the kernel of \eqref{eq:outmap} then $\sigma_u$ is an inner
automorphism of $H$ and then ${}_{\sigma_u}\! V\cong V$. Hence $u\in N_V(H)$. So the kernel of \eqref{eq:outmap} is contained in $N_V(H)$, which is therefore of finite index.

\medskip

We now prove the claim.
Note that we can assume that $H$ is connected. Indeed $H_e$ is a normal subgroup of $K$ and furthermore $\operatorname{Out}(H)\r \Aut(H_e)/\operatorname{Inn}(H)$ has finite kernel (since the kernel is a subquotient of $\Aut(H/H_e)$ which is finite as $H/H_e$ is finite), and $\Aut(H_e)/\operatorname{Inn}(H)$ is a quotient of $\operatorname{Out}(H_e)$.

Assuming $H$ connected we have $H\subset K_e$. As $K/K_e$ is finite we may then also assume that $K$ is connected.
Then $K=HQ$ for a subgroup $Q$ of $K$ such that $Q$ and $H$ commute \cite[Theorem 8.1.5, Corollary 8.1.6]{Springer}. Thus, the image of $K$ is trivial in this case.
\end{proof}
For use below we write  $V\sim V'$ for $V,V'\in \rep(H)$ if  there is some $g\in N(H)$ such that $V'\overset{H}{\cong} {}_{\sigma_g} V$. This defines an equivalence relation
on $\rep(H)$ and the equivalence classes are in bijection with $N(H)/N_V(H)$. In particular
 by Lemma \ref{lem:finiteisoV} they are finite.
\subsection{Actions with stabilizers of constant dimension}\label{sec:actstab}
Now we assume that
 ~$\XZ$ is a $G$-equivariant connected\footnote{The connectedness assumption is purely to simplify the notation. It is not a serious restriction as in general $X/G\cong \coprod_i X_i/G_i$ where
the $X_i$ are representatives of the orbits of the connected components of $X$ and the $G_i$ are their stabilizers.} smooth
$k$-scheme with a good quotient $\pi:\XZ\r \XZ\quot G$.
Moreover we assume that the stabilizers $(G_x)_{x\in \XZ}$ have
dimension independent of~$x$. As explained in \S\ref{sec:conststab}
all orbits in $\XZ$ are closed and all $G_x$
are reductive.

Let $H$ be the stabilizer of a point in the open (``principal'') stratum of the Luna stratification \cite{Luna} (we call $H$ a {\em generic stabilizer}).
By the properties of the Luna stratification, $H$ is uniquely determined up to conjugacy. 
\begin{proposition}\label{prop:structure}
Let $\XZ^{\langle H\rangle}$ be the union of connected components of $\XZ^H$ which contain a point whose stabilizer is exactly $H$.  Then $\XZH$ is smooth and the canonical map  
\[
\phi: G\times^{N(H)} \XZ^{\langle H\rangle} \overset{\cong}{\to} \XZ
\]
is an isomorphism.
\end{proposition}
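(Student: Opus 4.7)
The plan is to prove the proposition in four steps, using Fogarty's theorem (fixed-point schemes of reductive group actions on smooth varieties are smooth) together with the Luna slice theorem. For \emph{smoothness and equivariance}: since $H$ is reductive and $X$ is smooth, $X^H$ is smooth by Fogarty, and hence so is its union of connected components $\XH$. The normalizer $N(H)$ preserves $X^H$ (as it normalizes $H$) and preserves the open subset of points with stabilizer exactly $H$ (since $n \in N(H)$ sends $G_x$ to $nG_xn^{-1}$ while fixing $H$), so it preserves those components of $X^H$ meeting this locus, which by definition form $\XH$. Hence $\phi\colon G \times^{N(H)} \XH \to X$, $[g,x] \mapsto gx$, is well-defined and $G$-equivariant.

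For \emph{\'etaleness} of $\phi$, I would verify this at points $[1, x_0]$ with $x_0 \in \XH$ having stabilizer exactly $H$; such points are dense in $\XH$ by definition, and $N(H)$-homogeneity propagates \'etaleness to all of $\XH$. At $x_0$ the Luna slice theorem gives an $H$-equivariant decomposition $T_{x_0}X \cong \mathfrak{g}/\mathfrak{h} \oplus T_{x_0}S$ for a slice $S$; reductivity of $H$ yields $(\mathfrak{g}/\mathfrak{h})^H = \mathfrak{n}(\mathfrak{h})/\mathfrak{h}$, while constant stabilizer dimension forces $H_e$ to act trivially on $T_{x_0}S$, and together these identifications make the differential of $\phi$ at $[1, x_0]$ an isomorphism by a direct dimension count.

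For \emph{surjectivity}, take $y \in X$ and apply the Luna slice theorem to obtain a strongly \'etale map $G \times^{G_y} S_y \to X$. The $G_y$-action on $T_y S_y$ has all orbits closed and stabilizer dimension $\dim H$, so its principal isotropy is conjugate in $G$ to $H$; replacing $y$ by a $G$-conjugate, we may assume this isotropy equals $H$, so $H \subset G_y$ and $y \in X^H$. A linear path in $(T_y S_y)^H$ from the origin to a principal point connects $y$ to a point with stabilizer exactly $H$ in the same component of $X^H$, showing $y \in \XH$.

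Finally, for the \emph{fiber analysis}: if $x_1, x_2 \in \XH$ and $gx_1 = x_2$, then $gG_{x_1}g^{-1} = G_{x_2}$ and both stabilizers contain $H$. The task is to adjust $g$ by an element of $G_{x_1}$ so that $gHg^{-1} = H$, i.e.\ $g \in N(H)$. Via the Luna slice at $x_1$ this reduces to the statement that the subgroups of $G_{x_1}$ of the form $gHg^{-1}$, $g \in G$, form a single $G_{x_1}$-conjugacy class, which should follow from the uniqueness of the principal isotropy under actions with closed orbits and constant stabilizer dimension. I expect this fiber identification to be the main technical obstacle; once it is settled, combined with \'etaleness, $\phi$ is an isomorphism.
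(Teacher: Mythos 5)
Your approach is genuinely different from the paper's. The paper deduces surjectivity from the Luna--Richardson theorem ($\XH\quot N(H)\cong X\quot G$) together with the fact that quotient maps separate closed orbits, then observes that $\phi$ is quasi-finite (constant stabilizer dimension) and restricts to an isomorphism on the open principal Luna strata, and concludes by Zariski's main theorem over the normal source. That route never touches the fibers of $\phi$ over non-principal points, which is exactly where your plan runs into trouble.

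The fiber/injectivity step, which you flag yourself, has a real gap. Injectivity of $\phi$ is precisely the statement that $g:=g_2^{-1}g_1\in N(H)$ whenever $gx_1=x_2$ with $x_1,x_2\in\XH$. Granting your single-$G_{x_1}$-conjugacy-class claim only yields $g\in N(H)\,G_{x_1}$, which is strictly weaker unless $G_{x_1}\subset N(H)$. Setting $x_1=x_2$ makes the missing ingredient explicit: the fiber of $\phi$ over $x\in\XH$ always contains $[k,x]$ for $k\in G_x$, and these all coincide with $[e,x]$ if and only if $G_x\subset N(H)$. That inclusion is true, but it is a nontrivial fact -- essentially part of the content of Luna--Richardson -- and your conjugacy-class assertion does not imply it (the assertion can hold even when $G_x\not\subset N(H)$, since $H$ and $kHk^{-1}$ for $k\in G_x$ are automatically $G_x$-conjugate). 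There is also a smaller slip in the \'etaleness count: the dimension comparison needs the full group $H$, not just $H_e$, to act trivially on $T_{x_0}S$; this does hold at a principal $x_0$ (a nontrivial slice action would produce nearby closed orbits with stabilizer a proper subgroup of $H$, contradicting principality), but ``constant stabilizer dimension'' by itself only controls the $H_e$-action.
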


\begin{proof}
Since $\XZ^H$ is smooth \cite[Proposition A.8.10(2)]{pseudoreductive},  $\XZH$ is smooth. 
  By \cite{LunaRichardson}, \cite[Theorem 7.14]{PopovVinberg},
  $\XZ^{\langle H\rangle}\quot N(H)\cong \XZ\quot G$. It thus follows that
  $\phi$ is surjective since $\XZ\r \XZ\quot G$,
  $\XZ^{\langle H\rangle}\r \XZ^{\langle H\rangle} \quot N(H)$ separate
  orbits (as all orbits are closed as mentioned in the beginning of
  this subsection). Moreover $\phi$ defines an isomorphism  between the principal strata for the Luna
  stratification. Globally $\phi$ is 
  quasi-finite since $G$ acts with constant stabilizer dimension. As $\XZ$ is normal, by Zariski's main theorem $\phi$ is an isomorphism.
\end{proof}
\begin{remark} Assume that $(\XZ,\Lscr)$ is a linearized connected smooth $G$-scheme. A point $x\in \XZ^{\operatorname{ss}}\coloneqq X^{\operatorname{ss},\Lscr}$ is \emph{stable in Mumford's sense} \cite{Mumford} if 
  $Gx$ has maximal dimension and is  closed in $\XZ^{\operatorname{ss}}$. Let $\XZ^{\operatorname{ms}}\subset \XZ^{\operatorname{ss}}$ be the set of Mumford stable points. Proposition~\ref{prop:structure}
applies to $\XZ^{\operatorname{ms}}$ and so gives a structure theorem for $\XZ^{\operatorname{ms}}$. We have not been able to find this result in the literature.
\end{remark}
For use below we introduce some associated notations. For $V\in \rep(H)$ we put  $\XscrZ_V\coloneqq \XZ^{\langle H\rangle}/(N_V(H)/H)$.
 For convenience
we list some easily verified properties of $\XscrZ_V$.
\begin{itemize}
\item
$\XscrZ_V$ is a smooth Deligne-Mumford stack.
\item
The natural quotient map $\XscrZ_V\r \XZ^{\langle H\rangle }\quot (N(H)/H)\cong \XZ\quot G$ is finite.
\item $\XscrZ_V$ may however be non-connected.
\item If $G$ is abelian then $\XscrZ_V=\XZ/(G/H)$, independently of $V$.
\end{itemize}

\subsection{Equivariant vector bundles and Azumaya algebras}
In this section we assume as in \S\ref{sec:actstab} that  $\XZ$ has constant
stabilizer dimension. 
We discuss some properties of equivariant vector bundles on $\XZ$. For simplicity we will phrase them for a fixed choice of $H$ (within its conjugacy class) but it is easy to see
that they are in fact independent of this choice.

If $U$ is an $N(H)$-representation (possibly infinite dimensional) and $V$ is an irreducible $H$-representation
then we let $U(V)$ be the $V$-isotypical part of $U$; i.e. if $U_V\coloneqq \Hom(V,U)^H$ then
$U(V)$ is the image of the
evaluation map
\begin{equation*}
\label{eq:evaluation}
V\otimes U_V\r U.
\end{equation*}
The evaluation map is injective so it yields in particular an isomorphism as $H$-representations
\begin{equation}
\label{eq:evaliso}
V\otimes U_V\cong U(V)
\end{equation}
where the $H$-action on $U_V$ is trivial.
Moreover there is an internal 
direct sum decomposition
\begin{equation}
\label{eq:internal}
U=\bigoplus_{V\in \rep(H)} U(V).
\end{equation}
One checks that if $g\in N(H)$ then inside $U$
\begin{equation}
\label{eq:blockaction}
g(U(V))=U({}_{\sigma_g} V).
\end{equation}
It follows that $U(V)$ is in fact a $N_V(H)$-subrepresentation of $U$.

\medskip

Let  $\Uscr$ be a $G$-equivariant vector bundle on $\XZ$. We write
\[
\Uscr^{\langle H\rangle}:=\Uscr\mid \XZ^{\langle H\rangle}.
\]
Since $\Uscr^{\langle H\rangle}$, locally over $\XZ^{\langle H\rangle}\quot N(H)$, is in particular an $N(H)$-representation it makes sense to use the notation
$
\Uscr^{\langle H\rangle}(V)
$.

We will also put
\[
\Uscr^{\langle H\rangle}_V:=\Hom(V,\Uscr^{\langle H\rangle})^H.
\]
From \eqref{eq:evaliso} (checking locally over $\XZ^{\langle H\rangle}\quot N(H)$) we get
\begin{equation}
\label{eq:triv1}
\Uscr^{\langle H\rangle}(V)\cong V\otimes \Uscr_V^{\langle H\rangle}
\end{equation}
as $H$-equivariant coherent sheaves on $\XZ^{\langle H\rangle}$.
\begin{lemma}\label{lem:uuv}  $\Uscr^{\langle H\rangle}(V)$ and $\Uscr_V^{\langle H\rangle}$ are 
vector bundles on $\XZ^{\langle H\rangle}$. Moreover,  $\Uscr^{\langle H\rangle}(V)$ is  a $N_V(H)$-equivariant subbundle of $\Uscr^{\langle H\rangle}$.
\end{lemma}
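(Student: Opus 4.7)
The plan is to reduce to a local computation using Lemma \ref{lem:bundle}. By definition of $\XZ^{\langle H\rangle}\subset \XZ^H$, the group $H$ acts trivially on the base, so every point $x\in\XZ^{\langle H\rangle}$ is $H$-invariant. Applying Lemma \ref{lem:bundle} to the $H$-equivariant vector bundle $\Uscr^{\langle H\rangle}$, we find around any $x$ a saturated affine open neighbourhood $\Omega\subset \XZ^{\langle H\rangle}$ on which there is an isomorphism of $H$-equivariant sheaves
\[
\Uscr^{\langle H\rangle}\mid_\Omega\cong U\otimes\Oscr_\Omega,\quad U=\Uscr\otimes k(x).
\]

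On such a neighbourhood the internal decomposition \eqref{eq:internal} $U=\bigoplus_{V\in\rep(H)} U(V)$ yields a decomposition
\[
\Uscr^{\langle H\rangle}\mid_\Omega=\bigoplus_{V\in\rep(H)} U(V)\otimes\Oscr_\Omega
\]
of $H$-equivariant $\Oscr_\Omega$-modules whose $V$-isotypic summand is manifestly a free $\Oscr_\Omega$-module. Hence locally
\[
\Uscr^{\langle H\rangle}(V)\mid_\Omega=U(V)\otimes\Oscr_\Omega,\qquad \Uscr^{\langle H\rangle}_V\mid_\Omega=U_V\otimes\Oscr_\Omega,
\]
both of which are free $\Oscr_\Omega$-modules of finite rank. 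The isotypic decomposition of any $H$-representation (and, sheafifying, of any $H$-equivariant quasi-coherent sheaf) is canonical—it is cut out by the idempotent projectors built from the character of $V$—so these local descriptions glue unambiguously to yield globally defined vector bundles $\Uscr^{\langle H\rangle}(V)$, $\Uscr^{\langle H\rangle}_V$ on $\XZ^{\langle H\rangle}$, with $\Uscr^{\langle H\rangle}(V)$ a subbundle of $\Uscr^{\langle H\rangle}$.

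For the equivariance claim, observe that the $N(H)$-action on the base $\XZ^{\langle H\rangle}$ factors through $N(H)/H$ since $H$ acts trivially. By the pointwise identity \eqref{eq:blockaction}, an element $g\in N(H)$ carries the $V$-isotypic subsheaf $\Uscr^{\langle H\rangle}(V)$ into $\Uscr^{\langle H\rangle}({}_{\sigma_g}V)$; for $g\in N_V(H)$ the defining isomorphism ${}_{\sigma_g}V\overset{H}{\cong}V$ forces these subsheaves to coincide. Thus $\Uscr^{\langle H\rangle}(V)$ is preserved by the $N_V(H)$-action on $\Uscr^{\langle H\rangle}$, and since the induced action on the base factors through $N_V(H)/H$ it is an $N_V(H)/H$-equivariant subbundle, as claimed.

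The only mildly delicate point is the gluing of the local isotypic decompositions; this is not an obstacle because the decomposition is canonically defined via the $H$-character projectors, so no coherent choice is required. Everything else reduces to linear algebra in the fibers once Lemma \ref{lem:bundle} provides the local trivialisation.
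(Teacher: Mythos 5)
Your argument is correct and runs essentially parallel to the paper's: both reduce to a local situation on $\XZ^{\langle H\rangle}$ by exploiting the triviality of the $H$-action on the base, apply the $H$-isotypic decomposition of the fiber, and handle the $N_V(H)/H$-equivariance via \eqref{eq:blockaction}. The one organizational difference is that the paper first establishes the global direct-sum decomposition \eqref{eq:uh} by checking locally over $\XZ^{\langle H\rangle}\quot N(H)$ via \eqref{eq:internal} and then concludes that $\Uscr^{\langle H\rangle}(V)$ is a direct summand of a vector bundle (reducing the claim for $\Uscr_V^{\langle H\rangle}$ via \eqref{eq:triv1}), whereas you invoke Lemma~\ref{lem:bundle} --- which applies here since every point of $\XZ^{\langle H\rangle}$ is $H$-fixed and the trivial $H$-action trivially admits a good quotient --- to produce an explicit local $H$-equivariant trivialization and read off local freeness of both sheaves at once. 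One small clarification: there is no genuine gluing issue to worry about at the end, since $\Uscr^{\langle H\rangle}(V)$ and $\Uscr_V^{\langle H\rangle}$ are already globally defined coherent sheaves (an image subsheaf and an $H$-invariant subsheaf of an internal Hom, respectively); your local computation simply verifies that they are locally free, with no descent data to reconcile.
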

\begin{proof}  By \eqref{eq:triv1} it suffices to consider $\Uscr^{\langle H\rangle}(V)$ for the first claim.
We have a decomposition of coherent sheaves on $\XZ^{\langle H\rangle}$:
\begin{equation}\label{eq:uh}
\Uscr^{\langle H\rangle}=\bigoplus_{V\in \rep(H)} \Uscr^{\langle H\rangle}(V).
\end{equation}
This can be checked locally over $\XZ^{\langle H\rangle}\quot N(H)$ so that we may assume
that $\XZ^{\langle H\rangle}$  is affine. Then it follows from  \eqref{eq:internal}.
In particular $\Uscr^{\langle H\rangle}(V)$ is a direct summand of $\Uscr^{\langle H\rangle}$. So
it is a vector bundle.

The fact that $\Uscr^{\langle H\rangle}(V)$ is $N_V(H)$-equivariant may again be checked in the case
that $\XZ$ is affine where it follows from the above discussed fact that $U(V)$ is $N_V(H)$-equivariant.
\end{proof}
\begin{lemma} \label{lem:nonzero}
Let $x\in \XZ^{\langle H\rangle}$. Then
$\Uscr^{\langle H\rangle}(V)\neq 0$ (which is equivalent to $\Uscr_V^{\langle H\rangle}\neq 0$) if and only if 
there exists $V'\sim V$ such that $V'$ appears in $\Uscr^{\langle H\rangle}_x$.
\end{lemma}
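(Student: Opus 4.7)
The plan is to exploit the fiberwise interpretation of the decomposition $\Uscr^{\langle H\rangle}=\bigoplus_{W\in\rep(H)}\Uscr^{\langle H\rangle}(W)$ from \eqref{eq:uh} together with the $N(H)$-covariance \eqref{eq:blockaction}. The parenthetical equivalence $\Uscr^{\langle H\rangle}(V)\neq 0\Leftrightarrow \Uscr_V^{\langle H\rangle}\neq 0$ is immediate from \eqref{eq:triv1}. Since each $\Uscr^{\langle H\rangle}(W)$ is a vector bundle (Lemma~\ref{lem:uuv}), it has locally constant rank; so $\Uscr^{\langle H\rangle}(W)$ is nonzero on a connected component $C$ of $\XZH$ if and only if $\Uscr^{\langle H\rangle}(W)_y\neq 0$ for one, equivalently every, $y\in C$, which by \eqref{eq:internal} is the same as saying that $W$ occurs in the $H$-representation $\Uscr^{\langle H\rangle}_y$.

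For ($\Leftarrow$), assume $V'\cong{}_{\sigma_g}V$ for some $g\in N(H)$ occurs in $\Uscr^{\langle H\rangle}_x$. The action of $g$ on $\Uscr^{\langle H\rangle}$ (covering its action on $\XZH$) restricts by \eqref{eq:blockaction} to an isomorphism $\Uscr^{\langle H\rangle}(V)_{g^{-1}x}\xrightarrow{\cong}\Uscr^{\langle H\rangle}({}_{\sigma_g}V)_x=\Uscr^{\langle H\rangle}(V')_x$. The right-hand side is nonzero by hypothesis, hence so is the left, giving $\Uscr^{\langle H\rangle}(V)\neq 0$.

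For ($\Rightarrow$), the key input is transitivity of the $N(H)/H$-action on $\pi_0(\XZH)$. I would deduce this from the isomorphism $\XZH\quot(N(H)/H)\cong \XZ\quot\G$ (established inside the proof of Proposition~\ref{prop:structure}): distinct $N(H)/H$-orbits of components would map to disjoint clopen subsets of $\XZH\quot(N(H)/H)$, contradicting the fact that this quotient is connected (being the image of the connected $\XZ$). Granted this, if $\Uscr^{\langle H\rangle}(V)_y\neq 0$ for some $y\in \XZH$, choose $g\in N(H)$ sending the component of $y$ to the component of $x$; then $\Uscr^{\langle H\rangle}({}_{\sigma_g}V)_{gy}\neq 0$ by \eqref{eq:blockaction}, and since $\Uscr^{\langle H\rangle}({}_{\sigma_g}V)$ has constant rank on the component of $x$, it is in particular nonzero at $x$, so $V'={}_{\sigma_g}V\sim V$ appears in $\Uscr^{\langle H\rangle}_x$.

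The only real obstacle is the transitivity statement for the $N(H)/H$-action on $\pi_0(\XZH)$; everything else is bookkeeping with the isotypical decomposition, its $N(H)$-covariance, and the observation that the summands $\Uscr^{\langle H\rangle}(W)$ are honest vector bundles and thus cannot vanish at a single point of a component on which they are supported.
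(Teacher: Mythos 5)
Your proof is correct and takes essentially the same route as the paper: both reduce the question to the fibers of the isotypical summands via \eqref{eq:blockaction} together with semicontinuity (equivalently, local constancy of rank of the vector bundle $\Uscr^{\langle H\rangle}(W)$), and both invoke transitivity of the $N(H)$-action on $\pi_0(\XZ^{\langle H\rangle})$. The paper leaves that transitivity claim implicit, merely attributing it to connectedness of $\XZ$, while you supply a clean justification via $\XZ^{\langle H\rangle}\quot N(H)\cong \XZ\quot G$ from the proof of Proposition~\ref{prop:structure}.
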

\begin{proof} We first collect some easy facts. From \eqref{eq:blockaction} one may deduce that if $y\in \XZ^{\langle H\rangle}$ and $g\in N(H)$ then
\begin{equation}
\label{eq:easy1}
\Uscr^{\langle H\rangle}(V)_{g^{-1} y}\overset{H}{\cong} \Uscr^{\langle H\rangle}({}_{\sigma_g}V)_{y}.
\end{equation}
Moreover if $y,y'$ are in the same connected component of $\XZ^{\langle H\rangle}$ then by semi-continuity
\begin{equation}
\label{eq:easy2}
\Uscr^{\langle H\rangle}(V)_{y'}\overset{H}{\cong}\Uscr^{\langle H\rangle}(V)_{y}.
\end{equation}
Now we prove the lemma.
\begin{itemize}
\item[($\Rightarrow$)] If $\Uscr^{\langle H\rangle}(V)\neq 0$ then there is some $y\in \XZ^{\langle H\rangle}$
such that  $\Uscr^{\langle H\rangle}(V)_y\neq 0$. $y$ may be in a different component than $x$, but by
combining \eqref{eq:easy1}\eqref{eq:easy2} we find that there exists $V'\sim V$ such that
$\Uscr^{\langle H\rangle}(V')_x\neq 0$ (as $N(H)$ acts transitively on the connected components by using the assumption that $\XZ$ is connected).
\item[($\Leftarrow$)] 
This is proved by reversing the argument in  ($\Rightarrow$).
\qedhere
\end{itemize}
\end{proof}
For use below we will write $\langle \Uscr\rangle \subset \rep(H)/\sim$ for the set of equivalence
classes that contain a representation that appears in $\Uscr_x^{\langle H\rangle}$  for some $x\in \XZ^{\langle H\rangle}$.
It follows from \eqref{eq:easy1} 
that $\langle \Uscr\rangle$ is well-defined.

\medskip

It follows from Lemma \ref{lem:uuv} 
that
\begin{align*}
\Ascr_V&\coloneqq\uEnd_{\XZ^{\langle H\rangle}}(\Uscr^{\langle H\rangle}(V))^H\\
&\overset{\eqref{eq:triv1}}{\cong} \uEnd_{\XZ^{\langle H\rangle}}(\Uscr^{\langle H\rangle}_V)
\end{align*}
is a $N_V(H)/H$-equivariant sheaf of Azumaya algebras on $\XZ^{\langle H\rangle}$ which is trivial
if we forget the $N_V(H)/H$-action. Below we consider $\Ascr_V$ as living on the quotient stack $\XscrZ_V=\XZ^{\langle H\rangle}/(N_V(H)/H)$ which was introduced in \S\ref{sec:actstab}. 
Our main result in this section is the following.
\begin{proposition} \label{prop:mainpropZ}
Assume that $\Uscr$ is a saturated $G$-equivariant vector bundle on~$\XZ$. Put
\[
\Lambda=\pi_{s\ast} \uEnd_\XZ(\Uscr)
\]
where $\pi_s:\XZ/G\r \XZ\quot G$ is the quotient map.
Then we have an equivalence
of abelian categories
\begin{equation}
\label{eq:decomp}
\Qch(\XZ\quot G,\Lambda)\cong \bigoplus_{V\in \langle \Uscr\rangle/{\sim}} \Qch(\XscrZ_V,\Ascr_V).
\end{equation}
If $G$ is abelian then each class in $\rep(H)/{\sim}$ is a singleton and  
$\Qch(\XscrZ_V,\Ascr_V)\cong \Qch(\XZ/(G/H))$ for $\{V\}\in \langle \Uscr\rangle$.
\end{proposition}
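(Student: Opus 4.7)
The plan is to translate everything into $N(H)$-equivariant data on $\XZ^{\langle H\rangle}$ via Proposition~\ref{prop:structure}, split the endomorphism algebra by the isotypical decomposition of $\Uscr^{\langle H\rangle}$, and then group the resulting blocks along the $\sim$-orbits of $N(H)/H$ acting on $\rep(H)$. First, the isomorphism $\XZ\cong G\times^{N(H)}\XZ^{\langle H\rangle}$ identifies $\XZ/G$ with $\XZ^{\langle H\rangle}/N(H)$, and since $H$ acts trivially on $\XZ^{\langle H\rangle}$ this is an $H$-gerbe over the smooth DM stack $\XZ^{\langle H\rangle}/(N(H)/H)$ whose coarse moduli space is $\XZ\quot G$. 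Consequently
\[
\Lambda \;\cong\; \bigl(\pi_{s\ast}\uEnd_{\XZ^{\langle H\rangle}}(\Uscr^{\langle H\rangle})^H\bigr)^{N(H)/H},
\]
the inner sheaf being an $(N(H)/H)$-equivariant sheaf of algebras on the quotient of $\XZ^{\langle H\rangle}$ by the inertia; this step is essentially bookkeeping.

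Next, applying~\eqref{eq:uh} together with the vanishing of $H$-invariant homs between distinct $H$-isotypes, I obtain
\[
\uEnd_{\XZ^{\langle H\rangle}}(\Uscr^{\langle H\rangle})^H \;=\; \bigoplus_{V\in\rep(H)} \Ascr_V,
\]
and Lemma~\ref{lem:nonzero} combined with the saturation hypothesis ensures that only the summands with $[V]\in\langle\Uscr\rangle/\sim$ are nonzero. By~\eqref{eq:blockaction} the residual $(N(H)/H)$-action permutes these summands along $\sim$, with $\Ascr_V$ stabilized by $N_V(H)/H$; by Lemma~\ref{lem:finiteisoV} each $\sim$-orbit is finite. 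Grouping by orbit, the $\sim$-block $\bigoplus_{V'\sim V}\Ascr_{V'}$ is realized as the $(N(H)/H)$-equivariant pushforward, along the finite étale cover $\XscrZ_V\to\XZ^{\langle H\rangle}/(N(H)/H)$, of the $(N_V(H)/H)$-equivariant algebra $\Ascr_V$. Standard equivariant Morita/descent along this cover, combined with orthogonality of distinct $\sim$-blocks and the affine pushforward down to $\XZ\quot G$, then assembles the decomposition~\eqref{eq:decomp}.

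For the abelian case, $H$ is normal in $G$ so $N(H)=G$, and since every $\sigma_g$ acts on the abelian group $H$ by an inner hence trivial automorphism, every irreducible $V$ is a character with ${}_{\sigma_g}V\cong V$; hence $N_V(H)=G$ and each $\sim$-class is a singleton, so $\XscrZ_V=\XZ/(G/H)$ for every $V$. Since $V$ is one-dimensional and inherits a canonical $(G/H)$-equivariant structure from its appearance inside $\Uscr^{\langle H\rangle}$, the factorization~\eqref{eq:triv1} endows $\Uscr^{\langle H\rangle}_V$ with a genuine (untwisted) $(G/H)$-equivariant structure and makes $\Ascr_V\cong \uEnd(\Uscr^{\langle H\rangle}_V)$ Morita trivial $(G/H)$-equivariantly, giving $\Qch(\XscrZ_V,\Ascr_V)\cong \Qch(\XZ/(G/H))$.

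The main difficulty will be the equivariant Morita/descent step in the second paragraph: making precise the identification of $(N(H)/H)$-equivariant $\bigoplus_{V'\sim V}\Ascr_{V'}$-modules with $(N_V(H)/H)$-equivariant $\Ascr_V$-modules, and confirming that saturation is the exact strength required to guarantee that every $\Lambda$-module decomposes along $\langle\Uscr\rangle/\sim$ without spilling into further isotypes. A subsidiary subtlety in the abelian case is verifying that the $(G/H)$-action on the line $V$ in $\Uscr^{\langle H\rangle}(V)=V\otimes\Uscr^{\langle H\rangle}_V$ descends without a gerbe twist, which uses that $\dim V=1$ so the potential equivariance anomaly is only a character, which can be absorbed into the factor $\Uscr^{\langle H\rangle}_V$.
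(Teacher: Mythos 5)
Your high-level plan --- translate via Proposition~\ref{prop:structure} to $N(H)$-equivariant data on $\XZ^{\langle H\rangle}$, split $\uEnd$ by $H$-isotypes, and group along $\sim$-orbits --- matches the paper's route (Lemmas~\ref{lem:decomp}, \ref{lem:morita}). However, you have misplaced the saturation hypothesis, and the resulting gap is exactly where you wave your hands.

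You claim saturation, together with Lemma~\ref{lem:nonzero}, is what ``ensures that only the summands with $[V]\in\langle\Uscr\rangle/\sim$ are nonzero'' and, later, that it guarantees ``every $\Lambda$-module decomposes along $\langle\Uscr\rangle/\sim$ without spilling into further isotypes.'' Neither statement is what saturation does: the restriction of the sum to $\langle\Uscr\rangle/\sim$ is automatic from Lemma~\ref{lem:nonzero} alone (it is just the definition of $\langle\Uscr\rangle$), and the block decomposition of $\Lambda$ along $\sim$-classes is a consequence of the $H$-isotypical decomposition \eqref{eq:uh} plus \eqref{eq:blockaction}; both hold with no hypothesis on $\Uscr$ beyond being a $G$-equivariant bundle. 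Where saturation actually enters is the step you describe as ``standard equivariant Morita/descent'' followed by ``the affine pushforward down to $\XZ\quot G$.'' That pushforward is along a coarse moduli map $\pi_{V,s}\colon\XscrZ_V\to\XZ^{\langle H\rangle}\quot(N_V(H)/H)$ from a Deligne--Mumford stack to a scheme, and $\pi_{V,s,\ast}$ is \emph{not} an equivalence of module categories in general (already $BG\to\operatorname{pt}$ shows this). It becomes one precisely when $\Ascr_V$ is a local progenerator, and proving this is the content of Lemma~\ref{lem:morita}: after passing to an \'etale slice it reduces to Lemma~\ref{lem:morita1}, whose hypothesis that $U$ be induced up to multiplicities from $H$ is literally the saturation condition. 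Your outline never addresses why $\Ascr_V$ is a progenerator on the quotient, so the central assertion of \eqref{eq:decomp} is unproven. The subsidiary step you worry about (matching $(N(H)/H)$-equivariant modules over $\bigoplus_{V'\sim V}\Ascr_{V'}$ with $(N_V(H)/H)$-equivariant $\Ascr_V$-modules) is, by contrast, elementary; the paper dispatches it with Lemma~\ref{lem:algebraic}. Your abelian-case argument is essentially the paper's (extend the $H$-action on the character $V$ to $G$ non-canonically and absorb the ambiguity into $\Uscr_V$), though calling the resulting $(G/H)$-structure on $V$ ``canonical'' is misleading.
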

\begin{proof} The part about general $G$ 
follows by combining Lemmas \ref{lem:decomp} and \ref{lem:morita} below where we use Lemma \ref{lem:nonzero} to restrict the sum.

Let us now assume that $G$ is abelian. Then $\XZ^{\langle H\rangle}=\XZ$ and we may drop $(-)^{\langle H\rangle}$ superscripts. It is obvious that every class in $\rep(H)/{\sim}$ is a singleton.
Furthermore we may extend the $H$-action on $V$ to a $G$-action (non-canonically). It follows that $\Uscr_V$ is $G/H$-equivariant and $\Ascr_V=\uEnd_\XZ(\Uscr_V)$ as $G/H$-equivariant sheaves
of algebras. In other words $\Ascr_V$ is a trivial Azumaya algebra on $\XZ/(G/H)$ and the result follows. 
\end{proof}
\begin{corollary}\label{cor:corZ} With notations and hypotheses as in Proposition \ref{prop:mainpropZ} we have a decomposition 
\begin{equation}
\label{eq:decomp1}
D(\Lambda)\cong \bigoplus_{V\in \langle \Uscr\rangle/{\sim}} D(\Ascr_V).
\end{equation}
\end{corollary}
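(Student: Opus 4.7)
The plan is to derive the corollary directly from Proposition~\ref{prop:mainpropZ} by passing from the equivalence of abelian categories to an equivalence of their unbounded derived categories, and then identifying both sides with the notation $D(-)$ used throughout the paper (namely, the unbounded derived category with quasi-coherent cohomology).

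First I would apply the derived functor to both sides of the equivalence \eqref{eq:decomp}. Since an equivalence of abelian categories is automatically exact in both directions, it prolongs to an equivalence of unbounded derived categories of those abelian categories. A direct sum of abelian categories commutes with taking $D(-)$, so this yields
\[
D(\Qch(\XZ\quot G,\Lambda))\cong \bigoplus_{V\in \langle \Uscr\rangle/{\sim}} D(\Qch(\XscrZ_V,\Ascr_V)).
\]

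Next I need to identify these derived categories of quasi-coherent sheaves with the categories $D(\Lambda)$ and $D(\Ascr_V)$ as defined in \S\ref{sec:notation}, i.e.\ the unbounded derived categories of right modules with quasi-coherent cohomology. Since $\XZ\quot G$ is a scheme and $\Lambda$ is a coherent sheaf of algebras on it, one has $D(\Qch(\XZ\quot G,\Lambda))\cong D(\Lambda)$. For the other factor, $\XscrZ_V$ is a smooth Deligne--Mumford stack and $\Ascr_V$ is a coherent sheaf of (Azumaya) algebras on it; the same identification $D(\Qch(\XscrZ_V,\Ascr_V))\cong D(\Ascr_V)$ then applies.

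The main technical point, and the only real obstacle, is this last identification for the stack $\XscrZ_V$. This is exactly the type of statement handled by \cite[Theorem~1.2]{HallNeemanRydh} (which was already invoked in the proof of Theorem~\ref{thm:sod} to identify $D(\Qch(\fks/G))$ with $D_{\Qch}(\fks/G)$); the Azumaya-algebra coefficient is harmless because $\Ascr_V$ is coherent and locally (after forgetting the equivariant structure) a matrix algebra over $\Oscr_{\XZ^{\langle H\rangle}}$, so the argument reduces to the structure-sheaf case. Assembling the three steps produces the decomposition \eqref{eq:decomp1}.
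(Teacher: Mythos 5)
Your proposal is correct and takes essentially the same route as the paper, which simply notes that $D_{\Qch}(-)=D(\Qch(-))$ holds here by (the proof of) \cite[Theorem 1.2]{HallNeemanRydh}; you spell out the intermediate steps (prolonging the abelian equivalence to unbounded derived categories, commuting $D(-)$ with the finite direct sum, and observing that the Azumaya coefficient is harmless since it is locally a matrix algebra) but the key technical ingredient is identical.
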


\begin{proof}
We only need to note that in $D_{\Qch}(-)=D(\Qch(-))$ in our case by (the proof of) \cite[Theorem 1.2]{HallNeemanRydh}.
\end{proof}

\subsection{Geometric interpretation of $\Lambda_{Z_{ji}}$}

In particular, Proposition \ref{prop:mainpropZ} and Corollary \ref{cor:corZ} apply to the setting of Theorem \ref{thm:sod}, and thus allow us to give a more geometric description of $\Lambda_{Z_{ji}}$ appearing there. For the convenience of the reader we repeat the statements in that setting. 

We use the notation introduced in Theorem \ref{thm:sod}, moreover we set $H_{ji}$ for the principal stabilizer of the action of $G_{ji}$ on $Z_{ji}$, 
$\Ascr_{Z_{ji},V}=\uEnd_{Z^{\langle H_{ji}\rangle}}(\Uscr_{j,Z_{ji},V}^{\langle H_{ji}\rangle})$ $\Zscr_{ji,V}=Z_{ji}^{\langle H_{ji}\rangle}/(N_V(H_{ji})/H_{ji})$.

\begin{corollary} \label{cor:external}
Let the setting be as in Theorem \ref{thm:sod}. Then
\begin{align*}
\Qch(\XZ_{ji}\quot G_{ji},\Lambda_{Z_{ji}})&\cong \bigoplus_{V\in \langle \Uscr_{j,Z_{ji}}\rangle/{\sim}} \Qch(\XscrZ_{ji,V},\Ascr_{Z_{ji},V}),\\
D(\Lambda_{Z_{ji}})&\cong \bigoplus_{V\in \langle \Uscr_{j,Z_{ji}}\rangle/{\sim}} D(\Ascr_{Z_{ji},V}).
\end{align*}
\end{corollary}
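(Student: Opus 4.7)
The plan is to observe that this corollary is essentially an instance of Proposition \ref{prop:mainpropZ} and Corollary \ref{cor:corZ} applied to the triple $(Z_{ji},G_{ji},\Uscr_{j,Z_{ji}})$, so the real work is only to verify that the two running hypotheses of \S\ref{sec:actstab}--\S\ref{sec:conststab} hold: namely that $G_{ji}$ acts on $Z_{ji}$ with stabilizers of \emph{constant} dimension, and that $\Uscr_{j,Z_{ji}}$ is \emph{saturated}.

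For the first condition I would unwind the construction of \S\ref{subsec:Kstep}. At stage $j$ of the Kirwan procedure the subset $Z_j\subset X_j$ is by definition the locus of maximal stabilizer dimension $\mu(X_j)$, so every $x\in Z_j$ has $\dim G_x=\mu(X_j)$. Since $Z_{ji}$ is a union of connected components of $Z_j$ and $G_{ji}$ is exactly its setwise $G$-stabilizer, $G_{ji}$ contains $G_e$, hence $\dim G_{ji}=\dim G$ and $G_{ji,x}=G_x$ for $x\in Z_{ji}$. This makes the $G_{ji}$-stabilizer dimension on $Z_{ji}$ constant, and simultaneously forces the local data $G_x$ and $H_x\trianglelefteq G_x$ entering the definition of saturation to agree whether computed for $G$ acting on $Z_j$ or for $G_{ji}$ acting on $Z_{ji}$ (the orbits have the same tangent space at $x$, and $Z_{ji}$ is open in $Z_j$). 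This will also identify the principal stabilizer for the $G_{ji}$-action on $Z_{ji}$ with the group $H_{ji}$ prescribed in the statement of the corollary, so the sheaf $\Ascr_{Z_{ji},V}$ and the stack $\Zscr_{ji,V}$ really are the avatars on $Z_{ji}$ of the $\Ascr_V$, $\XscrZ_V$ of Proposition \ref{prop:mainpropZ}.

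For the saturation of $\Uscr_{j,Z_{ji}}$ the strategy is to inherit it along the tower of Reichstein transforms. The hypotheses of Theorem \ref{thm:sod} say that $\Lambda$ is homologically homogeneous and $\Uscr$ is a generator in codimension one, and Proposition \ref{prop:want}\eqref{T1}--\eqref{T2} propagates both of these properties across every single Reichstein step. Consequently $\pi_{j,s*}\uEnd_{X_j}(\Uscr_j)$ is homologically homogeneous on $X_j\quot G$ and $\Uscr_j$ is a generator in codimension one at stage $j$. Theorem \ref{thm:saturated}(1) then yields that $\Uscr_j\mid Z_j$ is saturated, and by the tangent-space identification of the previous paragraph this restricts to saturation of $\Uscr_{j,Z_{ji}}$ as a $G_{ji}$-equivariant bundle on $Z_{ji}$. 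With the two hypotheses in hand, Proposition \ref{prop:mainpropZ} directly produces the equivalence of abelian categories and Corollary \ref{cor:corZ} upgrades it to the derived level, yielding both displayed formulas at once. The only conceptual bottleneck is the inductive step: had the Reichstein transform failed to preserve ``homologically homogeneous + generator in codimension one'', the saturation input to Proposition \ref{prop:mainpropZ} would be unavailable. This is exactly the job Proposition \ref{prop:want} does upstream, and so at the level of this corollary it is merely invoked, not reproved.
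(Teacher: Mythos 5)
Your proposal is correct and follows the same route as the paper's own proof: both reduce to checking the two hypotheses of Proposition \ref{prop:mainpropZ} for $(Z_{ji},G_{ji},\Uscr_{j,Z_{ji}})$, obtain constant stabilizer dimension from the definition of $Z_j$, and derive saturation by combining Proposition \ref{prop:want} with Theorem \ref{thm:saturated}. Your additional remarks — that $G_e\subset G_{ji}$ forces $G_{ji,x}=G_x$ and $H_x^{ji}=H_x$ for $x\in Z_{ji}$, so the $G$-saturation of $\Uscr_j\mid Z_j$ transfers verbatim to $G_{ji}$-saturation of $\Uscr_{j,Z_{ji}}$ — usefully flesh out a step the paper leaves implicit.
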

\begin{proof} 
We need to check that the hypotheses for Proposition \ref{prop:mainpropZ} with  $(Z,G,\Uscr)=(Z_{ji},G_{ji},\Uscr_{j,Z_{ji}})$  apply. 

Recall that $Z_{ji}$ is $G_{ji}$-equivariant smooth connected $k$-scheme, and by definition $Z_{ji}$ has stabilizers of constant dimension. Let us denote $\Uscr_{ji}:=\Uscr_{j,Z_{ji}}$. 
We only need to observe that the hypothesis on $\Uscr$ imply that $\Uscr_{ji}$ is saturated. This follows by Theorem \ref{thm:saturated} as its  hypotheses are satisfied by Proposition \ref{prop:want} (and the initial hypothesis on $\Lambda$, $\Uscr$).
\end{proof}

\subsection{A decomposition result}
In this section we assume as in \S\ref{sec:actstab} that  $\XZ$ has constant
stabilizer dimension. We keep the notations introduced in the previous sections.
\begin{lemma} \label{lem:decomp}
For $V\in \rep(H)$ consider the morphism
\[
\psi_V: \XZ^{\langle H\rangle}\quot (N_V(H)/H)\r \XZ^{\langle H\rangle}\quot (N(H)/H)\cong \XZ\quot G.
\]
Let $\Uscr$ be a $G$-equivariant vector bundle on $Z$. Then we have
\[
\pi_{s\ast}\uEnd_\XZ(\Uscr)\cong\bigoplus_{V\in \rep(H)/\sim} \psi_{V,\ast}\pi_{V,s,\ast} \Ascr_V
\]
where $\pi_s:\XZ/G\r \XZ\quot G$, $\pi_{V,s}:\XscrZ_V=\XZ^{\langle H\rangle}/(N_V(H)/H)
\to \XZ^{\langle H\rangle}\quot (N_V(H)/H)$ are the quotient maps.
\end{lemma}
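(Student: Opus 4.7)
The plan is to pass to $\XZ^{\langle H\rangle}$ via Proposition~\ref{prop:structure}, apply Schur's lemma to isolate the isotypical summands, and then take invariants orbit by orbit under the $N(H)/H$-action on $\rep(H)$.

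First, the isomorphism $G\times^{N(H)}\XZ^{\langle H\rangle}\cong \XZ$ identifies $G$-equivariant quasi-coherent sheaves on $\XZ$ with $N(H)$-equivariant sheaves on $\XZ^{\langle H\rangle}$, sends $\Uscr$ to $\Uscr^{\langle H\rangle}$, and identifies $\XZ\quot G=\XZ^{\langle H\rangle}\quot N(H)$. Since $H$ acts trivially on $\XZ^{\langle H\rangle}$, it then suffices to compute the pushforward to $\XZ\quot G$ of $\uEnd_{\XZ^{\langle H\rangle}}(\Uscr^{\langle H\rangle})^H$ together with its residual $N(H)/H$-equivariant structure. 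Expanding via the isotypical decomposition $\Uscr^{\langle H\rangle}=\bigoplus_V \Uscr^{\langle H\rangle}(V)$ of Lemma~\ref{lem:uuv} and invoking Schur's lemma (the cross terms $\uHom(\Uscr^{\langle H\rangle}(V),\Uscr^{\langle H\rangle}(V'))^H$ vanish for $V\neq V'$ in $\rep(H)$, while by \eqref{eq:triv1} together with $\End(V)^H=k$ the diagonal summand equals $\Ascr_V$) yields
\[
\uEnd_{\XZ^{\langle H\rangle}}(\Uscr^{\langle H\rangle})^H \cong \bigoplus_{V\in \rep(H)}\Ascr_V.
\]

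Next I analyze the residual $N(H)/H$-action on the right-hand side. By \eqref{eq:blockaction}, an element $g\in N(H)$ carries the $V$-summand to the ${}_{\sigma_g}V$-summand, so the action permutes the summands via the natural action of $N(H)$ on $\rep(H)$; its orbits are precisely the $\sim$-equivalence classes and the stabilizer at $V$ is $N_V(H)/H$ (of finite index in $N(H)/H$ by Lemma~\ref{lem:finiteisoV}). Consequently the $N(H)/H$-invariants split orbit by orbit over $\rep(H)/{\sim}$.

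Finally, for a single orbit $[V]$ I claim that the pushforward to $\XZ\quot G$ of the $N(H)/H$-invariants of $\bigoplus_{V'\in [V]}\Ascr_{V'}$ equals $\psi_{V,\ast}\pi_{V,s,\ast}\Ascr_V$. This is the heart of the argument: the orbit-sum, equipped with its permutation $N(H)/H$-action, is the ``induced equivariant sheaf'' from the stabilizer $N_V(H)/H$ acting on $\Ascr_V$ over $\XscrZ_V=\XZ^{\langle H\rangle}/(N_V(H)/H)$, so its $N(H)/H$-invariants on $\XZ\quot G$ coincide with $\psi_{V,\ast}$ applied to the $N_V(H)/H$-invariants of $\Ascr_V$, i.e.\ with $\psi_{V,\ast}\pi_{V,s,\ast}\Ascr_V$. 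The main obstacle is precisely this orbit-wise reduction, which amounts to descent along the finite map $\psi_V$ combined with the finiteness of the orbit guaranteed by Lemma~\ref{lem:finiteisoV}; summing over orbits then yields the decomposition stated in the lemma.
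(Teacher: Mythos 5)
Your proof is correct and follows essentially the same route as the paper's: both pass to $\XZ^{\langle H\rangle}$ with $N(H)$-action via Proposition~\ref{prop:structure}, decompose $\uEnd(\Uscr^{\langle H\rangle})^H$ into isotypical blocks $\Ascr_V$ (Schur's lemma kills the cross terms, as the paper uses implicitly via Lemma~\ref{lem:uuv}), and then take $N(H)/H$-invariants orbit by orbit over $\rep(H)/{\sim}$. Your final ``induced equivariant sheaf'' reduction is exactly the content of the paper's Lemma~\ref{lem:algebraic} applied after localizing over $\XZ\quot(G/H)$ to the affine case; you describe it as ``descent along $\psi_V$'' rather than isolating the algebraic statement, but the substance and the level of detail (the paper leaves Lemma~\ref{lem:algebraic} to the reader) are the same.
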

\begin{proof} We consider the corresponding quotient map
\[
\pi^{\langle H\rangle}_{s\ast}
:\XZ^{\langle H\rangle}/N(H)\r \XZ^{\langle H\rangle}\quot N(H)
\cong \XZ\quot G.
\]
Using $\XZ\quot G=\XZ^{\langle H\rangle}/N(H)$ we obtain
\[
\pi_{s\ast}\uEnd_\XZ(\Uscr)=\pi^{\langle H\rangle}_{s\ast}\uEnd_{\XZ^{\langle H\rangle}}(\Uscr^{\langle H\rangle}).
\]
We may now restrict to the case\footnote{Note that $\XZ^{\langle H\rangle}$ may be nonconnected. So we are stepping out of our original context. However we will be careful not to use any
results depending on connectedness.} $\XZ=\XZ^{\langle H\rangle}$, $G=N(H)$. We drop all superscripts 
$(-)^{\langle H\rangle}$ from the notation.

Using \eqref{eq:uh} and Lemma \ref{lem:uuv} we obtain a $G$-equivariant decomposition of $\Uscr$,
\[
\Uscr=\bigoplus_{V\in \rep(H)/\sim} \bigoplus_{V'\sim V} \Uscr(V')
\]
so that 
\[
\pi_{s\ast} \uEnd_\XZ(\Uscr)=\bigoplus_{V\in \rep(H)/\sim} \pi'_{s\ast}\left(\bigoplus_{V'\sim V}\uEnd_{\XZ}(\Uscr(V'))^H\right)
\]
where $\pi'_{s\ast}$ is the modified quotient map
\[
\XZ/(G/H)\r \XZ\quot (G/H).
\]

Using the definition of $\Ascr_V$ it is now sufficient to prove that the projection
\[
\pi'_{s\ast}\left(\bigoplus_{V'\sim V}\uEnd_{\XZ}(\Uscr(V'))^H\right)\r \psi_{V,\ast}\pi_{V,s,\ast} \uEnd_{\XZ}(\Uscr(V))^H
\]
is an isomorphism. This can be checked locally over $\XZ\quot (G/H)$ and hence we may assume that $\XZ$ is affine. Then 
it reduces to the algebraic statement in Lemma \ref{lem:algebraic} below (with $G=G/H$, $K=N_V(H)/H$).
\end{proof}
\begin{lemma}
\label{lem:algebraic} Let $K\subset G$ be  groups. Let $A=\bigoplus_{u\in G/K} A_u$ be an algebra
equipped with a $G$-action such that $g(A_k)=A_{gk}$. Then projection induces an algebra isomorphism
\[
\bigl(\bigoplus_{u\in G/K} A_u\bigr)^G\r A_e^{K}.
\]
\end{lemma}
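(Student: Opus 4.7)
The plan is to prove this by a direct verification at the level of underlying $k$-modules, then check compatibility with the algebra structure. First I would unwind the hypothesis: uniqueness of the decomposition $A=\bigoplus_u A_u$ combined with $g(A_k)=A_{gk}$ implies that an element $a=\sum_u a_u$ (with $a_u\in A_u$) is $G$-invariant if and only if $g(a_u)=a_{gu}$ for all $g\in G$ and $u\in G/K$.

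Since $G$ acts transitively on $G/K$ with stabilizer $K$ at the coset $e:=K$, such an invariant element is completely determined by its projection $a_e\in A_e$, and the invariance condition specialized to $g=k\in K$ becomes $k(a_e)=a_e$, i.e.\ $a_e\in A_e^K$. This shows injectivity of $\mathrm{pr}_e\colon A^G\to A_e^K$. Conversely, given $b\in A_e^K$, the formula $a_{gK}:=g(b)$ is well-defined thanks to the $K$-invariance of $b$, and $a:=\sum_u a_u$ is a $G$-invariant element projecting to $b$, yielding surjectivity.

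It remains to verify that the bijection $\mathrm{pr}_e$ is an algebra homomorphism. Here one uses that the decomposition $A=\bigoplus_u A_u$ is a direct product of algebras, i.e.\ $A_u\cdot A_v=0$ for $u\neq v$ and each $A_u$ is a two-sided ideal. This holds in the application of Lemma \ref{lem:decomp}, where the $A_u=\uEnd_Z(\Uscr(V'))^H$ are orthogonal summands of $\uEnd_Z(\Uscr)^H$ (the $\Uscr(V')$ for distinct $V'\sim V$ are non-isomorphic $H$-isotypical direct summands of $\Uscr$, so Schur orthogonality kills the off-diagonal $H$-invariant homomorphisms). Under this compatibility, for $a,b\in A$ one has $(ab)_e=\sum_{u,v}(a_ub_v)_e=a_eb_e$ since only the $(u,v)=(e,e)$ term contributes; the unit $1=\sum_u 1_u\in A^G$ clearly maps to $1_e\in A_e^K$.

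There is effectively no hard step: the transitivity of the $G$-action on $G/K$ trivializes the invariance condition, and the algebra compatibility is built into the orthogonal-block structure supplied by the application. The only thing worth spelling out explicitly in the write-up is the observation that in the setting of Lemma \ref{lem:decomp} the summands are genuinely mutually annihilating ideals, so that the linear isomorphism $A^G\cong A_e^K$ promotes to an algebra isomorphism.
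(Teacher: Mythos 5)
The paper declares this proof ``Left to the reader,'' so there is no written proof to compare against; I can only assess your argument on its own terms. It is correct and, in one respect, more careful than the lemma's formal statement. Your identification of the $k$-linear bijection is complete: $G$-invariance of $a=\sum_u a_u$ is equivalent to $g(a_u)=a_{gu}$ for all $g,u$; specializing to $u=e$ and $g\in K$ shows the $e$-component lands in $A_e^K$; transitivity of $G$ on $G/K$ gives injectivity, and your construction $a_{gK}:=g(b)$ gives surjectivity. One tacit assumption both you and the paper make is that $G/K$ is finite (otherwise the putatively invariant element $\sum_u g(b)$ would have infinitely many nonzero components and lie outside the direct sum $\bigoplus_u A_u$); this is harmless because in the intended application the index set is $N(H)/N_V(H)$, which is finite by Lemma \ref{lem:finiteisoV}.

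You are also right to flag that the multiplicativity of $\pr_e$ does not follow from the stated hypotheses alone. Indeed, one can cook up a two-dimensional commutative algebra $A=A_e\oplus A_g$ with $G=\ZZ/2$ swapping the two lines, in which $A_e$ is not even a subalgebra, so ``algebra isomorphism onto $A_e^K$'' makes no literal sense. What saves the lemma in the intended application (Lemma \ref{lem:decomp}) is precisely the orthogonal-block structure you observe: the $A_u=\uEnd_Z(\Uscr(V'))^H$ are mutually annihilating two-sided ideals, so only the diagonal term survives in $(ab)_e=\sum_{u,v}(a_u b_v)_e$. Spelling this out, as you do, is a worthwhile sharpening of the statement rather than a deviation from the paper's (unwritten) intent.
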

\begin{proof} Left to the reader.
\end{proof}
\subsection{Morita theory of $\Ascr_V$}
In this section we assume as in \S\ref{sec:actstab} that  $\XZ$ has constant
stabilizer dimension. We keep the notations introduced in the previous section.
Consider the quotient map
\[
\pi_{V,s}:\XscrZ_V\r \XZ^{\langle H\rangle} \quot (N_V(H)/H)=\XZ^{\langle H\rangle}\quot N_V(H)
\]
as well as the associated morphism of ringed stacks
\[
\bar{\pi}_{V,s}:(\XscrZ_V,\Ascr_V)\r (\XZ^{\langle H\rangle} \quot N_V(H),\pi_{V,s,\ast}\Ascr_V).
\]
\begin{lemma} \label{lem:morita}
Assume that $\Uscr$ is a $G$-equivariant saturated vector bundle on $Z$. Then for every $V\in \rep(H)$ there is an equivalence of categories
\[
\bar{\pi}_{V,s,\ast}:\Qch(\XscrZ_V,\Ascr_V)\r \Qch(\XZ^{\langle H\rangle} \quot N_V(H),\pi_{V,s,\ast}\Ascr_V).
\]
\end{lemma}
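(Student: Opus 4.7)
The claim reduces to combining trivial Morita theory with Lemma~\ref{lem:full}. First, the $H$-equivariant isomorphism $\Uscr^{\langle H\rangle}(V)\cong V\otimes \Uscr^{\langle H\rangle}_V$ of \eqref{eq:triv1}, together with Lemma~\ref{lem:uuv}, identifies
\[
\Ascr_V\cong \uEnd_{\XZ^{\langle H\rangle}}(\Uscr^{\langle H\rangle}_V)
\]
as $K$-equivariant sheaves of algebras, where $K:=N_V(H)/H$ and $\Uscr^{\langle H\rangle}_V$ is a $K$-equivariant vector bundle on $\XZ^{\langle H\rangle}$. This exhibits $\Ascr_V$ as the trivial Azumaya algebra attached to a $K$-equivariant bundle, so the classical Morita theorem (applied $K$-equivariantly) yields an equivalence $\Qch(\XscrZ_V,\Ascr_V)\xrightarrow{\sim}\Qch(\XscrZ_V)$ which is compatible with pushforward along $\pi_{V,s}$.

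Second, we check that $K$ acts on $\XZ^{\langle H\rangle}$ with finite stabilizers. Indeed, since $G$ acts on $\XZ$ with stabilizers of constant dimension $\dim H$, for every $x\in \XZ^{\langle H\rangle}$ the inclusion $N_V(H)_x\subset G_x$ yields $\dim N_V(H)_x\le \dim H$, so $K_x=N_V(H)_x/H$ is finite. Hence $\XscrZ_V$ is a smooth Deligne--Mumford stack and Lemma~\ref{lem:full} is applicable to $\Uscr^{\langle H\rangle}_V$ as soon as we know this $K$-equivariant bundle is \emph{full}, i.e.\ that for every $x\in \XZ^{\langle H\rangle}$ the $K_x$-representation
\[
(\Uscr^{\langle H\rangle}_V)_x=\Hom_H(V,\Uscr_x)
\]
contains every irreducible $K_x$-representation. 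Granting fullness, Lemma~\ref{lem:full} gives
\[
\Qch(\XscrZ_V)\xrightarrow{\sim}\Qch(\XZ^{\langle H\rangle}\quot N_V(H),\,\pi_{V,s,\ast}\Ascr_V),
\]
and composing with the Morita equivalence above produces the required equivalence. One checks by unwinding the unit/counit that this composite is indeed $\bar{\pi}_{V,s,\ast}$.

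The \textbf{main obstacle} is to deduce fullness of $\Uscr^{\langle H\rangle}_V$ from the saturation hypothesis on $\Uscr$. The plan is to localize via the Luna slice theorem at a point $x_0\in \XZ^{\langle H\rangle}$ with closed $N_V(H)$-orbit, so that (by Lemma~\ref{lem:bundle}) $\Uscr$ is determined on a slice by the $G_{x_0}$-representation $\Uscr_{x_0}$, which by saturation satisfies $\Uscr_{x_0}:=:\Ind_{H_{x_0}}^{G_{x_0}}W_0$ for some $H_{x_0}$-representation $W_0$. Frobenius reciprocity then rewrites $\Hom_H(V,\Uscr_{x_0})$ in terms of $W_0$ and the double coset decomposition of $H\backslash G_{x_0}/H_{x_0}$; the $K_{x_0}$-action permutes the resulting summands via conjugation, and we expect to conclude via the recognition criterion (Lemma~\ref{thm:recognition}) applied to this $K_{x_0}$-action — ensuring $\Hom_H(V,\Uscr_{x_0}):=:\Ind_{\{e\}}^{K_{x_0}}\Hom_H(V,\Uscr_{x_0})$, equivalently that every irreducible $K_{x_0}$-representation occurs with nonzero multiplicity.
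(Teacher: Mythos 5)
Your argument has a genuine gap in the very first step, and it is fatal to the whole plan.

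You assert that \eqref{eq:triv1} together with Lemma~\ref{lem:uuv} identifies $\Ascr_V\cong \uEnd_{\XZ^{\langle H\rangle}}(\Uscr^{\langle H\rangle}_V)$ \emph{as $K$-equivariant sheaves of algebras}, where $K=N_V(H)/H$, so that $\Ascr_V$ is the trivial Azumaya algebra on $\XscrZ_V$ and Morita theory collapses $\Qch(\XscrZ_V,\Ascr_V)$ to $\Qch(\XscrZ_V)$. This is false in general. The isomorphism $\Uscr^{\langle H\rangle}(V)\cong V\otimes\Uscr^{\langle H\rangle}_V$ of \eqref{eq:triv1} is only $H$-equivariant: $\Uscr^{\langle H\rangle}(V)$ carries an $N_V(H)$-action (as a subsheaf of $\Uscr^{\langle H\rangle}$), but $\Uscr^{\langle H\rangle}_V=\uHom(V,\Uscr^{\langle H\rangle})^H$ does \emph{not} come with a canonical $K$-action, because $g\in N_V(H)$ only produces an isomorphism ${}_{\sigma_g}V\cong V$ up to a Schur scalar. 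What $\Uscr^{\langle H\rangle}_V$ carries is a projective $K$-action, with cocycle measuring the obstruction to extending $V$ from $H$ to $N_V(H)$; correspondingly, $\Ascr_V$ may be a genuinely nontrivial Azumaya algebra on the Deligne--Mumford stack $\XscrZ_V$. The paper is careful about exactly this point: it says $\Ascr_V$ is ``trivial if we forget the $N_V(H)/H$-action,'' and it is only in the abelian case of Proposition~\ref{prop:mainpropZ} --- where $V$ is one-dimensional and can be extended to $G$ --- that $\Ascr_V$ is shown to be Morita-trivial on the stack. Indeed, the whole point of the ``gerby'' phrasing in Remark~\ref{rem:spoiler} is that these extra components are derived categories of possibly nontrivial Azumaya algebras.

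Because of this, neither the Morita equivalence $\Qch(\XscrZ_V,\Ascr_V)\cong\Qch(\XscrZ_V)$ nor the application of Lemma~\ref{lem:full} to $\Uscr^{\langle H\rangle}_V$ (which presupposes that $\Uscr^{\langle H\rangle}_V$ is an honest $K$-equivariant bundle with $\pi_{V,s,\ast}\uEnd(\Uscr^{\langle H\rangle}_V)=\pi_{V,s,\ast}\Ascr_V$) is available. The paper's proof sidesteps the issue entirely: after replacing $(\XZ,G)$ by $(\XZ^{\langle H\rangle},N_V(H))$ (using Lemma~\ref{lem:saturatedres} to preserve saturation) and reducing strongly \'etale-locally via Luna slices to the case of a $G$-fixed point, it shows directly that $\Ascr_V$ is a projective generator of $\coh(\Ascr_V)$ over its invariants, which is the content of Lemma~\ref{lem:morita1}. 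That lemma works with the skew category $\mod(G/H,\End_H(U(V)))$ itself, never passing through the untwisted stack, and so is valid whether or not the Azumaya class of $\Ascr_V$ is trivial. If you want to salvage your approach you would have to carry the projective $K$-action (i.e.\ the gerbe) along explicitly, at which point you would essentially be reproving the paper's Lemma~\ref{lem:morita1}.
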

\begin{proof} 
To simplify the notation we first replace $\XZ$ by $\XZ^{\langle H\rangle}$ and $G$ by $N(H)$ and drop all $(-)^{\langle H\rangle}$ superscripts. 
Since $X/G\cong X^{\langle H\rangle}/N(H)$
it is easy to see that this does not affect the saturation property of $\Uscr$. 

Next we further replace $G$ by $N_V(H)$ which by Lemma \ref{lem:saturatedres} below also does not affect the saturation property.

As $\bar{\pi}_{V,s,\ast}\bar{\pi}_{V,s}^\ast$ is easily seen to be the identity, we have 
to prove that $\bar{\pi}_{V,s}^\ast\bar{\pi}_{V,s,\ast}$ is the identity. 

This may be checked strongly \'etale locally on $\XZ$. Hence
we may replace $\XZ$ by $G\times^{G_x} S$ for $S$ a smooth connected affine slice at $x\in \XZ$. Using $(G\times^{G_x} S)/(G/H)\cong S/(G_x/H)$
we may reduce do $\XZ=S$, $G=G_x$; i.e. $x\in \XZ$ is now a fixed point for~$G$ and we have to show that $\bar{\pi}_{V,s}^\ast\bar{\pi}_{V,s,\ast}$ is
the identity on a neighborhood of $x$. Note that $G/H$ is now a finite group.
 
Since $\bar{\pi}_{V,s,\ast}$ is exact and $\bar{\pi}^\ast_{V,s}$ is right exact it is sufficient to prove that for every $\Mscr\in \coh(\Ascr_V)$ there
is a map $\Ascr_V^{\oplus N}\r \Mscr$ in $\coh(\Ascr_V)$, whose cokernel is zero on a neighborhood of $x$. By lifting generators of $\Mscr$
we may reduce to the case $\XZ=x$ and we have to show that $\Ascr_V$ is a projective generator for $\coh(\Ascr_V)$. 
As $\Uscr$ is saturated 
this follows from Lemma \ref{lem:morita1} below (using that $H_x=H$).  
\end{proof}
\begin{lemma}\label{lem:saturatedres}
Assume that $\Uscr$ is a saturated $G$-equivariant vector bundle on $Z$ and~$K$ is a subgroup of $G$ of finite index which contains
  $H_x$ for all $x\in H$. Then the  pullback of $\Uscr$ to $Z/K$
  is also saturated. 
\end{lemma}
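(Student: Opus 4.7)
The plan is to reduce the statement to a fibrewise representation-theoretic computation. The only genuinely new input beyond the $G$-equivariant hypothesis is to identify the $K$-version of the pointwise stabilizer $H^K_x$ of $T_xZ/T_x(Kx)$ with the original $H_x$; saturation then transfers along restriction via a short Mackey argument.

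First I would observe that since $K$ has finite index in $G$, the identity components coincide, $K_e=G_e$, and therefore $T_x(Kx)=T_x(Gx)$ for every $x\in Z$ (both equal $\mathfrak{g}/\mathfrak{g}_x$). Writing $K_x:=K\cap G_x$, the pointwise stabilizer $H^K_x$ of $T_xZ/T_x(Kx)$ inside $K_x$ is then $H_x\cap K_x$. By hypothesis $H_x\subset K$, and by definition $H_x\subset G_x$, so $H_x\subset K_x$ and hence $H^K_x = H_x$.

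Next I would use saturation of $\Uscr$ over $G$: for each $x\in Z$ there is an $H_x$-representation $W$ with $\Uscr_x :=: \Ind_{H_x}^{G_x} W$ as $G_x$-representations. Restricting to $K_x$ and using that $H_x$ is normal in $G_x$ (so $gH_xg^{-1}=H_x$ for $g\in G_x$), the Mackey decomposition simplifies to
\[
\Res_{K_x}^{G_x}\Ind_{H_x}^{G_x} W \;\cong\; \bigoplus_{g\in K_x\backslash G_x/H_x}\Ind_{H_x}^{K_x}\bigl({}_gW\bigr)\;\cong\;\Ind_{H_x}^{K_x}\Bigl(\bigoplus_{g} {}_gW\Bigr).
\]
The sum is finite since $G_x/H_x$ is finite, and the right-hand side is induced from $H^K_x=H_x$ up to $K_x$. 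Consequently $\Uscr_x$, viewed as a $K_x$-representation on the fibre of the pullback to $Z/K$, is up to nonzero multiplicities induced from $H^K_x$. As this holds for every $x\in Z$, the pullback is saturated.

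I do not expect a serious obstacle here: the only two ingredients are the finite-index identification $T_x(Kx)=T_x(Gx)$ and the collapse of the Mackey double cosets afforded by normality of $H_x$ in $G_x$. The main point to be careful about is merely to verify that the hypothesis $H_x\subset K$ for \emph{all} $x$ is exactly what is needed both to force $H_x\subset K_x$ (used in Step~1) and to ensure that each Mackey summand is induced from $H_x$ rather than from a proper subgroup $K_x\cap gH_xg^{-1}$ (used in Step~2).
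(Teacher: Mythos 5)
Your proof is correct and follows the same approach as the paper's own (very terse) proof, which consists of exactly two observations: that the finite-index hypothesis gives $T_x(Kx)=T_x(Gx)$ so the $K$-analogue of $H_x$ is again $H_x$, and that the rest follows from Mackey's restriction formula. You simply spell out the Mackey computation the paper leaves implicit, correctly using normality of $H_x$ in $G_x$ and the hypothesis $H_x\subset K$ to collapse each double-coset summand to an induction from $H_x$ itself.
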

\begin{proof} Let $x\in Z$. As $G/K$ is finite we have $T_x(X)/T_x(Kx)=T_x(X)/T_x(Gx)=H_x$.
As $G_x/K_x$ is finite, the lemma follows from Mackey's restriction formula.
\end{proof}

\begin{lemma}\label{lem:morita1}
Let $H$ be a normal subgroup of finite index in $G$. Assume that $U$ is a finite dimensional $G$-representation which is up to nonzero multiplicities induced
from $H$. If for $V\in \rep(H)$, ${}_{\sigma_g} V\cong V$ for all $g\in G$, then
$\End_H(U(V))$ is a projective generator for  $\mod(G/H,\End_H(U(V))$.
\end{lemma}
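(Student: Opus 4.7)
The plan is to use Morita theory and Clifford-type decomposition arguments to reduce the claim to the saturation hypothesis on $U$. First I would identify $A := \End_H(U(V))$ concretely: from the canonical $H$-equivariant isomorphism $U(V) \cong V \otimes U_V$ (with $H$ acting only on the first factor, cf.\ \eqref{eq:evaliso}) and $\End_H(V) = k$, we get $A \cong \End(U_V)$, a full matrix algebra. The hypothesis ${}_{\sigma_g}V \cong V$ ensures $G$ acts on $U(V)$ (cf.\ \eqref{eq:blockaction}), inducing a $G/H$-action on $A$ by algebra automorphisms which is inner by Skolem--Noether. Note that $A$ is automatically projective in $\mod(G/H,A)$: the functor $\Hom_{A,G/H}(A,-)$ coincides with $(-)^{G/H}$, which is exact by averaging because $G/H$ is finite and $\charact k = 0$. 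So the real content is to show that $A$ is a generator.

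Next, I would reformulate generation in Morita-theoretic terms. Choosing lifts $\tilde u_g \in A^\times$ of the inner $G/H$-action on $A$ defines a projective representation of $G/H$ on $U_V$ with cocycle $\alpha \in H^2(G/H,k^\ast)$, and yields an isomorphism $A \rtimes k[G/H] \cong M_n(k^\alpha[G/H])$, where $n = \dim U_V$. Hence $\mod(G/H,A)$ is Morita equivalent to $\mod(k^\alpha[G/H])$, which is semisimple because $k^\alpha[G/H]$ is a twisted group algebra of a finite group in characteristic zero. Under this equivalence, $A$ viewed as a right module over itself with the conjugation $G/H$-structure corresponds to $U_V$ viewed as an $\alpha$-projective representation of $G/H$. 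In a semisimple category a projective generator is precisely a module containing every simple summand, so it remains to show that $U_V$ realizes every simple $\alpha$-projective representation of $G/H$.

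Finally, I would invoke Clifford theory for the normal subgroup $H \subset G$ with full inertia: there is an $\alpha$-projective extension $\tilde V$ of $V$ to $G$, and a bijection $W \mapsto \tilde V \otimes W$ between simple $\alpha^{-1}$-projective $G/H$-representations and simple $G$-subrepresentations of $\Ind^G_H V$, with $V$-isotypic component $(\tilde V \otimes W)_V \cong W$. Therefore every simple $\alpha^{-1}$-projective $G/H$-representation appears in $(\Ind^G_H V)_V$. The saturation hypothesis $U :=: \Ind^G_H \Res^G_H U$, combined with $V$ appearing in $\Res^G_H U$ (else $U_V = 0$ and the lemma is vacuous), shows that $\Ind^G_H V$ is a summand of $U$ up to positive multiplicity, hence every simple $\alpha^{-1}$-projective $G/H$-representation appears in $U_V$. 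The main obstacle will be the cocycle bookkeeping -- verifying that the $\alpha$ arising from the inner $G/H$-action on $A$ matches (up to inversion) the cocycle governing the Clifford decomposition of $\Ind^G_H V$ -- but this is routine once the framework is in place.
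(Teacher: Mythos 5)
Your proof is correct in outline, but it takes a genuinely different route from the paper's. You pass through Clifford theory for the normal subgroup $H\subset G$ with full inertia: identify $A=\End_H(U(V))\cong\End(U_V)$ as a matrix algebra on which $G/H$ acts by inner automorphisms (Skolem--Noether), recognize $\mod(G/H,A)$ as Morita-equivalent to modules over a twisted group algebra $k^\alpha[G/H]$, and then invoke the Clifford bijection to show that the saturation hypothesis forces $U_V$ to contain every simple $\alpha$-projective $G/H$-representation. The paper instead avoids cocycles and twisted group algebras entirely: after reducing (by discarding Morita-irrelevant blocks) to $U=U(V)=\Ind_H^G V$, it applies the tensor identity $\Ind^G_H V\otimes k[G/H]\cong \Ind^G_H(V\otimes k[G/H]):=:\Ind^G_H V$, pushes it through the functor $F\colon M\mapsto \Hom_H(U,M)$ using the multiplicativity $F(T\otimes W)\cong F(T)\otimes_k W$, and concludes $\End_H(U)\otimes k[G/H]:=:\End_H(U)$, where the left side is tautologically a generator. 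The paper's argument is considerably shorter and sidesteps the cocycle bookkeeping you explicitly flag as the remaining work; your route, though heavier, yields sharper structural information (the number of blocks, their Brauer classes as twisted group algebras), which aligns with the ``gerby'' interpretation given elsewhere in the paper. One small caution on your side: $A$ as a right $A\rtimes k[G/H]$-module is $U_V^{\oplus n}$ rather than $U_V$, but since you only need its additive closure this does not affect the generator conclusion.
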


\begin{proof}
Note that 
$\Res^G_H\Ind_H^G W:=:\bigoplus_{W'\sim W} W'$ for $W\in \rep(H)$ (see e.g. the proof of \cite[Lemma 4.5.1]{SVdB}). 
Hence up to Morita equivalence we may assume $U=\bigoplus_{i=1}^n \Ind^G_H V_i$ with $V_i\in \rep(H)$ and $V_1=V$, $V_2,\ldots, V_n\not\sim V$ (as $\Ind_H^GW\cong \Ind_H^GW'$ if $W\sim W'$), so that $\Ind^G_H V_1$ and $\bigoplus_{i=2}^n \Ind^G_H V_i$ have no common
$H$-summands. Thus $U(V)=\Ind^G_HV$ (as ${}_{\sigma_g} V\cong V$). We now put $U=U(V)$.

As $G$-representations we have
\begin{equation}
\label{eq:step1}
U\otimes k[G/H]\cong \Ind^G_HV\otimes k[G/H]=\Ind^G_H(V\otimes k[G/H]):=:\Ind^G_H V=U
\end{equation}
where we used the projection formula (i.e. the tensor identity, \cite[Proposition I.3.6]{Jantzen}) for the second equality, and the fact $V:=:V\otimes k[G/H]$ as $H$-representations 
(since $k[G/H]$ is the trivial $H$-representation) for the $:=:$-relation.

Consider the  functor
\[
F:\mod(G)\r \mod(G/H,\End_H(U)):M\mapsto \Hom_H(U,M).
\]
One checks that if $T\in \mod(G)$, $W\in \mod(G/H)$ then
\begin{equation}
\label{eq:step2}
F(T\otimes W)=F(T)\otimes_k W
\end{equation}
in $\mod(G/H,\End_H(U))$. Applying $F$ to \eqref{eq:step1} and using \eqref{eq:step2} with $W=k[G/H]$ we obtain
\begin{equation}
\label{eq:step3}
\End_H(U)\otimes k[G/H] :=:\End_H(U)
\end{equation}
in $\mod(G/H,\End_H(U))$.
As $\End_H(U)\otimes k[G/H]$ is tautologically a generator for $\mod(G/H,\End_H(U))$ it follows from \eqref{eq:step3}
that $\End_H(U)$ is a generator for $\mod(G/H,\End_H(U))$.
\end{proof}

\section{Example}\label{sec:conifold}
We demonstrate the above results on a simple example of  the conifold singularity.

\medskip

Assume that $X=W$ is a $4$-dimensional vector space on which $G=G_m$ acts with weights $-1,-1,1,1$. Then $X\quot G$ is a conifold singularity. 
In this case $Z$ is the origin, and the Kirwan resolution $\fks /G$ is obtained by one Reichstein transform. 

A noncommutative crepant resolution $\Lambda$ of $X\quot G$, is given by a vector bundle $\Uscr=\Oscr_X\oplus \chi_1\otimes\Oscr_X$, where $\chi_i$ denotes $1$-dimensional $G$-representation with weight $i$, i.e. $\Lambda=(\End(\chi_0\oplus \chi_1)\otimes k[W^\vee])^G\cong \End_{k[W^\vee]^G}(k[W^\vee]^G\oplus (\chi_1\otimes k[W^\vee])^G)$ \cite[Theorem 8.9]{VdB04}. 

Note that $\Lambda_Z=\End(\chi_0\oplus \chi_1)^G=k^{\oplus 2}$ and $\codim(Z,X)=4$. 
By Theorem \ref{thm:sod}
 we then obtain 
\begin{equation}\label{eq:conifold}
D(\fks/G)=\langle D(\Lambda),D(k),D(k),D(k),D(k),D(k),D(k)\rangle.
\end{equation} 

\begin{remark}
Note that $X\quot G$ is as a toric variety given by a fan with a single cone $\sigma$ generated by $(0,0,1),(1,0,1),(1,1,1),(0,1,1)$. 
Let $\Sigma=\sigma\cup \RR_{\geq 0}(1,1,2)$.
Then $\fks/G$ is a toric stack  given by the stacky fan 
\[\mathbf{\Sigma}=(\Sigma,\{(0,0,1),(1,0,1),(1,1,1),(0,1,1),(2,2,4)\})\] (representing the stacky blow up of $X\quot G$ in the origin) and $\fks\quot G$  is a toric variety given by $\Sigma$ (which is a blow-up of $X\quot G$ in the origin) \cite[Theorem 4.7]{EdidinMore}.
Using \cite[Proposition 4.5]{borisov2005orbifold} one can (alternatively) check that $\rk K_0(\fks/G)=8$, which agrees with \eqref{eq:conifold}.

The blow-up of $X\quot G$ in the origin can also be viewed as $\Oscr(-1,-1)_{\PP^1\times \PP^1}$. From this perspective $\fks/G$ is the square-root stack of  $\Oscr(-1,-1)_{\PP^1\times \PP^1}$ along the exceptional divisor. 
\end{remark}

\appendix

\section{Local duality for graded rings}
Results in this appendix  have in a closely related form appeared in \cite{VyasYekutieli}, \cite{yekutieli2019derived}. 
However, our setting is slightly more general from the one in loc. cit. (as  $R_0$ below can be infinite dimensional), therefore for convenience of the reader we provide self-contained proofs.

Let $\Lambda$ be an $\NN$-graded ring which is finitely generated as a
module over its center $R$ which in turn is a $\NN$-graded $k$-algebra
such that $R_n$ is a finitely generated $R_0$-module. We assume that $R_0$ is 
a finitely generated $k$-algebra. 
For convenience reasons we use {\em left} modules in this appendix.
This allows us to literally use some results from
\cite{van1997existence}. Needless to say this is only a notational issue and moreover in the rest of the paper we only use Corollary
\ref{cor:dualizing} which is left right agnostic.

Below we write $D(\Lambda)$ for $D(\Gr\Lambda)$
and this convention extends to all related notations. 
 Let $D_{R}$, $D_{R_0}$ be the Grothendieck dualizing complexes of
 $R$, $R_0$ respectively and let $D_{\Lambda}$, $D_{\Lambda_0}$ be the
 corresponding dualizing complexes of $\Lambda$, $\Lambda_0$; i.e. we have
\begin{align*}
D_{\Lambda}&=\RHom_{R}(\Lambda,D_R),\\
D_{\Lambda_0}&=\RHom_{R_0}(\Lambda_0,D_{R_0}).
\end{align*}

Let $C$ be an arbitrary graded $k$-algebra. For $M$ a complex of left  graded $C\otimes\Sigma$-modules, where $\Sigma\in\{\Lambda,\Lambda_0\}$,  we put
\[
M^\vee=\RHom_{R_0}(M,D_{R_0})\in D(C^\circ\otimes \Sigma^{\circ}).
\]

Let $D_f(\Lambda_0)$ be the full subcategory of $D(\Lambda_0)$ consisting of complexes which have finitely generated cohomology.
Then $(-)^\vee$ defines a duality between 
$D_f(\Lambda_0)$ and $D_f(\Lambda_0^\circ)$ (recall that $D_{R_0}$ is bounded and has finite injective dimension).

Similarly, let $D_f^b(\Lambda)$ be the full subcategory of $D^b(\Lambda)$ consisting of complexes with cohomology which
is finitely generated as $\Lambda_0$-module (or equivalently as $R_0$-module) in every degree. Then $(-)^\vee$ defines a duality between 
$D_f^b(\Lambda)$ and $D_f^b(\Lambda^\circ)$ (recall that $D_{R_0}$ has finite injective dimension).

\begin{remark} 
If $M\in D(\Lambda_0)$ then 
by change of rings we have
\[
M^\vee=\RHom_{\Lambda_0}(M,D_{\Lambda_0}).
\]
The same formula holds if $M\in D(\Lambda)$, but 
unfortunately in that case  the formula obscures the $\Lambda$-action on $M^\vee$.
\end{remark}

Let $R\Gamma_{\Lambda_{\ge 1}}$ be the right derived functor of $\varinjlim_n\RHom_\Lambda(\Lambda/\Lambda_{\geq n},-)$.

\begin{proposition}\label{prop:localduality}
Let $M\in D(C\otimes\Lambda)$. Then there is a local duality formula in $D(C^\circ\otimes\Lambda^\circ)$
\[
R\Gamma_{\Lambda_{>0}}(M)^\vee\cong
\RHom_\Lambda(M,R\Gamma_{\Lambda_{>0}}(\Lambda)^\vee).
\]
\end{proposition}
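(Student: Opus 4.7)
The plan is to reduce the local duality formula to the standard tensor--hom adjunction by first identifying the functor $R\Gamma_{\Lambda_{>0}}$ with tensor product against a fixed $\Lambda$-bimodule, namely $R\Gamma_{\Lambda_{>0}}(\Lambda)$.

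First, I would prove the key identification
\[
R\Gamma_{\Lambda_{>0}}(M)\;\cong\;R\Gamma_{\Lambda_{>0}}(\Lambda)\otimes^L_\Lambda M \qquad\text{in } D(C\otimes\Lambda),
\]
where $R\Gamma_{\Lambda_{>0}}(\Lambda)$ carries its natural $\Lambda$-bimodule structure and the tensor product uses its right $\Lambda$-action. Since $\Lambda$ is module-finite over its graded centre $R$, one has $R\Gamma_{\Lambda_{>0}}=R\Gamma_{R_{>0}}$ on $\Lambda$-modules (the ideals $\Lambda_{>0}$ and $R_{>0}\Lambda$ have a common power in each other). Pick a finite set $x_1,\dots,x_n$ of homogeneous generators of $R_{>0}$ and form the stable Koszul/\v{C}ech complex $K_\infty=K_\infty(x_1,\dots,x_n)$, a bounded complex of flat graded $R$-modules. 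Then $R\Gamma_{R_{>0}}(N)\cong K_\infty\otimes_R N$ functorially in an $R$-module $N$, and because $K_\infty$ is bounded with flat terms, the same formula holds for any complex of $R$-modules without further flat resolution. Applying this with $N=M$ and $N=\Lambda$ and using associativity of $\otimes_R$ yields
\[
R\Gamma_{\Lambda_{>0}}(M)\;\cong\;K_\infty\otimes_R M\;\cong\;(K_\infty\otimes_R\Lambda)\otimes_\Lambda M\;\cong\;R\Gamma_{\Lambda_{>0}}(\Lambda)\otimes^L_\Lambda M,
\]
with all $\Lambda$-bimodule and left $C$-structures respected because $R$ is central in $\Lambda$.

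With this in hand, setting $A:=R\Gamma_{\Lambda_{>0}}(\Lambda)$ and applying $(-)^\vee=\RHom_{R_0}(-,D_{R_0})$ to the isomorphism above, the standard tensor--hom adjunction for graded modules gives
\[
\RHom_{R_0}\!\bigl(A\otimes^L_\Lambda M,\,D_{R_0}\bigr)\;\cong\;\RHom_\Lambda\!\bigl(M,\,\RHom_{R_0}(A,D_{R_0})\bigr),
\]
which is exactly the claimed formula. A routine bimodule book-keeping confirms the claimed $D(C^\circ\otimes\Lambda^\circ)$-structure: the second (unused) $\Lambda$-action on the bimodule $A$ becomes a right $\Lambda$-action on $\RHom_{R_0}(A,D_{R_0})$ which is carried through the outer $\RHom_\Lambda$, while the left $C$-action on $M$ dualises to a right $C$-action.

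The main technical point is Step~1: arranging that the \v{C}ech/Koszul presentation of local cohomology is functorial not only in $M$ but also in the ``second'' $\Lambda$-action on $R\Gamma_{\Lambda_{>0}}(\Lambda)$, while simultaneously delivering the correct derived tensor product. Both issues are handled by the centrality of $R$ in $\Lambda$ and the flatness of the terms of $K_\infty$ over $R$, which together make the argument work for arbitrary $M\in D(C\otimes\Lambda)$ without boundedness or finiteness hypotheses on $M$.
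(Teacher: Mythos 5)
Your proof is correct, and it arrives at the result by the same underlying mechanism as the paper, namely realizing local cohomology as derived tensoring against a fixed $\Lambda$-bimodule and then invoking tensor--hom adjunction. The paper phrases this as: check the formula on the chain level for $M=\Lambda$, then bootstrap via a free $C\otimes\Lambda$-resolution of $M$, following the proof of \cite[Proposition 5.1]{van1997existence}. You instead make the key functorial isomorphism $R\Gamma_{\Lambda_{>0}}(M)\cong R\Gamma_{\Lambda_{>0}}(\Lambda)\otimes^L_\Lambda M$ explicit by exhibiting the stable Koszul/\v{C}ech complex $K_\infty$ on a finite set of homogeneous generators of $R_{>0}$, using that $K_\infty$ is bounded with $R$-flat terms so that $K_\infty\otimes_R-$ is already derived, and then deducing the duality formula in one stroke from adjunction over $R_0$. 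Both arguments use the same two reductions: $R\Gamma_{\Lambda_{>0}}=R\Gamma_{R_{>0}}$ on $\Gr(\Lambda)$ because $\{\Lambda_{\geq n}\}_n$ and $\{(R_{>0}\Lambda)^n\}_n$ are cofinal (this needs $\Lambda$ module-finite over $R$ with $R$ a noetherian graded $k$-algebra, so $R_{>0}$ is finitely generated --- you should flag this noetherian hypothesis, which the paper carries implicitly), and compatibility of the whole construction with the auxiliary $C$-action because $R$ is central. Your \v{C}ech packaging is slightly cleaner for unbounded $M$, since $K_\infty\otimes_R-$ is visibly defined on all of $D(\Gr\Lambda)$ and there is no need to argue that applying $R\Gamma$ termwise to a free resolution computes the derived functor; but the essential content is identical to the paper's (and to the cited proof in \cite{van1997existence}), so this is a difference in exposition rather than in mathematical route.
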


\begin{proof}
If $C$ is absent then the formula is true on the level of complexes
for $M=\Lambda$. 
One then proceeds as in the proof of \cite[Proposition 5.1]{van1997existence} by replacing $M$ by a free  $C\otimes \Lambda$-resolution. 
\end{proof}

\begin{corollary}\label{cor:dualizing}
We have $D_\Lambda\cong R\Gamma_{\Lambda_{>0}}(\Lambda)^\vee$ in $D(\Lambda^e)$.
\end{corollary}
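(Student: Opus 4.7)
The plan is to reduce the statement to the commutative graded local duality for the center $R$ and then transport the resulting isomorphism along the central inclusion $R \hookrightarrow \Lambda$ via Proposition \ref{prop:localduality}. First I would invoke, as a black box, the commutative graded local duality for the $\NN$-graded commutative $k$-algebra $R$, namely
\[
D_R \cong R\Gamma_{R_{>0}}(R)^\vee \quad \text{in } D(R\otimes R),
\]
which is the standard commutative companion of Proposition \ref{prop:localduality} (and is in fact what the noncommutative proposition ultimately rests on, via \cite[Proposition 5.1]{van1997existence}).

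Next I would apply Proposition \ref{prop:localduality}, taking the ring ``$\Lambda$'' there to be the commutative ring $R$, with $C = \Lambda^e = \Lambda \otimes \Lambda^\circ$ and $M = \Lambda$ regarded as a $\Lambda^e \otimes R$-module via the embedding $R \hookrightarrow Z(\Lambda)$. Since $(\Lambda^e)^\circ \cong \Lambda^e$ and $R^\circ = R$, the output of the proposition sits in $D(\Lambda^e \otimes R)$, which we may restrict to $D(\Lambda^e)$. Combined with the commutative local duality this gives
\[
R\Gamma_{R_{>0}}(\Lambda)^\vee \cong \RHom_R(\Lambda, R\Gamma_{R_{>0}}(R)^\vee) \cong \RHom_R(\Lambda, D_R) = D_\Lambda
\]
in $D(\Lambda^e)$, the final equality being the definition of $D_\Lambda$.

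Finally I would identify $R\Gamma_{R_{>0}}(\Lambda) \cong R\Gamma_{\Lambda_{>0}}(\Lambda)$ in $D(\Lambda^e)$. The two descending filtrations $\{R_{\geq n}\Lambda\}_n$ and $\{\Lambda_{\geq n}\}_n$ of $\Lambda$ by two-sided ideals (two-sided because $R$ is central) are cofinal: the inclusion $R_{\geq n}\Lambda \subseteq \Lambda_{\geq n}$ is immediate, while if $x_1,\dots,x_m$ are homogeneous generators of $\Lambda$ over $R$ of degrees $\leq d$, then $\Lambda_{\geq n+d} \subseteq R_{\geq n}\Lambda$. Hence the two local cohomology functors agree, and chaining this with the display above yields $D_\Lambda \cong R\Gamma_{\Lambda_{>0}}(\Lambda)^\vee$ in $D(\Lambda^e)$, as required. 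The main point of friction is not the argument itself but the bookkeeping of bimodule structures: one must verify that the commutative duality isomorphism is $R^e$-equivariant and that the application of Proposition \ref{prop:localduality} upgrades correctly to the full $\Lambda^e$-action rather than to a one-sided $\Lambda$-action only. These verifications are forced by naturality, but they are the only real content beyond the formal manipulations.
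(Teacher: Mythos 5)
Your proof is correct but follows a genuinely different route from the paper's. The paper first verifies that $R\Gamma_{\Lambda_{>0}}(\Lambda)^\vee$ is a dualizing complex (the substance being finite generation of its cohomology, established via Lemma \ref{lem:fingen}), and then concludes by the uniqueness of \emph{rigid} dualizing complexes, importing rigidity of both sides from \cite{van1997existence} and \cite{Yekutieli}. You instead reduce to the commutative ring $R$: taking relative graded local duality $D_R\cong R\Gamma_{R_{>0}}(R)^\vee$ for $R$ over $R_0$ as input, you transport it along the finite central inclusion $R\hookrightarrow\Lambda$ by applying Proposition~\ref{prop:localduality} with the ambient graded ring taken to be $R$, with $C=\Lambda^e$ and $M=\Lambda$ (legitimate since $R$ is its own center, $\Lambda$ is a $\Lambda^e\otimes R$-module via the central inclusion, and $(\Lambda^e)^\circ\cong\Lambda^e$), obtaining $R\Gamma_{R_{>0}}(\Lambda)^\vee\cong\RHom_R(\Lambda,D_R)=D_\Lambda$ in $D(\Lambda^e)$; the cofinality of $\{\Lambda_{\ge n}\}$ and $\{R_{\ge n}\Lambda\}$ then converts $R\Gamma_{R_{>0}}$ into $R\Gamma_{\Lambda_{>0}}$. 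What this buys is that it bypasses the rigidity formalism entirely, turning the noncommutative corollary into a formal transport of its commutative analogue; what it costs is that commutative relative graded local duality over a non-local, non-Artinian $R_0$, with the $R\otimes R$-bimodule structure, must be accepted as an external black box, and you should supply a precise citation for that statement. Two small remarks: the identification $R\Gamma_{\Lambda_{>0}}(\Lambda)\cong R\Gamma_{R_{>0}}(\Lambda)$ that you justify by cofinality is also used, without comment, in the first line of the paper's own proof, so this step is shared; and your parenthetical that the noncommutative Proposition~\ref{prop:localduality} ``ultimately rests on'' the commutative duality is not quite right --- that proposition is a formal tensor-Hom statement proved by a free resolution argument and does not presuppose any duality for $R$.
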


\begin{proof}
We first check that the right-hand side is a dualizing complex.
Note that $R\Gamma_{\Lambda_{>0}}(\Lambda)^\vee=R\Gamma_{R_{>0}}(\Lambda)^\vee$ so that we do not need to worry about the distinction between left and right. 
 Following the proof of \cite[Theorem 6.3]{van1997existence} we only need to check that $H^i(R\Gamma_{\Lambda_{>0}}(\Lambda)^\vee)$ is finitely generated. Following Lemma \ref{lem:fingen} below it is enough to verify that $\Lambda_0\Lotimes_\Lambda H^i(R\Gamma_{\Lambda_{>0}}(\Lambda)^\vee)$ has finitely generated cohomology as $\Lambda_0$-modules.  
We have the following formula as right $\Lambda_0$-modules
\begin{align*}
\RHom_{\Lambda_0^\circ}(\Lambda_0\Lotimes_\Lambda R\Gamma_{\Lambda_{>0}}(\Lambda)^\vee,D_{\Lambda_0})&
=\RHom_{\Lambda^\circ}(\Lambda_0,\RHom_{\Lambda_0}(R\Gamma_{\Lambda_{>0}}(\Lambda)^\vee,D_{\Lambda_0}))\\
&=\RHom_{\Lambda^\circ}(\Lambda_0,R\Gamma_{\Lambda_{>0}}(\Lambda))\\
&=\RHom_{\Lambda^\circ}(\Lambda_0,\Lambda)
\end{align*}
where in the first line we have considered $\Lambda_0\Lotimes_\Lambda R\Gamma_{\Lambda_{>0}}(\Lambda)^\vee$ as the complex of $(\Lambda,\Lambda_0)$-bimodules, and the third line follows by replacing $\Lambda$ as a right $\Lambda$-module by an injective resolution. It follows that as left $\Lambda_0$-modules
\[
\Lambda_0\Lotimes_\Lambda R\Gamma_{\Lambda_{>0}}(\Lambda)^\vee=\RHom_{\Lambda^\circ}(\Lambda_0,\Lambda)^\vee,
\] 
which implies that $\Lambda_0\Lotimes_\Lambda H^i(R\Gamma_{\Lambda_{>0}}(\Lambda)^\vee)$ indeed has finitely generated cohomology as $\Lambda_0$-modules. 

The isomorphism $D_\Lambda\cong R\Gamma_{\Lambda_{>0}}(\Lambda)^\vee$ 
is a consequence of the uniqueness of  ``rigid'' dualizing complexes \cite[Definition 6.1, Proposition 8.2(1)]{van1997existence}. The fact that the right-hand side is rigid follows as in the proof of \cite[Proposition 8.2(2)]{van1997existence}, as for $D_\Lambda$ this follows from the proof of \cite[Proposition 5.7]{Yekutieli}. 
\end{proof}

\begin{lemma}\label{lem:fingen}
Let $\Lambda$ be a left noetherian $\NN$-graded ring. Assume that $M$ is a right bounded complex of graded left $\Lambda$-modules with left bounded cohomology. 
Then the cohomology modules of $M$ are finitely generated $\Lambda$-modules if and only if the cohomology modules of  $\Lambda_0\Lotimes_\Lambda M$ are finitely generated $\Lambda_0$-modules.
\end{lemma}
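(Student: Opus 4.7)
The forward implication is standard. Since $\Lambda$ is left noetherian and $M$ is right bounded with finitely generated cohomology, one can construct a quasi-isomorphism $P^\bullet \to M$ where $P^\bullet$ is a right bounded complex of finitely generated free graded $\Lambda$-modules (built inductively, starting from the top cohomological degree, which is well-defined by right boundedness). Tensoring yields $\Lambda_0 \Lotimes_\Lambda M \simeq \Lambda_0 \otimes_\Lambda P^\bullet$, a right bounded complex of finitely generated free $\Lambda_0$-modules. Since $\Lambda_0 \cong \Lambda/\Lambda_{>0}$ is left noetherian as a graded quotient of $\Lambda$, each cohomology group is a subquotient of a finitely generated $\Lambda_0$-module, hence finitely generated.

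For the converse, the plan is to argue by descending induction on $N := \sup\{i : H^i(M) \neq 0\}$, which is finite since $M$ is right bounded. The crucial identification at the top degree is
\[
H^N(\Lambda_0 \Lotimes_\Lambda M) \;\cong\; \Lambda_0 \otimes_\Lambda H^N(M),
\]
obtained from the truncation triangle $\tau^{<N} M \to M \to H^N(M)[-N] \to$. The point is that $\tau^{<N} M$ admits a projective resolution concentrated in cohomological degrees $\leq N-1$, so $H^{\geq N}(\Lambda_0 \Lotimes_\Lambda \tau^{<N} M) = 0$ and the long exact sequence collapses to the displayed isomorphism. The hypothesis then gives $\Lambda_0 \otimes_\Lambda H^N(M)$ finitely generated over $\Lambda_0$, and graded Nakayama yields $H^N(M)$ finitely generated over $\Lambda$ (assuming, as is tacit here and holds in the intended application to $R\Gamma_{\Lambda_{>0}}(\Lambda)^\vee$, that the graded modules in question are bounded below in the graded sense).

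To run the induction, I revisit the same long exact sequence now knowing that $H^N(M)$ is finitely generated over $\Lambda$. One has
\[
H^i(\Lambda_0 \Lotimes_\Lambda H^N(M)[-N]) = \mathrm{Tor}^\Lambda_{N-i}(\Lambda_0, H^N(M)),
\]
and each such $\mathrm{Tor}$ group is finitely generated over $\Lambda_0$, because $H^N(M)$ admits a resolution by finitely generated free $\Lambda$-modules and $\Lambda_0$ is noetherian. The long exact sequence then forces each $H^i(\Lambda_0 \Lotimes_\Lambda \tau^{<N} M)$ to be finitely generated over $\Lambda_0$; the inductive hypothesis, applied to $\tau^{<N} M$ whose top cohomology degree is at most $N-1$, gives finite generation of $H^i(\tau^{<N} M) = H^i(M)$ for $i < N$, completing the induction.

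The main obstacle I anticipate is the application of graded Nakayama: it requires graded lower-boundedness of $H^N(M)$, which is not explicit in the statement but is automatic in the intended context (where the complex in question arises as a graded dual $(-)^\vee$ of something bounded above in the grading). Everything else is routine bookkeeping with truncation triangles, long exact sequences, and the noetherianity of $\Lambda$ and $\Lambda_0$.
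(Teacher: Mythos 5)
Your proof is essentially the same as the paper's: descending induction on the top cohomological degree, identification of $H^N(\Lambda_0\Lotimes_\Lambda M)$ with $\Lambda_0\otimes_\Lambda H^N(M)$, graded Nakayama, and then a long exact sequence argument to pass to $\tau^{<N}M$. The paper phrases the top-degree identification via a hypercohomology spectral sequence rather than the truncation triangle, but that is cosmetic. One small point: the lower-boundedness in the internal grading that you flag as ``tacit'' and ``not explicit in the statement'' is in fact precisely what the hypothesis ``left bounded cohomology'' means (compare Step~1 of the paper's proof, which speaks of modules with ``left bounded grading''); so there is no gap to worry about, and your caveat can simply be dropped.
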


\begin{proof}
We concentrate on the nonobvious direction.
\begin{step}
Assume that $M\in\Gr(\Lambda)$ has left bounded grading. By the graded Nakayama lemma, $M$ is finitely generated if and only if $\Lambda_0\otimes_\Lambda M$ is finitely generated (see e.g. \cite[Proposition 2.2]{ATVdB}). 
\end{step}

\begin{step}
Let now $M$ be as in the statement of the lemma and assume that the cohomology modules of $\Lambda_0\Lotimes_\Lambda M$ are finitely generated $\Lambda_0$-modules. 
Let $m$ be maximal such that $H^m(M)\neq 0$. Then $H^m(\Lambda_0\Lotimes_\Lambda M)=\Lambda_0\otimes_\Lambda H^m(M)$. This follows by the appropriate hypercohomology spectral sequence. Hence by Step 1, $H^m(M)$ is finitely generated. 
\end{step}

\begin{step}
Tensoring the distinguished triangle 
\[
\tau_{\leq m-1} M\to M\to H^m(M)[-m]\to 
\]
with $\Lambda_0$ yields the distinguished triangle
\[
\Lambda_0\Lotimes_\Lambda\tau_{\leq m-1} M\to \Lambda_0\Lotimes_\Lambda M\to \Lambda_0\Lotimes_\Lambda H^m(M)[-m]\to.
\]
It now follows by Step 2 that $\Lambda_0\Lotimes_\Lambda\tau_{\leq m-1} M$ has finitely generated cohomology. Now we repeat Steps 2,3 with $\tau_{\leq m-1}M$ replacing $M$.\qedhere
\end{step}
\end{proof}

\begin{remark}\label{rem:dualhh}
If $\Lambda$ is homologically homogeneous (c.f. Definition \ref{def:hh}) of dimension $d$ then $D_\Lambda=\omega_\Lambda[d]$, where $\omega_\Lambda:=\Hom_R(\Lambda,\omega_R)$, by \cite[Proposition 2.9]{stafford2008}.  
\end{remark}


\providecommand{\bysame}{\leavevmode\hbox to3em{\hrulefill}\thinspace}
\providecommand{\MR}{\relax\ifhmode\unskip\space\fi MR }
\providecommand{\MRhref}[2]{%
  \href{http://www.ams.org/mathscinet-getitem?mr=#1}{#2}
}
\providecommand{\href}[2]{#2}

\end{document}